\newcounter{ictr}
\newenvironment{ilist}{\begin{list}
                         {\textup{(\arabic{ictr})}}
                         {\usecounter{ictr}
                          \setlength{\leftmargin}{0.6truein}
                          \setlength{\rightmargin}{0.5truein}
                          \setlength{\itemsep}{0.0truein}
                          \setlength{\labelwidth}{0.3truein}}}
                      {\end{list}}
\newcounter{actr}
\newenvironment{alist}{\begin{list}
                         {\textup{(\roman{actr})}}
                         {\usecounter{actr}
                          \setlength{\leftmargin}{0.6truein}
                          \setlength{\rightmargin}{0.5truein}
                          \setlength{\itemsep}{0.0truein}
                          \setlength{\labelwidth}{0.3truein}}}
                      {\end{list}}
\newtheorem{thm}{Theorem}[subsection]
\newtheorem{cor}[thm]{Corollary}
\newtheorem{lem}[thm]{Lemma}
\newtheorem{prop}[thm]{Proposition}
\newtheorem{thms}{Theorem}[section]
\newtheorem{lems}[thms]{Lemma}
\newtheorem{props}[thms]{Proposition}
\newtheorem*{thmm}{Theorem}
\newtheorem*{stablerigidity}{Stable Rigidity Theorem}
\newtheorem*{boundedrigidity}{Bounded Rigidity Theorem}
\newtheorem{fibering}[thm]{Fibering Theorem}
\newtheorem{finite-union}[thm]{Finite Union Theorem}
\newtheorem{union}[thm]{Union Theorem}
\newtheorem{invariance}[thm]{Coarse Invariance}
\theoremstyle{definition}
\newtheorem{remark}[thm]{Remark}
\newtheorem{example}[thm]{Example}
\newtheorem{defn}[thm]{Definition}
\newtheorem{que}[thm]{Question}
\newtheorem{remarks}[thms]{Remark}
\newtheorem{defns}[thms]{Definition}
\newtheorem{ques}[thms]{Question}
\theoremstyle{remark}
\numberwithin{equation}{section}
\newcommand{\Z}{\mathbb{Z}}
\newcommand{\N}{\mathbb{N}}
\newcommand{\R}{\mathbb{R}}
\newcommand{\Q}{\mathbb{Q}}
\renewcommand{\L}{\mathbb{L}}
\newcommand{\FF}{\mathcal{F}}
\newcommand{\GG}{\mathcal{G}}
\newcommand{\VV}{\mathfrak{V}}
\renewcommand{\O}{\mathcal{O}}
\newcommand{\CC}{\mathbb{C}}
\newcommand{\Cc}{\mathcal{C}}
\newcommand{\D}{\mathcal{D}}
\newcommand{\W}{\mathcal{W}}
\newcommand{\V}{\mathcal{V}}
\newcommand{\A}{\mathfrak{A}}
\newcommand{\B}{\mathcal{B}}
\newcommand{\C}{\mathfrak{D}}
\newcommand{\E}{\mathfrak{E}}
\newcommand{\F}{\mathfrak{F}}
\newcommand{\FFF}{\mathbb{F}}
\newcommand{\U}{\mathcal{U}}
\newcommand{\X}{\mathcal{X}}
\newcommand{\Y}{\mathcal{Y}}
\newcommand{\ZZ}{\mathcal{Z}}
\newcommand{\n}{\mathfrak{n}}
\newcommand{\p}{\mathfrak{p}}
\newcommand{\m}{\mathfrak{m}}
\DeclareMathOperator{\Aut}{Aut}
\DeclareMathOperator{\HNN}{HNN}
\DeclareMathOperator{\diam}{diam}
\DeclareMathOperator{\stab}{Stab}
\newcommand{\fin}{\text{fin}}
\newcommand{\simp}{\text{sim}}
\newcommand{\ind}{\text{sub}}
\newcommand{\GL}{\text{GL}}
\newcommand{\SL}{\textnormal{SL}}
\newcommand{\Hawaii}{Hawai\kern.05em`\kern.05em\relax i}
\newcommand{\Manoa}{M\=anoa}
\title[Geometric complexity and topological rigidity]%
{A notion of geometric  complexity  and \\ its application to topological rigidity}
\date{\today}
\author{Erik Guentner}
\address{University of \Hawaii ~ at \Manoa, Department of Mathematics,
2565 McCarthy Mall, Honolulu, HI 96822}
\email{erik@math.hawaii.edu}
\author{Romain Tessera}
\address{Vanderbilt University, Department of Mathematics, 1326
  Stevenson Center, Nashville, TN  37240}
\curraddr{UMPA, ENS de Lyon, 46 all\'ee d'Italie, 69364 Lyon Cedex 07, France} 
\email{romain.a.tessera@vanderbilt.edu}
\author{Guoliang Yu}
\address{Vanderbilt University, Department of Mathematics, 1326
  Stevenson Center, Nashville, TN  37240}
\email{guoliang.yu@vanderbilt.edu}
\thanks{The authors were partially supported by grants from the
U.S. National Science Foundation.}
\begin{document}

\begin{abstract}
  We introduce a geometric invariant, called
  finite decomposition complexity (FDC), to study topological rigidity
  of manifolds. We prove for instance that if the fundamental group of
  a compact aspherical manifold $M$ has FDC, and if $N$ is homotopy
  equivalent to $M$, then $M\times \R^n$ is homeomorphic to $N\times
  \R^n$, for $n$ large enough. This statement is known as the stable
  Borel conjecture. On the other hand, we show that the class of FDC
  groups includes all countable subgroups of $\GL(n,K)$, for any field
  $K$, all elementary amenable groups, and is closed under taking
  subgroups, extensions, free amalgamated products, $\HNN$ extensions,
  and direct unions.
  \end{abstract}

\maketitle


\section{Introduction}

We introduce the geometric concept of finite decomposition complexity
to study questions concerning the topological rigidity of manifolds.
Roughly speaking, a metric space has finite decomposition complexity
when there is an algorithm to decompose the space into simpler, more
manageable pieces in an asymptotic way.  The precise definition is
inspired by the property of finite asymptotic dimension of Gromov
\cite{G1} and is presented in Section~\ref{FDCsection}.

While the property of finite decomposition complexity is flexible --
the class of countable groups having finite decomposition complexity
includes all linear groups (over a field with arbitrary
characteristic), all hyperbolic groups and all elementary amenable
groups and is closed under various operations -- it is a powerful tool
for studying topological rigidity -- we
shall see, for example, that if the fundamental group of a closed
aspherical manifold has finite decomposition complexity then its
universal cover is boundedly rigid, and the manifold itself is stably
rigid.

The remainder of the introduction is divided into three parts.  In the
first we describe the applications of finite decomposition complexity
to topological rigidity; in the second we outline results on the class
of countable discrete groups having finite decomposition complexity;
we conclude with general remarks on the organization of the paper.

\subsection*{Topological rigidity}

A closed manifold $M$ is {\it rigid\/} if every homotopy equivalence
between $M$ and another closed manifold is homotopic to a
homeomorphism. The Borel conjecture asserts the rigidity of closed
aspherical manifolds.  Many important results on the Borel conjecture
have been obtained by Farrell and Jones \cite{FJ1,FJ2,FJ3,FJ4}, and more recently
Bartels and Lück \cite{BL}.
These results are proved
by studying dynamical properties of actions of the
fundamental group of $M$.  

Our approach to rigidity questions is different -- we shall focus not
on the dynamical properties but rather on the large scale geometry of
the fundamental group.  As a natural byproduct, we prove 
the {\it bounded Borel conjecture\/}, a `large-scale
geometric' version of the Borel conjecture.  Our principal result in
this direction is the following theorem.

\begin{thmm}
\label{BoundedBorelThmIntro} 
The bounded Borel isomorphism conjecture and the bounded Farrell-Jones
$L$-theory isomorphism conjecture hold for a metric space with bounded
geometry and finite decomposition complexity.
\end{thmm}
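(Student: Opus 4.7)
The plan is to prove the theorem by transfinite induction on finite decomposition complexity, reducing the bounded Borel and bounded Farrell--Jones statements on a space $X$ of complexity $\alpha$ to the corresponding statements on uniform families of metric subspaces of strictly smaller complexity. The base case is a bounded metric space, where the bounded assembly maps in $K$-theory and $L$-theory can be computed directly: on a bounded set all coarse phenomena are trivial, the Roe-type algebras encoding bounded propagation collapse to something easily identified with the homological target, and the assembly maps are then seen to be isomorphisms. Coarse Invariance of the bounded conjectures, which should be established first as a preliminary, ensures that this base case applies simultaneously to every uniformly bounded family.

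The inductive engine is a Fibering Theorem (or, equivalently, a Union Theorem) asserting that if $X$ admits, at each scale $r>0$, a decomposition into a uniformly bounded family of subspaces for each of which the bounded assembly maps are isomorphisms in a uniform sense, then the bounded assembly maps for $X$ are isomorphisms as well. Analytically, this is a controlled Mayer--Vietoris argument: a class in the bounded $K$- or $L$-theory of $X$ is written, using a partition of $X$ subordinate to the $r$-decomposition, as a sum of classes each supported in one piece, the inductive isomorphism is applied on each piece, and the pieces are reassembled. Bounded geometry is used precisely to control the multiplicity of such partitions and to guarantee that commutators between locally supported operators shrink as the scale grows, so that no error terms survive in the limit.

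Combining these ingredients, the transfinite induction runs as follows. By definition of FDC, for every $r>0$ the space $X$ admits an $r$-decomposition into a family of subspaces lying in a class $\mathcal{D}_\beta$ with $\beta<\alpha$. The inductive hypothesis gives the bounded isomorphism conjectures uniformly on $\mathcal{D}_\beta$, and the Fibering Theorem then lifts this uniform statement to $X$. Successor steps handle decompositions of bounded depth via the Finite Union Theorem; limit ordinals are handled by the continuity of $K$- and $L$-theory together with coarse invariance, after writing $X$ as a directed union of subspaces of smaller complexity.

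The main obstacle is the Fibering / Union Theorem, which is where the analytic content lives. The delicate points are making precise the notion of uniform isomorphism over a metric family, checking that controlled $K$- and $L$-theory satisfy a genuine Mayer--Vietoris exact sequence compatible with the chosen decompositions, and verifying that the scale-dependent error terms vanish as $r \to \infty$ thanks to bounded geometry. Once this Fibering Theorem is in place, the deduction of the bounded Borel and bounded Farrell--Jones conjectures for FDC spaces becomes an essentially formal transfinite induction on the complexity ordinal.
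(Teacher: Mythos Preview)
Your overall shape---induction on the FDC structure, bounded families as base case, controlled Mayer--Vietoris for the inductive step---matches the paper. But there is a genuine gap in the inductive step. The conjectures are formulated as limits over Rips complexes $P_d(\Gamma)$, and the proof proceeds by showing that given control parameters $(a,\delta)$ there exists $b$ so that the map from scale $a$ to scale $b$ has the desired property. The difficulty is this: to apply the inductive hypothesis on the pieces of an $r$-decomposition you must increase the Rips parameter from $a$ to some large $b$, but doing so destroys the $r$-separation of the pieces---subspaces that were far apart in $P_a(\Gamma)$ can become adjacent in $P_b(\Gamma)$, and the Mayer--Vietoris diagram you sketch no longer makes sense in the controlled setting. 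The paper resolves this with the \emph{relative Rips complex} $P_{ab}(\Gamma,W)$ (and, for $L$-theory, also a \emph{scaled} Rips complex $P_{abm}(\Gamma,W)$), which rescales only over the intersection region while preserving separation elsewhere. This is the central technical device; without it the inductive step does not go through, and your proposal neither identifies the problem nor its solution.

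A few further points. The setting is controlled algebraic $K$- and $L$-theory \`a la Ranicki--Yamasaki, not Roe $C^*$-algebras; ``commutators of operators'' is the wrong language here. The two conjectures are not proved in parallel: one first proves a $K$-theory \emph{vanishing} theorem for the bounded $Wh_{1-i}$ and $\widetilde K_{-i}$, and this vanishing is a hypothesis needed even to define the boundary maps in the controlled $L$-theory Mayer--Vietoris sequence; the bounded Borel conjecture then follows by combining the free $L$-theory assembly isomorphism with the Ranicki--Rothenberg sequence, which uses the $K$-theory vanishing to identify simple and free $L$-theory in the limit. Finally, your treatment of limit ordinals is off: $X\in\mathfrak{D}_\alpha$ for a limit $\alpha$ does not mean $X$ is a directed union of subspaces of smaller complexity, and the paper avoids the successor/limit dichotomy altogether by showing directly that the class of ``good'' families (vanishing families, respectively $L$-isomorphism families) contains bounded families and is closed under decomposability.
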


We refer the reader to Section~\ref{TopologicalResultsSection} for a
detailed discussion and precise statements, in particular,
Theorems~\ref{BoundedFarrelJonesThm} and \ref{BoundedBorelThm}.  In
this introduction we shall instead describe two concrete applications
to questions of rigidity.  Our first application concerns {\it bounded
  rigidity\/} of universal covers of closed aspherical manifolds.

\begin{boundedrigidity} 
  Let $M$ be a closed aspherical manifold of dimension at least five
  whose fundamental group has finite decomposition complexity
  \textup{(}as a metric space with a word metric\textup{)}.  For every
  closed manifold $N$ and homotopy equivalence $M\to N$ the
  corresponding bounded homotopy equivalence of universal covers
  is boundedly homotopic to a bounded homeomorphism.
\end{boundedrigidity}

The universal covers of $M$ an $N$ as in the statement are, in
particular, homeomorphic.  The conclusion is actually much
stronger -- being {\it boundedly} homeomorphic means that the
homeomorphism is at the same time a coarse equivalence.  We defer
discussion of the relevant notions concerning the {\it bounded
  category\/} to Section~\ref{TopologicalResultsSection}.  See, in
particular Theorem~\ref{thm:brig}, of which the previous result is a
special case.

Davis has given examples of aspherical manifolds whose universal
covers are not homeomorphic to the Euclidean space \cite{D}.  These
examples satisfy the hypothesis of the previous theorem.

A closed manifold $M$ is {\it stably rigid\/} if there exists an $n$
such that for every closed manifold $N$ and every homotopy equivalence
$M\to N$, the product with the identity $M\times \R^n\to N\times \R^n$
is homotopic to a homeomorphism.  The {\it stable Borel conjecture\/}
asserts that closed aspherical manifolds are stably rigid.  The first
result on the stable Borel conjecture is due to Farrell and Hsiang
\cite{FH} who proved that non positively curved Riemannian manifolds
are stably rigid.  Our second application is the following theorem
(see Corollary \ref{UniversalCoverCor}).

\begin{stablerigidity}
  A closed aspherical manifold whose fundamental group has finite
  decomposition complexity is stably rigid.
\end{stablerigidity}

\noindent
Observe that there is no restriction on the dimension.

\subsection*{Groups with finite decomposition complexity}

We consider countable groups equipped with a proper left-invariant metric. 
Recall that every countable group admits such a metric, and that any
two such metrics are coarsely equivalent.  As finite decomposition
complexity is a coarse invariant, the statement that a countable group
has finite decomposition complexity is independent of the choice of metric.

Our next results summarize the main examples of groups having finite
decomposition complexity, and the stability properties of this class of
groups.  For the first statement,
recall that a Lie group is {\it almost connected\/} if it has finitely
many connected components.

\begin{thmm}  
  The collection of countable groups having finite decomposition
  complexity contains all countable linear groups \textup{(}over a
  field of arbitrary characteristic\textup{)}, all countable subgroups
  of an almost connected Lie group, all hyperbolic groups and all
  elementary amenable groups.
\end{thmm}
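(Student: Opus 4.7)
The plan is to split the statement into its four assertions and attack them with a combination of base cases together with the permanence properties (subgroups, extensions, free amalgamated products, $\HNN$ extensions, and directed unions) announced in the abstract. A preliminary step, presumably established earlier in the paper, is that finite asymptotic dimension in the sense of Gromov implies finite decomposition complexity; this supplies the base cases for most of the arguments.

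The hyperbolic and almost connected Lie group cases are the quickest. A word hyperbolic group has finite asymptotic dimension by a theorem of Roe and hence has FDC. If $G$ is an almost connected Lie group and $K \leq G$ a maximal compact subgroup, then the symmetric space $G/K$ has finite asymptotic dimension (Carlsson--Goldfarb, Ji), and any countable subgroup $\Gamma \leq G$ is coarsely embedded in $G/K$; coarse invariance of FDC then transfers the property to $\Gamma$.

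For elementary amenable groups I would argue by transfinite induction on the Chou hierarchy. The class of elementary amenable groups is the smallest collection containing all finite groups and $\Z$ and closed under extensions, subgroups, quotients, and directed unions. Since finite groups trivially have FDC, $\Z$ has asymptotic dimension one, and FDC is closed under subgroups, extensions, and directed unions, the induction proceeds with no further input.

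The genuine content, and the main obstacle, is the class of linear groups over an arbitrary field. Using closure under directed unions one reduces immediately to a finitely generated linear group $\Gamma \leq \GL(n, K)$, and a standard specialization argument places $\Gamma$ inside $\GL(n, R)$ for some finitely generated integral subring $R \subset K$. It therefore suffices to prove that $\GL(n, R)$ has FDC for every finitely generated commutative ring $R$, and my plan would be induction on the Krull dimension of $R$: embedding $R$ diagonally into a product of completions at a suitable set of places produces a proper action of $\GL(n, R)$ on a product of symmetric spaces and Bruhat--Tits buildings, each of finite asymptotic dimension, and a careful analysis of the orbit map should yield a decomposition of $\GL(n, R)$ whose pieces are coarsely controlled by $\GL(n, R')$ for rings $R'$ of strictly smaller dimension. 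Making this decomposition genuinely fit the framework of FDC, and handling positive characteristic uniformly with characteristic zero where Lie theoretic tools are unavailable, is where I expect the bulk of the technical work to go.
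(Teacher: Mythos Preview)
Your argument for countable subgroups of almost connected Lie groups contains a genuine error. The claim that an arbitrary countable subgroup $\Gamma \leq G$ coarsely embeds in $G/K$ is false: this holds only when $\Gamma$ is discrete in $G$. For a dense subgroup such as $\SL(n,\Z[\pi]) \leq \SL(n,\R)$, the orbit map $\Gamma \to G/K$ (with $\Gamma$ carrying a proper left-invariant metric) is uniformly expansive but not effectively proper---inverse images of bounded sets in $G/K$ are unbounded in $\Gamma$---so coarse invariance does not apply. Indeed $\SL(n,\Z[\pi])$ has infinite asymptotic dimension, so it cannot inherit FDC from the finite-dimensional symmetric space in this way. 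The paper instead deduces the Lie group case \emph{from} the linear case: a countable subgroup of a connected Lie group is, via the adjoint representation, an extension with abelian kernel and linear quotient, and one then invokes closure of FDC under extensions together with the linear and abelian cases.

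For linear groups your sketch diverges from the paper's argument and is, as you acknowledge, incomplete. The paper does not induct on Krull dimension; rather it introduces a refinement of the Guentner--Higson--Weinberger notion of discrete embeddability (called \emph{strong} discrete embeddability), proves that every finitely generated field has this property, and then combines finitely many discrete norms with an auxiliary countable family of archimedean norms to place $\GL(n,A)$ in $\C_{\omega+\fin}$ (and in $\C_{\fin}$, hence finite asymptotic dimension, in positive characteristic). Your adelic/building picture is the right intuition for $S$-arithmetic groups over global fields, but for a general finitely generated domain such as $\Z[X]$ there is no finite set of places giving a proper diagonal embedding, and the Krull-dimension induction you outline would need substantial additional ideas---essentially those encoded in the SDE machinery---to be made precise.

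Your treatments of hyperbolic groups and elementary amenable groups are correct and agree with the paper.
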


The geometry of a {\it discrete\/} subgroup of, for example, a
connected semisimple Lie group such as $\SL(n,\R)$ reflects the
geometry of the ambient Lie group.  In this case, the theorem follows
from the well-known result that such groups have finite asymptotic
dimension.  The difficulty in the theorem concerns the case of {\it
  non-discrete or even dense\/} subgroups whose geometry exhibits
little apparent relationship to the geometry of the ambient group.  An
interesting example to which our theorem applies is $\SL(n,\Z[\pi])$,
which has infinite asymptotic dimension.  (Here, $\pi=3.14\dots$.) 

\begin{thmm}
  The collection of countable groups having finite decomposition
  complexity is closed under the formation of subgroups, products,
  extensions, free amalgamated products, $\HNN$ extensions and direct
  unions.
\end{thmm}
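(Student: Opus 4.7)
The plan is to reduce each of the six closure properties to the permanence results---Coarse Invariance, the Fibering Theorem, the Finite Union Theorem, and the Union Theorem---that have been established earlier for metric spaces with FDC. Throughout, each countable group is equipped with an arbitrary proper left-invariant metric; by Coarse Invariance the statement ``$G$ has FDC'' is independent of the choice.

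The cases of subgroups and direct unions are immediate. For $H \leq G$ the inclusion is an isometric (hence coarse) embedding for suitable choices of metrics, so FDC passes to subspaces directly from the definition. If $G = \bigcup_n G_n$ is a direct union of subgroups each of which has FDC, then the $G_n$ form an ascending exhaustion of $G$ by metric subspaces, and the Union Theorem applies.

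Extensions and direct products are handled by a single application of the Fibering Theorem. Given $1 \to N \to G \to Q \to 1$ with $N$ and $Q$ of FDC, the quotient homomorphism $\pi \colon G \to Q$ is Lipschitz for the chosen word metrics. For every $R > 0$ the preimage $\pi^{-1}(B_R(e_Q))$ is a finite union of left cosets $g_1 N, \dots, g_{k(R)} N$ with the $g_i$ of bounded $G$-length, and each coset is coarsely equivalent in $G$ to $N$ and hence has FDC. By the Finite Union Theorem these unions have FDC uniformly in $R$, and the Fibering Theorem applied to $\pi$ then yields FDC for $G$. Direct products are the special case of the split extension $1 \to H \to G \times H \to G \to 1$.

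The case of free amalgamated products $G = A *_C B$ and HNN extensions $G = \HNN(A, C, \varphi)$ is the most substantial and I expect it to be the main obstacle. By Bass-Serre theory, $G$ acts on a simplicial tree $T$ with vertex stabilizers conjugate to the vertex groups; by the already-handled subgroup case these stabilizers all have FDC. Fix a base vertex $v_0 \in T$ and consider the orbit map $f \colon G \to T$, $g \mapsto g v_0$, which is Lipschitz for the word metric on $G$ and the combinatorial metric on $T$; the tree has asymptotic dimension one and in particular has FDC. The plan is to apply the Fibering Theorem to $f$, so the key technical point is to verify that the preimages $f^{-1}(B_R(v_0))$ have FDC uniformly in $R$. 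When the indices $[A:C]$ and $[B:C]$ are infinite the ball $B_R(v_0) \subset T$ contains infinitely many vertices and $f^{-1}(B_R(v_0))$ is an infinite disjoint union of cosets of vertex stabilizers; this is the crux. One handles it by exploiting that cosets attached to distinct vertices are separated in the $G$-metric by a uniform multiple of their tree-distance, so that a regrouping combined with the Union Theorem (for the resulting well-separated infinite family) and the Finite Union Theorem (for the finite layers at each tree-distance) yields the required uniform FDC. The Fibering Theorem then delivers FDC for $G$.
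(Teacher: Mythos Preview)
Your overall strategy---reducing to Coarse Invariance, Fibering, and the Union Theorems---matches the paper's, and your treatment of subgroups, extensions, and products is essentially identical to theirs. However, there is a genuine gap in your handling of direct unions, and your sketch for amalgamated products, while headed in the right direction, is too vague at the key step.

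For direct unions you claim the Union Theorem applies to the ascending chain $G_1\subset G_2\subset\cdots$ taken as the covering family. It does not. The Union Theorem requires, for each $r>0$, a subspace $Y(r)$ with FDC such that the complements $G_n\setminus Y(r)$ are pairwise $r$-disjoint. But for a nested chain the complements $G_n\setminus Y(r)$ are themselves nested, so distinct nonempty ones always intersect and are never $r$-disjoint. There is no choice of $Y(r)$ short of $Y(r)=G$ (which is circular) that rescues this. The paper's argument is different and does not use the Union Theorem at all: given $r$, one picks $i(r)$ so that the $r$-ball about the identity lies in $G_{i(r)}$; then the left cosets of $G_{i(r)}$ are pairwise $r$-disjoint and each is isometric to $G_{i(r)}$, so $G$ is $r$-decomposable over a family with FDC. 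This is a direct appeal to the definition of decomposability, and is what you should substitute for your Union Theorem invocation.

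For amalgamated products and HNN extensions your plan---Bass--Serre tree plus the Fibering Theorem, with the Union Theorem handling the coarse stabilizers---is exactly the paper's. But your sentence about ``the Finite Union Theorem (for the finite layers at each tree-distance)'' does not parse into an argument: while there are only finitely many tree-distances $0,1,\dots,R$ in $B_R(v_0)$, each sphere of a given radius can itself contain infinitely many vertices when $[A:C]$ or $[B:C]$ is infinite, and two vertices on the same sphere can be as close as tree-distance $2$, giving only a fixed (not $r$-growing) separation between the corresponding cosets. So a single layer is not a well-separated family to which the Union Theorem applies without further work; you need to say what $Y(r)$ is inside each layer, and why it has FDC. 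The paper does not spell this out either, referring instead to the axiomatic treatment in \cite{GP}, but you should be aware that ``regrouping by tree-distance'' is not by itself sufficient.
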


At the moment, we know of no group {\it not\/} having finite
decomposition complexity other than Gromov's random groups
\cite{G2,G3,AD}.  Since finite decomposition complexity appears as a
generalization of finite asymptotic dimension, we mention that in
general, solvable groups, or linear groups have infinite asymptotic
dimension.  On the other hand, we shall prove the following result
(see Theorem~\ref{linear}).

\begin{thmm}
  A finitely generated linear group over a field of positive
  characteristic has finite asymptotic dimension.
\end{thmm}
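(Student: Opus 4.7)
The plan is to reduce to a finitely generated subgroup of $\GL(m, \mathbb{F}_p[t_1,\ldots,t_d])$ and then realize it as acting properly on a product of finite-dimensional Bruhat--Tits buildings over local fields of characteristic $p$, each of finite asymptotic dimension.

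For the reduction, let $\gamma_1,\ldots,\gamma_k$ generate $\Gamma$. Their matrix entries, together with those of their inverses, generate a finitely generated $\mathbb{F}_p$-subalgebra $R \subset K$, so $\Gamma \le \GL(n,R)$. By Noether normalization, $R$ is a finite module over a polynomial subring $S = \mathbb{F}_p[t_1,\ldots,t_d]$; using the regular representation of $R$ as a rank-$N$ free $S$-module embeds $\GL(n,R) \hookrightarrow \GL(nN, S)$, and we may assume $\Gamma \le \GL(m, S)$.

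The geometric input I would use is a finite family of non-trivial valuations $v_1,\ldots,v_r$ of the fraction field $F = \mathbb{F}_p(t_1,\ldots,t_d)$ so that the diagonal map $\Gamma \to \prod_i \GL(m, F_{v_i})$ has discrete image, and so that $\Gamma$ acts coboundedly on an invariant subspace of the product $X = \prod_i X_i$ of the Bruhat--Tits buildings of $\GL(m, F_{v_i})$. Each $F_{v_i}$ is a non-archimedean local field of characteristic $p$, so $X_i$ is a CAT$(0)$ simplicial complex of dimension $m-1$ and hence has finite asymptotic dimension; the product formula then yields finite asymptotic dimension for $X$. A proper cobounded isometric action would force $\Gamma$ to be quasi-isometric to a subspace of $X$, completing the proof.

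The main obstacle is constructing such a family when $d \ge 2$, since $F$ is not a global function field and the classical theory of $S$-arithmetic groups does not apply verbatim. I would proceed by induction on $d$: the case $d = 0$ yields $\Gamma$ finite, and the case $d = 1$ is the classical $S$-arithmetic setting over the global function field $\mathbb{F}_p(t_1)$, where finite asymptotic dimension follows from reduction theory and the building of $\GL(m, F_\infty)$ at the place at infinity. For $d \ge 2$, regarding $S$ as a finitely generated algebra over $\mathbb{F}_p(t_d)$ of smaller transcendence degree suggests an inductive step, augmented by the $t_d$-adic valuation and an extension-style argument for asymptotic dimension; verifying the coboundedness condition at each step, in the absence of a global-field framework, is the technical heart of the argument.
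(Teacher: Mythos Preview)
Your outline has the right architecture—a finite family of discrete valuations, a diagonal embedding, finite asymptotic dimension of each factor—but the geometric step has a real gap for $d \ge 2$. The natural valuations on $\mathbb{F}_p(t_1,\ldots,t_d)$ (degree in $t_i$, or $t_i$-adic) have residue field $\mathbb{F}_p(t_1,\ldots,\hat t_i,\ldots,t_d)$; the completions are therefore \emph{not} local fields, contrary to what you assert, and the associated Bruhat--Tits buildings are not locally finite. You do not explain how to produce finitely many valuations with finite residue field whose diagonal action makes $\Gamma$ discrete and cobounded, and you yourself flag this properness/coboundedness step as the unresolved technical heart. (A smaller point: your reduction to $\GL(m,\mathbb{F}_p[t_1,\ldots,t_d])$ assumes $R$ is free over the polynomial subring $S$, which can fail for $d\ge 2$; but this reduction is in any case unnecessary.)

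The paper bypasses the local-compactness problem. It equips $\GL(n,K)$ with the left-invariant pseudo-metric coming from an arbitrary discrete norm and proves directly (Proposition~\ref{Gfad}) that this lies in $\C_{\fin}$, with no hypothesis on the residue field: one reduces to the upper-triangular subgroup $T = DU$, observes that the unipotent part $U$ has asymptotic dimension zero via a dilation argument and that $D \cong \Z^n$, and fibers. The global input replacing your unfinished properness step is the \emph{strong discrete embeddability} of finitely generated fields (Definition~\ref{def:sde}): for the fraction field of the finitely generated domain $A$ there exist finitely many discrete norms such that, in positive characteristic (where there are no archimedean norms), the sum of the associated length functions is already proper on $\GL(n,A)$. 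This is established by a purely algebraic induction over transcendental and finite extensions, and together with Proposition~\ref{Gfad} and Theorem~\ref{fad&fdc} yields finite asymptotic dimension.
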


Combined, our results greatly extend an earlier result of Ji \cite{J}
proving the stable Borel conjecture for a special class of linear
groups with finite asymptotic dimension -- namely, subgroups of
$\GL(n, K)$ for a global field $K$, for example when $K=\Q$.

\subsection*{Organization and remarks}

The paper falls into essentially two parts.  In the first part,
comprising Sections~2 -- 6, we introduce and study finite
decomposition complexity.  More precisely, we introduce finite
decomposition complexity in Section~\ref{FDCsection} and study its
basic properties.  In the subsequent section we develop the
permanence characteristics of finite decomposition complexity.  In
Section \ref{Asection} we show that a metric space having finite
asymptotic dimension has finite decomposition complexity, and that one
having finite decomposition complexity has Property $A$.  As a consequence, any
sequence of expanding graphs (viewed as a metric space) does not have
finite decomposition complexity.

Sections~\ref{linearSection} -- \ref{MoreExamplesection} are devoted
to examples.  In these we prove that all (countable) linear groups
have finite decomposition complexity and that and all (countable)
elementary amenable groups have finite decomposition complexity.

The remainder of the paper, comprising
Section~\ref{TopologicalResultsSection} -- \ref{conclusion} and the
appendices, is devoted to applications to topological rigidity. We
have split Section~\ref{TopologicalResultsSection} into two parts. The
first outlines two essential results,
Theorems~\ref{AsymptoticVanishingK-theoryThm} 
and \ref{AssemblyMapThm} -- these are proven in Section~\ref{vanishing} and
\ref{assembly}, respectively. The remainder of
Section~\ref{TopologicalResultsSection} contains a 
self-contained exposition of our topological rigidity results in their
most general and natural setting -- these are deduced directly from
Theorems~\ref{AsymptoticVanishingK-theoryThm} and \ref{AssemblyMapThm}.

In the first appendix, we discuss several variants of the Rips
complex, the {\it relative\/} and {\it scaled\/} Rips complexes, which
are important technical tools for the proofs of our results on
topological rigidity. We believe this material may be of independent
interest. In the second appendix, we recall the controlled
Mayer-Vietoris sequences in $K$ and $L$-theory, as proved in
\cite{RY1,RY2}. These are essential tools for our proofs of the
bounded Borel and the Farrell-Jones $L$-theory isomorphism conjectures.

\subsection*{Acknowledgment}  

We would like to thank Yves de Cornulier for his helpful comments.

\section{Decomposition complexity}
\label{FDCsection}

Our proofs of the isomorphism conjectures will be based on
Mayer-Vietoris arguments -- we shall apply a {\it large-scale}
version of an appropriate Mayer-Vietoris sequence to prove that an
assembly map is an isomorphism. To carry out this idea, we shall
decompose a given metric space as a union of two subspaces, which are
{\it simpler\/} than the original. Roughly, {\it simpler\/} is
interpreted to mean that each subspace is itself a union of spaces
at a pairwise distance large enough that proving the isomorphism for
the subspace amounts to proving the isomorphism for these constituent
pieces `uniformly'. 
Further, this basic decomposition
step shall be iterated a finite number of times, until we reach a
bounded family.   
This is the idea behind finite decomposition complexity.

We shall need to formulate our notion of finite decomposition
complexity not for a single metric space, but rather for a 
{\it metric family}, a (countable) family of metric spaces which we
shall denote by $\X = \{\, X \,\}$; throughout we view a single metric
space as a family containing a single element.

In order to streamline our definitions we introduce some terminology
and notation for manipulating decompositions of metric spaces and
metric families. A collection of subspaces $\{\, Z_i \,\}$ of a metric
space $Z$ is {\it $r$-disjoint\/} if for all $i\neq j$ we have
$d(Z_i,Z_j)\geq r$. To express the idea that $Z$ is the union of
subspaces $Z_i$, and that the collection of these subspaces is
$r$-disjoint we write \begin{equation*}
  Z = \bigsqcup_{r-disjoint} Z_i.
\end{equation*}
A family of of metric spaces $\{\, Z_i \,\}$ is
{\it bounded\/} if there is a uniform bound on the diameter of the
individual $Z_i$:
\begin{equation*}
  \sup \diam(Z_i) <\infty.
\end{equation*}

\begin{defn}
\label{dfn:drdecomp}
A metric family $\X$ is {\it $r$-decomposable\/} over a metric
family $\Y$ if every $X\in \X$ admits an {\it $r$-decomposition\/}
\begin{equation*}
    X = X_0\cup X_1, \quad
     X_i = \bigsqcup_{r-disjoint} X_{ij},
\end{equation*}
where each $X_{ij}\in \Y$.  
We introduce the notation
${\X} \overset{r}{\longrightarrow} {\Y}$ to indicate that $\X$ is
$r$-decomposable over $\Y$.
\end{defn}

\begin{defn}
  Let $\A$ be a collection\footnote{While
    we generally prefer the term `collection' to `class', we do not mean
    to imply that a collection of metric families is a {\it set\/} of
   metric families. We shall not belabor the associated
   set-theoretic complications.} of metric families.  A metric
 family $\X$ is {\it decomposable\/} over $\A$ if, for every $r>0$,
 there exists a metric family $\Y\in\A$ and an $r$-decomposition of
 ${\X}$ over $\Y$.
 The collection $\A$ is {\it stable under decomposition\/} if every
 metric family which decomposes over $\A$ actually belongs to $\A$.
\end{defn}

\begin{defn} 
  The collection $\C$ of metric families with finite
  decomposition complexity is the minimal collection of metric families
  containing the bounded metric families and stable under decomposition.
  We abbreviate membership in $\C$ by saying that a metric family in
  $\C$ has FDC.
\end{defn}

This notion has been inspired by the property of {\it finite
  asymptotic dimension\/} of Gromov \cite{G1}. Recall that a metric
space $X$ has {\it finite asymptotic dimension\/} if there exists
$d\in \N$ such that for every $r>0$ the space $X$ may be written as a
union of $d+1$ subspaces, each of which may be further decomposed as
an $r$-disjoint union: \begin{equation} \label{eqn:fad}
  X = \bigcup_{i=0}^d X_i, \quad
     X_i = \bigsqcup_{r-disjoint} X_{ij},
\end{equation}
in which the family $\{\, X_{ij} \,\}$ (as both $i$ and $j$ vary) is
bounded.  It follows immediately from the definitions that metric
families with asymptotic dimension at most one (uniformly) have finite
decomposition complexity.

\begin{remark}
\label{weakFDCrem}
At the outset of this project, we defined a property weaker than FDC
which is more transparently related to finite asymptotic dimension.
The difference between this property -- {\it weak finite decomposition
  complexity\/} --  and the one defined here lies in the type of
decomposition -- we replace $r$-decomposability by the notion of
$(d,r)$-decomposability.  

A metric family $\X$ is {\it $(d,r)$-decomposable\/} over a metric
family $\Y$ if every $X\in \X$ admits a decomposition
\begin{equation*}
    X = X_0\cup\ldots\cup X_d , \quad
     X_i = \bigsqcup_{r-disjoint} X_{ij},
\end{equation*}
where each $X_{ij}\in \Y$.  
The metric family $\X$ {\it weakly} decomposes over the collection
$\A$ of metric families, if there exists a $d\in \N$ such that for
every $r>0$, there exists $\Y\in \A$ and a $(d,r)$-decomposition of
${\X}$ over $\Y$.
The collection of metric families with {\it weak finite decomposition
  complexity\/} is the smallest collection containing bounded metric
families, and stable under weak decomposition.

Clearly, both finite asymptotic dimension (again, uniformly in the
sense of Bell and Dranishnikov \cite{BD0}) and finite decomposition
complexity imply {\it weak\/} finite decomposition complexity. While
true that finite asymptotic dimension implies finite decomposition
complexity, at least for proper metric spaces, this is already
difficult.
\end{remark}

\begin{que}
Are finite and weak finite decomposition complexity equivalent?
\end{que}

\subsection{The metric decomposition game}

The game has two players and begins with a metric family.  The objective of
the first player is to successfully decompose the spaces comprising the
family, whereas the second player attempts to obstruct the decomposition.

Formally, let $\X=\Y_0$ be the starting family.  The game begins
with the first player asserting to the second they can decompose $\Y_0$.  
The second player challenges the first by
asserting an integer $r_1$.  The first turn ends with the first player
exhibiting a $r_1$-decomposition of $\Y_0$ over a metric family
$\Y_1$.

Subsequent turns are analogous: the first player asserts they can
decompose the family $\Y_i$; the second
challenges with an $r_{i+1}$; the first responds by exhibiting an
$r_{i+1}$-decomposition of $\Y_i$ over a metric family $\Y_{i+1}$.

The first player has a {\it winning strategy\/} if, roughly speaking,
they can produce decompositions ending in a bounded family {\it no
  matter what choices the second player makes.\/}  While the outcome is certain, 
the number of turns required for eventual victory may depend on the
choices made by the second player.
A game in which the first player applies their
winning strategy exhibits a sequence of decompositions beginning with
the spaces in $\X$ and ending in a bounded family:

\begin{equation}
\label{game}
\xymatrix@+15pt{    
      \X=\Y_0 \ar[r]^-{r_1} & \Y_1 \ar[r]^-{r_2} & \Y_2 \ar[r] &
      \dots \Y_{n-1} \ar[r]^-{r_n} & \Y_n, }
         \quad
      \text{$\Y_n$ bounded}.
\end{equation}

\subsection{Decomposition strategy}

We shall now formalize the idea of a winning strategy for the
decomposition game. As we shall see in the next section, the existence
of a winning strategy is equivalent to finite decomposition complexity.

A {\it decomposition tree\/} is a directed, rooted tree $T$
satisfying the following:
\begin{ilist}
  \item every non-root vertex of $T$ is the terminal vertex
    of a unique edge;
  \item every non-leaf vertex of $T$ is the initial vertex of countably
    many edges, which are labeled by the natural numbers;
  \item $T$ contains no infinite ray (geodesic edge-path).
\end{ilist}
Conditions (1) and (2) are succintly expressed by saying that $T$ is a
`$1$-up, $\infty$-down' rooted tree in which the `down' edges emanating
from each vertex are in explicit bijection with $\N$.
Figure~\ref{fig:block} 
depicts a typical caret in $T$ with labeled edges.

\begin{figure}[h] 
   \centering
   \includegraphics[height=1.0in]{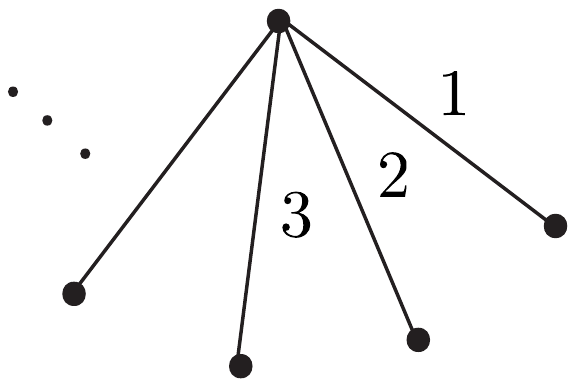}
   \caption{A caret}
   \label{fig:block}
\end{figure}

The vertices of a rooted tree are partially ordered by setting $v\leq w$
precisely when $w$ lies on the (unique) geodesic edge path from $v$ to
the root vertex.  Condition (3) asserts that for a decomposition tree
every decreasing chain in this order eventually terminates.

Let $\X$ be a metric family.
A {\it decomposition strategy\/} for $\X$ comprises a decomposition tree
$T$, the {\it support tree\/} of the strategy, together with a labeling
of the vertices of $T$ by metric families $\Y$ subject to the following requirements:
\begin{ilist}{\setcounter{ictr}{3}}
  \item the root vertex of $T$ is labeled $\X$;
  \item every leaf of $T$ is labeled by a bounded family;
  \item if $\Y$ labels the initial vertex and $\ZZ$ the
    terminal vertex of an edge labeled by $r\in\N$ then  $\Y$ is
    $r$-decomposable over $\ZZ$.
\end{ilist}

The intuition, of course, is that paths in $T$ beginning at its root and
ending at a leaf correspond to decompositions of the family $\X$ into
uniformly bounded parts.  Precisely, if the edges along the path are
labeled $r_1,\dots,r_n$ and the vertices are labeled $\X$,
$\Y_1, \dots, \Y_n$ we obtain the decomposition in
(\ref{game}).

An important feature of the decomposition game is the dependence among
the $r_i$.  Existence of a decomposition strategy implies that a
partially completed decomposition may always be continued, no matter
the choice of the subsequent $r_i$, and eventually terminates in a
bounded family.

\subsection{Equivalent formulations of FDC}

We shall present two equivalent descriptions of the collection of
families having finite decomposition complexity.  We require the
following lemma.

\begin{lem}
\label{lem:poset}
Let $T$ be a decomposition tree.  There exists a function 
$v\mapsto \alpha_v$ from the set of vertices of $T$ to a set of countable ordinal
numbers with the properties that $\alpha_v=0$ if $v$ is a leaf and
  \begin{equation*}
    \alpha_v = \sup_{w<v}\, \{\, \alpha_w + 1 \,\}
  \end{equation*}
otherwise.
\end{lem}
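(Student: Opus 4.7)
The plan is to construct $\alpha$ by well-founded recursion on the descendant order of $T$, and then to establish countability of its values by a second well-founded induction.

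First I would verify that the descendant order on $T$ is well-founded, meaning that every non-empty set $S$ of vertices contains a $<$-minimal element. If not, dependent choice would produce a strictly decreasing sequence $v_0 > v_1 > v_2 > \cdots$ in $S$; concatenating the geodesic edge-paths from each $v_{i+1}$ up to $v_i$ would then yield an infinite ray in $T$, contradicting condition (3). With well-foundedness in hand, the standard well-founded recursion theorem produces a unique function $v\mapsto \alpha_v$ satisfying the stated recursion, adopting the convention that a supremum over the empty set is $0$, so that leaves automatically receive the value $0$.

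The second step is to show by well-founded induction that every $\alpha_v$ is a countable ordinal. The base case (leaves) is immediate. For the inductive step at a non-leaf $v$, I would first observe that $\alpha$ is strictly increasing when moving toward the root: by a short induction on tree distance, any descendant $u$ lying below a child $c$ of $v$ satisfies $\alpha_u < \alpha_c$. Consequently
\[
  \alpha_v \;=\; \sup_{w<v}\{\,\alpha_w + 1\,\} \;=\; \sup\{\,\alpha_c + 1 : c \text{ a child of } v\,\}.
\]
Condition (2) of the decomposition tree definition ensures that $v$ has only countably many children, and each of the $\alpha_c$ is countable by the inductive hypothesis. Since the supremum of countably many countable ordinals is itself a countable ordinal, $\alpha_v$ is countable.

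The main obstacle is conceptual rather than computational: one has to be comfortable invoking well-founded recursion on a countably-branching tree with no infinite descending ray, and the reduction of the defining supremum from all strict descendants to the immediate children is the technical observation that makes the countability bookkeeping go through. Beyond these points, the argument is routine.
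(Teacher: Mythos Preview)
Your argument is correct and yields the same rank function as the paper, but the route is organized differently. The paper proceeds by transfinite recursion on ordinals: it repeatedly strips off the current set of leaves of $T$, setting $L_\alpha$ to be the leaves of what remains after all earlier strata have been removed, and then assigns to each vertex the stage $\alpha$ at which it is removed; countability is obtained globally, from the fact that the nonempty $L_\alpha$ partition the (countable) vertex set of $T$, so only countably many stages can occur. You instead invoke well-founded recursion on the descendant order directly to define $\alpha_v$, and your countability argument is local --- you reduce the defining supremum to the immediate children and use condition~(2). Both are standard constructions of the ordinal rank on a well-founded relation; your version is the cleaner abstract formulation, while the paper's makes the leaf-stratification of $T$ explicit.
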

\begin{proof}
  Observe that, by virtue of the no-infinite-ray assumption, a
  decomposition tree has leaves.
  Define, for each countable ordinal $\alpha$, a subset $L_\alpha$ of
  the vertex set of $T$ by transfinite recursion:  $L_0$ is the set of
  leaves of $T$; for $\alpha>0$, 
  \begin{equation*}
    L_\alpha = \text{the set of leave of 
            $T\setminus \cup_{\beta<\alpha}L_\beta$},
  \end{equation*}
  if this set is nonempty, and $L_\alpha=\emptyset$ otherwise.  Note
  that, if it is non-empty, the
  set $T\setminus \cup_{\beta<\alpha}L_\beta$ is again a decomposition tree,
  and therefore has leaves.

Let $\alpha_0 = \{\, \alpha \colon L_\alpha\neq \emptyset \,\}$ and
let $\mathfrak{L}= \{L_{\alpha}, \alpha< \alpha_0\}$. Clearly,
$\mathfrak{L}$ is a partition of the set of vertices of $T$, and the
map $\alpha\mapsto L_\alpha:\alpha_0\to \mathfrak{L}$ is a bijection.
It follows that $\alpha_0$ is countable.  Finally, for every vertex $v$,
let $\alpha_v$ be the unique $\alpha$ such that $v\in L_\alpha$. It is
not difficult to see that $\alpha_v$ satisfies the desired properties.
\end{proof}

\begin{defn}
\label{dfn:heirarchy}
We define, for each ordinal $\alpha$, a collection of metric families
according to the following prescription:
\begin{ilist}
  \item Let $\C_0$ be the collection of bounded families:
    \begin{equation*}
      \C_0 = \{\, \X \colon \text{$\X$ is bounded} \,\}.
    \end{equation*}
  \item If $\alpha$ is an ordinal greater than $0$, let $\C_{\alpha}$
    be the collection of metric families decomposable over
    $\cup_{\beta<\alpha}\C_\beta$: 
    \begin{equation*}
      \C_\alpha = \{\, \X \colon \text{$\forall\, r$
         $\exists\, \beta<\alpha$ $\exists\, \Y\in \C_{\beta}$
         such that $\X\overset{r}{\longrightarrow}\Y$} \,\}.
    \end{equation*}
\end{ilist}
We introduce the notation $\C_{\fin}$ for the union of the $\C_n$,
over $n\in\N$. 
\end{defn}

\begin{thm}
\label{thm:char_complexity}
The following statements concerning a metric family $\X$ are equivalent:
\begin{ilist}
\item $\X$ has finite decomposition complexity;
\item $\X$ admits a decomposition strategy;
\item there exists a countable ordinal $\alpha$ such that $\X\in\C_\alpha$.
\end{ilist} 
\end{thm}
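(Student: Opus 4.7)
The plan is to prove the equivalence by the cycle (iii) $\Rightarrow$ (ii) $\Rightarrow$ (i) $\Rightarrow$ (iii). Each implication amounts to bookkeeping; the one subtle point throughout is ensuring that the transfinite parameters remain countable ordinals, which relies on the fact that a countable supremum of countable ordinals is again countable. This countable-ordinal bookkeeping is the main obstacle to expect; the combinatorial content of each step is otherwise routine.

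For (iii) $\Rightarrow$ (ii), I would proceed by transfinite induction on $\alpha$, constructing from each $\X \in \C_\alpha$ a decomposition strategy. The base case $\alpha=0$ means $\X$ is bounded, and then the one-vertex tree whose root is simultaneously its unique leaf, labeled by $\X$, satisfies conditions (1)--(6) trivially. For the inductive step $\alpha>0$, for each $r \in \N$ select $\beta_r<\alpha$ and a family $\Y_r \in \C_{\beta_r}$ with $\X \overset{r}{\longrightarrow} \Y_r$; invoke the inductive hypothesis to obtain a decomposition strategy whose support tree $T_r$ is rooted at a vertex labeled $\Y_r$; then assemble the desired tree by introducing a new root labeled $\X$ and attaching each $T_r$ along a new edge labeled by $r$ whose terminal vertex is the root of $T_r$. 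The absence of infinite rays, together with conditions (4)--(6), follows directly from the corresponding properties of the $T_r$ and the defining property of $\Y_r$.

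For (ii) $\Rightarrow$ (i), I would use Lemma~\ref{lem:poset} to equip the support tree of a given decomposition strategy with an ordinal rank $v \mapsto \alpha_v$, and prove by transfinite induction on $\alpha_v$ that the label at every vertex $v$ lies in $\C$. The case $\alpha_v=0$ is the leaf case, where the label is bounded and hence in $\C$ by definition. For the inductive step, all children of $v$ have strictly smaller rank, so their labels lie in $\C$ by induction; condition (6) then exhibits the label of $v$ as decomposable over $\C$, and stability of $\C$ under decomposition places it in $\C$. Specializing to the root yields $\X \in \C$.

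For the closing implication (i) $\Rightarrow$ (iii), I would verify that the union $\bigcup_{\alpha}\C_\alpha$ over countable ordinals $\alpha$ contains the bounded families and is stable under decomposition; minimality of $\C$ then forces $\C \subseteq \bigcup_{\alpha}\C_\alpha$, which is the desired inclusion. Stability is the only nontrivial point: if $\X$ decomposes over $\bigcup_{\alpha}\C_\alpha$, then for each $r \in \N$ there exist a countable ordinal $\alpha_r$ and $\Y_r \in \C_{\alpha_r}$ with $\X \overset{r}{\longrightarrow} \Y_r$; setting $\alpha := \sup_r(\alpha_r+1)$ gives a countable ordinal with $\X \in \C_\alpha$. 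This is precisely the spot where countable-ordinal $\sigma$-completeness is essential.
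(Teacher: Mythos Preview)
Your proof is correct.  You run the cycle in the opposite direction from the paper, which proves $(\mathrm{iii})\Rightarrow(\mathrm{i})\Rightarrow(\mathrm{ii})\Rightarrow(\mathrm{iii})$, but the ingredients are the same: the caret construction for assembling strategies, Lemma~\ref{lem:poset} for ranking a strategy tree by ordinals, and transfinite induction throughout.  The one place the routes genuinely diverge is in how countability of the ordinal is secured.  The paper uses Lemma~\ref{lem:poset} to pass directly from a strategy to membership in some $\C_\alpha$ (its $(\mathrm{ii})\Rightarrow(\mathrm{iii})$), with countability of $\alpha$ supplied by the lemma itself.  You instead use the lemma only to prove $(\mathrm{ii})\Rightarrow(\mathrm{i})$, where countability plays no role, and then close the loop via $(\mathrm{i})\Rightarrow(\mathrm{iii})$ by showing $\bigcup_{\alpha\text{ countable}}\C_\alpha$ is stable under decomposition and invoking the countable-supremum argument.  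Your route makes the $\sigma$-completeness of countable ordinals more visibly the crux; the paper's is a touch shorter.
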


\begin{proof}
  For purposes of the proof let $\C'$ be collection of families
  admitting a decomposition strategy; let $\C''$ be the collection of
  families belonging to $\C_\alpha$ for some countable ordinal
  $\alpha$. We must show $\C''= \C'= \C$.

A simple transfinite induction shows that $\C_\alpha\subset \C$ for
every ordinal $\alpha$.  Thus, $\C''\subset \C$.

Next, we show that $\C\subset \C'$.  Since a bounded family
trivially admits a decomposition strategy, it suffices to show that the
collection $\C'$ is closed under decomposability.  Let $\X$ be a
family decomposable over $\C'$.  For every
$r\in\N$, obtain a family $\Y_r\in\C'$ such that 
$\X$ is $r$-decomposable over $\Y_r$.  A
decomposition strategy for $\X$ is obtained by attaching
strategies for the $\Y_r$ to the bottom of an `infinite caret' whose
root vertex is labeled $\X$ and whose edges are labeled by $\N$ as
shown in Figure~\ref{fig:concat}.

\begin{figure}[h] 
   \centering
   \includegraphics[height=2.0in]{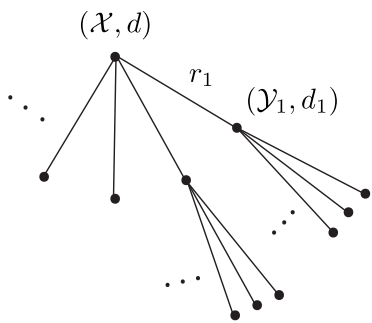}
   \caption{Concatenating strategies}
   \label{fig:concat}
\end{figure}

Finally, we show that $\C'\subset \C''$.  Let $\X\in \C'$.  Let $T$ be the
support tree of a decomposition strategy for $\X$; denote the label of a
vertex $v$ by $\Y_v$ and let $v\mapsto \alpha_v$ be a function
with the properties outlined in Lemma~\ref{lem:poset}.  It suffices to
show that for every ordinal $\alpha$ we have: if $\alpha_v\leq \alpha$
then $\Y_v\in\C_\alpha$.  This follows easily by transfinite induction.
\end{proof}

\begin{example}
  We shall require the fact, easily verified (by induction),
  that $\Z^n\in\C_n$, for each natural number $n$. 
\end{example}

\begin{example}
  Let $G=\oplus\Z$ (countably infinite direct sum), equipped with a proper
  left-invariant metric; for concreteness use the metric determined by
  the requirement that the generator having a single $1$ in the
  $i^{\mbox{\tiny{th}}}$ position has length $i$.
  We claim that $G\in \C_\omega$.  Indeed, let $r>0$ be given.  If
  $n$ is a natural number greater than $r$ then the decomposition of $G$ into
  cosets of the subgroup $\Z^n$ (via the first $n$ coordinates) is
  $r$-disjoint and the family comprised of these cosets is in $\C_n$
  by the previous example.

  Moreover, we shall shortly see that since $G$ does not have finite
  asymptotic dimension it is {\it not\/} in $\C_{\fin}$.
\end{example}

\begin{example} 
Let $G=\Z \wr \Z$. 
By 
considering the extension $0\to \oplus_{n\in \Z} \Z \to G\to \Z\to 0$
we see that $G \in \C_{\omega+1}$
(compare to Remark~\ref{fiberingremark-finoverfin}).  
\end{example}

\begin{example}   
Let $G=\oplus G_n$, where $G_n=(\dots((\Z\wr\Z)\wr \Z)\dots)\wr\Z$,
the wreath product of $n$ copies of $\Z$.  Then 
$G\in \C_{\omega^2}$.  It is an open question whether $G\in\C_\alpha$
for some $\alpha<\omega^2$. 
\end{example}

\section{Permanence of FDC}\label{PermanenceSection}

We shall study the permanence characteristics of finite decomposition
complexity. While we shall focus on finite decomposition complexity,
all permanence results stated in this section hold for weak finite
decomposition complexity as well.

We begin by recalling some elementary concepts from coarse
geometry. Let $\X$ and $\Y$ be metric families. A {\it subspace\/} of
the family $\Y$ is a family $\ZZ$, every element of which is a
subspace of some element of $\Y$. A {\it map of families\/} from $\X$
to $\Y$ is a collection of functions $F = \{\, f \,\}$, each mapping
some $X\in \X$ to some $Y\in\Y$ and such that every $X\in \X$ is the
domain of at least one $f\in F$. We use the notation $F:\X\to \Y$ and,
when confusion could occur, write $f:X_f\to Y_f$ to refer to an
individual function in $F$. The {\it inverse image\/} of the subspace
$\ZZ$ is the collection \begin{equation*}
  F^{-1}(\ZZ)=  \{\, f^{-1}(Z)  \colon  \text{$Z\in\ZZ$, $f\in F$} \,\}.
\end{equation*}
The inverse image is a subspace of $\X$.

A map of families $F:\X\to \Y$ is {\it uniformly expansive\/} if there
exists a non-decreasing function $\rho:[0,\infty)\to [0,\infty)$ such
that for every $f\in F$ and every $x$, $y\in X_f$
\begin{equation}
\label{eqn:ue}
   d(f(x),f(y)) \leq \rho(d(x,y));
\end{equation}
it is {\it effectively proper\/} if there exists a proper
non-decreasing function $\delta:[0,\infty)\to [0,\infty)$ such that for
every $f\in F$ and every $x$, $y\in X_f$
\begin{equation}
\label{eqn:ep}
  \delta(d(x,y))\leq d(f(x),f(y));
\end{equation}
it is a {\it coarse embedding\/} if it is both uniformly expansive and
effectively proper.  (In this case, if $\X$ is unbounded then $\rho$ is
also proper.)  Summarizing, a map of families $F$ is a coarse embedding
if the individual $f$ are coarse embeddings {\it admitting a common
  $\delta$ and $\rho$\/}.  Similar remarks apply to uniformly expansive
and effectively proper maps.

Recall that a coarse embedding $f:X\to Y$ of metric spaces is a coarse
equivalence if it admits an `inverse' -- a coarse embedding $g:Y\to X$
for which the compositions $f\circ g$ and $g\circ f$ are {\it close\/}
to the identity maps on $X$ and $Y$, respectively: 
\begin{equation}
\label{Cdense}
  \text{there exists $C>0$ such that $d(x,gf(x))\leq C$
           and $d(y,gf(y))\leq C$,}
\end{equation}
for all $x\in X$ and $y\in Y$. So motivated, a coarse embedding
$F:\X\to\Y$ of metric families is a {\it coarse equivalence\/} if each
$f\in F$ is a coarse equivalence admitting an inverse $g$ satisfying
the following two conditions: first, the collection $G= \{\, g \,\}$
is a coarse embedding $\Y\to\X$ of metric families; second, the
composites $f\circ g$ and $g\circ f$ are {\it uniformly close\/} to
the identity maps on the spaces comprising $\X$ and $\Y$, in the sense
that the constant $C$ in (\ref{Cdense}) may be chosen independently of
the spaces $X\in\X$ and $Y\in\Y$.  Two metric
families $\X$ and $\Y$ are {\it coarsely equivalent\/} if there exists
a coarse equivalence $\X\to \Y$. Coarse equivalence is an equivalence
relation.

\subsection{Permanence for spaces}

The primitive permanence properties for metric families are Coarse
Invariance, the Fibering and Union Theorems.  We shall prove these in
this section.

\begin{lem}
\label{lem:ue}
Let $\X$ and $\Y$ be metric families and let $F:\X\to \Y$ be a uniformly
expansive map.  For every $r>0$ there exists an $s>0$ such that if $\ZZ$
and $\ZZ'$ are subspaces of $\Y$ and
$\ZZ'\overset{s}{\longrightarrow}\ZZ$ then
$F^{-1}(\ZZ')\overset{r}{\longrightarrow} F^{-1}(\ZZ)$.  Further, $s$
depends only on $r$ and on the non-decreasing function $\rho$ satisfying
\textup{(\ref{eqn:ue})}.

\end{lem}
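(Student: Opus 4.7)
The plan is a direct contrapositive argument based on the uniform expansiveness bound $d(f(x), f(y)) \leq \rho(d(x,y))$. Given $r>0$, I would set $s = \rho(r) + 1$ (any value strictly exceeding $\rho(r)$ would do) and show that this choice transports an $s$-decomposition of a subspace of $\Y$ into an $r$-decomposition of its preimage in $\X$.

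To execute this, first unpack the hypothesis $\ZZ' \overset{s}{\longrightarrow} \ZZ$: for each $Z' \in \ZZ'$ there is a presentation $Z' = Z'_0 \cup Z'_1$ with $Z'_i = \bigsqcup_{s\text{-disjoint}} Z'_{ij}$ and each $Z'_{ij} \in \ZZ$. For any $f \in F$, the compatibility of preimages with unions then yields $f^{-1}(Z') = f^{-1}(Z'_0) \cup f^{-1}(Z'_1)$ together with $f^{-1}(Z'_i) = \bigcup_j f^{-1}(Z'_{ij})$, and each $f^{-1}(Z'_{ij})$ belongs to $F^{-1}(\ZZ)$ by definition. Taking this family of decompositions as $f$ ranges over $F$ and $Z'$ over $\ZZ'$ gives the candidate $r$-decomposition of $F^{-1}(\ZZ')$ over $F^{-1}(\ZZ)$.

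The only nontrivial step is to verify that the collection $\{f^{-1}(Z'_{ij})\}_j$ is $r$-disjoint inside $X_f$. Given $x \in f^{-1}(Z'_{ij})$ and $y \in f^{-1}(Z'_{ij'})$ with $j \neq j'$, one has $d(f(x), f(y)) \geq s$; were $d(x,y) < r$, monotonicity of $\rho$ would force $d(f(x), f(y)) \leq \rho(d(x,y)) \leq \rho(r) < s$, a contradiction. Hence $d(x,y) \geq r$, as required. I do not anticipate any serious obstacle: the entire argument is the contrapositive of uniform expansiveness together with set-theoretic manipulation of preimages, and the explicit prescription $s = \rho(r) + 1$ displays the claimed dependence of $s$ on $r$ and $\rho$ alone, independent of the particular $f \in F$ or of the spaces in $\X$ and $\Y$.
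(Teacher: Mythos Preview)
Your proof is correct and essentially identical to the paper's: pull back the $s$-decomposition via $f^{-1}$ and use uniform expansiveness to verify $r$-disjointness of the pieces. The paper takes $s=\rho(r)$ while you take $s=\rho(r)+1$; your choice is slightly cleaner, as it makes the strict inequality in the contrapositive immediate.
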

\begin{proof}
  Assuming $F$ is uniformly expansive let $\rho$ be such that
  (\ref{eqn:ue}) holds.  Set $s=\rho(r)$ and assume
  $\ZZ'\overset{s}{\longrightarrow}\ZZ$.  An element of $F^{-1}(\ZZ')$
  has the form $f^{-1}(Z)$ for some $Z\in\ZZ'$ and $f\in F$.
  Given such an element obtain a decomposition
  \begin{equation*}
    Z = Z_0\cup Z_1, \quad Z_i = \bigsqcup_{s-disjoint} Z_{ij},
  \end{equation*}
  in which the $Z_{ij}\in\ZZ$.  We then have a decomposition
\begin{equation*}
  f^{-1}(Z) =  f^{-1}(Z_0) \cup f^{-1}(Z_1), \quad 
             f^{-1}(Z_i) = \bigcup f^{-1}(Z_{ij}),
\end{equation*}
in which the $f^{-1}(Z_{ij})\in F^{-1}(\ZZ)$.  From the definition of
$s$ we see immediately that the union on the right is $r$-disjoint.
\end{proof}

\begin{lem}
\label{lem:ep}
Let $\X$ and $\Y$ be metric families and let $F:\X\to \Y$ be an
effectively proper map.  If $\ZZ$ is a bounded subspace of $\Y$ then
$F^{-1}(\ZZ)$ is a bounded subspace of $\X$.
\end{lem}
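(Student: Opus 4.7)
The plan is to unpack the definitions and exploit properness of the control function $\delta$. Since $F$ is effectively proper, fix a proper non-decreasing function $\delta:[0,\infty)\to[0,\infty)$ that witnesses this, i.e.\ for every $f\in F$ and every $x,y\in X_f$ we have $\delta(d(x,y))\leq d(f(x),f(y))$. Since $\ZZ$ is a bounded subspace of $\Y$, set $D=\sup_{Z\in\ZZ}\diam(Z)<\infty$.

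Next I would estimate the diameter of a typical element of $F^{-1}(\ZZ)$. Such an element has the form $f^{-1}(Z)$ for some $f\in F$ and some $Z\in\ZZ$. For any two points $x,y\in f^{-1}(Z)$, their images $f(x),f(y)$ lie in $Z$, so $d(f(x),f(y))\leq D$, and therefore $\delta(d(x,y))\leq D$. Because $\delta$ is proper and non-decreasing, the preimage $\delta^{-1}([0,D])$ is bounded; choose $R\geq 0$ large enough that $\delta(t)>D$ whenever $t>R$. It follows that $d(x,y)\leq R$, and hence $\diam(f^{-1}(Z))\leq R$.

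Since the bound $R$ depends only on $D$ and $\delta$, it is independent of the choice of $f\in F$ and $Z\in\ZZ$. Therefore $\sup\{\,\diam(W):W\in F^{-1}(\ZZ)\,\}\leq R<\infty$, which is precisely the statement that $F^{-1}(\ZZ)$ is a bounded subspace of $\X$. The only subtle point worth emphasizing is that effective properness is assumed uniformly across the family $F$ (a single $\delta$ works for all $f$), which is exactly what allows the bound $R$ to be uniform; without this uniformity the conclusion would fail, so I would flag this as the conceptual content of an otherwise direct argument.
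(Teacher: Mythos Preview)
Your proof is correct and follows essentially the same approach as the paper: both arguments use the properness of the common control function $\delta$ to convert the uniform diameter bound on $\ZZ$ into a uniform diameter bound on $F^{-1}(\ZZ)$. If anything, your formulation ``choose $R$ large enough that $\delta(t)>D$ whenever $t>R$'' is slightly more precise than the paper's ``let $A$ be such that $\delta(A)\geq B$,'' and your final remark about the necessity of the uniformity in $\delta$ is a nice touch.
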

\begin{proof}
  Assuming $F$ is effectively proper let $\delta$ be such that
  (\ref{eqn:ep}) holds.  
  Let $B$ bound the diameter of the metric spaces in the family
  $\ZZ$.  Using the hypothesis that $\delta$ is proper, let $A$ be such
  that $\delta(A)\geq B$.  Then $F^{-1}(\ZZ)$ is bounded by $A$.
\end{proof}

\begin{invariance}
  Let $\X$ and $\Y$ be metric families.  If there is a coarse embedding
  from $\X$ to $\Y$ and $\Y$ has finite decomposition complexity, then
  so does $\X$.  In particular:
  \begin{ilist}
    \item a subspace of a metric family with FDC itself has FDC;
    \item if $\X$ and $\Y$ are coarsely
      equivalent, then $\X$ has FDC if and only if $\Y$ does.
  \end{ilist}
\end{invariance}
\begin{proof}
  By pruning and relabeling we can pull back a decomposition strategy
  for $\Y$ to $\X$.  Precisely, select an increasing sequence of natural
  numbers $s_1,s_2,\dots$ such that $s_i\geq i$.  Prune $T$ by removing
  a vertex $v$, together with the entire `downward' subtree based at $v$
  and the unique upward edge incident at $v$, when this upward edge is
  labeled by an element of $\N\setminus \{\, s_i \,\}$.  The resulting
  graph $T'$ is a subtree of $T$ and a vertex of $T'$ is a leaf of $T'$
  exactly when it is a leaf of $T$.  
  Relabel a typical edge as shown in Figure~\ref{fig:relabel}. 
  \begin{figure}[h]
    \centering
    \includegraphics[height=1.0in]{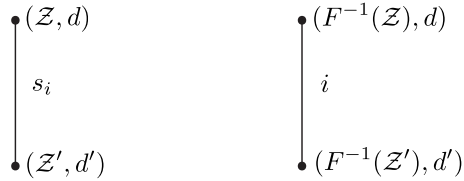}
    \caption{Relabeling}
    \label{fig:relabel}
  \end{figure}
  It follows from Lemmas~\ref{lem:ue} and \ref{lem:ep}
  that the labeling requirements for a decomposition strategy are
  fulfilled.
\end{proof}

\begin{fibering}
\label{fibering}
  Let $\X$ and $\Y$ be metric families and let $F:\X\to \Y$ be a
  uniformly expansive map.  Assume $\Y$ has finite decomposition
  complexity, and that for every bounded subspace $\ZZ$ of $\Y$ the
  inverse image $F^{-1}(\ZZ)$ has finite decomposition complexity.  Then
  $\X$ has finite decomposition complexity.
\end{fibering}
\begin{proof}
  A decomposition strategy for $\Y$ pulls back, as in the previous
  proof, to a {\it partial\/} decomposition strategy for $\X$.  It is
  partial in that the leaves of its support tree are labeled by families
  which are not (necessarily) bounded but rather are the inverse images of
  bounded subspaces of $\Y$.  We complete the partial strategy by
  attaching to a leaf labeled by $F^{-1}(\ZZ)$ a strategy for this
  family. 
\end{proof}

\begin{remark}
\label{fiberingremark}
Directly from the definitions we see that $\X\in\C_n$ precisely when
$\X$ admits a decomposition strategy in which the strategy tree has
{\it depth\/} not greater than $n$, meaning that the length of a
geodesic emanating from the root vertex is at most $n$.  In the
notation of the Fibering Theorem, the previous proof shows the
following: suppose that $\Y\in\C_n$ and that there exists a natural
number $m$ such that $F^{-1}(\ZZ)\in \C_m$ for every bounded subspace 
$\ZZ$ of $\Y$; then $\X\in \C_{n+m}$.
\end{remark}

\begin{remark}
\label{fiberingremark-finoverfin}
Continuing in the spirit of the previous remark, suppose that
$\Y\in\C_{\fin}$ and that $F^{-1}(\ZZ)\in \C_{\fin}$ for every
bounded subspace $\ZZ$ of $\Y$.  Then $\X\in\C_{\omega+\fin}$, meaning
that for some natural number $n$ we have $\X\in\C_{\omega+n}$.  The
distinction between this remark and the previous is that here we
assume merely that each $F^{-1}(\ZZ)\in \C_m$ for some natural number
$m$, {\it which may depend on $\ZZ$}.
\end{remark}

\begin{finite-union}
\label{finite-union-thm}
Let $X$ be a metric space, expressed as a union of finitely many metric
subspaces $X=\cup_{i=0}^n X_i$.  If the metric family $\{\, X_i \,\}$
has finite decomposition complexity so does $X$.
\end{finite-union}
\begin{proof}
  Consider first the case $n=2$, illustrated in Figure~\ref{fig:finite-union}.
  For every $r>0$, the metric space $X=X_1\cup X_2$
  is $r$-decomposable over the family $\{\, X_1,X_2 \,\}\in\C$.
  Thus $X\in\C$. The general case follows by induction.  
\end{proof}

\begin{union}
\label{union-theorem}
Let $X$ be a metric space, expressed as a union of metric subspaces
$X=\cup_{i\in I} X_i$.  Suppose that the metric family $\{\, X_i \,\}$
has finite decomposition complexity and that for every $r>0$ there exists a
metric subspace $Y(r)\subset X$ having finite decomposition complexity
and such that the
subspaces $Z_i(r) = X_i \setminus Y(r)$ are pairwise $r$-disjoint. Then
$X$ has finite decomposition complexity.
\end{union}
\begin{proof}
To conclude that $X$ has finite decomposition complexity, it suffices
to show that $X$ is decomposable over $\C$.
The proof of this is illustrated in Figure~\ref{fig:union}.
Formally, for every $r>0$ let $Y(r)$ and $Z_i(r)$ be as in the
statement.  The decomposition 
\begin{equation*}
  X = Y(r) \bigcup Z(r), \quad Z(r) = \bigsqcup_{r-disjoint} Z_i(r)
\end{equation*}
is a $r$-decomposition of $X$ over the family 
$\Y_r=\{\, Y(r) \,\}\cup \{\, Z_i(r) : i\in I\,\}$.  Since the $Z_i(r)$
are subspaces of the $X_i$ and the family $\{\, X_i \,\}$ has finite
decomposition complexity, the family $\{\, Z_i(r) : i\in I \,\}$ does as
well; since $Y(r)$ has finite decomposition complexity, 
the family $\Y_r$ does as well.  
\end{proof}

\begin{figure}[h]
    \begin{minipage}[b]{0.45\linewidth}
        \centering
        \includegraphics[scale=0.5]{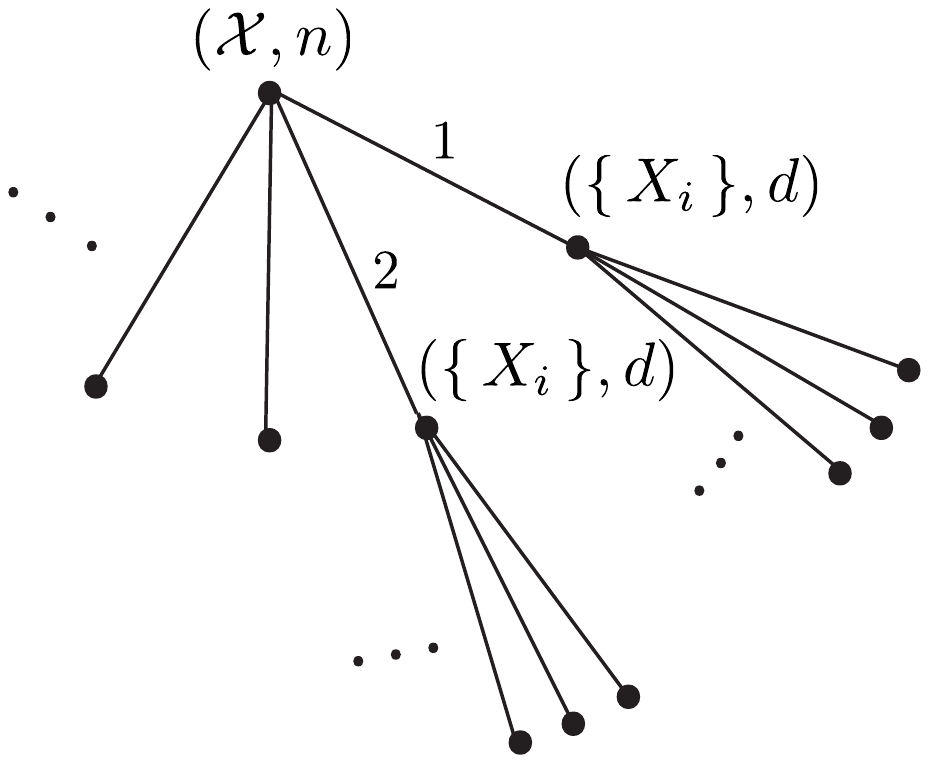}
        \caption{A finite union}
        \label{fig:finite-union}
    \end{minipage}
    \begin{minipage}[b]{0.45\linewidth}
        \centering
        \includegraphics[scale=0.5]{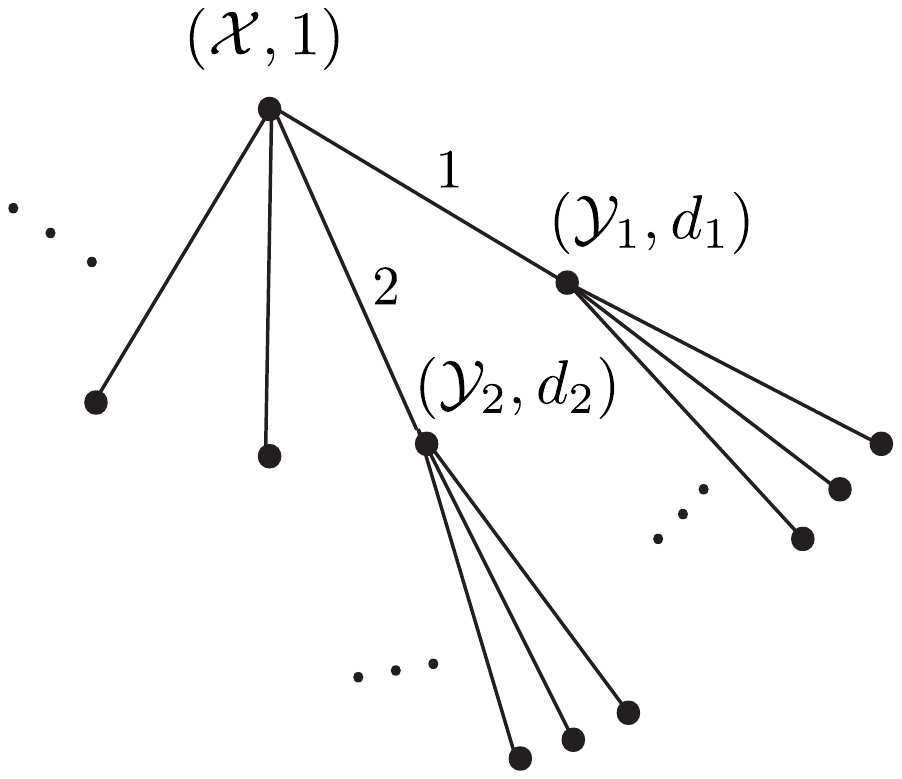}
        \caption{A union}
        \label{fig:union}
    \end{minipage}
\end{figure}

\begin{remark}
 While we could state union theorems in the context of metric families
 (instead of single metric spaces) we shall not require this level of
 generality.
\end{remark}

\subsection{Permanence for groups}

Most (though not all) permanence properties for discrete groups are
deduced by allowing the group to act on an appropriate metric space, and
applying the permanence results for spaces detailed in the previous
section.

Let $G$ be a countable discrete group.  Recall that a countable discrete
group admits a proper length function $\ell$ and that any two metrics
defined from proper length functions by the formula 
\begin{equation*}
  d(s,t) = \ell(s^{-1}t)
\end{equation*}
are coarsely equivalent (in fact, the identity map is a coarse
equivalence).  As a consequence, a coarsely invariant property of
metric spaces is a property of countable discrete groups -- whether or
not a group has the property is not an artifact of the particular
metric chosen.  Consequently, we say that a discrete group has finite
decomposition complexity if its underlying metric space has finite
decomposition complexity for some (equivalently every) metric defined
as above.

\begin{prop}
\label{limits}
  A countable direct union of groups with finite decomposition
  complexity has finite decomposition complexity.  Equivalently, a
  countable discrete group has 
  finite decomposition complexity if and only if every finitely
  generated subgroup does.
\end{prop}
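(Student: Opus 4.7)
The plan is to prove the direct union statement first; the stated equivalence then follows at once, since any countable group $G$ can be written as the direct union of its finitely generated subgroups (enumerate $G = \{g_1, g_2, \ldots\}$ and set $G_n = \langle g_1, \ldots, g_n\rangle$), while the converse implication is immediate from Coarse Invariance, each subgroup being (with the restricted metric) a subspace of $G$.

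So let $G = \bigcup_n G_n$ be a countable group equipped with a proper left-invariant metric $d$ and associated length function $\ell$, with $G_1 \subset G_2 \subset \cdots$ and each $G_n \in \C$. Because $\C$ is stable under decomposition, it suffices to exhibit, for every $r > 0$, a metric family $\Y_r \in \C$ over which $G$ is $r$-decomposable. Fix $r$. By properness of $d$ the ball $B(e, r)$ is finite, so it is contained in $G_n$ for some $n = n(r)$. I claim the left cosets of $G_n$ in $G$ are pairwise $r$-disjoint with respect to $d$. Indeed, suppose $a \in g_1 G_n$ and $b \in g_2 G_n$ with $g_1 G_n \neq g_2 G_n$ and write $a = g_1 x$, $b = g_2 y$ with $x, y \in G_n$. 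If $d(a, b) < r$ then
\[
  (g_2 y)^{-1}(g_1 x) \in B(e, r) \subset G_n,
\]
and since $x, y \in G_n$ this forces $g_2^{-1} g_1 \in G_n$, contradicting distinctness of the cosets.

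Left-invariance of $d$ makes left multiplication by $g^{-1}$ an isometry $gG_n \to G_n$, so the family $\Y_r = \{\, gG_n : gG_n \in G/G_n \,\}$ consists of isometric copies of $G_n$. The collection of these inverse translations assembles into a map of families $\Y_r \to \{G_n\}$ which is a coarse embedding with uniform constants $\rho(t) = \delta(t) = t$. Since $\{G_n\} \in \C$ by hypothesis, Coarse Invariance gives $\Y_r \in \C$. Taking $X_0 = G$, expressed as the $r$-disjoint union of cosets, and $X_1 = \emptyset$, yields an $r$-decomposition of $G$ over $\Y_r \in \C$, as required.

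The main obstacle — and essentially the only substantive step — is the verification that the family of cosets $\Y_r$ has FDC. This is where left-invariance of the metric enters decisively: without it, the cosets would be genuinely different metric spaces and one could not transfer FDC from the single space $G_n$ to the whole family. Everything else reduces to the elementary facts that proper metrics have finite balls and that cosets of a subgroup partition the ambient group.
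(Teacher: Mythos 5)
Your proof is correct and follows essentially the same route as the paper's: fix a proper left-invariant metric, observe that the $r$-ball around the identity is finite and hence lies in some $G_n$, decompose $G$ into the $r$-disjoint family of left cosets of $G_n$, and note each coset is isometric to $G_n$ so the family inherits FDC. The only difference is that you spell out the coset-disjointness verification and the coarse-embedding-of-families bookkeeping, which the paper leaves implicit.
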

\begin{proof}
Let $G$ be a countable discrete group, expressed as the union of a
collection of subgroups each of which has finite decomposition
complexity:
$G=\cup G_i$.  Equip $G$ with a proper length function and associated
metric.  We shall show that for every $r>0$ (the metric space) $G$ is
$r$-decomposable over a  metric family with finite decomposition complexity; by
Theorem~\ref{thm:char_complexity} this will suffice.

Let $r>0$.  Since the ball of radius $r$ centered at the identity in $G$
is finite there exists $i=i(r)$ such that this ball is contained in
$G_i$. It follows that the decomposition of $G$ into the cosets of $G_i$
is $r$-disjoint.  Further, the family comprised of these cosets has finite decomposition complexity since each coset is isometric to $G_i$, which has finite decomposition complexity 
(in any proper metric so in the subspace metric)
by assumption.
\end{proof}

Let now $X$ be a metric space, and suppose that $G$ acts (by isometries)
on $X$.  For $R>0$ the {\it $R$-coarse stabilizer of $x$\/} is 
\begin{equation*}
  \stab(x,R) = \{\, g\in G : d(x,g\cdot x) < R \,\}.
\end{equation*}
In general an $R$-coarse stabilizer is a {\it subset\/} of $G$.  The
$0$-coarse stabilizer of $x$ is its stabilizer, a subgroup of $G$.  The
space $X$ is {\it locally finite\/} if every ball is finite.  


\begin{lem}
  For every $x\in X$ the orbit map $g\mapsto g\cdot x : G\to X$ 
  is uniformly expansive.  \qed
\end{lem}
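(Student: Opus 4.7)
The plan is to exploit the two defining features of the setup: the action of $G$ on $X$ is by isometries, and the metric on $G$ comes from a proper length function $\ell$, so that balls in $G$ are finite. These two ingredients together will hand us the non-decreasing bounding function $\rho$ more or less directly.

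First I would reduce the inequality to one involving a single group element. Since $G$ acts by isometries,
\begin{equation*}
   d(g\cdot x, h\cdot x) \;=\; d(x,\, g^{-1}h\cdot x),
\end{equation*}
while $d(g,h) = \ell(g^{-1}h)$. So it suffices, setting $s = g^{-1}h$, to bound $d(x, s\cdot x)$ by a non-decreasing function of $\ell(s)$.

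Next I would simply define
\begin{equation*}
  \rho(t) \;=\; \sup\bigl\{\, d(x,\, s\cdot x) \,:\, s\in G,\ \ell(s)\leq t \,\bigr\}.
\end{equation*}
The only thing to check is that $\rho(t)<\infty$ for every $t\geq 0$, and this is where properness of $\ell$ enters: the set $\{\, s\in G : \ell(s)\leq t \,\}$ is a closed ball in $G$ around the identity and is therefore finite, so the supremum is taken over a finite set of real numbers. Monotonicity of $\rho$ is immediate from the definition, and the required inequality $d(g\cdot x, h\cdot x)\leq \rho(d(g,h))$ follows by construction.

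There isn't really a hard step here; the only thing to be careful about is that we are not claiming that the orbit map is effectively proper (it need not be if the stabilizer of $x$ is infinite), only that it is uniformly expansive, and for that the finiteness of balls in $G$ together with the isometric action is exactly what is needed.
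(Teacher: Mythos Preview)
Your argument is correct and is exactly the obvious proof the paper has in mind; the paper omits the proof entirely (the lemma is marked with a \qed and no argument), so there is nothing further to compare.
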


\begin{prop}
\label{fiberingprop}
Let $G$ be a countable discrete group acting on a metric space $X$
with finite decomposition complexity. If there exists $x_0\in X$ such
that for every $R>0$ the $R$-coarse stabilizer of $x_0$ has finite
decomposition complexity then $G$ has finite decomposition complexity.
\end{prop}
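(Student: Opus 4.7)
The plan is to apply the Fibering Theorem directly to the orbit map $F\colon G\to X$ defined by $F(g)=g\cdot x_0$, where $G$ is regarded as the single-element metric family $\{G\}$ with a proper left-invariant metric, and $X$ as the single-element family $\{X\}$. The lemma immediately preceding the proposition already supplies the first hypothesis, namely that $F$ is uniformly expansive. So the task reduces to verifying the second hypothesis: for every bounded subspace $\ZZ$ of $\{X\}$, the inverse image $F^{-1}(\ZZ)$ has FDC.

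First I would fix such a $\ZZ$ and set $D=\sup_{Z\in\ZZ}\diam(Z)<\infty$. For each $Z\in\ZZ$ with $F^{-1}(Z)$ nonempty, choose some $g_Z\in F^{-1}(Z)$. For any other $g\in F^{-1}(Z)$, since $G$ acts by isometries, $d(x_0,g_Z^{-1}g\cdot x_0)=d(g_Z\cdot x_0,g\cdot x_0)\leq D$, so $g_Z^{-1}g\in\stab(x_0,D+1)$. Consequently $F^{-1}(Z)\subseteq g_Z\cdot\stab(x_0,D+1)$. This observation, that the $F$-preimage of a set of diameter at most $D$ sits inside a single left coset of an $(D+1)$-coarse stabilizer, is the geometric heart of the argument.

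Next, because the metric on $G$ is left-invariant, left translation by $g_Z$ is an isometry, so the coset $g_Z\cdot\stab(x_0,D+1)$ is isometric to $\stab(x_0,D+1)$, which by hypothesis has FDC. A decomposition strategy for $\stab(x_0,D+1)$ transfers, via these isometries applied coordinate-wise, to a decomposition strategy for the family $\{g_Z\cdot\stab(x_0,D+1):Z\in\ZZ\}$; hence this family has FDC. Since $F^{-1}(\ZZ)$ is a subspace of this family (each $F^{-1}(Z)$ is contained in the corresponding coset), Coarse Invariance implies that $F^{-1}(\ZZ)$ also has FDC. Both hypotheses of the Fibering Theorem are then satisfied, and we conclude that $G$ has FDC.

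The argument presents no substantive obstacle: the only delicate point is correctly identifying the role of the coarse stabilizer. What makes this work is that an isometric action translates metric constraints in $X$ (elements close to each other in a bounded set $Z$) into constraints on group elements of the form $g_Z^{-1}g$ lying near the identity in $G$, which is precisely the data encoded by $\stab(x_0,R)$. Once this dictionary is set up, the proposition follows formally from Fibering and Coarse Invariance.
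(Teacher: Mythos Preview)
Your proof is correct and follows essentially the same approach as the paper: apply the Fibering Theorem to the orbit map $g\mapsto g\cdot x_0$, using the preceding lemma for uniform expansiveness, and verify that preimages of bounded families land inside left cosets of coarse stabilizers and hence inherit FDC. The paper's proof is a two-line sketch (it first restricts to the orbit of $x_0$ to make the orbit map surjective, then asserts that the hypotheses of the Fibering Theorem hold); you have simply spelled out the verification of the fiber hypothesis in detail, which is exactly the content the paper leaves implicit.
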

\begin{proof}
By restricting to the orbit of $x_0$ we may assume the action is
transitive.  Together with the coarse 
stabilizer condition, the fact that the orbit map $g\to g\cdot x_0$ is a
surjective and equivariant map $G\to X$
implies that the hypothesis of
the Fibering Theorem~\ref{fibering} are fulfilled.  The proposition
follows. 
\end{proof}

\begin{cor}
  Let $G$ and $X$ be as in the previous proposition.  If $X$ is locally
  finite, and if there exists $x_0\in X$ such that the stabilizer of
  $x_0$ has finite decomposition complexity, then $G$ has finite
  decomposition complexity. 
\end{cor}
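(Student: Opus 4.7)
The plan is to reduce the corollary to Proposition~\ref{fiberingprop} by showing that, under local finiteness of $X$, every $R$-coarse stabilizer of $x_0$ has FDC, not merely the ordinary stabilizer $\stab(x_0) = \stab(x_0,0)$. Once that reduction is in place, Proposition~\ref{fiberingprop} applies verbatim.

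For a given $R > 0$, the assumption that $X$ is locally finite says the ball $B(x_0,R)$ is a finite subset of $X$. I would partition
\begin{equation*}
  \stab(x_0,R) = \bigsqcup_{y \in B(x_0,R)} \{\, g \in G : g\cdot x_0 = y \,\},
\end{equation*}
and observe that the non-empty pieces on the right are precisely the left cosets $g_y \stab(x_0)$, where $g_y$ is any chosen element of $G$ with $g_y \cdot x_0 = y$. This exhibits $\stab(x_0,R)$ as a finite union of left cosets of $\stab(x_0)$.

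The key point is that each such coset, viewed as a metric subspace of $G$, is isometric to $\stab(x_0)$: this uses that our metric on $G$ comes from a proper length function and is therefore left-invariant, so left multiplication by $g_y$ is an isometry. By Coarse Invariance, each coset $g_y \stab(x_0)$ inherits FDC from the hypothesis that $\stab(x_0)$ has FDC. The Finite Union Theorem then gives that the finite union $\stab(x_0,R)$ has FDC.

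This verification works uniformly for every $R > 0$, so the hypothesis on $R$-coarse stabilizers in Proposition~\ref{fiberingprop} is satisfied, and the corollary follows. I do not expect any genuine obstacle here; the only point that needs care is the use of left-invariance of the metric to identify cosets as isometric copies of $\stab(x_0)$, which is why the hypothesis is phrased in terms of a proper left-invariant metric on $G$.
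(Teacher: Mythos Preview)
Your proposal is correct and follows essentially the same approach as the paper: the paper's proof simply asserts that under local finiteness the Finite Union Theorem implies the $R$-coarse stabilizers have FDC, and then applies Proposition~\ref{fiberingprop}. You have spelled out the details behind that one-line assertion --- namely, that $\stab(x_0,R)$ decomposes as a finite union of left cosets of $\stab(x_0)$, each isometric to the stabilizer by left-invariance of the metric --- which is exactly the content implicit in the paper's appeal to the Finite Union Theorem.
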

\begin{proof}
  Under the stated hypotheses the Finite Union Theorem implies that the
  coarse stabilizers of $x_0$ have finite decomposition complexity.
  Thus, the previous proposition applies.
\end{proof}

\begin{cor}
\label{extension}
The collection of countable discrete groups  with finite
decomposition complexity is closed under extensions. \qed
\end{cor}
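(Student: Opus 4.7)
The plan is to deduce extension stability as an immediate application of the corollary just proved. Given an extension $1 \to N \to G \to Q \to 1$ with both $N$ and $Q$ in $\C$, I will let $G$ act on $Q$ and read off the result from the fibering corollary.

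Concretely, equip $Q$ with a proper left-invariant metric $d_Q$; since all such metrics on a countable group are coarsely equivalent and FDC is a coarse invariant, $Q$ lies in $\C$ with this choice. Note also that $Q$ is locally finite (balls in a proper left-invariant metric are finite). Let $\pi \colon G \to Q$ denote the quotient homomorphism. Then $G$ acts on $Q$ by $g \cdot q = \pi(g)q$, and this action is by isometries (since $d_Q$ is left-invariant) and is transitive on $Q$. The point stabilizer of the identity $e_Q \in Q$ is precisely $\ker\pi = N$.

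Equip $G$ with a proper left-invariant metric whose length function restricts to a proper left-invariant length function on $N$ — for example, the restriction of any proper left-invariant length function on $G$ has this property. The subgroup $N$, viewed as a subspace of $G$, is then isometric to $N$ with an intrinsic proper left-invariant metric, so by hypothesis $N$ has FDC as a subspace of $G$. All hypotheses of the corollary to Proposition~\ref{fiberingprop} are now met: $G$ acts on the locally finite metric space $Q$, which has FDC, and the stabilizer of a chosen basepoint has FDC. The corollary yields FDC for $G$.

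The only point that might require a moment's thought is the compatibility of metrics on $N$: FDC of $N$ as a group (defined via any proper left-invariant metric of its own) must agree with FDC of $N$ as a metric subspace of $G$. This is not really an obstacle — the restriction of a proper left-invariant length function on $G$ is a proper left-invariant length function on $N$, and all such length functions on $N$ produce coarsely equivalent metrics, so coarse invariance of FDC (Coarse Invariance) closes the gap. With this understood, the argument is a one-line application of the corollary.
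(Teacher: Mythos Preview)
Your proof is correct and is precisely the argument the paper has in mind: the \qed in the statement signals that the corollary is an immediate consequence of the preceding corollary to Proposition~\ref{fiberingprop}, applied to the action of $G$ on the locally finite space $Q$ with point stabilizer $N$. Your remark on the compatibility of metrics on $N$ is the only detail worth spelling out, and you handle it correctly via Coarse Invariance.
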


\begin{prop}
If a countable discrete group acts \textup{(}without
inversion\textup{)} on a tree, and the vertex stabilizers of the
action have finite decomposition complexity, then the group itself has
finite decomposition complexity. 
\end{prop}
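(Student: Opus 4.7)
The natural route is through Bass--Serre theory, combined with the permanence of $\C$ under amalgamated free products, HNN extensions (both asserted in the introduction and developed elsewhere in this section), and countable direct unions (Proposition~\ref{limits}).

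I would first reduce to the case that $G$ is finitely generated. Any finitely generated subgroup $H \leq G$ acts on $T$ without inversion by restriction, and its vertex stabilizers $H \cap G_v$ are subgroups of the FDC groups $G_v$, hence themselves in $\C$ by Coarse Invariance. By Proposition~\ref{limits}, it therefore suffices to treat the case that $G$ itself is finitely generated.

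Bass--Serre theory now presents $G$ as the fundamental group $\pi_1(\mathcal{G}, Y)$ of a graph of groups, where $Y = G \backslash T$ is the quotient graph, the vertex groups are conjugate to the vertex stabilizers of the $G$-action, and the edge groups (conjugate to edge stabilizers) are subgroups of vertex groups and therefore in $\C$. If $Y$ is finite, fix a spanning tree $T_Y \subset Y$: then $\pi_1(\mathcal{G}, Y)$ is obtained from a single vertex group by iteratively forming amalgamated free products along the edges of $T_Y$, followed by HNN extensions along the edges of $Y \setminus T_Y$, so $G \in \C$. For infinite $Y$, exhaust $Y$ by an increasing sequence of finite connected subgraphs $Y_1 \subset Y_2 \subset \cdots$ with $\bigcup_n Y_n = Y$ and $T_Y \cap Y_n$ a spanning tree of $Y_n$. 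Standard Bass--Serre theory supplies injective homomorphisms $\pi_1(\mathcal{G}|_{Y_n}) \hookrightarrow \pi_1(\mathcal{G}|_{Y_{n+1}}) \hookrightarrow G$ whose union is all of $G$, so $G \in \C$ by Proposition~\ref{limits}.

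The main obstacle is verifying the direct-limit statement $G = \bigcup_n \pi_1(\mathcal{G}|_{Y_n})$ for infinite $Y$; this is standard but requires care with the choice of basepoint and the compatibility of the spanning trees $T_Y \cap Y_n$ across the exhaustion. Everything beyond this rests on the nontrivial permanence of $\C$ under amalgamated free products and HNN extensions, which is developed separately.
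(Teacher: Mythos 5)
Your overall strategy is the same as the paper's: invoke Bass--Serre theory to present $G$ as built from vertex stabilizers via amalgamated free products, HNN extensions, and direct unions, then appeal to the permanence of FDC under each of those operations. The reduction to finitely generated groups and the exhaustion of the quotient graph by finite connected subgraphs are fine (if somewhat more than is needed, since the direct-union bookkeeping already handles the countable case).

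The gap is in what you treat as black boxes. You assert that the permanence of FDC under amalgamated free products and HNN extensions is ``developed elsewhere in this section,'' but it is not: the present proposition \emph{is} where those two cases are established, and the paper must therefore supply the argument rather than cite it. Concretely, the paper reduces an HNN extension to a combination of an amalgamated free product, a direct union and a group extension (all of which have independently-proven permanence), and then establishes the remaining key fact --- that $A *_C B$ has FDC when $A$ and $B$ do --- by applying the Fibering Theorem to the action of $A*_C B$ on its Bass--Serre tree, using the Union Theorem to verify that the $R$-coarse stabilizers (which are finite unions of translates of vertex stabilizers) have FDC. Without supplying some version of this argument, your proof is circular: it derives the general tree-action statement from the amalgam and HNN special cases, but those special cases have no other proof in the paper. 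You should either prove the amalgam case directly (fibering over the tree plus the Union Theorem for coarse stabilizers) or prove the tree-action statement directly in the same way; once that is in place, the Bass--Serre reduction you outline closes the argument.
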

\begin{proof} 
  According to the Bass-Serre theory, a group as in the statement is
  built from vertex stabilizers of the action by iterated free
  products (with amalgam), $\HNN$ extensions and direct unions.  An
  $\HNN$ extension, in turn, is built from free products (with
  amalgam), a direct union and a group extension.  As we have seen
  that the class of (countable discrete) groups with finite
  decomposition complexity is closed under direct unions, subgroups
  and extensions the proposition follows once we show that a
  free product with amalgam has finite decomposition complexity if the
  factors do.  But, this follows axiomatically from the above proven
  permanence results -- essentially, apply fibering to the action on
  the Bass-Serre tree using the union theorem to conclude that the
  coarse stabilizers have finite decomposition complexity.  For a more
  detailed discussion see \cite{GP} and the references therein.
\end{proof}

\section{FDC, Property A and finite asymptotic dimension}
\label{Asection}

In this section we shall discuss how the property of finite
decomposition complexity relates to other familiar properties from
coarse geometry, notably to Property $A$ and to finite asymptotic
dimension.

Above we have discussed how the definition of finite decomposition
complexity is {\it motivated\/} by finite asymptotic dimension.  We
shall now pursue this discussion further, our goal being to prove that
a metric space having finite asymptotic dimension has finite
decomposition complexity as well.

Recall that a metric space is {\it proper\/} if closed and bounded
sets are compact.  A discrete metric space is proper precisely when it
is {\it locally finite\/} in the sense that every ball is finite.  It
is not difficult to see that a proper metric space having finite
asymptotic dimension has finite decomposition complexity.  Indeed,
according to a theorem of Dranishnikov-Zarichnyi a proper metric space
having finite asymptotic dimension admits a coarse emebdding into the
product of finitely many locally finite trees \cite{DZ}.  As trees
have finite decomposition complexity, we may apply our permanence
results to conclude.  More generally, for metric spaces which are {\it
  not necessarily proper\/} we have the following theorem.

\begin{thms}
\label{fad&fdc}
 A metric space has finite asymptotic dimension if and only if it
 belongs to $\C_{\fin}$.  In particular, a metric space having finite
 asymptotic dimension has finite
 decomposition complexity as well.
\end{thms}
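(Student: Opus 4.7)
My plan is to prove both directions by induction, the first on the FDC level and the second on the asymptotic dimension.

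For $(\Leftarrow)$, I show by induction on $n$ that every metric family in $\C_n$ has uniform asymptotic dimension at most $f(n)$, with $f(0)=0$ and $f(n+1)=2f(n)+1$. The base case is immediate since $\C_0$ consists of bounded families. For the inductive step, $\X\in\C_{n+1}$ gives, for each $r>0$, a family $\Y_r\in\C_n$ with $\X\overset{r}{\longrightarrow}\Y_r$; by the inductive hypothesis each piece $X_{ij}$ in the $r$-decomposition of some $X\in\X$ admits at scale $r$ a $(f(n)+1)$-color decomposition into uniformly bounded $r$-disjoint pieces. Combining the outer two-coloring with the inner $(f(n)+1)$-coloring yields a $2(f(n)+1)$-color decomposition of $X$ at scale $r$ into uniformly bounded pieces: two pieces of matching inner color contained in distinct outer-pieces of the same outer color are automatically $r$-disjoint in $X$. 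Hence the uniform asymptotic dimension of $\X$ is at most $f(n+1)$.

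For $(\Rightarrow)$, I induct on $d$, the asymptotic dimension of $X$, aiming to show $X\in\C_{d+1}$. The base $d=0$ is immediate: by definition, $X$ is an $r$-disjoint union of uniformly bounded sets for every $r$, which is literally an $r$-decomposition of $X$ over a bounded family, so $X\in\C_1$. For the inductive step the key tool is a decomposition lemma: if the asymptotic dimension of $X$ is at most $d$, then for every $r>0$ one can write $X=A\cup B$ with $A=\bigsqcup A_j$ a uniformly bounded $r$-disjoint union and $B\subset X$ a subspace of asymptotic dimension at most $d-1$. Granted the lemma, $\{A_j\}\in\C_0$ and by the inductive hypothesis $\{B\}\in\C_d$; the decomposition $X=A\cup B$, with $B$ viewed as a trivial single-element $r$-disjoint union, becomes an $r$-decomposition over the family $\{A_j\}\cup\{B\}\in\C_d$, placing $X$ in $\C_{d+1}$.

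The decomposition lemma is the main obstacle. For \emph{proper} metric spaces it follows from the Dranishnikov--Zarichnyi embedding of $X$ into a finite product of locally finite trees, since trees have FDC by a direct verification, finite products of FDC families again have FDC via the Fibering Theorem, and FDC is a coarse invariant. For general (possibly non-proper) metric spaces one must construct $A$ and $B$ by hand: take the $(d+1)$-color asymptotic-dimension decomposition $X=U_0\cup\cdots\cup U_d$ at a scale $R$ much larger than $r$, set $A=U_0=\bigsqcup_{R} U_{0j}$ (automatically a uniformly bounded $r$-disjoint union), and let $B$ be a carefully chosen subspace covering $X\setminus A$. The delicate point is to verify that the $d$ remaining colors yield a uniform $d$-color asymptotic-dimension decomposition of $B$ at \emph{every} scale, not only at scale $R$, which requires combining the asdim decompositions of $X$ at multiple nested scales in a Bell--Dranishnikov style argument.
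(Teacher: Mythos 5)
Your forward direction is correct and matches the paper's: the induction showing that every family in $\C_n$ admits, for each $r$, a $(2(f(n)+1),r)$-decomposition into uniformly bounded pieces is exactly the paper's remark that $\C_n$ forces a $(2^n,r)$-decomposition over a bounded family, giving uniform asymptotic dimension at most $2^n-1$.

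The converse direction has a genuine gap: the decomposition lemma you rely on is false. Take $X=\Z^2$, $d=2$, and any $r\geq 2$. If $A=\bigsqcup_j A_j$ is an $r$-disjoint union of pieces of diameter at most $D$, then because each piece is bounded the complement $X\setminus A$ is $(D+r)$-dense in $\Z^2$; any $B\supseteq X\setminus A$ is therefore coarsely dense, hence coarsely equivalent to $\Z^2$, hence of asymptotic dimension $2$ and not $1$. So there is no way to peel off one color and be left with a genuinely lower-dimensional remainder, and no "careful choice" of $B$ covering $X\setminus A$ nor any Bell--Dranishnikov-style rescaling can fix this: the failure is forced by coarse invariance of asymptotic dimension. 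This is the essential reason that a $(d+1)$-coloring at a single scale does not iterate into the two-at-a-time decomposition hierarchy required by $\C_{\fin}$, and it is precisely the obstruction that the Dranishnikov--Zarichnyi embedding is brought in to circumvent. The coordinate-by-coordinate fibering over a product of trees lets one two-color without requiring the leftover to drop dimension inside $X$ itself. You correctly observe that this route settles the proper case and gives $X\in\C_{d+1}$, but Dranishnikov--Zarichnyi is a statement about proper spaces, and the paper's actual work in this direction is extending it to arbitrary metric spaces: one applies the embedding uniformly to the finite subsets of $X$, passes to an ultralimit to produce Gromov $0$-hyperbolic pseudo-metric spaces $X_0,\dots,X_n$, and shows that the product of the natural maps is a coarse embedding $X\to X_0\times\cdots\times X_n$. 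That ultralimit construction has no counterpart in your sketch, and the non-proper case is left unproved.
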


We are primarily interested in the forward implication, and 
shall reduce the general case to the case of proper metric spaces
using an ultralimit construction.  Before turning to the proof, we recall
the relevant background notions.  Let $X$ be a (pseudo-)metric space.
The {\it Gromov triple product\/} (with respect to a base point
$x_0$) is
\begin{equation*}
  (x|y)=\tfrac{1}{2}\left( d(x,x_0)+d(y,x_0)-d(x,y) \right). 
\end{equation*}
The (pseudo)-metric space $X$ is {\it Gromov $0$-hyperbolic\/} if
\begin{equation*}
  (x|z) \geq \min \{\, (x|y), (y|z) \,\},
\end{equation*}
for all $x$, $y$ and $z\in X$.  The notion of $0$-hyperbolicity is
independent of the choice of base point \cite [Prop.~2.2]{ABetc}.  A
Gromov $0$-hyperbolic (pseudo)-metric space has asymptotic dimension
at most $1$.  (See \cite{Roe} for a direct argument.)  Hence, a Gromov
$0$-hyperbolic (pseudo-)metric space has finite decomposition
complexity.

\begin{proof}[Proof of Theorem~\ref{fad&fdc}]
A simple induction shows that a
(pseudo)-metric space belonging to $\C_n$ admist, for every $r$, a
$(2^n,r)$-decomposition over a bounded family.  In particular, its
asymptotic dimension is at most $2^n-1$.  

For the converse, let $X$ be a (pseudo)-metric space having finite asymptotic
  dimension at most $n$.  We shall show that $X$ has finite
  decomposition complexity, indeed that $X\in \C_{\fin}$.   
  Apply the result of
  Drashnikov-Zarichnyi \cite{DZ} to the finite subsets of $X$ -- these are
  locally finite metric spaces and the essential observation here is
  that the result of Drashnikov-Zarichnyi applies {\it uniformly\/}.
  Precisely, there exists $\rho$ and $\delta$ and for each finite
  subset $F\subset X$ a $\rho$-uniformly expansive and
  $\delta$-effectively proper map into a product of trees:
  \begin{equation*}
    F \to T_0^F \times \cdots \times T_n^F.
  \end{equation*}
  Projecting to the individual factors we lift the tree metrics back to
  $F$ to obtain a family of (pseudo-)metrics $d_0^F,\dots,d_n^F$ on
  $F$ with the following two properties.  First, each $d_i^F$ is Gromov
  $0$-hyperbolic -- recall here that an $\R$-tree is Gromov
  $0$-hyperbolic.  Second, the identity $F\to F$ is $\rho$-uniformly
  expansive and $\delta$-effectively proper, when the domain is
  equipped with the subspace metric from $X$ and the range the sum
  metric $d_0^F+\cdots+d_n^F$ -- explicitly, for all $x$, $y\in F$ we
  have
  \begin{equation}
\label{CEexplicit}
    \delta(d_X(x,y)) \leq d_0^F(x,y) + \cdots + d_n^F(x,y) 
        \leq \rho(d_X(x,y)).
  \end{equation}

  Let now $\F$ be the collection of finite subsets of $X$ containing a
  fixed base point $x_0$, viewed as a directed set under inclusion.
  Let $\omega$ be an ultrafilter on the {\it set\/} $\F$ with the
  following property:   for every convergent net $(t_F)_{F\in\F}$ of
  real numbers we have
  \begin{equation*}
    \text{$\lim t_F = \omega$-$\lim t_F$},
  \end{equation*}
where the limit of the left is the ordinary limit of the convergent
net, and the limit of the right is the limit with respect to the
ultrafilter $\omega$.

  For each fixed $i=0,\dots,n$ form the 
  ultraproduct $X_i = \omega$-$\lim F_i$, where we write $F_i$ for $F$
  equipped with the metric $d_i^F$.  Precisely, $X_i$ is the space of
  of $\F$-indexed nets $\overline x = (x_{F})$, with
  $x_{F}\in F$, for which $d_i^F(x_{F},x_0)$ is bounded independent of
  $F$.\footnote{As we work with pseudo-metric spaces it is not
    necessary to consider equivalence classes as would be typical.}
  Define a pseudo-metric on $X_i$ by
  \begin{equation*}
    \text{$d_i(\overline x,\overline y) = 
               \omega$-$\lim d_i^F(x_{F},y_{F})$,}
  \end{equation*}
where $\overline x = (x_F)$ and $\overline y = (y_F)$ are elements of
$X_i$.  Define a map $\alpha_i:X\to X_i$ by associating to $x$ the
`constant sequence'; it follows immediately from
(\ref{CEexplicit}) that
\begin{equation*}
  \alpha_i(x)_F = \begin{cases} x, &x\in F \\ 
                  x_0, &\text{else} \end{cases}
\end{equation*}
satisfies the boundedness condition required of elements of $X_i$.

Now, the individual $X_i$ are Gromov $0$-hyperbolic, essentially
because the condition for $0$-hyperbolicity, satisfied by the
individual $d_i^F$, involves only finitely many points and passes to
the limit intact.  Thus, each $X_i$ has finite decomposition
complexity and indeed belongs to $\D_1$.  An elementary application of
permenance shows that 
the product $X_0\times\cdots\times X_n$ belongs to $\C_{n+1}$.  See
Remark~\ref{fiberingremark}. 

The proof concludes with the observation that the product of the
$\alpha_i$ is a coarse embedding $X\to X_0\times\cdots \times X_n$.
To verify this observe that for $x\in X$ we have $\alpha_i(x)=x$ for
$\omega$-almost every $F$.  So, if $y\in X$ as well we have
\begin{equation*}
  \sum_{i=0}^n d_i(\alpha_i(x),\alpha_i(y)) = 
     \text{$\omega$-$\lim$}  \sum_{i=0}^n d_i^F(x,y)
\end{equation*}
which by (\ref{CEexplicit}) is bounded above by $\rho(d_X(x,y))$ and
below by $\delta(d_X(x,y))$.
\end{proof}

\begin{remarks}
  We are unable to find a reference for the existence of an
  ultrafilter as required in the previous proof; we provide instead
  the following simple argument.  In the notation of the proof, the
  collection of all subsets of $\F$ containing a set of the form
  \begin{equation*}
    \{\, F\in\F : F_0\subset F \,\}
  \end{equation*}
  is a filter, the {\it filter of tails in $\F$\/}.  
  An ultrafilter containing the filter of tails is as required --
  existence of an ultrafilter containing a given filter is a classic
  application of Zorn's lemma. 
\end{remarks}

We turn now to a discussion of 
Property $A$, a geometric property guaranteeing coarse embeddability
into Hilbert space \cite{Y2}. We shall show that a
metric space with (weak) finite decomposition complexity has Property
$A$. As a consequence, any sequence of expanding graphs (as a metric
space) does not have (weak) finite decomposition complexity since it
does not admit a coarse embedding into Hilbert space.

To prove the main result of this section, it is convenient
to work with a characterization of Property $A$ introduced by Dadarlat
and Guentner \cite{DG}. A metric family $\U=\{\, U \,\}$ is a {\it
  cover\/} of a metric space $X$ if every $U\in\U$ is a metric subspace
of $X$ and
\begin{equation*}
  X= \bigcup_{u\in\U} U.  
\end{equation*}
A {\it partition of unity on $X$ subordinate to a cover $\U$\/} is a
family of maps $\phi_U:X\to [0,1]$, one for each $U\in\U$, such that
each $\phi_U$ is supported in $U$ and such that for every $x\in X$
\begin{equation*}
  \sum_{U\in\U} \phi_U(x) = 1.
\end{equation*}
We do not require that the sum is finite for any particular $x\in X$.

\begin{defns}
\label{def-unif-exactness}
A metric family $\X$ is exact if 
for every $R>0$ and $\varepsilon>0$ and for every $X\in \X$ there
is a partition of unity $\{\, \psi_U^X \,\}$ on $X$
subordinate to a cover $\U_X$ of $X$
such that the collection
\begin{equation*}
  \U = \{\, U : \text{$U\in \U_X$, some $X$} \,\}
\end{equation*}
is a bounded metric family and such that for every $X\in\X$ and every
$x$, $y\in X$ 
\begin{equation*}
    d(x,y)\leq R \Longrightarrow
    \sum _{U\in \U_X}|\psi_U^X (x)-\psi_U^X (y)|\leq \varepsilon.
\end{equation*}
\end{defns}

\begin{remarks}
  Our definition of exactness is equivalent to the notion of an {\it
    equi-exact family of metric spaces\/} introduced by Dadarlat and
  Guentner (compare \cite{DG} Defs.~2.7 and 2.8).  However, we have
  indexed our partition of unity and cover differently so our definition
  is not identical to the one in \cite{DG}.
\end{remarks}

For the statements of the next two results, recall that a metric space
has {\it bounded geometry\/} if for every $r>0$ there exists an
$N=N(r)$ such that every ball of radius $r$ contains at most $N$
points.

\begin{thms}[\cite{DG} Prop.~2.10]
  A metric space having Property $A$ is exact.  A bounded geometry exact
  metric space has Property $A$. \qed
\end{thms}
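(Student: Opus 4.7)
The plan is to prove the two implications by explicit conversion between the two data types: on one side a Property $A$ family $\{\xi_x\}_{x\in X}$ of $\ell^2$-normalized, finitely supported functions with controlled support radius and small $\ell^2$-variation, and on the other side the partition of unity $\{\psi_U^X\}$ subordinate to a bounded cover $\U_X$ as in Definition~\ref{def-unif-exactness}.

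For the direction \emph{Property $A$ implies exactness}, I would fix $R, \varepsilon > 0$ and select Property $A$ data with $\|\xi_x\|_2 = 1$, $\Supp(\xi_x) \subset B(x, S)$, and $\|\xi_x - \xi_y\|_2 \leq \varepsilon/2$ whenever $d(x,y) \leq R$. Take $\U_X = \{\, B(y, S) : y \in X \,\}$, a bounded cover (diameter $\leq 2S$), and set
\[
\psi_{B(y,S)}^X(x) = |\xi_x(y)|^2.
\]
The identity $\sum_y |\xi_x(y)|^2 = 1$ gives the partition-of-unity property; $\psi_{B(y,S)}^X$ is supported in $B(y, S)$ since $\xi_x(y) \neq 0$ forces $x \in B(y, S)$; and the variation estimate follows from $|a^2 - b^2| = |a-b|(a+b)$ together with Cauchy--Schwarz:
\[
\sum_y \bigl| |\xi_x(y)|^2 - |\xi_z(y)|^2 \bigr| \leq \|\xi_x - \xi_z\|_2 \bigl(\|\xi_x\|_2 + \|\xi_z\|_2\bigr) \leq \varepsilon.
\]

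For the reverse direction \emph{bounded geometry plus exactness implies Property $A$}, I would start from a partition of unity $\{\psi_U^X\}$ subordinate to a bounded cover $\U_X$ of diameter bound $D$. Choose a representative point $y_U \in U$ for each $U$ and define
\[
\xi_x(y) = \biggl(\sum_{\substack{U \in \U_X \\ y_U = y}} \psi_U^X(x)\biggr)^{1/2}.
\]
Then $\|\xi_x\|_2^2 = \sum_U \psi_U^X(x) = 1$; the support lies in $B(x, D)$ because $\psi_U^X(x) \neq 0$ together with $y_U = y$ forces both $x$ and $y$ to lie in $U$; and the pointwise inequality $|\sqrt{a} - \sqrt{b}|^2 \leq |a - b|$ yields
\[
\|\xi_x - \xi_z\|_2^2 \leq \sum_{U \in \U_X} |\psi_U^X(x) - \psi_U^X(z)| \leq \varepsilon
\]
whenever $d(x,z) \leq R$.

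The main subtlety, and the reason for the bounded geometry hypothesis in the reverse direction, is that the construction above produces a Property $A$ family in the $\ell^2(X)$ formulation, while Property $A$ is sometimes formulated using finite subsets of $X \times \N$. Bounded geometry is exactly what guarantees the coincidence of these formulations. I would therefore appeal to \cite{DG} Prop.~2.10 for the reconciliation of these equivalent formulations of Property $A$ in the bounded-geometry setting, which combined with the two explicit constructions above completes the proof.
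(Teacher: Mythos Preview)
The paper supplies no proof of this statement: it is marked \qed\ and attributed to Prop.~2.10 of \cite{DG}, so there is no argument in the paper to compare against. Your two explicit conversions are the standard ones and are correct in substance; this is essentially how the result is established in \cite{DG}.

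One point to fix: in your final paragraph you invoke \cite{DG} Prop.~2.10 to reconcile the $\ell^2(X)$ formulation with Yu's original finite-subset formulation of Property~$A$. But Prop.~2.10 of \cite{DG} is precisely the statement you are proving, so this appeal is circular. What you actually need is the separate (and standard) fact that for bounded geometry spaces the finite-subset, $\ell^1$, and $\ell^2$ characterisations of Property~$A$ coincide; cite that result instead. Concretely, bounded geometry enters your reverse construction by making the ball $B(x,D)$ finite, so that your $\xi_x$ is finitely supported; after that, the usual level-set argument converts the associated probability measures to finite subsets of $X\times\N$. A minor arithmetic point: your variation estimate in the reverse direction gives $\|\xi_x-\xi_z\|_2^2\leq\varepsilon$, hence $\|\xi_x-\xi_z\|_2\leq\sqrt{\varepsilon}$ rather than $\leq\varepsilon$, which is of course still sufficient.
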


\begin{thms}
  A metric family having \textup{(}weak\textup{)} finite decomposition
  complexity is exact.  A bounded geometry metric space having finite
  decomposition complexity has Property $A$.
\end{thms}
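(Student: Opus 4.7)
The plan is twofold. First, the second assertion reduces to the first: exactness of the one-element family $\{X\}$, combined with bounded geometry and the Dadarlat--Guentner theorem recalled just above, yields Property~$A$. The substantive claim is therefore that every $\X\in\C$ is exact, which I prove by transfinite induction on the hierarchy $\C_\alpha$ of Definition~\ref{dfn:heirarchy}, invoking Theorem~\ref{thm:char_complexity}. The base case $\X\in\C_0$ is immediate: for a bounded family, the singleton cover $\U_X=\{X\}$ with $\psi_X^X\equiv 1$ has vanishing variation, and the global cover remains bounded.

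For the inductive step, fix $R>0$ and $\varepsilon>0$, choose a parameter $s\geq 4R/\varepsilon$, and set $r=4s$. By $r$-decomposability, each $X\in\X$ admits $X=X_0\cup X_1$ with $X_i=\bigsqcup_{r\text{-disj}} X_{ij}$ and $X_{ij}\in\Y$ for some $\Y\in\bigcup_{\beta<\alpha}\C_\beta$. Apply the inductive hypothesis (exactness of $\Y$) with parameters $(R+2s,\varepsilon/2)$ to obtain, for each piece $X_{ij}$, a partition of unity $\{\psi_V^{X_{ij}}\}_{V\in\V_{X_{ij}}}$ with uniformly bounded global cover $\V$. Define cutoffs $\phi_{ij}(x)=\max\bigl(0,\,1-d(x,X_{ij})/s\bigr)$ on $X$; because $r>2s$, for each fixed $i$ the $\phi_{ij}$ have pairwise disjoint supports, and since $X_0\cup X_1=X$ we have $\sum_{i,j}\phi_{ij}(x)\geq 1$ pointwise. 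Fix, using choice, a nearest-point selection $\pi_{ij}\colon \{\phi_{ij}>0\}\to X_{ij}$ and set
\[
  \psi_V^X(x)=\frac{\phi_{ij}(x)\,\psi_V^{X_{ij}}(\pi_{ij}(x))}{\sum_{i',j'}\phi_{i'j'}(x)}\qquad(V\in\V_{X_{ij}}),
\]
extended by zero elsewhere. The supports of these functions lie in $s$-neighborhoods of members of $\V$, so the associated cover of $X$ remains uniformly bounded as required by Definition~\ref{def-unif-exactness}.

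The main obstacle is controlling $\sum_V|\psi_V^X(x)-\psi_V^X(y)|$ when $d(x,y)\leq R$. This naturally decomposes into two pieces. The cutoff contribution, bounded via the $(1/s)$-Lipschitz character of each $\phi_{ij}$ and a standard normalization-quotient estimate, is $O(R/s)<\varepsilon/2$. The second contribution, from the pulled-back factors $\psi_V^{X_{ij}}\circ\pi_{ij}$, is bounded by $\varepsilon/2$ because the triangle inequality yields $d(\pi_{ij}(x),\pi_{ij}(y))\leq R+2s$, which is precisely the scale at which exactness of $\Y$ was invoked. The two pieces sum to at most $\varepsilon$, closing the induction. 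The weak-FDC case is identical, with two-piece decompositions replaced by $(d+1)$-piece ones; only the numerical constants change, and the final statement about bounded geometry follows at once by specializing to a one-element family.
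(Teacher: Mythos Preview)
Your argument is essentially the paper's: both show that the class of exact metric families contains the bounded families and is closed under decomposition, and then invoke the characterization of $\C$ (via Theorem~\ref{thm:char_complexity}) to conclude. The only difference is that the paper delegates the inductive step to \cite[Theorem~4.4]{DG} as a black box, whereas you have written out the partition-of-unity construction underlying that theorem explicitly.

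Two small technical points deserve attention. First, with your stated choice $s\geq 4R/\varepsilon$ the cutoff term works out to roughly $4R/s\leq\varepsilon$ rather than $\varepsilon/2$, so you should take $s$ a constant factor larger (e.g.\ $s\geq 8R/\varepsilon$) to make the numerics close; your ``$O(R/s)$'' already signals this is only a matter of adjusting constants. Second, in an arbitrary metric space a nearest point in $X_{ij}$ need not exist, so $\pi_{ij}$ should be chosen as an \emph{approximate} nearest-point map (within some slack $\eta>0$), with the scale $R+2s$ in the inductive call to exactness increased accordingly. Neither issue affects the structure of the proof.
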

\begin{proof}
Let $\E$ be the collection of exact metric families.  By
Theorem~\ref{thm:char_complexity} it suffices to show that $\E$
contains the bounded families and is closed under decomposability.

Clearly, $\E$ contains the bounded families -- for $X$ selected from a
bounded family the partition of unity comprised of the constant function
at $1$, subordinate to the cover $\{\, X \,\}$, fulfills the definition.

It remains to to check that $\E$ is closed under decomposability.  Let
$\X$ be a family and assume $\X$ is decomposable over $\E$ -- for
every $r$ there exists $\Y\in\E$ such that $\X$ 
is $r$-decomposable over $\Y$. We shall apply \cite[Theorem~4.4]{DG}
to show that $\X\in\E$.  Let $\delta>0$.  Select $r$ large enough so
that $r\delta\geq 2$ and obtain $\Y$ as above.  Translating the
notion of decomposability into the language of \cite{DG} we see that
$\Y$ is an equi-exact family with the property that that every $X\in\X$
admits a $r$-separated cover, the pieces of which belong to $\Y$.
Thus, the hypotheses of \cite[Theorem~4.4]{DG} are satisfied and we
conclude that $\X$ is an equi-exact family.  In other words, $\X\in\E$.
\end{proof}

\begin{remarks}
  \cite[Theorem~4.4]{DG} is stated for a single metric space.  The
  same argument can be used to verify that it applies to a metric
  family.
\end{remarks}

\section{Linear groups have FDC}
\label{linearSection}

\setcounter{thm}{0}

We devote the present section to the proof of the following result.

\begin{thm}
\label{linear}
If a countable group admits a faithful, finite dimensional
representation \textup{(}as matrices over a field of arbitrary
characteristic\textup{)}, then it has finite decomposition complexity.
Precisely, let $G$ be a finitely generated subgroup of
$\GL(n,K)$, where $K$ is a field.  If $K$ has characteristic zero then
$G\in\C_{\omega+\fin}$; if $K$ has positive characteristic then $G$ has
finite asymptotic dimension. 
\end{thm}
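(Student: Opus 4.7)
The plan is to reduce to subgroups of $\GL(n,R)$ for $R$ a finitely generated ring, and then exploit the permanence results already in hand together with the standard actions of arithmetic and $S$-arithmetic groups on Bruhat--Tits buildings and symmetric spaces. First I would note that because $G$ is finitely generated, the subring $R \subset K$ generated over the prime field by the matrix entries of a finite symmetric generating set is itself finitely generated, and $G \subseteq \GL(n,R)$. By Coarse Invariance it therefore suffices to prove the theorem for $\GL(n,R)$.

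In positive characteristic, $R$ is a finitely generated $\mathbb{F}_p$-algebra; by Noether normalization $R$ is module-finite over a polynomial subring $A = \mathbb{F}_p[x_1, \ldots, x_d]$, yielding an embedding $\GL(n, R) \hookrightarrow \GL(mn, A)$ for a suitable $m$. The ring $A$ is an $S$-integer ring in the global function field $K_0 = \mathbb{F}_p(x_1, \ldots, x_d)$, so $\GL(mn, A)$ sits diagonally and discretely in the finite product $\prod_{v \in S} \GL(mn, K_{0,v})$ of $\GL$'s over non-archimedean local fields. Each factor acts properly on its Bruhat--Tits building---a finite-dimensional, locally finite simplicial complex of finite asymptotic dimension---and finite products preserve finite asymptotic dimension. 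Coarse Invariance then gives $G$ finite asymptotic dimension.

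In characteristic zero the same template applies, but $G$ may fail to be discrete in any natural ambient Lie group (as the introduction notes, $\SL(n, \Z[\pi])$ has infinite asymptotic dimension), so finite asymptotic dimension is too much to hope for. I would induct on the transcendence degree $d$ of $\mathrm{Frac}(R)$ over $\Q$. The base case $d=0$ gives an $S$-arithmetic subgroup of a number field, which has finite asymptotic dimension via a proper action on a finite product of symmetric spaces and Bruhat--Tits buildings. For $d \geq 1$, apply the Fibering Theorem (or equivalently Corollary~\ref{extension}) to the specialization $\GL(mn, \Z[x_1, \ldots, x_d]) \to \GL(mn, \Z[x_1, \ldots, x_{d-1}])$ induced by $x_d \mapsto 0$. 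The target has finite decomposition complexity by induction, while the fiber, whose additive structure is of the shape $\bigoplus_{i \geq 1}\Z$, lies in $\C_\omega$; combining via Remark~\ref{fiberingremark-finoverfin} then keeps the total complexity in $\C_{\omega + \fin}$.

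The main obstacle will be the characteristic-zero induction: a naive iteration risks introducing a fresh $\omega$ at each of the $d$ inductive steps and producing a bound like $\C_{\omega \cdot d}$ rather than $\C_{\omega + \fin}$. The fix is to organize \emph{all} the infinite-rank additive contributions from the transcendence generators into a single application of the extension/fibering machinery, by embedding $\GL(n, R)$ directly into a finite product of $\GL$'s over completions of $\mathrm{Frac}(R)$ and absorbing the unipotent/additive part in one stroke. The precise form of Remark~\ref{fiberingremark-finoverfin}---``a $\C_{\fin}$-fibered family over a $\C_{\fin}$-base lies in $\C_{\omega + \fin}$''---then yields the claimed optimal bound.
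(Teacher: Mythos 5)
Your reduction to $\GL(n,R)$ for $R$ finitely generated, and the appeal to Coarse Invariance, match the paper. But from there you diverge, and both branches of your argument have genuine gaps.

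\textbf{Positive characteristic.} You claim that $A=\FFF_p[x_1,\ldots,x_d]$ is an $S$-integer ring in the ``global function field'' $K_0=\FFF_p(x_1,\ldots,x_d)$. This is false for $d\geq 2$: global function fields are, by definition, finite extensions of $\FFF_p(t)$ (transcendence degree one), and for higher transcendence degree the set of valuations that are $\leq 1$ on $A$ is infinite, so there is no finite $S$ and no finite product of Bruhat--Tits buildings on which $\GL(mn,A)$ acts properly. This is precisely the obstruction that makes the theorem nontrivial beyond the global-field case already covered by Ji. The paper gets around it by \emph{not} embedding into a locally compact ambient group. Instead it equips $\GL(n,K)$ itself (with $K$ not locally compact) with the pseudo-metric $\ell_\gamma$ coming from a single discrete norm $\gamma$, proves $(\GL(n,K),\ell_\gamma)\in\C_{\fin}$ directly by a fibering argument through the upper-triangular subgroup (Proposition~\ref{Gfad}, discrete case), and then uses finitely many such norms $\gamma_1,\dots,\gamma_q$ — chosen so that the joint balls $B_{\ell'}(r)$ are finite (the strong discrete embeddability condition) — to metrize $\GL(n,A)$ properly.

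\textbf{Characteristic zero.} Your induction on transcendence degree via the specialization $x_d\mapsto 0$ has two problems. First, the kernel of $\GL(mn,\Z[x_1,\ldots,x_d])\to\GL(mn,\Z[x_1,\ldots,x_{d-1}])$ is the congruence subgroup of matrices $\equiv I\pmod{x_d}$; it is non-abelian and nothing like $\bigoplus\Z$, and showing it has FDC is at least as hard as the original problem, so the Fibering Theorem does not apply. Second, your proposed fix — embed into completions of $\mathrm{Frac}(R)$ and ``absorb the additive part in one stroke'' — again presupposes a locally compact target, which does not exist. The paper's resolution is subtler: it chooses finitely many \emph{discrete} norms to define the metric $\ell'$ on $G=\GL(n,A)$, proves $(G,\ell')\in\C_{\fin}$, and then observes that $\ell'$ alone is not proper in characteristic zero. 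The crucial Lemma~\ref{oddpermanence} allows one to restore properness only on the balls $B_{\ell'}(r)$, by introducing \emph{additional, $r$-dependent} archimedean norms $\ell_r$ and checking that $B_{\ell_r}(s)\cap B_{\ell'}(r)$ is finite (this is what the full SDE condition guarantees). The resulting bound $\C_{\omega+\fin}$ then comes from Proposition~\ref{fiberingprop} applied to the $G$-action on $(G,d')$. Your heuristic worry about accumulating $\omega$'s is well-founded, but the cure you propose is not available.
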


\begin{example}
The wreath product
$\Z\wr \Z$ can be realized as a subgroup of $\SL(2,\Z[X,X^{-1}])$ but
does not have finite  asymptotic dimension (it contains an infinite
rank abelian subgroup).  Concretely, $\Z\wr \Z$ is isomorphic to
the group comprised of all matrices of the form 
\begin{equation*}
 \left(\begin{array}{cc} X^n&p(X^2) \\ 0&X^{-n} \end{array}\right),
\end{equation*}
where $n\in \Z$ and $p$ is a Laurent polynomial with $\Z$ coefficients
in the variable $X^2$.  
On the other hand $\Z/p\Z\wr \Z$, which can be similarly realized as a
subgroup of $\SL(2,\Z/p\Z[X,X^{-1}])$, has finite asymptotic dimension by
results of Bell and Dranishnikov \cite{BD2} and Dranishnikov and Smith
\cite{DS}. Indeed, it is an extension with both quotient $\Z$ and kernel 
$\oplus \Z/p\Z$ having finite asymptotic dimension.
These examples show that the conclusion in the theorem is optimal.
\end{example}

In light of our permanence results, the first assertion in
Theorem~\ref{linear} follows from the second.  For the second, let $K$
be a field and let $G$ be a finitely generated subgroup of $\GL(n,K)$.
The subring of $K$ generated by the matrix entries of a finite
generating set for $G$ is a finitely generated domain $A$, and we have
$G\subset \GL(n,A)$.  Thus, we are lead to consider finitely generated
domains, and their fraction fields.

\subsection{Preliminaries on fields}

The proof of Theorem~\ref{linear} relies on a
refinement of the notion of discrete embeddability introduced earlier
by Guentner, Higson and Weinberger \cite{GHW}.  
A {\it norm}\footnote{Guentner-Higson-Weinberger use the term valuation.}
on a field $K$ is a map $\gamma:K\to [0,\infty)$ satisfying, for all
$x$, $y\in K$
\begin{ilist}
  \item $\gamma(x)=0 \; \Leftrightarrow \;x=0$ 
  \item $\gamma(xy)=\gamma(x) \gamma(y)$
  \item $\gamma(x+y)\leq \gamma(x)+\gamma(y)$
\end{ilist}
A norm obtained as the restriction of the usual absolute value on $\CC$
via a field embedding $K\to \CC$ is {\it archimedean\/}.  
A norm satisfying the stronger {\it ultra-metric inequality\/}
\begin{ilist}
\setcounter{ictr}{3}
  \item $\gamma(x+y)\leq \max\{\, \gamma(x),\gamma(y) \,\}$
\end{ilist}
in place of the triangle inequality (3) is {\it non-archimedean\/}.
If in addition the range of $\gamma$ on $K^\times$ is a discrete subgroup of
the multiplicative group $(0,\infty)$ the norm is {\it discrete\/}.
If $\gamma$ is a discrete norm on a field $K$ the subset
\begin{equation*}
  \O = \{\, x\in K \colon \text{$\gamma(x)\leq 1$} \,\}
\end{equation*}
is a subring of $K$, the {\it ring of integers of $\gamma$\/};  the subset
\begin{equation*}
  \m = \{\, x\in K \colon \text{$\gamma(x)<1$} \,\}
\end{equation*}
is a principal ideal in $\O$; a generator for $\m$ is a {\it
  uniformizer\/}.

\begin{defn}
\label{def:sde}
A field $K$ is strongly discretely embeddable (for short SDE) if for
every finitely 
generated subring $A$ of $K$ there exists a finite set $N_A$ of
discrete 
norms on $K$, and  countable set $M_A$ of archimedean norms on $K$
with the following property:  for every real number $k$ there exists 
a finite subset $F_A(k)$ of $M_{A}$ such that for every
$s>0$ the set
\begin{equation*}
  \B_A(k,s)=\{\, a\in A \colon
    \text{$\forall \gamma\in N_A \;\; \gamma(a)\leq e^k$ and 
         $\forall \gamma\in F_A(k) \;\; \gamma(a)\leq s$} \,\}
\end{equation*}
is finite. 
\end{defn}

\begin{remark}[SDE versus DE]
  In discrete embeddability \cite[Definition~2.1]{GHW} the family of
  norms depends only on the subring $A$.  In Definition~\ref{def:sde},
  the subset of discrete norms depends only on $A$, but is required to
  be finite; the subset $F_A(k)$ of archimedean norms is also 
  is required to be finite, but depends on $k$.  One readily
  verifies that a strongly discretely embeddable field in the sense of
  Definition~\ref{def:sde} is discretely embeddable in the sense of
  \cite{GHW}.
\end{remark}

\begin{remark}
A field of positive characteristic admits no archimedean norms.
In particular, a field of nonzero characteristic is strongly discretely
embeddable if and only if for every finitely generated subring $A$
there exists a finite set $N_A$ of (discrete) norms such that for
every $k\in \N$ the set
\begin{equation*}
  \B_A(k) = \{\, a\in A \colon 
         \text{$\forall \gamma\in N_A \; \gamma(a)\leq e^k$} \,\}
\end{equation*}
is finite.  
\end{remark}

\begin{example}
Let $q$ be a positive power of the prime $p$ and let $\FFF_q$ be the
finite field with $q$ elements.  Let $K=\FFF_q(X)$ be the rational
function field.   We shall show that $K$ satisfies
the definition of SDE with respect to subring of polynomials
$A=\FFF_q[X]\subset K=\FFF_q(X)$.  Indeed, consider the norm 
\begin{equation}
\label{degreenorm}
  \gamma(P/Q)=e^{deg(P)-deg(Q)}, 
\end{equation}
where $P$ and $Q$ are nonzero polynomials.
For all $k\in \N$, we have 
\begin{equation*}
  B_A(k)=\{a\in A : \gamma(a)\leq e^k\} = \FFF_q[X]_k,
\end{equation*}
the set of polynomials of degree at most $k$.  As this set is already
finite, it suffices to take $N_A=\{\gamma\}$. 

A similar analysis applies to
$A=\FFF_q[X_1,X_2,\ldots,X_n]\subset K=\FFF_q(X_1,X_2,\ldots, X_n)$.
Indeed, observe that $K=K_i(X_i)$, where
$K_i=\FFF_q(X_1,\ldots,\hat{X_i},\ldots, X_n)$.  Thus, in analogy with 
(\ref{degreenorm}), we can define a
norm reflecting the degree in the variable $X_i$:
$\gamma_i(P/Q)=e^{deg(P)-deg(Q)}$, where $P,Q\in K_i[X_i]$.
The definition is satisfied with $N_A=\{\gamma_i : 1\leq i\leq n\}$. 
\end{example}

\begin{example}
The case of characteristic zero is more involved, since we will have
to deal with archimedean norms.  Let us treat the simplest non trivial
case:  $A=\Z[X]\subset K=\Q(X)$.   The set $N_A$ of discrete norms
will again contain the single norm $\gamma$, defined as above in
(\ref{degreenorm}).  As in the previous example, 
\begin{equation*}
  B_A(k)=\{\, a\in A : \gamma(a)\leq  e^k\} = \Z[X]_k,
\end{equation*}
the polynomials of degree at most $k$.  Unfortunately, since the
coefficients are integers, this set is infinite -- we shall need
to add some archimedean norms.  

Evaluation of a rational function at a transcendental element 
$t\in \CC$ defines an embedding $\Q(X)\subset \CC$, and hence an
archimedean norm on $\Q(X)$.  Consider the set 
$M_{A}=\{\gamma^0,\gamma^1\ldots \}$, where the $\gamma^i$ are
archimedean norms constructed in this way from distinct transcendental
elements, $t_0,\dots,t_k$.  We are to show that for each $s$ the set
\begin{equation*}
  \B_A(k,s) = \{\, P\in \Z[X]_k : 
      \text{$|P(t_i)|\leq s$ for all $i=0,\dots,k$} \,\}
\end{equation*}
is finite.  This is, however, straightforward:  the assignment
\begin{equation*}
  P \mapsto (P(t_0), \dots P(t_k))
\end{equation*}
defines an isomorphism of complex vector spaces 
$\CC[X]_k\to \CC^{k+1}$ (with the obvious notation) and 
$\Z[X]_k\subset \CC[X]_k$ is discrete.

The multi-variable case $\Z[X_1,\ldots, X_n]\subset \Q(X_1,\ldots,X_n)$  can be
treated as in the previous example, by replacing the single discrete
norm $\gamma$ by the discrete norms $\gamma_i$, for $i=1,\ldots,n$.
\end{example}

\begin{remark}
With these two examples in hand, the reader can omit the remainder of
this section and proceed directly to
Section~\ref{lineargroupSubsection}  to complete a proof that
$\GL(d,\FFF_q[X_1,\ldots, X_n])$ has finite asymptotic dimension, and
that $\GL(d,\Z[X_1,\ldots, X_n])$ has finite decomposition complexity.
\end{remark}

\begin{prop}
A finitely generated field is strongly discretely embeddable.  
\end{prop}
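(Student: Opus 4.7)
The plan is to reduce the proposition to the two examples already worked out, namely $\FFF_q(X_1,\ldots,X_n)$ and $\Q(X_1,\ldots,X_n)$, by proving that strong discrete embeddability passes to finite separable algebraic extensions. Any finitely generated field $K$ admits a separating transcendence basis $X_1,\ldots,X_n$ over its prime field $k_0\in\{\Q,\FFF_p\}$ (both perfect), so that $K$ is a finite separable extension of the purely transcendental field $L=k_0(X_1,\ldots,X_n)$. Since the examples show that $L$ is SDE, the whole proposition reduces to the claim: \emph{if $L$ is SDE and $K/L$ is a finite separable extension of degree $d$, then $K$ is SDE.}

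To prove the claim, I would fix a finitely generated subring $A\subset K$ and, after enlarging $A$ (which only strengthens the SDE conclusion, since the same norms will witness SDE for any smaller subring), assume $A$ contains an $L$-basis $e_1,\ldots,e_d$ of $K$ with $e_1=1$. Choose a finitely generated subring $A_0\subset L$ containing both the structure constants $c_{ij}^k$ defined by $e_ie_j=\sum_k c_{ij}^k e_k$ and the basis coefficients of a finite generating set for $A$; then $A\subset A_0 e_1+\cdots+A_0 e_d$. Applying the SDE hypothesis to $A_0$ supplies a finite set $N_{A_0}$ of discrete norms and a countable set $M_{A_0}$ of archimedean norms on $L$ as in Definition~\ref{def:sde}. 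Classical valuation theory provides finitely many extensions of each discrete norm on $L$ to $K$, and exactly $d$ extensions of each archimedean norm on $L$ to $K$ (one per embedding $K\hookrightarrow\CC$ extending the given embedding of $L$). I would then define $N_A$, $M_A$, and $F_A(k)$ to be the respective unions of extensions of the norms in $N_{A_0}$, $M_{A_0}$, and $F_{A_0}(k')$, for a suitably chosen $k'=k'(k)$.

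The final, and most delicate, step is an inverse estimate allowing one to recover the coefficients of an element $a=\sum a_ie_i\in A$ from the values of the extended norms at $a$. By separability, the matrix $(\sigma_j(e_i))$, with $\sigma_j$ ranging over the embeddings of $K$ over $L$, is invertible, so Cramer's rule expresses each $a_i$ as a fixed polynomial combination of the $\sigma_j(a)$. This yields an inequality of the form
\[
\gamma(a_i)\ \leq\ C\,\max_j\tilde\gamma^{(j)}(a)
\]
for every norm $\gamma$ on $L$ and every choice of extensions $\tilde\gamma^{(j)}$ to $K$, with $C$ depending only on the basis and $\gamma$. Consequently, if $a\in\B_A(k,s)$ then each coefficient $a_i$ lies in $\B_{A_0}(k',s')$ for $k',s'$ depending only on $k$ and $s$; this set is finite by the SDE of $L$, and since $a$ is determined by its $d$ coefficients drawn from a finite set, $\B_A(k,s)$ is finite.

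The main obstacle is to keep $F_A(k)$ finite in the archimedean case: one must verify that for a fixed $k$, enough extensions of each norm in the (finite) set $F_{A_0}(k')$ are needed to invert the coefficient matrix uniformly, so that $F_A(k)$ can indeed be taken as the finite union of the $d$ extensions of each such norm. A secondary technical point is handling the interaction between discrete and archimedean extensions -- one must verify that the extensions of norms in $N_{A_0}$ to $K$ remain discrete and together with finitely many of the archimedean extensions cut out a finite subset of $A$. Both points are standard once the reduction above is in place, but they require careful bookkeeping of the constant $C$ and of the finite set $F_A(k)$.
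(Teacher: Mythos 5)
Your approach is genuinely different from the paper's, but it contains a real gap in the reduction step.

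The paper proves the proposition by induction on the structure of a finitely generated field: it shows (i) finite fields and $\Q$ are SDE; (ii) SDE is stable under transcendental extensions; (iii) SDE is stable under finite extensions, using a passage to the normal closure, the transitive $\Aut_K(L)$-action on conjugate roots, and the elementary symmetric functions expressing the coefficients of the characteristic polynomial in terms of those roots. Your route instead observes that because the prime field is perfect, a finitely generated field $K$ has a separating transcendence basis, reducing everything to showing that a \emph{finite separable} extension of $L=k_0(X_1,\dots,X_n)$ is SDE; for that step you replace the symmetric-function argument with the invertibility of the matrix $(\sigma_j(e_i))$ and Cramer's rule. This is a perfectly legitimate alternative for the finite-extension step, and arguably cleaner since separability removes the need to worry about inseparable elements and repeated roots; the circularity one might worry about in defining $k'$ in terms of $k$ is avoidable exactly as you hint, since one can first fix $k'$ using only the finite set $N_{A_0}$, and only afterward introduce $F_{A_0}(k')$ and its constant into $s'$.

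The gap is in the sentence ``Since the examples show that $L$ is SDE, the whole proposition reduces to the claim.'' The worked examples in the paper verify the SDE condition of Definition~\ref{def:sde} only for the \emph{single} subring $\FFF_q[X_1,\dots,X_n]$ (resp.\ $\Z[X_1,\dots,X_n]$), whereas the definition quantifies over \emph{every} finitely generated subring. For a subring such as $A_0=\Z[X,X^{-1}]\subset\Q(X)$, the degree norm $\gamma(P/Q)=e^{\deg P-\deg Q}$ of the example does not even keep $\B_{A_0}(0)$ finite, since $\gamma(X^{-k})=e^{-k}\leq 1$ for all $k$; one must add, as the paper does, the further discrete norms $\gamma_{Q_i}$ attached to the prime polynomials $Q_i$ appearing in denominators. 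So to carry out your reduction you still need something equivalent to the paper's transcendental-extension lemma, which is the most technically involved part of the paper's proof. Without it, the claim that $L$ is SDE is unjustified, and the reduction does not close.
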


This proposition follows from an adaptation either of the proof of
\cite[Theorem~2.2]{GHW}, or of \cite[Proposition~1.2]{AS} (which
relies on Noether's normalization theorem).  Below, we follow
\cite{GHW}.  The proof comprises three lemmas: in the first we show
that finite fields and the field of rational numbers are SDE; in the
second and third we show that SDE is stable under transcendental and
finite extensions, respectively.

\begin{lem}[Finite fields and the rationals]
Finite fields and the field of rational numbers are strongly
discretely embeddable.
\end{lem}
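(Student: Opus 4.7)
The plan is to verify Definition~\ref{def:sde} by hand in each case, exploiting the fact that finite fields carry no meaningful arithmetic and that $\Q$ has a completely understood set of places.

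For a finite field $K$, any finitely generated subring $A$ is itself finite, being a subset of the finite field $K$. I would simply take $N_A = \emptyset$ (so the discrete-norm condition is vacuous) and $M_A = \emptyset$ (so $F_A(k) = \emptyset$ for every $k$); then $\B_A(k,s)$ coincides with $A$ and is automatically finite for every $k$ and $s$, as required.

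For $\Q$, the first step is to observe that any finitely generated subring $A \subseteq \Q$ is contained in $\Z[1/N]$ for some positive integer $N$, where $N$ can be taken to be the product of the denominators of a finite generating set. Letting $p_1,\ldots,p_r$ denote the primes dividing $N$, I would take $N_A = \{|\cdot|_{p_1},\ldots,|\cdot|_{p_r}\}$ (with the standard normalization $|p_i|_{p_i} = 1/p_i$) and $M_A = F_A(k) = \{|\cdot|_\infty\}$, the ordinary absolute value, independent of $k$.

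The remaining step is the key estimate. Given $a \in A$, write $a = m/n$ in lowest terms with $n > 0$; since $a \in \Z[1/N]$, the denominator factors as $n = p_1^{e_1}\cdots p_r^{e_r}$. Each constraint $|a|_{p_i} \leq e^k$ forces $p_i^{e_i} \leq e^k$, so every $e_i$ is bounded in terms of $k$ alone, and hence $n \leq C(k)$ for some constant depending only on $k$. The further constraint $|a|_\infty \leq s$ then yields $|m| \leq s\cdot C(k)$, so $m$ and $n$ each range over finite sets, and $\B_A(k,s)$ is finite. There is no genuine obstacle beyond this bookkeeping; the one subtle point worth flagging is that, because elements of $A \subseteq \Z[1/N]$ have denominators supported on $\{p_1,\ldots,p_r\}$, the finite set $N_A$ of discrete norms suffices, with no need to include $p$-adic norms for primes $p$ coprime to $N$.
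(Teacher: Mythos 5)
Your proof is correct and takes essentially the same route as the paper: for finite fields the assertion is immediate (the paper just says "obvious"; your choice $N_A=M_A=\emptyset$ makes this precise), and for $\Q$ the paper likewise sets $N_A$ to be the $p$-adic norms at the primes dividing the denominator and $M_A$ to be the single archimedean norm, leaving the verification to the reader. Your bookkeeping — bounding the denominator of $a$ in lowest terms via the $p$-adic constraints, then bounding the numerator via the archimedean one — is exactly the check the paper omits, and it is carried out correctly.
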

\begin{proof}
The assertion is obvious for finite fields.  Turning to the rationals, 
let $A$ be a finitely generated subring of $\Q$.  Thus, there exists a
positive integer $n$ such that $A=\Z[1/n]$.  
Let $N_A=\{\; \gamma_{p_1},\dots, \gamma_{p_{m}}  \;\}$ where, for
each prime divisor $p_i$ of $n$, we denote by $\gamma_{p_i}$  the
(discrete) $p$-adic norm on $\Q$. Let $M_{A}$ consist solely of the
archimedean norm coming from the inclusion $\Q\subset\CC$.  We leave
to the reader to verify that these choices satisfy
Definition~\ref{def:sde}. 
\end{proof}

\begin{lem}[Transcendental extensions]
Strong discrete embeddability is stable under the formation of
transcendental extensions.   
\end{lem}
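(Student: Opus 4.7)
The plan is to lift the norms witnessing SDE of $K$ to $K(X)$, using Gauss extensions in the non-archimedean case and evaluation at transcendental complex numbers in the archimedean case. Let $A$ be a finitely generated subring of $K(X)$. Choosing a finite generating set and writing each generator as $p_i/q_i$ with $p_i,q_i\in K[X]$, let $B$ be the finitely generated subring of $K$ spanned by the coefficients of all $p_i$ and $q_i$, and set $q=\prod_i q_i$. Then $A\subset B[X,q^{-1}]$. Apply SDE of $K$ to $B$ to obtain a finite set $N_B$ of discrete norms and a countable set $M_B$ of archimedean norms on $K$.

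The set $N_A$ is built from three kinds of discrete norms on $K(X)$. For each $\gamma\in N_B$, take the Gauss extension $\tilde\gamma$, defined on polynomials by $\tilde\gamma(\sum a_iX^i)=\max_i\gamma(a_i)$ and extended multiplicatively to $K(X)$; it remains discrete. Include also the degree norm $\gamma_\infty(p/q)=e^{\deg p-\deg q}$, and for each irreducible factor $\pi$ of $q$ in $K[X]$ the associated $\pi$-adic discrete norm. For $M_A$, each $\gamma\in M_B$ is the restriction of a complex embedding $K\hookrightarrow\CC$; for any complex number $t$ transcendental over the image of $K$, sending $X\mapsto t$ extends this to an archimedean norm on $K(X)$. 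Fix a countable family $\{t_j\}_{j\in\N}$ of such $t$ and let $M_A$ collect all resulting norms.

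To verify Definition~\ref{def:sde}, fix $k>0$. The degree norm together with the $\pi$-adic norms force any $a\in A$ bounded by all norms in $N_A$ to have the form $a=P(X)/q(X)^n$ with both $\deg P$ and $n$ bounded in terms of $k$; the Gauss extensions $\tilde\gamma$ then bound each coefficient of $P$ under every $\gamma\in N_B$. We define $F_A(k)$ by pairing the finite set $F_B(k')\subset M_B$ provided by SDE of $B$ at an appropriate scale $k'$ determined by $k$ with enough transcendentals $t_0,\ldots,t_d\in\{t_j\}$, where $d$ exceeds the degree bound from $k$. The main subtlety, and the only serious obstacle, is the archimedean side: since Gauss extensions are not available for archimedean norms, coefficient bounds must be recovered from evaluation data. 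Here we use that the Vandermonde map $P\mapsto(P(t_0),\ldots,P(t_d))$ is a linear isomorphism $\CC[X]_d\to\CC^{d+1}$, so bounds $|P(t_j)|_\gamma\leq s$ translate into bounds on each coefficient of $P$ in the $\gamma$-norm. Applying SDE of $B$ to these coefficient bounds leaves only finitely many possibilities for $P$, and hence only finitely many elements in $\B_A(k,s)$.
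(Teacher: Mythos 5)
Your proposal is correct and follows essentially the same route as the paper's proof: Gauss (sup-of-coefficients) extensions of the discrete norms, the degree norm plus denominator valuations to bound $\deg P$ and the denominator exponents, complex embeddings extended by sending $X$ to transcendentals off the relevant subfield, and the Vandermonde isomorphism $\CC[X]_d\to\CC^{d+1}$ to convert evaluation bounds into coefficient bounds. The only differences are cosmetic (you swap the roles of the symbols $A$ and $B$, and you factor a single $q=\prod q_i$ rather than fixing monic prime polynomials $Q_1,\dots,Q_m$); the substance of the argument is the same.
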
 
\begin{proof}
  We shall show that the field of rational
  functions over a (countable) SDE field is itself SDE.  To this end, let $K$ be
  an SDE field and let $B$ be a
  finitely generated subring of $K(X)$.  There exist monic prime
  polynomials $Q_1,\ldots, Q_m\in K[X]$ and a finitely generated
  subring $A$ of $K$ such that 
  $B\subset A[X][Q_1^{-1}, \ldots, Q_m^{-1}]$.  According to
  Definition~\ref{def:sde}, applied to the subring $A$ of $K$, we
  obtain (finitely many) discrete norms $N_A$, and (countably many)
  archimedean norms $M_A$. 

Let $N_B$ be the following (finite) set of discrete norms on $K(X)$:
\begin{ilist}
  \item the elements of $N_A$ extended to $K(X)$;
\end{ilist}
(At one place below we shall require the fact that if
$\gamma$ is a discrete norm on $K$ then its extension $\tilde\gamma$
to $K(X)$ satisfies $\tilde{\gamma}(P) = \max \{\, \gamma(a) \,\}$,
where the maximum is taken over the coefficients $a$ of the polynomial
$P\in K[X]$.)
\begin{ilist}\setcounter{ictr}{1}
  \item the norm $\gamma_{\infty}(P/Q)=e^{deg(P)-deg(Q)}$;
  \item the norms $\gamma_{Q_i}(PQ_i^l)=e^{-l}$ where $\gcd(Q_i,P)=1$
    and $l\in\Z$ \newline
    (there are $m$ norms of this type, one for each $i=1,\dots,m$). 
\end{ilist}

Each of the archimedean norms $\gamma\in M_A$ arises from an embedding
of fields $\phi_\gamma:K\to \CC$.  
Let $t_0,t_1,\dots$ be a countable family of distinct transcendentals
in $\CC$ that are {\it not\/} in the subfield of $\CC$
generated by the images of these embeddings -- to see that this is
possible, observe that since
both $M_A$ and $K$ are countable so is the subfield generated by the
images.  With these
choices, each embedding $\phi_\gamma$ extends to an embedding
$K(X)\to\CC$ by sending $X$ to $t_i$; we denote 
the corresponding norm on $K(X)$ by $\gamma_i$.  Let
\begin{equation*}
  M_B = 
   \{\, \gamma_i \colon \text{$\gamma\in M_A$ and $i=0,1,\dots$} \,\},
\end{equation*}
a countable set of archimedean norms on $K(X)$.  We record for future
use that in our notation $\gamma_i(P)=|\phi_\gamma(P)(t_i)|$, for
every $P\in K[X]$; here, $\phi_\gamma(P)\in \CC[X]$ is the
polynomial obtained by applying $\phi_\gamma$ to the coefficients of
$P$.

We shall show that $N_B$ and $M_B$ satisfy the condition in
Definition~\ref{def:sde}.  For this, let $k>0$ be given.  An element
of $\B_{B}(k)$ necessarily has the form
\begin{equation}
\label{bbk}
  \frac{P}{Q}=\frac{P}{Q_1^{n_1}\ldots Q_m^{n_m}},
\end{equation}
where $n_1,\ldots, n_m$ are $\leq k$, so that also $\deg P \leq
k'=k\left(1+\sum \deg Q_i\right)$ -- here we are using the norms in $N_B$
of types (2) ad (3) above.  In particular, the set of possible
denominators $Q$ is finite; denote it by $\mathcal Q_k$.  Set 
\begin{equation*}
  k''= k + \log \max \{\, \gamma(Q) \colon 
         Q\in {\mathcal Q_k}, \gamma\in N_B \,\}
\end{equation*}
(actually, taking the maximum over $\gamma\in N_B$ of type (1) would
suffice).   Summarizing, an element of $\B_{B}(k)$
has the form (\ref{bbk}) in which $Q$ belongs to the finite set
$\mathcal Q_k$, the degree of $P$ is at most 
$k'$ and all coefficients of $P$ belong
to $\B_{A}(k'')$ -- the last assertion follows from the formula for the
extension of an element of $N_A$ to an element of $N_B$ of type (1).

Define a finite set of archimedean norms on $K(X)$ by 
\begin{equation*}
  F_B(k) = \{\, \gamma_i\in M_B \colon
                  \text{$\gamma\in F_A(k'')$ and $i=0,\dots,k'$} \,\}
\end{equation*}
Let now $s>0$; it
remains to show that $\B_{B}(k,s)$ is finite.  We claim
that an element of $\B_{B}(k,s)$ satisfies, in addition to the
conditions outlined above for membership in $\B_{B}(k)$, the following
condition:  there exists an
$s''$ such that for every 
norm $\gamma\in F_A(k'')$ the value of $\gamma$ on each coefficient of
$P$ is at most $s''$; in other words, form some $s''$ the
coefficients of $P$ belong to $\B_{A}(k'',s'')$.  If indeed
this is the case, the proof is complete -- $\B_{A}(k'',s'')$ is a
finite set, so only finitely many polynomials $P$ can appear in
(\ref{bbk}) which, combined with our remarks above concludes the
proof.  

It remains to prove the existence of $s''$.  Let
\begin{equation*}
  s' = s \cdot \max \{\, \gamma(Q) \colon
          \text{$Q\in\mathcal Q_k$, $\gamma\in F_B(k)$} \,\}
\end{equation*}
so that for an element of $\B_{B}(k,s)$ written in the form
(\ref{bbk}) we have $\gamma_i(P)\leq s'$ for every 
$\gamma\in F_A(k'')$ and $i=0,\dots,k'$.
Now, the linear transformation 
\begin{equation*}
   P\longmapsto (P(t_0),\dots,P(t_{k'})), \qquad
   \CC[X]_{k'} \to {\bigoplus}_0^{k'} \CC
\end{equation*}
is invertible -- identifying a polynomial
$P\in \CC[X]_{k'}$ with the column vector formed by its coefficients
it is given by the Vandermonde matrix corresponding to the
distinct transcendentals $t_0,\dots,t_{k'}$.  The condition that
$\gamma_i(P)\leq s'$ for every $i=0,\dots,k'$ and $\gamma\in F_A(k'')$
means that the coefficients of the polynomial $\phi_\gamma(P)$ lie in the
subset of the domain mapping into the compact subset of the
range defined by the requirement that the absolute value of each entry
is at most $s'$.  This is a compact set so that there is an $s''$ such that the
absolute value of the coeffecients of the polynomial $\phi_\gamma(P)$
are bounded by $s''$; in other words, the coefficients of $P$ are in
the set $\B_{A}(k'',s'')$ as required.  
\end{proof}

\begin{lem}[Finite extensions]
Strong discrete embeddability is stable under the formation of finite
extensions. 
\end{lem}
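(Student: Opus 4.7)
The plan is to reduce to the separable case and then apply a Vandermonde-type argument to pull the SDE data back from $L$ to $K$. Assume $K$ is strongly discretely embeddable and $L/K$ is finite. A purely inseparable extension in positive characteristic can be handled separately: each discrete norm on $K$ admits a unique extension via $\gamma(\beta)=\gamma(\beta^{p^n})^{1/p^n}$ whenever $\beta^{p^n}\in K$, and there are no archimedean norms to worry about in this setting. So it suffices to treat the maximal separable subextension, in which case the primitive element theorem gives $L=K(\alpha)$ for $\alpha$ a root of an irreducible separable $p\in K[X]$ of degree $d=[L:K]$, and every element of $L$ has a unique expression $b=\sum_{i=0}^{d-1}a_i\alpha^i$ with $a_i\in K$.

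Given a finitely generated subring $B\subseteq L$, I would choose a finitely generated subring $A\subseteq K$ containing the coefficients of $p$ together with the $K$-coefficients of a finite generating set for $B$ in the basis $1,\alpha,\ldots,\alpha^{d-1}$; then $B\subseteq A[\alpha]=A\oplus A\alpha\oplus\cdots\oplus A\alpha^{d-1}$. Applying Definition~\ref{def:sde} to $A\subset K$ produces finite $N_A$, countable $M_A$, and the finite sets $F_A(k)$. I would then take $N_B$ to be the union of all extensions to $L$ of the norms in $N_A$ --- finitely many, one for each irreducible factor of $p$ over the completion of $K$ at each $\gamma\in N_A$ --- and take $M_B$ to be all extensions to $L$ of the norms in $M_A$, namely the $d$ archimedean norms induced by the embeddings $L\to\CC$ extending $\phi_\gamma:K\to\CC$ via the $d$ complex roots of $\phi_\gamma(p)$. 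Set $F_B(k)$ to be the extensions of the norms in $F_A(k')$, for a parameter $k'=k'(k)$ fixed below.

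The heart of the proof is a coefficient-recovery estimate. For archimedean $\gamma\in M_A$, the $d$ values $\gamma'_j(b)=|\sum_i\phi_\gamma(a_i)\beta_j^i|$, ranging over the extensions $\gamma'_j$ at the roots $\beta_1,\ldots,\beta_d$ of $\phi_\gamma(p)$, determine $\phi_\gamma(a_i)$ by inversion of the Vandermonde matrix on $\beta_1,\ldots,\beta_d$, yielding $|\phi_\gamma(a_i)|\leq C_\gamma\max_j\gamma'_j(b)$ with $C_\gamma$ depending only on $\gamma$ and $p$. An analogous ultrametric estimate, using the decomposition $L\otimes_K\hat K_\gamma\cong\prod_j L_j$ into local factors, handles each discrete $\gamma\in N_A$ and produces $\gamma(a_i)\leq C_\gamma\max_j\gamma'_j(b)$. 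Choosing $k'$ and $s'$ so as to absorb these finitely many constants $C_\gamma$, any $b\in\B_B(k,s)$ has each coefficient $a_i$ in $\B_A(k',s')$, a finite set; hence $\B_B(k,s)$ is itself finite, verifying SDE for $L$. The main technical obstacle is the careful bookkeeping of constants, ensuring that $k'$ depends only on $k$ (not on $s$) and that $F_B(k)$ is finite --- neither of which could have been arranged under the weaker notion of discrete embeddability from \cite{GHW}, which is precisely why the refinement to SDE is essential.
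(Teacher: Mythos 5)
Your proof is correct and takes a genuinely different route from the paper. The paper avoids the separable/inseparable distinction entirely by first enlarging $L$ to its normal closure over $K$ (legitimate, since a subfield of an SDE field is trivially SDE), then choosing $N_B$ and $M_B$ to be unions of $\Aut_K(L)$-orbits of extended norms, so that the set $\B_B(k,s)$ is $\Aut_K(L)$-invariant. Viewing each $b$ as a $K$-linear operator on $L$, the roots of its characteristic polynomial $\Xi_b$ are Galois conjugates of $b$ and hence all lie in $\B_B(k,s)$; the coefficients of $\Xi_b$, being bounded-degree symmetric functions of those roots, lie in a controlled finite set $\B_A(k',s')$, and finiteness of $\B_B(k,s)$ follows because its elements are roots of finitely many polynomials. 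You instead apply the primitive element theorem in the separable case, write $b=\sum_i a_i\alpha^i$, and recover the $a_i$ by inverting the Vandermonde matrix at archimedean places (resp.\ the decomposition $L\otimes_K\hat{K}_\gamma\cong\prod_j L_j$ at non-archimedean ones), handling the purely inseparable layer separately via the Frobenius. The paper thus trades your separability split and Vandermonde bookkeeping for a detour through the Galois closure and the language of symmetric functions; your version is more hands-on and yields essentially explicit constants $C_\gamma$. Both require exactly the dependency tracking you emphasize --- $k'$ must depend only on $k$ and not on $s$, and $F_B(k)$ must be finite --- which is indeed the substantive content of SDE over DE, as you observe. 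One small imprecision: the phrase ``it suffices to treat the maximal separable subextension'' is not quite right; what you actually do (correctly) is prove stability of SDE under separable and under purely inseparable extensions separately, and then compose through the tower $K\subseteq K^{\mathrm{sep}}\cap L\subseteq L$; the statement would be cleaner if the tower were made explicit.
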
 
\begin{proof}
We shall show that a finite extension of an SDE field is SDE.  To this
end, let $L$ be a finite extension of an SDE field $K$.  As a
subfield of an SDE field is itself SDE we may, enlarging $L$ as
necessary, assume that $L$ is a finite normal extension of $K$.

Let $B$ be a finitely generated subring of $L$.  Fix a basis of the
$K$-vector space $L$ and let $A$ be a finitely generated
subring of $K$ containing the matrix entries
of each element of $B$, viewed as a $K$-linear transformation of $L$.
This is possible -- we may take for $A$ any subring containing the
matrix entries of a finite generating set for $B$.

According to Definition~\ref{def:sde} applied to the subring $A$ of
$K$, we obtain (finitely many) discrete norms $N_A$ and (countably
many) archimedean norms $M_A$.  Now, every
discrete norm on $K$ admits at least
one extension to a discrete norm on $L$; a similar statement applies
to archimedean norms.  See \cite[Chapter 12]{Lang}.  Moreover, the
finite group $\Aut_K(L)$ of $K$-automorphisms of $L$ acts on
the set of extensions of each individual norm on $K$.  

Let $N_B$ 
be a (finite) set of discrete norms on $L$ comprising exactly one
$\Aut_K(L)$-orbit of extensions of each norm in $N_A$; let $M_B$ be a
(countable) set of archimedean norms on $L$ defined similarly with
respect to $M_A$.  Finally, for each $k$ let
\begin{equation*}
  F_B(k) = \{\, \gamma\in M_B \colon
     \text{$\gamma$ extends a norm in $F_A(k')$} \,\};
\end{equation*}
here 
$k' = \max \{\, |f(x_0,\dots,x_n)| \,\}$, where $n$ is the degree of
the extension and the maximum is over
all elementary symmetric functions $f$ and all tuples of real numbers
$x_0,\dots,x_n$ each of which has absolute value at most $k$.  Each
$F_B(k)$ is a finite set of archimedean norms invariant under the
action of $\Aut_K(L)$.

Let $k$ and $s>0$ be given.  We must show that $\B_{B}(k,s)$ is
finite.  We shall do this by showing that the coefficients of the
characteristic polynomial of each element of $\B_{B}(k,s)$, again
viewed as a $K$-linear transformation of $L$, belong to the finite set
$\B_{A}(k',s')$ where $s'$ is defined in terms of $s$ as $k'$ was in
terms of $k$.  Thus, every element of $\B_{B}(k,s)$ is the root of one
of finitely many polynomials and $\B_{B}(k,s)$ is itself finite.

Let now $b\in\B_{B}(k,s)$.  Since the extension is normal, the minimal
polynomial of $b$ (in the sense of field theory) splits in $L$ and the
group $\Aut_K(L)$ acts transitively on its roots.  The minimal
polynomials of $b$ in the sense of field theory and as a $K$-linear
transformation of $L$ agree.  Hence the group $\Aut_K(L)$ acts
transitively on the roots of the characteristic polynomial $\Xi_b $ of
$b$.\footnote{Recall that the characteristic polynomial and the
  minimal polynomial of a linear transformation have the same roots
  (in the algebraic closure of the ground field), possibly with
  different multiplicities.}  It follows that all the roots of the
$\Xi_b$ belong to $\B_{B}(k,s)$.  Since the coefficients of
$\Xi_b$ are symmetric functions of degree $\leq n$ of the roots, every
such coefficient belongs to $\B_{A}(k',s')$ by virtue of the
definitions of $k'$ and $s'$.
\end{proof}

\subsection{The general linear group}
\label{lineargroupSubsection}

Let $\gamma$ be a norm on a field $K$.  Following Guentner, Higson and
Weinberger define a (pseudo)-length function $\ell_{\gamma}$ on
$\GL(n,K)$ as follows: if $\gamma$ is non-archimedean
\begin{equation}
\label{ell-discrete}
  \ell_{\gamma}(g)=\log\max_{ij} \{\, \gamma(g_{ij}),\gamma(g^{ij}) \,\},  
\end{equation}
where $g_{ij}$ and $g^{ij}$ are the matrix coefficients of $g$ and
$g^{-1}$, respectively; if $\gamma$ is archimedian, arising from an embedding
$K\hookrightarrow \CC$ then 
\begin{equation}
\label{ell-arch}
  \ell_{\gamma}(g) = \log\max \{\, \|g\|,\|g^{-1}\| \,\},
\end{equation}
where $\|g\|$ is the norm of $g$ viewed as an element of $\GL(n,\CC)$, and
similarly for $g^{-1}$.  The following proposition is central to our
discussion of linear groups.

\begin{prop}
\label{Gfad}
Let $\gamma$ be an archimedean or a discrete norm on a field $K$.  The
group $\GL(n,K)$, equipped with the 
\textup{(}left-invariant pseudo-\textup{)}metric induced by $\ell_\gamma$,
is in $\C_{\fin}$.
\end{prop}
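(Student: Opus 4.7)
The plan is to split into the archimedean and non-archimedean cases and, in each, to exhibit a coarse embedding of $(\GL(n,K),d_\gamma)$ into a space of known finite asymptotic dimension. By Theorem~\ref{fad&fdc}, membership in $\C_{\fin}$ is equivalent to finite asymptotic dimension, and by Coarse Invariance the property transfers back to $\GL(n,K)$.

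\textbf{The archimedean case.} The given archimedean norm corresponds to a field embedding $\phi\colon K\hookrightarrow\CC$. Applying $\phi$ entrywise gives an injective homomorphism $\GL(n,K)\to\GL(n,\CC)$, which by construction is isometric with respect to $d_\gamma$ on the source and the operator-norm pseudometric on the target. I would then argue that on $\GL(n,\CC)$, the length $\ell_\gamma$ is coarsely equivalent to the Riemannian distance from the identity in the symmetric space $\GL(n,\CC)/U(n)$: via the polar decomposition $g=kp$ with $p=(gg^*)^{1/2}$ having eigenvalues $\lambda_1\ge\cdots\ge\lambda_n>0$, one has $\ell_\gamma(g)=\max(\log\lambda_1,-\log\lambda_n)$, which is comparable (up to a factor $\sqrt{n}$) to $\bigl(\sum_i(\log\lambda_i)^2\bigr)^{1/2}$. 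Since $\GL(n,\CC)/U(n)$ is a finite-dimensional Hadamard manifold, a direct dimensional argument (or appeal to the standard fact that a simply connected nonpositively curved manifold has asymptotic dimension at most its dimension) shows it has finite asymptotic dimension.

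\textbf{The non-archimedean discrete case.} Let $\hat K$ be the completion of $K$ at $\gamma$; the extension $\hat\gamma$ is again a discrete norm, and $\GL(n,K)\hookrightarrow\GL(n,\hat K)$ is an isometric embedding. Let $\O$ be the ring of integers of $\hat\gamma$, $\pi$ a uniformizer, and consider the Bruhat--Tits (affine) building $\Delta$ of type $\tilde A_{n-1}$ (together with the $\R$-factor accounting for the determinant) on which $\GL(n,\hat K)$ acts by simplicial isometries. The Cartan decomposition $g=k_1\,\mathrm{diag}(\pi^{a_1},\dots,\pi^{a_n})\,k_2$ with $k_i\in\GL(n,\O)$ and $a_1\ge\cdots\ge a_n$ shows $\ell_\gamma(g)=\max(|a_1|,|a_n|)$, which is comparable to the distance in $\Delta$ from a base vertex $v_0$ to $g\cdot v_0$. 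The stabilizer of $v_0$ is $\GL(n,\O)$, a $d_\gamma$-bounded subset, so the orbit map is a coarse equivalence from $\GL(n,\hat K)$ to $\Delta$. It remains to invoke that a Euclidean building has finite asymptotic dimension equal to the rank of its apartments (a fact provable directly from the apartment decomposition and the uniform finite asymptotic dimension of $\R^{n-1}$).

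\textbf{The main obstacle.} The conceptual content is in the two coarse-equivalences between $\ell_\gamma$ and a natural geometric distance: in the archimedean case this is the polar-decomposition comparison to the symmetric space metric, and in the non-archimedean case it is the Cartan-decomposition comparison to the building metric. Both are classical Guentner--Higson--Weinberger-type calculations, but care is needed because $\gamma$ is only \emph{discrete}, not necessarily \emph{locally compact} (the residue field of $\hat K$ need not be finite), so the building $\Delta$ may fail to be locally finite. The argument for finite asymptotic dimension of $\Delta$ must therefore be carried out using only the Euclidean apartment structure and the CAT($0$) retraction onto apartments, rather than any local-finiteness hypothesis; this is the most delicate point, but it is insensitive to the cardinality of the residue field.
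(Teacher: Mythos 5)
Your proposal takes a genuinely different route from the paper, substituting the geometry of symmetric spaces and Bruhat--Tits buildings for the paper's elementary subgroup-decomposition argument. In both cases the paper works inside $\GL(n,\cdot)$ directly: it first shows $\GL(n,K)$ (or $\GL(n,\CC)$) is coarsely equivalent to its upper-triangular subgroup $T=DU$, then establishes that the unipotent radical $U$ has asymptotic dimension $0$ (via a clever dilation automorphism $\Theta$ that conjugates a neighbourhood of the identity onto subgroups $U_k$ of controlled size), and finally fibers $T\to D\cong\Z^n$. This is entirely self-contained and requires no appeal to external results about asymptotic dimension of ambient model spaces.

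Two points in your argument need attention. In the archimedean case, the assertion that ``a simply connected nonpositively curved manifold has asymptotic dimension at most its dimension'' is not a standard fact and is not known for general Hadamard manifolds; what saves you is that $\GL(n,\CC)/U(n)$ is a symmetric space of noncompact type, for which finite asymptotic dimension is classical, but you should invoke that rather than a purported general statement. The non-archimedean case has the more serious gap, which you have correctly located but not closed: since the residue field of $\hat K$ need not be finite, the Bruhat--Tits building $\Delta$ is not locally finite, and the claim that a Euclidean building of rank $n$ has finite asymptotic dimension \emph{independent of local finiteness} is not a result you can cite as ``provable directly from the apartment decomposition.'' The apartment retraction by itself does not control how the asymptotic dimension of a union over arbitrarily (possibly uncountably) many apartments behaves; this is exactly the point where an actual argument is required, and it is precisely what the paper's construction (with $U_k=\Theta^k(U_0)$ and $T=DU$) replaces. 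Your outline therefore reduces the statement to a fact that is, as stated, unproven in the generality you need, whereas the paper's proof is complete and more elementary. You should either supply a genuine proof of finite asymptotic dimension for thick affine buildings, or switch to the paper's intrinsic group-theoretic decomposition, which sidesteps the building entirely.
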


\begin{proof}[Proof of Proposition~\ref{Gfad} \textup{(}archimedean case\textup{)}]
  The result follows immediately from the corresponding result for
  $\GL(n,\CC)$; indeed, the metric on $\GL(n,K)$ is the subspace
  metric it inherits from an embedding into $\GL(n,\CC)$.  For
  $\GL(n,\CC)$ the result follows from standard arguments, once we observe
  that the length function (\ref{ell-arch}) is continuous, hence
  bounded on compact sets, and proper, meaning that bounded sets are
  compact.  In brief, $\GL(n,\CC)$ is coarsely equivalent to the
  subgroup $T(n,\CC)$ of all upper triangular matrices and a fibering
  argument based on Theorem~\ref{fibering} show thats the solvable
  group $T(n,\CC)$ has finite decomposition complexity.
\end{proof}

The discrete case is more subtle than the archimedean case, primarily
because we do not assume that $K$ is locally compact.  In this case the
result was proven by Matsnev \cite{Ma}.  We shall present a 
simplified proof, based essentially on the same ideas.

Let $\gamma$ be a discrete norm on a field $K$ and fix a uniformizer $\pi$.
For the proof we shall introduce some subgroups of $\GL(n,K)$.  Let
$D$ denote the subgroup of diagonal matrices with powers of the
uniformizer on the diagonal and let $U$ denote the unipotent upper
triangular matrices.  Observe that $D$ normalizes $U$ so that $T=DU$
is also a subgroup (namely the group upper triangular matrices).
Restrict the length function $\ell_{\gamma}$ to each subgroup and
equip each with the associated (left-invariant pseudo-)metric (which
is in fact the subspace pseudo-metric from $G$).

\begin{lem}
The group $U$ has asymptotic dimension zero.  In particular, $U\in\C_1$.
\end{lem}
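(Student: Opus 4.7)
The plan is to exhibit, for each integer $k \ge 0$, a subgroup $V(k) \subseteq U$ with the twin properties that $V(k)$ has uniformly bounded $\ell_\gamma$-diameter and that distinct left cosets of $V(k)$ are at pseudo-distance greater than $k$; letting $k$ grow then establishes asymptotic dimension zero directly from the definition, and in particular $U \in \C_1$.

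The subgroup I would use is
\begin{equation*}
V(k) = \{\, u \in U : \gamma(u_{ij}) \le e^{k(j-i)} \text{ for all } i<j \,\}.
\end{equation*}
The crucial point is that the exponent $k(j-i)$ is \emph{additive} in intervals: if $i<l<j$ then $k(l-i)+k(j-l)=k(j-i)$. Combined with the ultrametric inequality and the multiplicativity of $\gamma$, this yields $\gamma((uv)_{ij}) \le e^{k(j-i)}$ directly from the unipotent product formula $(uv)_{ij}=u_{ij}+v_{ij}+\sum_{i<l<j}u_{il}v_{lj}$. Writing $u^{-1}=\sum_{m=0}^{n-1}(-N)^m$ with $N=u-I$ strictly upper triangular, each entry of $N^m$ is a sum of products $N_{a_0 a_1}\cdots N_{a_{m-1}a_m}$ along increasing paths $i=a_0<\cdots<a_m=j$, and the same additivity bounds each such product by $e^{k(j-i)}$. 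Hence $V(k)$ is a subgroup of $U$.

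Boundedness of $V(k)$ is then immediate: $\gamma(u_{ij}) \le e^{k(n-1)}$ for every $u \in V(k)$, and closure under inversion gives the same bound for $u^{-1}$, so $\ell_\gamma(u) \le k(n-1)$. By left-invariance of the metric, every left coset of $V(k)$ has $\ell_\gamma$-diameter at most $k(n-1)$. For the separation of cosets, suppose $u_0 V(k) \ne u_0'V(k)$, so $g = u_0^{-1}u_0' \notin V(k)$. For any $v, v' \in V(k)$, the identity $v \cdot V(k) \cdot v'^{-1} = V(k)$ (valid because $V(k)$ is a subgroup containing both $v$ and $v'$) shows $v^{-1}gv' \notin V(k)$, so some off-diagonal entry has $\gamma$-value strictly exceeding $e^{k(j-i)}$ for some pair $i<j$. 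Because $\gamma$ is a discrete norm, the strict inequality is in fact a gap, so $\ell_\gamma(v^{-1}gv') > k$, which gives $d(u_0 v, u_0'v') > k$.

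Given $r>0$, choosing any integer $k\ge r$ therefore exhibits $U$ as a union of left cosets of $V(k)$ that are $r$-disjoint and of diameter at most $k(n-1)$, which is exactly the definition of asymptotic dimension zero. The main obstacle I anticipate is the verification that $V(k)$ is genuinely closed under inversion: this requires bounding the contribution of \emph{every} monomial path through the nilpotent matrix $N$, and it is precisely the additive structure of the exponents $k(j-i)$ that makes the bookkeeping close up. The remainder of the argument is essentially formal, packaging left-invariance together with the subgroup property to promote boundedness of $V(k)$ into uniform boundedness of all its cosets.
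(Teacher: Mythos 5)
Your proof is correct and is essentially the same as the paper's: the paper constructs the identical filtration of $U$ by subgroups with geometrically scaled entry bounds, $U_k=\{u:\gamma(u_{ij})\leq\gamma(\theta)^{k(j-i)}\}$, the only difference being that it packages the subgroup verification via the dilation automorphism $\Theta(u)_{ij}=\theta^{j-i}u_{ij}$ (so $U_k=\Theta^k(U_0)$, with $U_0$ the elements of length zero), whereas you check the ultrametric product and inverse estimates by hand. Both routes then conclude by sandwiching $U_k$ between balls of comparable radius and observing that its cosets give an $r$-disjoint bounded cover.
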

\begin{proof}
The {\it dilation by \textup{(}a nonzero\textup{)} $\theta\in K$\/} is 
the function $\Theta: U\to U$ defined by 
\begin{equation*}
  \Theta(u)_{ij} = \theta^{j-i} u_{ij};
\end{equation*}
the entries on the $k^{th}$-superdiagonal of $n$ are multiplied
by $\theta^{k}$.  (For $k=0,\dots,n-1$ the {\it
  $k^{th}$-superdiagonal\/} of an $n\times n$ matrix consists of the
positions $(i,j)$ for which $j-i=k$.)
The formula for matrix multiplication shows that
$\Theta$ is an endomorphism of $U$.  Further, it is an automorphism with
inverse the dilation by $\theta^{-1}$.

Fix $\theta\in K$ of norm greater than one -- the inverse of a
uniformizer will do.  Let $U_0$ be the subgroup of $U$ comprised
of elements of length zero, and define a sequence of subgroups of $U$ by
$U_{k}=\Theta(U_{k-1})$.  We shall show that
\begin{equation}
\label{nested}
   B(1,k\log \gamma(\theta))\subset U_k \subset B(1,k(n-1)\log \gamma(\theta)).
\end{equation}
The lemma follows immediately.  Indeed, $U$ is the union of the cosets
of $U_k$ and the family of these cosets is both bounded and $r$-disjoint,
provided $k\log \gamma(\theta)>r$.

In order to verify (\ref{nested}) observe that the length function on
$U$ is given by
\begin{equation}
\label{length-on-U}
  \ell_{\gamma}(u) = \log \max_{i<j} \{\, 1,\gamma(u_{ij}), \gamma(u^{ij}) \,\}.
\end{equation}
For the first inclusion in (\ref{nested}) suppose 
$\ell_{\gamma}(u)\leq k\log \gamma(\theta)$ so that in particular 
$\gamma(u_{ij})\leq \gamma(\theta)^k$ for all $i<j$.  The non-diagonal $(i,j)$
entry of $\Theta^{-k}(u)$ is $u_{ij}\theta^{k(i-j)}$ so that
each has norm at most one.  Elementary properties of the norm and
(\ref{length-on-U}) show that
this implies $\Theta^{-k}(u)\in U_0$, or $u\in U_k$.

The second inclusion in (\ref{nested}) follows by induction from
\begin{equation*}
  \ell_{\gamma}(\Theta(u)) \leq \ell_{\gamma}(u) + (n-1)\log \gamma(\theta).
\end{equation*}
To verify this inequality, note that the non-diagonal $(i,j)$ entry of
$\Theta(u)$ is $u_{ij}\theta^{j-i}$ which has norm bounded by
$\gamma(u_{ij})\gamma(\theta)^{n-1}$.  Since $\Theta$ is an automorphism a similar
statement applies to the entries of $\Theta(u)^{-1}=\Theta(u^{-1})$.
The inequality now follows from (\ref{length-on-U}).
\end{proof}

\begin{lem}
The group $T$ is in $\C_{n+1}$.
\end{lem}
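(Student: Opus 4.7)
The plan is to apply the Fibering Theorem, in the quantitative form of Remark~\ref{fiberingremark}, to the projection homomorphism $\pi: T \to D$ sending $t$ to its diagonal part. That $\pi$ is $1$-Lipschitz, hence uniformly expansive, is immediate from~\eqref{ell-discrete}: the maximum defining $\ell_{\gamma}(t)$ already ranges over the diagonal entries of $t$ and $t^{-1}$. Moreover $D$ is isomorphic to $\Z^n$ via $\text{diag}(\pi^{m_1},\dots,\pi^{m_n}) \mapsto (m_1,\dots,m_n)$, and the induced metric is a positive scalar multiple of the $\ell^\infty$ metric; thus $D \in \C_n$ by the example following Theorem~\ref{thm:char_complexity}.

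The heart of the argument is showing that the inverse image of every bounded subspace of $D$ lies in $\C_1$, \emph{uniformly}, so that Remark~\ref{fiberingremark} delivers $T \in \C_{n+m} = \C_{n+1}$. To this end I would first establish the key geometric identity
\[
d(du,\,U) \;=\; \ell_{\gamma}(d), \qquad d \in D,\; u \in U.
\]
The upper bound uses that $D$ normalizes $U$: the element $u' := dud^{-1}$ lies in $U$ and a direct computation gives $(du)^{-1} u' = d^{-1}$, so $d(du, u') = \ell_{\gamma}(d)$. The lower bound uses the diagonal-preservation fact already exploited in the proof that $U$ has asymptotic dimension zero: multiplication by a unit upper triangular matrix on either side of a diagonal matrix preserves its diagonal entries, so for any $u' \in U$ the diagonal of $(du)^{-1} u' = u^{-1}d^{-1}u'$ is $d^{-1}$, whence $\ell_{\gamma}((du)^{-1}u') \geq \ell_{\gamma}(d)$.

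With the identity in hand, let $\ZZ = \{Z_i\}$ be any bounded family of subsets of $D$, say with $\diam(Z_i) \leq K$ uniformly. Choosing $d_i \in Z_i$, left-translation by $d_i^{-1}$ is an isometry of $T$ carrying $\pi^{-1}(Z_i)$ to $\pi^{-1}(d_i^{-1} Z_i)$, so we may assume $1 \in Z_i$. Then every $t = du \in \pi^{-1}(Z_i)$ satisfies $\ell_{\gamma}(d) \leq K$, and the identity gives $d(t, U) \leq K$. Consequently $U \subseteq \pi^{-1}(Z_i) \subseteq N_K(U)$, and the inclusion $U \hookrightarrow \pi^{-1}(Z_i)$ is a coarse equivalence with constants depending only on $K$. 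Since $U \in \C_1$ by the previous lemma, Coarse Invariance applied to the resulting family of coarse equivalences yields $\pi^{-1}(\ZZ) \in \C_1$, and Remark~\ref{fiberingremark} then gives $T \in \C_{n+1}$.

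The crucial subtlety, and the main obstacle to watch out for, is that Remark~\ref{fiberingremark} requires a \emph{single} natural number $m$ bounding the decomposition complexity of every preimage family; the plan achieves $m = 1$ precisely because the identity $d(du,U) = \ell_{\gamma}(d)$ compresses the potentially many cosets of $U$ making up $\pi^{-1}(Z)$ into a single $K$-neighborhood of $U$, rather than forcing us to track a number of cosets growing with $K$. Without this compression one would at best obtain $T \in \C_{\omega + \fin}$ via Remark~\ref{fiberingremark-finoverfin}, which is strictly weaker than the claimed bound $\C_{n+1}$.
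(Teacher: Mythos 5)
Your proof is correct and follows essentially the same route as the paper: project $T\to D$ (a $1$-Lipschitz homomorphism onto $D\cong\Z^n\in\C_n$), show the preimage of any bounded subset of $D$ is coarsely equivalent to $U\in\C_1$ with uniform constants, and invoke Remark~\ref{fiberingremark}. The only cosmetic difference is that you prove the sharp identity $d(du,U)=\ell_\gamma(d)$ and normalize so $1\in Z_i$, while the paper phrases the same computation as ``$b_1U$ is $\diam(B)$-coarsely dense in $BU$'' for a chosen $b_1\in B$; both reduce to the observation that $D$ normalizes $U$ and that conjugation by $U$ preserves diagonal entries.
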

\begin{proof}
  Observe that $D\cong \Z^{n}$, and that the restriction of
  $\ell_{\gamma}$ to $D$ is a proper length function -- indeed it
  corresponds (up to a multiplicative factor) with the supremum norm
  on $\Z^n$:
\begin{equation*}
  \ell_{\gamma}(a) = \max |k_i| \cdot \log \gamma(\pi^{-1}),
\end{equation*}
where $a$ is the diagonal matrix with entries $\pi^{k_i}$.  Hence $D$
is in $\C_n$.  It remains to check, as an application of fibering,
that $T$ is indeed in $\C_{n+1}$.\footnote{Since $D\subset T$
isometrically, if $T$ is in $\C_\alpha$ then necessarily
$\alpha\geq n$.  An argument more refined than the one we
present here achieves this bound:  indeed $T\in \C_n$.}

We require two observations.  First, the map $T\to D$ associating to
each matrix in $T$ the matrix of its diagonal entries is a contraction.
Indeed, it is a homomorphism and from the definition of $\ell_{\gamma}$ we see
that it decreases length.  Second, if $B\subset D$ is a bounded subset
and $b_1\in B$ then the subset $b_1 U\subset BU$ is $\diam(B)$-coarsely
dense.  Indeed, if $bu\in BU$ then $d(bu,bub^{-1}b_1)\leq \diam(B)$ and,
since $D$ normalizes $U$, 
\begin{equation*}
  bub^{-1}b_1 = b_1 (b_1^{-1}b)u(b^{-1}b_1)\in b_1U.
\end{equation*}
We conclude by applying the Fibering
Theorem~\ref{fibering} or, more accurately, the subsequent
Remark~\ref{fiberingremark}, to the map $T\to D$.
\end{proof}

\begin{proof}[Proof of Proposition~\ref{Gfad} \textup{(}discrete case\textup{)}]
  The inclusion of $T$ in $G$ is isometric.  Further, it
  is metrically onto in the sense that every element of $G$ is at
  distance zero from an element of $T$.  Indeed, let $H$ be the subgroup
  of those $g\in \GL(n,K)$ for which the entries of $g$ and $g^{-1}$ are
  in $\O$.  Then $G=TH$ \cite[Lemma~4.5]{GHW} and elementary calculations
  show that every $h\in H$ has length zero.  Hence, if $g=th$ then
  $d(t,g)=\ell(h)=0$.
\end{proof}

\subsection{Finite decomposition complexity}

We have previously reduced Theorem~\ref{linear} to the case of
$G=\GL(n,A)$, where $A$ is a finitely generated domain.  Denoting the
fraction field of $A$ by $K$, our strategy
is to embed $\GL(n,A)$ into the product of several copies of
$\GL(n,K)$ equipped with metrics associated to various norms.  The
proof rests on a permanence property summarized in the following
lemma.

\begin{lem}
\label{oddpermanence}
Let $G$ be a countable discrete group.  Suppose there exists a
\textup{(}pseudo-\textup{)}length  
function $\ell'$ on $G$ with the following properties:
  \begin{ilist}
    \item $G$ is in $\C_{\fin}$ with respect to the associated 
           \textup{(}pseudo-\textup{)}metric $d'$ 
    \item $\forall\,r>0$ $\exists\,\ell_r$, a 
           \textup{(}pseudo-\textup{)}length function on
      $G$, for which 
      \begin{alist}
         \item $G$ is in $\C_{\fin}$ with respect to  the associated
             \textup{(}pseudo-\textup{)}metric $d_r$,
         \item $\ell_r$ is proper when restricted to $B_{\ell'}(r)$.
      \end{alist}
   \end{ilist}
Then $G$ has finite decomposition complexity, and indeed 
$G\in \C_{\omega+\fin}$.  
\end{lem}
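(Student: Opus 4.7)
The plan is to prove the lemma by a two-stage (nested) application of the Fibering Theorem~\ref{fibering}, tracking depths via Remarks~\ref{fiberingremark} and~\ref{fiberingremark-finoverfin}. Fix a proper left-invariant length function $\ell$ on $G$ with associated metric $d$; we shall show $(G,d)\in\C_{\omega+\fin}$. The workhorse observation, used at both stages, is that properness of $\ell$ makes the identity map $(G,d)\to(G,d_0)$ uniformly expansive for \emph{any} left-invariant (pseudo-)length function $\ell_0$ on $G$: take
\[
\rho(K)=\max\{\,\ell_0(g):\ell(g)\le K\,\},
\]
which is finite and non-decreasing because $d$-balls are finite.

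The outer fibering is applied to $F_1=\mathrm{id}:(G,d)\to(G,d')$. By hypothesis (1) the target has FDC and by the workhorse $F_1$ is uniformly expansive, so the Fibering Theorem reduces the problem to showing that for every $d'$-bounded family $\ZZ$ of subsets of $G$ (with uniform $d'$-diameter $R$, say), the inverse image $F_1^{-1}(\ZZ)$ — the same subsets carrying the restriction of $d$ — has FDC.

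The inner fibering handles this, using hypothesis~(2) at parameter $r=R$. Apply the Fibering Theorem to $F_2=\mathrm{id}:F_1^{-1}(\ZZ)\to\ZZ^{*}$, where $\ZZ^{*}$ is the same family viewed as a subspace of $(G,d_R)$. The target has FDC by (2)(a) and Coarse Invariance, and $F_2$ is uniformly expansive by the workhorse applied to $\ell_R$. The crucial point is that the inverse image of a bounded subfamily $\W$ of $\ZZ^{*}$ — uniform $d_R$-diameter at most $S$, say — is not merely FDC but actually $d$-bounded. Indeed, for $W\in\W$ and $w_0\in W$, left-invariance places
\[
w_0^{-1}W\subset B_{\ell_R}(S)\cap B_{\ell'}(R),
\]
the second inclusion because $w_0$ and every element of $W$ lie in a common element of $\ZZ$ of $d'$-diameter at most $R$. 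By hypothesis~(2)(b) this intersection is finite, with cardinality and $\ell$-diameter bounded in terms of $R$ and $S$ alone. Hence $F_2^{-1}(\W)\in\C_0$, so fibering yields $F_1^{-1}(\ZZ)\in\C$.

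For the depth refinement, Remark~\ref{fiberingremark} applied to the inner fibering strengthens this to $F_1^{-1}(\ZZ)\in\C_{n_R}$, where $n_R$ is a depth realizing $(G,d_R)\in\C_\fin$. Since $n_R$ depends on $\ZZ$ (through $R$), the outer fibering lands in the exact hypothesis of Remark~\ref{fiberingremark-finoverfin}: target in $\C_\fin$ and inverse images in $\C_\fin$ with depth varying by $\ZZ$. The conclusion is $(G,d)\in\C_{\omega+\fin}$, as asserted. The main technical step, and the reason the argument works as a permanence statement rather than merely a set-theoretic one, is the \emph{uniform} boundedness of the inverse images in the inner fibering — uniformity across the family, not just elementwise finiteness — which is precisely what the properness in (2)(b) delivers.
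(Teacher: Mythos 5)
Your proof is correct and takes essentially the same route as the paper's: the outer fibering over $(G,d')$ (the paper uses Proposition~\ref{fiberingprop}, the group-action form of the Fibering Theorem, applied to left translation on $(G,d')$) reduces the claim to $d'$-bounded pieces, and the key step is the comparison of $d$ with $d_r$ on such pieces via the properness of $\ell_r$ restricted to $\ell'$-balls. The only difference is cosmetic: the paper shows directly that $d$ and $d_{2r}$ are coarsely equivalent on $B_{\ell'}(r)$ and then invokes Coarse Invariance, whereas you encode the same two-sided comparison as a second fibering with $d$-bounded inverse images.
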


\noindent
Condition \textup{(}\rm{ii}\textup{)} in the lemma means precisely that
$B_{\ell_r}(s)\cap B_{\ell'}(r)$ is finite for every $s>0$.

\begin{proof}
  Fix a proper length function $\ell$ on $G$, with associated metric
  $d$.  By Proposition~\ref{fiberingprop}, applied to the action of
  $G$ on the metric space $(G,d')$, it suffices to show that for every
  $r>0$ the ball $B_{\ell'}(r)$ is in $\C_{\fin}$ when equipped with
  the metric $d$.

Let $r>0$.  Obtain $\ell_{2r}$ as in the statement.  The ball
$B_{\ell'}(r)$ is in $\C_{\fin}$ with respect to the metric $d_{2r}$.
Thus, it remains to show that the metrics $d$ and $d_{2r}$ on
$B_{\ell'}(r)$ are coarsely equivalent.

Since $\ell$-balls in $G$ are finite, we easily see that for every $s$
there exists $s'$ such that if $d(g,h)\leq s$ then $d_{2r}(g,h)\leq s'$;
this holds for every $g$ and $h\in G$.  Conversely, for every $s$ the
set $B_{\ell'}(2r)\cap B_{\ell_{2r}}(s)$ is finite by assumption, and we
obtain $s'$ such that for every $g$ in this set $\ell(g)\leq s'$.  If
now $g$ and $h\in B_{\ell'}(r)$ are such that $d_{2r}(g,h)\leq s$ then
$g^{-1}h\in B_{\ell'}(2r)$ and
\begin{equation*}
  d(g,h) = \ell(g^{-1}h) \leq s'.
\end{equation*}
\end{proof}

\begin{proof}[Proof of Theorem~\ref{linear}]
Let $A$ be a finitely generated domain, $K$ the fraction field of $A$
and $G=\GL(n,A)$.  (We have previously reduced the theorem to this
case.)
Obtain a finite family 
$N_A=\{\, \gamma_1,\dots,\gamma_q \,\}$ of discrete norms on
$K$ as in the definition of strong discrete embeddability.  For each
norm $\gamma_i$ 
we have the corresponding length function $\ell_{\gamma_i}$ and metric on $\GL(n,K)$
defined as in (\ref{ell-discrete}). Define a length function on $G$ by
\begin{equation*}
  \ell' = \ell_{\gamma_1} + \cdots + \ell_{\gamma_q}.
\end{equation*}
Thus, $G$ is metrized so that the diagonal embedding
\begin{equation*}
  G \hookrightarrow \GL(n,K) \times \cdots \times \GL(n,K)
\end{equation*}
is an isometry when the $i^{\mbox{\tiny{th}}}$ factor in the product is
equipped with the metric associated to the norm $\gamma_i$ and the product is
given the sum metric.  Equipped with this metric $G$ is in $\C_{\fin}$
by Proposition~\ref{Gfad}, and Remark~\ref{fiberingremark}.  To apply the lemma, we 
shall study the balls $B_{\ell'}(r)$ of the identity in $G$.

Let $r=e^k$.  Obtain a family of archimedean norms 
$F_{A}(k)$ as in the definition of strong
discrete embeddability.  For each we have the
corresponding length function and metric on $\GL(n,K)$ defined as in
(\ref{ell-arch}).  Define a length function on $G$ by
\begin{equation*}
  \ell_r = \sum_{\gamma\in F_A(k)}\ell_{\gamma}.
\end{equation*}
Thus, $G$ is metrized so that the diagonal embedding 
\begin{equation*}
  G\hookrightarrow \GL(n,K) \times \cdots \times \GL(n,K)
\end{equation*}
is an isometry when each factor in the product is
equipped with the metric associated to the corresponding norm $\gamma$, and the
product is given the sum metric.   Equipped with this metric $G$ is in $\C_{\fin}$
by Proposition~\ref{Gfad}, and Remark~\ref{fiberingremark}.  To apply the lemma, we 
shall study the balls $B_{\ell'}(r)$ of the identity in $G$.

It remains only to show that for every $s>0$ the set
$B_{\ell_r}(s)\cap B_{\ell'}(r)$
is finite.  Suppose $g$ is in this set.  From the definitions of the
length functions it follows that the entries of $g$ and $g^{-1}$ satisfy
inequalities 
\begin{equation*}
  \gamma(g_{ij})\leq r, \quad  \gamma(g^{ij})\leq r,
\end{equation*}
for $\gamma\in N_A$, and also the inequalities
\begin{equation*}
  \gamma(g_{ij})\leq s, \quad  \gamma(g^{ij})\leq s,
\end{equation*}
for $\gamma\in F_A(k)$.  But, these norms were chosen according to the
definition of strong discrete embeddability, so that the subset of
those elements of $A$ satisfying these inequalities is finite.  In
particular, the number of matrices containing only these elements as
their entries is finite and the proof of the general case is complete.
Further, in the 
case of positive characteristic, there are no archimedean norms and
the above inequalities show that $B_{\ell'}(r)$ is already finite for
every $r$.  In this case, we conclude that $G$ belongs to $\C_{\fin}$
so that by Theorem~\ref{fad&fdc} it has finite asymptotic dimension.
\end{proof}

\begin{remark}
  Essentially, the proofs of Lemma~\ref{oddpermanence} and
  Theorem~\ref{linear} yield the following result: if the finitely
  generated domain $A$ has characteristic zero there is an action of
  $\GL(n,A)$ on a metric space in $\C_{\fin}$ such that each coarse
  stabilizer is in $\C_{\fin}$. 
\end{remark}
\section{Further examples}
\label{MoreExamplesection}

Additional examples of groups having finite decomposition complexity
are readily exhibited based on our results. In this section, we prove
that all countable elementary amenable groups, all countable subgroups
of almost connected Lie groups, and all countable subgroups of $\GL(n,
R)$ for any commutative ring $R$ with unit have finite decomposition
complexity.

The class of {\it elementary amenable groups\/} is the smallest class
of countable discrete groups containing all finite groups and all
(countable) abelian groups, and closed under the formation of
subgroups, quotients, extensions and direct unions.

\begin{props}[\cite{C}]
The class of elementary amenable groups is the smallest class of
countable discrete groups containing all finite groups and all
\textup{(}countable\textup{)} abelian groups and closed under the
formation of extensions and direct unions.
\end{props}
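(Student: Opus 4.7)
The plan is to show that the class $\E'$ defined as the smallest class of countable discrete groups containing all finite groups and all countable abelian groups and closed under extensions and direct unions is itself automatically closed under the formation of subgroups and quotients; the stated proposition then follows immediately, since the reverse inclusion $\E' \subset \E$ is trivial from the definition of $\E$.

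To carry this out, I would first stratify $\E'$ by transfinite ordinals in the usual way. Let $\E'_0$ be the collection of finite and countable abelian groups. For each ordinal $\alpha>0$, let $\E'_\alpha$ be the collection of groups that either are an extension $1\to N\to G\to Q\to 1$ with $N,Q\in\bigcup_{\beta<\alpha}\E'_\beta$, or a direct union of groups drawn from $\bigcup_{\beta<\alpha}\E'_\beta$. Then $\E' = \bigcup_{\alpha}\E'_\alpha$. The key assertion I would prove by transfinite induction on $\alpha$ is: for every $G\in\E'_\alpha$, every subgroup and every quotient of $G$ belongs to $\E'$.

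The base case is immediate since both subgroups and quotients of finite (resp.\ abelian) groups remain finite (resp.\ abelian). For the inductive step, suppose the statement holds for all $\beta<\alpha$ and let $G\in\E'_\alpha$. If $G=\bigcup_i G_i$ is a direct union with each $G_i$ at a lower level, then a subgroup $H\le G$ satisfies $H=\bigcup_i(H\cap G_i)$, and a quotient $G/K$ satisfies $G/K=\bigcup_i G_iK/K$ with $G_iK/K\cong G_i/(G_i\cap K)$; in each case, the pieces are subgroups or quotients of groups at a lower level, hence belong to $\E'$ by induction, and $\E'$ is closed under direct unions. If instead $G$ sits in an extension $1\to N\to G\to Q\to 1$ with $N,Q$ at lower levels, then any $H\le G$ fits in
\begin{equation*}
1\longrightarrow H\cap N \longrightarrow H \longrightarrow H/(H\cap N)\longrightarrow 1,
\end{equation*}
in which the kernel is a subgroup of $N$ and the quotient embeds in $Q$; any quotient $G/K$ fits in
\begin{equation*}
1\longrightarrow N/(N\cap K)\longrightarrow G/K \longrightarrow G/NK\longrightarrow 1,
\end{equation*}
in which the kernel is a quotient of $N$ and the quotient is a quotient of $G/N\cong Q$. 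By the inductive hypothesis all four groups $H\cap N$, $H/(H\cap N)$, $N/(N\cap K)$, $G/NK$ lie in $\E'$, and closure of $\E'$ under extensions then gives $H,G/K\in\E'$.

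There is no genuine difficulty here; the only thing requiring care is the bookkeeping at successor versus limit ordinals and the verification that the two standard short exact sequences above correctly identify the kernel and cokernel as a subgroup/quotient (respectively) of the building blocks $N$ and $Q$. Once the transfinite induction is set up, every step is routine, and the proposition follows by induction on the rank together with the trivial observation $\E'\subset\E$.
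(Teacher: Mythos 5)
Your argument is correct and follows essentially the same route as the paper: stratify the inductively generated class by transfinite ordinals and verify closure under subgroups and quotients by transfinite induction, using the standard short exact sequences in the extension case and the intersection/image decompositions in the direct union case. The paper states this verification is "readily verified"; you have simply supplied the routine details.
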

\begin{proof}[Sketch of proof]
  Define a class of groups $\A$ by transfinite recursion as follows:
  $\A_0$ is the class of all finite and countable abelian groups; for a
  successor ordinal $\alpha$ define
  $\A_{\alpha}$ to be the class of all groups obtained as a (countable) direct
  union or extension of groups in $\A_{\alpha-1}$; for a limit ordinal
  $\alpha$ define $\A_\alpha = \cup_{\beta<\alpha}\A_\beta$; finally,
  $\A$ is the collection of groups belonging to some $\A_\alpha$.  

  From its construction $\A$ is closed under extensions and (countable)
  direct unions, and is clearly contained in the collection of
  elementary amenable groups.  It remains to show that $\A$ is closed
  under subgroups and quotients.  Indeed, it is readily verified by
  transfinite induction that each $\A_\alpha$ is closed under these
  operations. 
\end{proof}

\begin{thms}
Elementary amenable groups have finite decomposition complexity.
\end{thms}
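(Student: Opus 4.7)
The plan is to induct on the construction of the class of elementary amenable groups given in the preceding proposition, showing at each stage that membership in FDC is preserved. The two key engines are already in hand: Corollary~\ref{extension}, which gives closure of FDC under extensions of countable discrete groups, and Proposition~\ref{limits}, which gives closure under countable direct unions. Since every elementary amenable group sits in some $\A_\alpha$ in the hierarchy built from finite and countable abelian groups by iterated extensions and direct unions, it suffices to verify the base case $\A_0$ and then feed the two permanence results through the transfinite induction.

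For the base case I would split into finite groups and countable abelian groups. Finite groups are bounded and hence lie in $\C_0$. For a countable abelian group $G$, write $G$ as the direct union of its finitely generated subgroups; by Proposition~\ref{limits} it is enough to handle the finitely generated case. A finitely generated abelian group is, by the structure theorem, a finite extension of $\Z^n$ for some $n$, and the example in Section~\ref{FDCsection} records that $\Z^n\in\C_n$. Closure under extensions (Corollary~\ref{extension}) together with the fact that a finite group is bounded then places every finitely generated abelian group in $\C_{\fin}$, and another application of Proposition~\ref{limits} yields the FDC property for arbitrary countable abelian groups.

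With $\A_0\subset\C$ established, the inductive step is formal: suppose $\A_\beta\subset\C$ for all $\beta<\alpha$. If $\alpha$ is a successor ordinal then every group in $\A_\alpha$ is either a countable direct union or an extension of groups in $\A_{\alpha-1}\subset\C$, and Corollary~\ref{extension} and Proposition~\ref{limits} put it in $\C$; if $\alpha$ is a limit ordinal the conclusion is immediate from $\A_\alpha=\bigcup_{\beta<\alpha}\A_\beta$. Since every elementary amenable group lies in some $\A_\alpha$, the theorem follows.

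There is no genuine obstacle here, since the permanence toolkit has been fully developed in Section~\ref{PermanenceSection}; the only small subtlety worth flagging is that we must use the version of the proposition of Chou (the preceding proposition in this section) which expresses elementary amenability using only extensions and direct unions, so that we are not forced to check closure of FDC under quotients, a property that plays no role in our framework.
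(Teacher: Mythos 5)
Your argument is correct and follows essentially the same route as the paper: reduce via Chou's characterization to closure under extensions and direct unions, handle the base case of finite and countable abelian groups (the latter via the direct union of their finitely generated subgroups), and run the transfinite induction. The paper states this more tersely, leaving the induction implicit, but the content is the same.
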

\begin{proof}
We have observed that the class of countable discrete groups having
finite decomposition complexity is closed under the formation of
extensions and direct unions.  Finite groups
have finite decomposition complexity, as do (countable) abelian groups.
Indeed, a (countable) abelian group is the direct union of its finitely generated
subgroups which, according to their general structure theory, have
finite decomposition complexity .
\end{proof}

\begin{ques}
  Does every countable amenable group have FDC?  In particular, does a
  Grigorchuk group of intermediate growth have FDC?
\end{ques}

\begin{thms}
A countable subgroup of an almost connected Lie group has finite
decomposition complexity.  \qed
\end{thms}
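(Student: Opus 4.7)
The plan is to combine elementary structure theory of Lie groups with the permanence results already established, reducing the statement to the linear and the abelian cases. Let $\Gamma$ be a countable subgroup of an almost connected Lie group $G$, and let $G^0$ denote its identity component.

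Since $G^0$ has finite index in $G$, the intersection $\Gamma_0 := \Gamma\cap G^0$ is a normal subgroup of finite index in $\Gamma$, and $\Gamma$ sits in an extension of a finite group by $\Gamma_0$. By Corollary~\ref{extension}, it therefore suffices to treat the case when $G$ is connected, which I now assume.

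The key tool is the adjoint representation $\mathrm{Ad}\colon G\to \GL(\mathfrak{g})$, a Lie group homomorphism whose kernel is the center $Z(G)$. Restricting to $\Gamma$ produces the short exact sequence of countable discrete groups
\begin{equation*}
1 \longrightarrow \Gamma\cap Z(G) \longrightarrow \Gamma \longrightarrow \mathrm{Ad}(\Gamma) \longrightarrow 1.
\end{equation*}
The image $\mathrm{Ad}(\Gamma)$ is a countable subgroup of $\GL(n,\R)$ with $n=\dim\mathfrak{g}$, hence has finite decomposition complexity by Theorem~\ref{linear}. The kernel $\Gamma\cap Z(G)$ is a countable abelian group, since the center of any group is abelian; countable abelian groups have FDC by the preceding theorem on elementary amenable groups. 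Alternatively, one may write such a group as a direct union of its finitely generated subgroups, each of which is linear over $\Z$, and invoke Theorem~\ref{linear} together with Proposition~\ref{limits}.

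Applying Corollary~\ref{extension} once more to the displayed extension yields that $\Gamma$ has FDC. No serious obstacle is anticipated: the three ingredients -- the finite-index reduction, the adjoint representation, and the fact that its kernel is abelian -- fit together cleanly, and no deeper structural input (such as the Levi decomposition, Iwasawa decomposition, or Malcev's theorem on linearity of simply connected solvable Lie groups) is required.
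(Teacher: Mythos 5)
Your proof is correct and follows essentially the same route as the paper's: reduce to the connected case by passing to a finite-index subgroup, then use the adjoint representation to realize a countable subgroup of a connected Lie group as an extension with abelian kernel (inside the center, since $\ker\mathrm{Ad}=Z(G)$ for connected $G$) and linear quotient, and close with the extension and linear-group results. The paper states the second step in the same terse form ("a subgroup of a connected Lie group is realized as an extension with linear quotient and abelian kernel") without naming $\mathrm{Ad}$, but the mechanism is identical.
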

\begin{proof}
A group as in the statement is realized as an extension with finite
quotient and with kernel a subgroup of a {\it connected\/} Lie group.
A subgroup of a connected Lie group is realized as an extension with
linear quotient and abelian kernel.  Thus, the result follows from the
stability of FDC under extensions.
Compare \cite[Thm.~6.5]{GHW}.
\end{proof}


\begin{thms}\label{FDCringThm}
Let $R$ be a commutative ring with unit.  A countable subgroup of
$\GL(n,R)$ has finite decomposition complexity.
\end{thms}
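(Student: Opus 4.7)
The plan is to reduce to the field case already handled by Theorem~\ref{linear} via two successive reductions: first to a finitely generated subgroup sitting inside $\GL(n,A)$ for a finitely generated commutative ring $A$, then, using the nilradical of $A$, to a product of situations covered by Theorem~\ref{linear} plus a nilpotent congruence kernel handled by elementary extension arguments.

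For the first reduction, by Proposition~\ref{limits} it suffices to consider a finitely generated subgroup $G$. Picking a finite symmetric generating set of $G$ and letting $A\subset R$ be the subring generated by the matrix entries of those generators and of their inverses yields a finitely generated commutative ring (hence Noetherian) with $G\subset\GL(n,A)$. Thus we may assume $R=A$ is a finitely generated commutative ring.

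For the second reduction, let $\mathfrak{n}\subset A$ denote the nilradical. Since $A$ is Noetherian, $\mathfrak{n}^k=0$ for some $k$. The reduction map $\pi\colon\GL(n,A)\to\GL(n,A/\mathfrak{n})$ yields a short exact sequence $1\to G\cap K\to G\to \bar G\to 1$, where $K=\ker\pi$ and $\bar G=\pi(G)$. Because $A/\mathfrak{n}$ is reduced and Noetherian, it has finitely many minimal primes $\mathfrak{p}_1,\dots,\mathfrak{p}_s$ and embeds in $\prod_{i=1}^{s}A/\mathfrak{p}_i$, a finite product of finitely generated integral domains. Each $A/\mathfrak{p}_i$ embeds in its fraction field $K_i$, so we obtain an embedding $\bar G\hookrightarrow \prod_{i=1}^{s}\GL(n,K_i)$. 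The projection of $\bar G$ to each factor is a finitely generated subgroup of $\GL(n,K_i)$ and hence has FDC by Theorem~\ref{linear}. Iterated application of extension stability (Corollary~\ref{extension}) to the product then gives that $\bar G$ has FDC.

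For the kernel, the principal congruence subgroups $K_j=\{g\in\GL(n,A):g\equiv I\pmod{\mathfrak{n}^j}\}$ form a finite descending filtration $K=K_1\supset K_2\supset\cdots\supset K_k=\{I\}$, and the map $I+M\mapsto M\bmod\mathfrak{n}^{j+1}$ realizes $K_j/K_{j+1}$ as an abelian group. Intersecting with $G$ gives a finite filtration of $G\cap K$ with countable abelian quotients. Since countable abelian groups have FDC (they are direct unions of finitely generated abelian groups, which even have finite asymptotic dimension) and the class of FDC groups is closed under extensions, $G\cap K$ has FDC. Applying Corollary~\ref{extension} to the sequence $1\to G\cap K\to G\to\bar G\to 1$ then finishes the proof.

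The main conceptual work is already packed into Theorem~\ref{linear}; the only substantive new ingredient here is the commutative algebra input (Noetherianness of finitely generated rings, nilpotence of the nilradical, and the embedding of a reduced Noetherian ring into the product over its finitely many minimal primes), so there is no serious obstacle — the argument is a tidy permanence exercise built on top of the field case.
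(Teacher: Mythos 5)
Your proof follows essentially the same route as the paper: reduce to a finitely generated subring via Proposition~\ref{limits}, split off the nilradical to get a nilpotent (or solvable, via your congruence filtration) congruence kernel, and embed the reduced quotient into a finite product of $\GL(n,\cdot)$ over domains, then invoke Theorem~\ref{linear} together with permanence under extensions, products, and subgroups. The only cosmetic difference is that you spell out more of the detail (the congruence filtration, passing to fraction fields) that the paper leaves to the reader; one small nit is that concluding FDC for $\bar G$ requires both product stability and subgroup stability, not just ``extension stability'' as your citation suggests, though both are in the paper's permanence results.
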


The essential piece of commutative algebra we require is summarized in
the following lemma.

\begin{lems}
Let $R$ be a finitely generated commutative ring with unit and let $\n$
be the nilpotent radical of $R$,
\begin{equation*}
  \n = \{\, r\in R : \text{$\exists n$ such that $r^n=0$} \,\}.
\end{equation*}
The quotient ring $S=R/\n$ contains a finite number of prime ideals
$\p_1,\dots,\p_n$ such that the diagonal map
\begin{equation*}
  S \to S/{\p_1} \oplus \cdots \oplus S/{\p_n}
\end{equation*}
embeds $S$ into a finite direct sum of domains. 
\end{lems}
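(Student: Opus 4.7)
The plan is to invoke standard Noetherian commutative algebra. Since $R$ is a finitely generated commutative ring with unit, it is a quotient of a polynomial ring $\Z[X_1,\dots,X_k]$, which is Noetherian by the Hilbert basis theorem; hence $R$ itself is Noetherian, and so is the quotient $S = R/\n$.

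The key structural fact I would use is that a Noetherian ring admits only finitely many minimal prime ideals. This can be established by the standard argument: if the set of ideals of the form $\bigcap_{i=1}^{m} \q_i$, with each $\q_i$ prime, that fail to be expressible as an intersection of finitely many primes were nonempty, then by the Noetherian hypothesis it would have a maximal element $I$, and one checks (via the usual argument that $I$ is not prime, so $I = (I + (a)) \cap (I + (b))$ for some $a, b \notin I$ with $ab \in I$) that $I$ itself is in fact a finite intersection of primes, a contradiction; applied to the zero ideal of $S$, this shows the set of minimal primes of $S$ is finite, say $\p_1,\dots,\p_n$.

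Next I would use the classical identity
\begin{equation*}
  \text{nilradical of } S = \bigcap_{\p \text{ prime}} \p = \bigcap_{i=1}^{n} \p_i,
\end{equation*}
where the first equality holds in any commutative ring (every nilpotent lies in every prime, and conversely if $s$ is not nilpotent then localizing at the multiplicative set $\{s^k\}$ yields a nonzero ring, hence a prime avoiding $s$), and the second reduces to the first because every prime contains a minimal prime in a Noetherian ring. Since $S = R/\n$ is by construction reduced, its nilradical is $(0)$, so $\bigcap_{i=1}^{n} \p_i = (0)$.

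The diagonal map $\varphi \colon S \to \bigoplus_{i=1}^{n} S/\p_i$ has kernel equal to $\bigcap_i \p_i = (0)$, so $\varphi$ is injective, and each factor $S/\p_i$ is a domain because $\p_i$ is prime. This is exactly the required embedding, concluding the proof. No step here is a genuine obstacle; the only point requiring mild care is the finiteness of minimal primes, which is the reason Noetherianity (and hence the finite generation hypothesis on $R$) enters.
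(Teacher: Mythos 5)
Your proof is correct and follows essentially the same route as the paper's: reduce to $S$ Noetherian, show $S$ has finitely many minimal primes $\p_1,\dots,\p_n$, observe their intersection is the nilradical of $S$ (which is zero since $S$ is reduced), and conclude the diagonal map is injective with domain factors. The only difference is that you prove the finiteness of minimal primes directly by Noetherian induction, whereas the paper derives it by citing the Associated Prime Theorem from Eisenbud; your argument is a bit more self-contained but establishes the same fact.
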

\begin{proof} 
  This classical fact is a consequence of the Associated Prime Theorem 
  which states that the set of associated primes of a finitely
  generated module over a Noetherian ring is finite
  \cite[Thm.~3.1]{Eisenbud}. Here, the module 
  is the ring itself which is Noetherian since it is finitely
  generated. The mentioned theorem then says that $R$ has finitely
  many minimal prime ideals $\p_1,\dots,\p_n$. The conclusion follows
  from the fact that their intersection is $\n$.   
\end{proof}

\begin{proof}[Proof of Theorem~\ref{FDCringThm}]
In views of Proposition~\ref{limits}, it is enough to treat the case
of $\GL_n(R)$, where $R$ is finitely generated.  With $\n$ and
$S$ as in the previous lemma, we have an exact sequence
\begin{equation*}
  1 \to I+M_n(\n) \to \GL(n,R) \to \GL(n,S) \to 1,
\end{equation*}
in which $I+M_n(\n)$ is nilpotent, and therefore has finite
decomposition complexity by Corollary~\ref{extension}.  In the
notation of the previous lemma, we have
\begin{equation*}
  \GL(n,S) \to \GL(n,S/{\p_1}) \times \cdots \times \GL(n,S/{\p_n}).
\end{equation*}
So, the quotient has finite decomposition complexity by our earlier
results. 
\end{proof}

\section{Decomposition Complexity and Topological Rigidity}
\label{TopologicalResultsSection}

This section is organized into two parts.  In the first part we shall
state two essential results,
Theorems~\ref{AsymptoticVanishingK-theoryThm} and
\ref{AssemblyMapThm}, the proofs of which are defered to later
sections.  In the second part we shall discuss applications to
topological rigidity.  We shall begin by describing the bounded
category, a natural framework in which to discuss bounded rigidity.
We shall then state and prove our results concerning the bounded Borel
and bounded Farrell-Jones $L$-theory isomorphism conjectures for
spaces with finite decomposition complexity,
Theorems~\ref{BoundedBorelThm} and \ref{BoundedFarrelJonesThm},
respectively.  Finally, from these we deduce concrete applications to
topological rigidity.

\subsection{Two main results}

Throughout, we shall work with a metric space $\Gamma$ having {\it
  bounded geometry\/}: for every $r>0$ there exists $N=N(r)$ such that
every ball of radius $r$ contains at most $N$ elements. In several
places the weaker hypothesis of {\it local finiteness\/} would suffice:
every ball contains finitely many elements.

\begin{defn} \label{RipsDef}
  For $d\geq 0$ we define the Rips complex $P_d(\Gamma)$ to be the
  simplicial polyhedron with vertex set $\Gamma$, and in which a finite
  subset
  $\{ \gamma_0,\dots,\gamma_n \} \subseteq \Gamma$ spans a simplex
  precisely when $d (\gamma_i,\gamma_j) \leq d$ for all 
  $0 \leq i, j \leq n$.
\end{defn}

If $\Gamma$ has bounded geometry the Rips complex is finite dimensional,
with dimension bounded by $N(d)-1$; if $\Gamma$ is merely locally finite
the Rips complex $P_d(\Gamma)$ is a locally finite simplicial complex.

There are in general several ways to equip the Rips complex with a
metric.  The {\it simplicial metric\/} is the metric induced by the
(pseudo) Riemannian metric whose restriction to each $n$-simplex is
the Riemannian metric obtained by identifying the $n$-simplex with the
standard simplex in the Euclidean space $\R ^{n}$.  By convention, the
distance between points in different connected components of
$P_d(\Gamma)$ is infinite.  Equipped with the simplicial metric the
Rips complex is a {\it geodesic space\/} in the sense that every two
points (at finite distance) are joined by a geodesic path.

Our first essential result is a vanishing
result for the Whitehead and algebraic $K$-theory groups.  To state
the result we introduce the following notation: for a locally compact
metric space $X$ and for each $\delta\geq 0$ and $i\geq 0$ the
$\delta$-controlled locally finite Whitehead group is denoted
$Wh^{\delta}_{1-i}(X)$; the $\delta$-controlled reduced locally finite
algebraic $K$-theory group is denoted
$\widetilde{K}_{-i}^{\delta}(X)$.  Both groups are defined in
\cite{RY1}.\footnote{The group we denote $\widetilde{K}_0^\delta(X)$ is the group
  $\widetilde{K}_0(X,p_X,0,\delta)$ defined on page 14 of \cite{RY1}, taking $p_X$
to be the identity map $X\to X$; our $\widetilde{K}_{-i}^{\delta}(X)$
is then defined to be $\widetilde{K}_0^{\delta}(X\times \R^i)$.  The
group we denote
$Wh^\delta(X)$ is the group $Wh(X,p_X,1,\delta)$ defined on page 22
of \cite{RY1}, where again $p_X$ is the identity map $X\to X$;
$Wh^\delta_{1-i}(X)$ is then defined to be $Wh^\delta_{1-i}(X\times \R^i)$.}

We then define, for each $i\geq 0$, the bounded locally finite
Whitehead group, and bounded reduced locally finite algebraic
$K$-theory group as follows:
\begin{equation*}
  Wh_{1-i}^{bdd} (P_d(\Gamma))=
    \lim_{\delta\rightarrow \infty}Wh_{1-i}^{\delta} (P_d(\Gamma))
\end{equation*}
\begin{equation*}
  \tilde{K}_{-i}^{bdd}(P_d(\Gamma))=
\lim_{\delta\rightarrow \infty}\tilde{K}_{-i}^{\delta}(P_d(\Gamma)).
\end{equation*}

\begin{thm}\label{AsymptoticVanishingK-theoryThm} 
  Let $\Gamma$ be a bounded geometry metric space.  Let
  $\widetilde{K}_{-i}^{bdd}(P_d(\Gamma))$ denote the reduced bounded
  locally finite algebraic $K$-theory group and let
  $Wh_{1-i}^{bdd}(P_d(\Gamma))$ denote the bounded locally finite
  Whitehead group of the Rips complex $P_d(\Gamma)$.  If $\Gamma$ has
  finite decomposition complexity then
  \begin{eqnarray*}
    \lim_{d\rightarrow \infty}\widetilde{K}_{-i}^{bdd}(P_d(\Gamma)) 
                    &= 0 \\ 
    \lim_{d\rightarrow\infty}Wh_{1-i}^{bdd}(P_d(\Gamma)) &= 0 
  \end{eqnarray*}
for each $i\geq 0$.
\end{thm}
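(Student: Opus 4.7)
The plan is to proceed by transfinite induction on the complexity ordinal $\alpha$ such that $\Gamma \in \C_\alpha$, using the characterization of FDC from Theorem~\ref{thm:char_complexity}. The inductive conclusion must be strengthened to a \emph{family} statement: for every metric family $\Y$ lying in some $\C_\alpha$,
\[
    \lim_{d\to\infty} \widetilde{K}^{bdd}_{-i}(P_d(Y)) = 0 \quad \text{and} \quad \lim_{d\to\infty} Wh^{bdd}_{1-i}(P_d(Y)) = 0
\]
uniformly in $Y\in\Y$, in the sense that for every class in the left-hand groups the data witnessing it as zero (the Rips scale and control parameter) may be chosen independently of $Y$. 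This uniformity is essential because the inductive step rewrites a single space as a combination of pieces that form a new family, to which the inductive hypothesis must apply as a unit.

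For the base case $\Y\in\C_0$, the Rips complex $P_d(Y)$ of a bounded space is, for $d$ exceeding the uniform diameter bound, a single simplex and hence contractible, so its controlled $K$-theory and Whitehead groups vanish in the limit $\delta\to\infty$ by standard computations of controlled $K$-theory.

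For the inductive step ($\Y\in\C_\alpha$ with $\alpha>0$), fix $d$ and $\delta$. Using the definition of $\C_\alpha$, pick $r=r(d,\delta)\gg d,\delta$, a family $\ZZ\in\bigcup_{\beta<\alpha}\C_\beta$, and an $r$-decomposition $Y=Y_0\cup Y_1$, $Y_i=\bigsqcup_{r\text{-disjoint}}Y_{ij}$ with $Y_{ij}\in\ZZ$, for each $Y\in\Y$. The subcomplexes $P_d(Y_{ij})\subseteq P_d(Y_i)\subseteq P_d(Y)$ are pairwise at simplicial distance at least $r-2d$. For $\delta$ small relative to $r-2d$, the $\delta$-controlled $K$-theory of $P_d(Y_i)$ splits as the direct sum of the $K$-theories of the $P_d(Y_{ij})$, and these vanish (in the appropriate limit) by the inductive hypothesis applied to $\ZZ$. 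Applying the controlled Mayer--Vietoris sequence from \cite{RY1,RY2} (recalled in the appendix) to the cover $P_d(Y)=P_d(Y_0)\cup P_d(Y_1)$ reduces vanishing on $P_d(Y)$ to vanishing on $P_d(Y_0)$, $P_d(Y_1)$, and their intersection $P_d(Y_0\cap Y_1)$. The first two are handled as just described; for the intersection, observe that $Y_0\cap Y_1=\bigsqcup_{r\text{-disjoint}}(Y_{0j}\cap Y_1)$, whose pieces are subspaces of $Y_{0j}\in\ZZ$, and since each $\C_\beta$ is preserved under passage to subspaces the same inductive argument applies to the intersection.

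The main obstacle will be the careful bookkeeping required to coordinate the three parameters $d$ (Rips scale), $\delta$ (control), and $r$ (decomposition scale): one must choose $r$ large enough, given $d$ and $\delta$, that the $r$-disjoint pieces genuinely split in $\delta$-controlled $K$-theory, while simultaneously driving $d\to\infty$ and $\delta\to\infty$ to extract the final vanishing. The boundary homomorphisms in controlled Mayer--Vietoris move elements across control scales, so these shifts must be absorbed by the $\delta$-limit. Handling limit ordinals in the transfinite induction requires no extra machinery beyond the family formulation, but propagating the uniformity through each Mayer--Vietoris application is delicate and ultimately the crux of the technical work.
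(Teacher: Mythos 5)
Your plan matches the skeleton of the paper's argument: strengthen to a family statement with uniform constants (the paper calls these \emph{vanishing families}), do transfinite induction on the ordinal via Theorem~\ref{thm:char_complexity}, handle bounded families as the base case, and close the induction with the controlled Mayer--Vietoris sequences of \cite{RY1,RY2}. In fact the diagram you have in mind is exactly the ``heuristic'' Mayer--Vietoris diagram the paper writes out before the proof of Theorem~\ref{VanishingK-theoryThm} --- and the paper immediately warns that this heuristic ``does not exist in the controlled setting and must be loosely interpreted.'' Your remark that the delicate part is coordinating $d$, $\delta$, and $r$ is pointing at the right problem, but you treat it as bookkeeping when it is actually an obstruction that the naive argument cannot survive.

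Here is where it breaks. After decomposing $Y$ into $r$-disjoint pieces $Y_{ij}$, you invoke the inductive hypothesis on the family $\{Y_{ij}\}$ (or on the intersections); this hands back a new Rips scale $b=b(d,\delta,\ZZ)$ at which the relevant classes die. Nothing forces $b\leq r$, and in general $b$ will be far larger than $r$. But once $b>r$, the subcomplexes $P_b(Y_{ij})\subset P_b(Y)$ are no longer well-separated --- indeed two pieces at $\Gamma$-distance $r$ acquire an edge in $P_b(\Gamma)$, so $P_b(Y_{ij})$ and $P_b(Y_{ik})$ can be at simplicial distance $1$. The direct-sum splitting of $\delta$-controlled $K$-theory you rely on (``these vanish, in the appropriate limit, by the inductive hypothesis'') therefore fails precisely at the Rips scale where you need it, and the second layer of Mayer--Vietoris (your application to $\Cc$ and $\D$ after killing the intersection) cannot be set up. This is not a uniformity issue one can paper over by choosing $r$ more generously at the outset, because $r$ must be fixed before you learn $b$. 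The paper's resolution is the \emph{relative Rips complex} $P_{ab}(\Gamma,W)$ (Appendix~A, Definition~\ref{relativeripsdef}), which inflates the Rips scale to $b$ only in a neighborhood of the intersection family $\W$ while keeping the ambient scale at $a$; the Neighborhood and Separation Lemmas (\ref{commutelemma}, \ref{separatelemma}) then guarantee the pieces of $\Cc$ and $\D$ remain separated inside $P_{ab}(\Gamma,W)$, and the proof threads the control-relaxation factors $\lambda,\lambda^2$ through this construction. Your proposal also omits the two auxiliary parameters the paper builds into the vanishing-family hypothesis --- that the statement must hold for all $Z\subset N_t(X)$ (needed because the enlarged intersection $W$ sits in a neighborhood of the pieces) and for all products with a torus $T^p$ --- without which the induction does not close. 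Until you introduce the relative Rips complex, or an equivalent device for rescaling locally, the argument has a genuine gap.
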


Our second essential result asserts that an appropriate
assembly map is an isomorphism.  To state the result we introduce the
following notation: $\L(e)$ denotes the simply connected surgery
spectrum with $\pi _n(\L(e))=L_n(\Z \{e\})$; $L_n^{bdd}(X)$ denotes
the bounded, locally finite and free $L$-theory of the locally compact
metric space $X$.  Recall that $L_n^{bdd}(X)$ is defined using locally
finite, free geometric modules and that a geometric module is locally
finite if its support is locally finite.
More precisely, for a locally compact
metric space $X$ and for each $\delta\geq 0$ and $n\geq 0$ the
$\delta$-controlled locally finite and free $L$-group in degree $n$ is denoted
$L^{\delta}_{n}(X)$.  This group is defined in
\cite{RY2}.\footnote{The group we denote $L^\delta_n(X)$ corresponds
  to the $\delta$-controlled locally finite and free $L$-theory group
  $L_n^{\delta,\delta}(X;p_X,\Z)$ in \cite{RY2}, where
  again $p_X$ is the identity map $X\to X$.}
We then define the bounded locally finite $L$-group as follows:
\begin{equation*}
L_{n}^{bdd} (P_d(\Gamma))=
    \lim_{\delta\rightarrow \infty}L_{n}^{\delta} (P_d(\Gamma)).  
\end{equation*}

\begin{thm}\label{AssemblyMapThm} 
  Let $\Gamma$ be a metric space with bounded geometry and finite
  decomposition complexity.  The assembly map
\begin{equation*}
  A : \lim_{d\to\infty} H_n(P_d(\Gamma),\L(e))\to 
           \lim_{d\to\infty}L_n^{bdd}(P_d(\Gamma))
\end{equation*}
is an isomorphism. 
\end{thm}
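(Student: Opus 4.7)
The plan is to prove the statement by transfinite induction over the decomposition complexity hierarchy $\C_\alpha$ from Definition~\ref{dfn:heirarchy}, using the characterization in Theorem~\ref{thm:char_complexity}(iii). To make the induction go through I would formulate the statement not for a single space $\Gamma$ but for a bounded geometry metric \emph{family} $\X$: if $\X$ has FDC then the assembly map $A_\X : \lim_d H_n(P_d(\X),\L(e)) \to \lim_d L_n^{bdd}(P_d(\X))$ is an isomorphism, where both sides are understood in the uniform sense over members of $\X$ (so that the union behaves like a coproduct for $r$-disjoint pieces, provided $r$ is eventually large relative to the scale $d$). This uniform reformulation is essential — the whole point of FDC is that the decomposition proceeds in families.

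The base case is a bounded family. Here each $X \in \X$ has uniformly bounded diameter, so for $d$ large enough each $P_d(X)$ is a single contractible simplex and the assembly map reduces to the assembly map over a point, which is an isomorphism by standard surgery theory. For the inductive step, suppose $\X \in \C_\alpha$. Given any $r$, choose $\beta<\alpha$ and a family $\Y \in \C_\beta$ with $\X \overset{r}{\longrightarrow} \Y$, so that each $X \in \X$ splits as $X = X_0 \cup X_1$ with $X_i = \bigsqcup_{r\text{-disjoint}} X_{ij}$ and $X_{ij} \in \Y$. The induction hypothesis applies to $\Y$.

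The main engine is then the controlled Mayer--Vietoris sequence in bounded $L$-theory recalled in the second appendix (following \cite{RY1,RY2}). Applied to the decomposition $X = X_0 \cup X_1$ (thickened appropriately in the Rips complex so that the overlap $X_0 \cap X_1$ is controlled), one obtains a long exact sequence both in bounded $L$-theory and in generalized homology $H_*(-,\L(e))$, connected by the assembly map. The crucial additional input is that for an $r$-disjoint union, once $r$ exceeds the scale $d$ used in $P_d$, the Rips complex $P_d(\bigsqcup_{r\text{-disjoint}} X_{ij})$ is literally the disjoint union $\bigsqcup P_d(X_{ij})$, and the bounded locally finite $L$-theory of an $r$-disjoint union is (uniformly) the direct product of the bounded $L$-theories of the pieces; the analogous statement holds for locally finite generalized homology. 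By the inductive hypothesis the assembly map is an isomorphism on each $X_i$, hence on $X_0$, $X_1$, and their intersection. The five lemma, applied term by term in the Mayer--Vietoris ladder and then passed to the $\lim_d$, produces the isomorphism for $\X$. The use of the vanishing Theorem~\ref{AsymptoticVanishingK-theoryThm} enters to kill $\lim^1$ obstructions and residual $Wh$/$\tilde K$-terms that arise when assembling the controlled Mayer--Vietoris sequences across increasing scales $d$, ensuring the ladder actually commutes after taking limits.

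The main obstacle is calibrating the various scale parameters: the Mayer--Vietoris sequence in bounded $L$-theory holds only when the decomposition of $X$ is sufficiently coarse relative to the propagation allowed on modules, while the inductive hypothesis supplies an isomorphism for $\Y$ only at scales compatible with $\Y$. Making these compatible requires one to take $d \to \infty$ and $r \to \infty$ in a suitably coordinated way, and to verify that the resulting double limit of Mayer--Vietoris sequences is exact. The formal tool is exactly the decomposition strategy of Theorem~\ref{thm:char_complexity}(ii): given the challenge $r$ dictated by the Mayer--Vietoris machinery, the strategy supplies the required $\Y$, and the no-infinite-ray condition on the strategy tree guarantees that the transfinite induction terminates. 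A secondary subtlety is the transition from finite to countable (or uncountable) ordinals in the hierarchy, handled by the usual direct limit argument together with compactness of simplices in the Rips complex, which ensures that any cycle or bordism at scale $d$ already lives in a finitely generated subfamily where the induction at an earlier ordinal applies.
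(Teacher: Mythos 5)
Your overall strategy is correct and matches the paper's: reformulate for metric families, induct over the FDC hierarchy with a bounded base case, and run a quantitative five lemma on the controlled Mayer--Vietoris sequences in $L$-theory from \cite{RY1,RY2}, using the $K$-theory vanishing of Theorem~\ref{AsymptoticVanishingK-theoryThm} to make the boundary maps in those sequences well-defined. (Be precise about the role of vanishing: in Theorem~\ref{MayerVietorisL_theoryThm} every statement past the first hypothesizes that a certain map of controlled $\tilde K_0$-groups is zero; this is exactly what Theorem~\ref{AsymptoticVanishingK-theoryThm} supplies. It is not a $\lim^1$ issue.) The paper implements the induction by defining a notion of \emph{$L$-isomorphism family} and showing the collection of families that are simultaneously vanishing and $L$-isomorphism families contains the bounded families and is closed under decomposability; by Theorem~\ref{thm:char_complexity} this is equivalent to the transfinite induction over $\C_\alpha$ you propose.

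You have, however, identified the crucial obstacle --- calibrating the scale parameters --- without identifying the device that overcomes it, and a direct attempt along your lines fails at exactly this point. Your key observation is that $P_d\bigl(\bigsqcup_{r\text{-disj}} X_{ij}\bigr)$ splits as a disjoint union once $r>d$. True, but the induction hypothesis applied to the pieces $X_{ij}\in\Y$ produces an isomorphism only after passing to some much larger Rips parameter $b$. At scale $b$ the $r$-disjointness has evaporated: if $b>r$ then $P_b$ reglues the pieces, and the identification of the $L$-theory of the disjoint union with a product of pieces breaks down. Taking $r$ larger a priori does not help, since the required $b$ depends on $r$ through the induction hypothesis and grows with it; ``taking $d\to\infty$ and $r\to\infty$ in a coordinated way'' therefore does not close up. The paper resolves this with the \emph{relative} Rips complex $P_{ab}(\Gamma,\Sigma)$ and the \emph{scaled} Rips complex $P_{ab\overline{m}}(\Gamma,W)$ of Appendix~\ref{Rips}: these increase the Rips parameter selectively --- on the intersection or on the piece where the induction hypothesis is being applied --- while keeping the ambient metric essentially at scale $a$ elsewhere, so that the separation guaranteed by Lemma~\ref{separatelemma} and the neighborhood control of Lemma~\ref{commutelemma} persist across the scale change. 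Without this (or an equivalent device) the argument is not merely uncalibrated but non-terminating: each application of the induction forces a scale increase that destroys the separation justifying the Mayer--Vietoris decomposition in the first place. This is also why the proof must be a genuinely \emph{quantitative} five lemma (Theorem~\ref{QuantitativeLtheoryThm}, with an explicit $b=b(a,\delta,t,n)$ uniform over $Z\subset N_t(X)$), rather than a five lemma applied after passing to $\lim_d$ of honest exact sequences, which do not exist in the controlled setting.
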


In the statement, the domain of assembly is the locally finite
homology of the Rips complex with spectrum $\L(e)$, and the range
is the bounded, locally finite and free $L$-theory of the same
Rips complex.

The proofs of our essential results,
Theorems~\ref{AsymptoticVanishingK-theoryThm} and
\ref{AssemblyMapThm}, are accomplished using appropriate controlled
Mayer-Vietoris arguments and shall be presented in
Sections~\ref{vanishing} and \ref{assembly}, respectively. Our proofs
require the full strength of the finite decomposition complexity
hypothesis; we are unable to prove the results under the hypothesis of
weak finite decomposition complexity. The remainder of the present
section is devoted to a description of how the results themselves are
used to deduce the rigidity statements in the introduction.

\subsection{The bounded category}

Being invariant under coarse equivalence, finite decomposition
complexity is well-adapted to a topological setting where the geometry
appears only `at large scale' and the topological properties are
`uniformly' locally trivial.  These ideas are formalized in the {\it
  bounded category}.

A {\it coarse metric manifold\/} is a topological manifold $M$
equipped with a continuous {\hbox{(pseudo-)}} metric in which balls are
precompact.  Although Riemannian manifolds, equipped with the path
length metric, are motivating examples of coarse metric manifolds, we
want to make clear that our definition entails {\it no\/} assumption
on the metric at `small scale' and that the manifold $M$ is {\it
  not\/} assumed to be smooth.  A continuous map $f:M\to N$, between
two coarse metric manifolds is {\it bounded\/} if there exists a
coarse equivalence $\phi: N\to M$ and a constant $K>0$ such that
\begin{equation*}
 d(x,\phi\circ f(x))\leq K 
\end{equation*}
for all $x\in M$.  Coarse metric manifolds and bounded continuous maps
comprise the {\it bounded category\/}.\footnote{In \cite{CFY}, the
  authors give an essentially equivalent definition of the bounded
  category in which an auxiliary metric space $X$ is introduced. An
  object is a pair $(M,p)$ where $p:M\to X$ has precompact
  preimages. To obtain a coarse metric manifold, one must merely
  pull back the metric from $X$ to $M$.}

Before discussing rigidity in the bounded category, we must introduce
appropriate notions of homeomorphism and homotopy.  A {\it bounded
  homeomorphism\/} between coarse metric manifolds is a map $M\to N$
which is simultaneously a homeomorphism and a coarse equivalence.
These are the isomorphisms in the bounded category.

Two bounded continuous maps $f$, $g:M\to N$ are {\it boundedly
  homotopic\/} if there exists a bounded homotopy between them; in
other words, if there exists a continuous map $F:M\times [0,1]\to N$,
for which $F(0,\cdot)=f$, $F(1,\cdot)=g$ and for which the family
$(F(t,\cdot))_{t\in [0,1]}$ is bounded (uniformly in $t$, in the
obvious sense).  A bounded continuous map $f:M\to N$ is a {\it bounded
  homotopy equivalence} if there exists a bounded continuous map
$g:N\to M$ such that the compositions $f\circ g$ and $g\circ f$ are
boundedly homotopic to the identity.

\begin{defn}
  A coarse metric manifold $M$ is {\it boundedly rigid\/} if the
  following condition holds:  every bounded homotopy equivalence
  $M\to N$ to another coarse metric manifold is boundedly homotopic to
  a (bounded) homeomorphism.
\end{defn}

A coarse metric manifold $M$ is {\it uniformly contractible\/} if for
every $r>0$, there exists $R\geq r$ such that every ball in $M$ with
radius $r$ is contractible to a point within the larger ball of radius
$R$ and the same center. Uniform contractibility is invariant under
bounded homotopy equivalence.

A coarse metric manifold has {\it bounded geometry\/} if there exists
$r>0$ with the following property: for every $R>0$ there exists $N>0$
such that every ball of radius $R$ is covered by $N$ or fewer balls of
radius $r$.\footnote{Traditionally, a Riemannian manifold is said to
  have bounded geometry if its curvature is bounded from below and its
  radius of injectivity is bounded away from zero. Such local
  conditions are known to imply our global condition.}

Perhaps the most important, and motivating, example of a coarse metric
manifold is the universal cover $\tilde{M}$ of a closed (topological)
manifold $M$. To realize the structure of a coarse metric manifold on
$\tilde{M}$ we need only equip it with a continuous $\Gamma$-invariant
pseudo-metric in which balls 
are precompact, where $\Gamma$ is the fundamental group of $M$.
Let $c$ be a continuous and compactly supported cut-off function on $\tilde{M}$ 
in the sense that $c$ is  a non-negative function on $\tilde{M}$ satisfying
\begin{equation*}
  \sum_{g \in \Gamma} c(gx)=1
\end{equation*}
for all $x\in \tilde{M}$.
We define a pseudo-metric $d$ on $\tilde{M}$ as follows:
\begin{equation*}
  d(x,y)=\sum_{g, h\in \Gamma} c(g^{-1}x)c(h^{-1}y) d_{\Gamma}(g,h)
\end{equation*}
for all $x$ and $y$ in $ \tilde{M}$, where $d_{\Gamma}$ is a word
metric on $\Gamma$.  Equipped with this pseudo-metric, $\tilde{M}$ is
coarsely equivalent to $\Gamma$. As a consequence, the coarse metric
manifold structure on $\tilde{M}$ is independent of the choices made
in the construction.


\subsection{Application to bounded rigidity}

The {\it bounded Borel isomorphism conjecture\/} asserts that an
appropriate assembly map is an isomorphism. Precisely this conjecture
asserts that for a locally finite metric space $\Gamma$ the assembly
map 
\begin{equation} 
\label{bbc}
  A : \lim_{d\to\infty} H_n(P_d(\Gamma),\L(e))\to 
            \lim_{d\to\infty}L_n^{bdd, s}(P_d(\Gamma))
\end{equation}
is an isomorphism: as in the previous section, the domain of assembly
is the locally finite homology of the Rips complex of $\Gamma$ with
spectrum $\L(e)$, the simply connected surgery spectrum with
$\pi_n(\L(e))=L^s_n(\Z\{e\})= L_n(\Z \{e\})$; the range of assembly is
the bounded simple $L$-theory of the Rips complex of $\Gamma$ defined
using locally finite free geometric modules.


\begin{thm}
\label{BoundedBorelThm}
  The bounded Borel isomorphism conjecture is true for metric
  spaces with bounded geometry and finite decomposition complexity.
\end{thm}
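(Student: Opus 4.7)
The plan is to deduce Theorem~\ref{BoundedBorelThm} from the two essential results already stated by passing between bounded free and bounded simple $L$-theory via a Rothenberg-type sequence, and using the vanishing of the bounded Whitehead groups to control the discrepancy.

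First, I would invoke Theorem~\ref{AssemblyMapThm}, which tells us that under the bounded geometry and FDC hypotheses, the assembly map
\begin{equation*}
  A : \lim_{d\to\infty} H_n(P_d(\Gamma),\L(e))\to \lim_{d\to\infty}L_n^{bdd}(P_d(\Gamma))
\end{equation*}
into the bounded locally finite \emph{free} $L$-theory is an isomorphism. The conjecture in (\ref{bbc}) is stated with the bounded \emph{simple} $L$-theory $L_n^{bdd,s}$ on the right, so the task reduces to identifying, in the limit $d\to\infty$, the natural forgetful map $L_n^{bdd,s}(P_d(\Gamma))\to L_n^{bdd}(P_d(\Gamma))$ as an isomorphism compatible with assembly.

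For this, I would use the controlled Rothenberg exact sequence in $L$-theory, which in the $\delta$-controlled setting takes the form
\begin{equation*}
  \cdots \to \widehat{H}^{n}(\Z/2;Wh^{\delta}(X))\to L^{\delta,s}_n(X)\to L^{\delta}_n(X)\to \widehat{H}^{n-1}(\Z/2;Wh^{\delta}(X))\to\cdots,
\end{equation*}
where $\widehat{H}^*(\Z/2;-)$ denotes Tate cohomology with respect to the duality involution on the controlled Whitehead group. Applying this with $X=P_d(\Gamma)$ and then taking the direct limit first in $\delta$ and then in $d$, exactness is preserved (direct limits are exact), so we obtain a long exact sequence relating $\lim_d L_n^{bdd,s}(P_d(\Gamma))$ and $\lim_d L_n^{bdd}(P_d(\Gamma))$ with comparison terms governed by $\lim_d \widehat{H}^*(\Z/2;Wh^{bdd}(P_d(\Gamma)))$. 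By Theorem~\ref{AsymptoticVanishingK-theoryThm}, the groups $\lim_d Wh^{bdd}_{1-i}(P_d(\Gamma))$ vanish for all $i\geq 0$; hence the Tate cohomology groups with coefficients in these abelian groups also vanish in the limit. Consequently the natural map $\lim_d L^{bdd,s}_n(P_d(\Gamma))\to \lim_d L^{bdd}_n(P_d(\Gamma))$ is an isomorphism.

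Combining the two isomorphisms and using the fact that the assembly map factors through the forgetful map $L^{s}\to L^{h}$, we conclude that the map in (\ref{bbc}) is also an isomorphism, proving the theorem. The main obstacle in this argument is the verification that the controlled Rothenberg sequence behaves well with respect to both the $\delta\to\infty$ limit (used to define the bounded theories) and the $d\to\infty$ limit over the Rips complexes, and that the assembly map commutes with the forgetful map in these direct limits. This compatibility is handled by the controlled surgery machinery of \cite{RY1,RY2}, which organizes the simple, free and Whitehead theories into compatible $\delta$-controlled spectra, so once that framework is in place the Tate cohomology argument goes through in the standard way.
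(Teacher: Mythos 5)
Your argument is exactly the paper's: the vanishing of the bounded Whitehead groups from Theorem~\ref{AsymptoticVanishingK-theoryThm} feeds into the controlled Ranicki--Rothenberg sequence to identify $\lim_d L^{bdd,s}(P_d(\Gamma))$ with $\lim_d L^{bdd}(P_d(\Gamma))$, and then Theorem~\ref{AssemblyMapThm} finishes. You have simply spelled out the Tate cohomology and limit-commutation details that the paper leaves implicit.
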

\begin{proof} 
  By Theorem \ref{AsymptoticVanishingK-theoryThm} and the
  Ranicki-Rothenberg sequence in the controlled setting \cite{FP}, we
  have
\begin{equation*}
  \lim_{d\rightarrow\infty}L^{bdd, s}(P_d(\Gamma))\cong 
         \lim_{d\rightarrow\infty}L^{bdd}(P_d(\Gamma)).
\end{equation*}
The result now follows from Theorem \ref{AssemblyMapThm}.
\end{proof}

As is the case for the classical Borel isomorphism conjecture, the
bounded Borel isomorphism conjecture has strong topological
implications. These implications, which we now describe, are to
questions of topological rigidity in the bounded category of coarse
metric manifolds.  Our principal result in this direction is the
following theorem.

\begin{thm}[Bounded Rigidity Theorem]
\label{thm:brig}
 A uniformly contractible coarse metric manifold with bounded
 geometry, finite decomposition complexity, and dimension at least
 five is boundedly rigid.
\end{thm}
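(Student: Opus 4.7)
The plan is to reduce bounded rigidity of $M$ to the vanishing of a bounded structure set via a bounded surgery exact sequence, and then to use the assembly map isomorphism of Theorem~\ref{BoundedBorelThm} to force that vanishing.

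First, I would pass from the coarse metric manifold $M$ to a discrete model. Choose a maximal $1$-separated net $\Gamma\subset M$. The bounded geometry hypothesis on $M$ forces $\Gamma$ to have bounded geometry as a metric space, the inclusion $\Gamma\hookrightarrow M$ is a coarse equivalence, and so Coarse Invariance gives that $\Gamma$ has finite decomposition complexity. Thus Theorems~\ref{AsymptoticVanishingK-theoryThm} and \ref{BoundedBorelThm} apply directly to $\Gamma$. Next I would use uniform contractibility of $M$ to compare $M$ with the Rips complexes $P_d(\Gamma)$. For each $d$ large enough, a standard inductive construction over simplices (using that every Rips simplex has diameter at most $d$ and is contractible inside a ball of radius $R(d)$ in $M$) produces a bounded continuous map $P_d(\Gamma)\to M$ extending $\Gamma\hookrightarrow M$, and the composition with the coarse equivalence $M\to \Gamma\hookrightarrow P_d(\Gamma)$ is boundedly homotopic to the identity at the cost of enlarging $d$. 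Taking the limit over $d$, the bounded $L$-theory and locally finite $\L(e)$-homology of $M$ agree with those of $\{P_d(\Gamma)\}$.

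With the comparison in hand, Theorem~\ref{AsymptoticVanishingK-theoryThm} vanishing of $\lim_d Wh^{bdd}_{1-i}(P_d(\Gamma))$ identifies $\lim_d L^{bdd,s}_n(P_d(\Gamma))$ with $\lim_d L^{bdd}_n(P_d(\Gamma))$ through the controlled Ranicki--Rothenberg sequence, and Theorem~\ref{AssemblyMapThm} (or equivalently Theorem~\ref{BoundedBorelThm}) then shows that the bounded assembly map
\begin{equation*}
A\colon H_n(M,\L(e))\longrightarrow L_n^{bdd,s}(M)
\end{equation*}
is an isomorphism in every degree. I would now feed this into the bounded surgery exact sequence for $M$,
\begin{equation*}
\cdots\to H_{n+1}(M,\L(e))\xrightarrow{A} L^{bdd,s}_{n+1}(M)\to S^{bdd}(M)\to H_n(M,\L(e))\xrightarrow{A} L^{bdd,s}_n(M),
\end{equation*}
which is available precisely because $\dim M\geq 5$ (this is where surgery and the bounded $s$-cobordism theorem are invoked). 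Isomorphism of $A$ on both sides of $S^{bdd}(M)$ forces $S^{bdd}(M)=0$, and this is exactly the statement that every bounded homotopy equivalence $N\to M$ from another coarse metric manifold is boundedly homotopic to a bounded homeomorphism, i.e.\ that $M$ is boundedly rigid.

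The main obstacle is Step~2, namely the transfer from the combinatorial object $P_d(\Gamma)$ to the topological manifold $M$. One must verify that uniform contractibility is strong enough to produce \emph{bounded} homotopy equivalences between $M$ and $P_d(\Gamma)$ (uniformly in $d$ in the appropriate sense) so that the assembly maps for $M$ and for the inverse system of Rips complexes are canonically isomorphic, and one must justify the existence of a bounded surgery exact sequence in this generality, together with the identification of its $L$-theory terms with $L^{bdd,s}_*(M)$ rather than with the Rips-complex version that appears in Theorem~\ref{BoundedBorelThm}.
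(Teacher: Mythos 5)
Your proposal follows essentially the same route as the paper: pass to a net $\Gamma\subset M$, use uniform contractibility and bounded geometry to identify the assembly maps for $M$ and for the Rips complexes $\{P_d(\Gamma)\}$ (this is Proposition~\ref{Rips/manifoldThm}, proved via Lemma~\ref{boundedhomotopyLem}), then feed the bounded Borel isomorphism into the bounded surgery exact sequence of \cite{FP} to kill the bounded structure set. The "main obstacle" you flag is precisely what Lemma~\ref{boundedhomotopyLem} resolves, and the one ingredient you do not mention explicitly is that the inductive construction of $g_d\colon P_d(\Gamma)\to M$ and the bounded homotopies needs a CW-structure on $M$, which the paper supplies via Kirby--Siebenmann and which is what actually forces $\dim M\geq 5$ in Proposition~\ref{Rips/manifoldThm}.
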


While we shall present the proof of this theorem at the end of this
subsection let us, for the moment, apply it in the case of the
universal cover of a closed aspherical manifold to deduce the
following result stated in the introduction.

\begin{cor}
\label{UniversalCoverCor}
 Let $M$ be a closed aspherical manifold of dimension at least five
 whose fundamental group has finite decomposition complexity
 \textup{(}as a metric space with a word metric\textup{)}.  For every
 closed manifold $N$ and homotopy equivalence $M\to N$ the
 corresponding bounded homotopy equivalence of universal covers
 is boundedly homotopic to a homeomorphism.
\end{cor}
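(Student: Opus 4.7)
The plan is to reduce the corollary directly to the Bounded Rigidity Theorem~\ref{thm:brig}, applied to the universal cover $\widetilde M$ equipped with the coarse metric manifold structure described at the end of this subsection, i.e., the pseudo-metric built from a compactly supported cut-off function and a word metric on $\Gamma = \pi_1(M)$. With this metric the orbit map makes $\widetilde M$ coarsely equivalent to $\Gamma$, so finite decomposition complexity transfers from $\Gamma$ to $\widetilde M$ by Coarse Invariance.

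First I verify the remaining hypotheses of Theorem~\ref{thm:brig} for $\widetilde M$. Since $M$ is closed, a fundamental domain for the $\Gamma$-action is precompact. Covering it by finitely many balls of a fixed small radius and translating by $\Gamma$ yields the uniform bound required for bounded geometry. For uniform contractibility, asphericity of $M$ makes $\widetilde M$ contractible; given $r>0$, contract the closed $r$-neighborhood of a fundamental domain inside some larger closed ball, then translate this contraction equivariantly by $\Gamma$ to obtain, uniformly in the basepoint, an $R$ such that every ball of radius $r$ is contractible within the ball of radius $R$ concentric to it. The dimension hypothesis is inherited from $M$.

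Second, given a homotopy equivalence $f:M\to N$, I use $f_*:\pi_1(M)\xrightarrow{\sim}\pi_1(N)$ to identify fundamental groups and lift to a $\Gamma$-equivariant homotopy equivalence $\tilde f:\widetilde M\to \widetilde N$, equipping $\widetilde N$ with the analogously defined coarse metric manifold structure (with respect to the same word metric on $\Gamma$, transported via $f_*$). Both universal covers are then coarsely equivalent to $\Gamma$ via their orbit maps, and $\tilde f$ intertwines these orbit maps up to a bounded error determined by the diameter of a fundamental domain; hence $\tilde f$ is a bounded continuous map. An equivariant homotopy inverse $\tilde g$ to $\tilde f$, together with equivariant homotopies $\tilde g \tilde f \simeq \mathrm{id}$ and $\tilde f \tilde g \simeq \mathrm{id}$, is bounded by the same argument, so $\tilde f$ is a bounded homotopy equivalence in the sense of the bounded category.

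Applying Theorem~\ref{thm:brig} to $\widetilde M$ then concludes that $\tilde f$ is boundedly homotopic to a (bounded) homeomorphism $\widetilde M\to\widetilde N$, which is the stated conclusion. The main obstacle I anticipate is not the abstract argument but the careful bookkeeping in the verification that the pseudo-metric built from the cut-off function yields \emph{uniformly} contractible, bounded-geometry manifolds and that the equivariant lift is \emph{uniformly} bounded; each of these is morally immediate from cocompactness and asphericity, but requires translating compactness-type statements into the uniform-over-basepoints form demanded by the coarse metric definitions. Once these verifications are in place, the corollary is a one-line consequence of Theorem~\ref{thm:brig}.
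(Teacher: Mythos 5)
Your proposal is correct and follows essentially the same route as the paper: equip $\widetilde M$ with the coarse metric manifold structure, observe that cocompactness gives bounded geometry and asphericity plus cocompactness gives uniform contractibility, transfer FDC from $\Gamma$ via the coarse equivalence $\widetilde M\simeq\Gamma$, and apply the Bounded Rigidity Theorem~\ref{thm:brig}. The paper's proof is terser (it simply asserts bounded geometry and uniform contractibility and cites the theorem), while you additionally spell out the equivariant-translation arguments for those two hypotheses and verify that the equivariant lift $\tilde f$ is a bounded homotopy equivalence — details the paper leaves implicit but which your reasoning fills in correctly.
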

\begin{proof}
 The universal cover of a closed manifold has bounded geometry as a
 coarse metric manifold.  Further, the universal cover of a closed
 aspherical manifold is uniformly contractible as a coarse metric
 manifold.  Thus, the previous theorem applies.
\end{proof}

Let $M$ be a coarse metric manifold. A {\it net\/} in $M$ is a metric
subspace $\Gamma\subset M$ which is both {\it uniformly discrete\/} --
the distance between distinct points of $\Gamma$ is bounded uniformly away from
zero -- and {\it coarsely dense\/} in $M$ -- for some $C>0$, every
ball $B(x,C)$ in $M$ intersects $\Gamma$. Clearly, the inclusion of a
net into $M$ is a coarse equivalence, so that any two nets are
coarsely equivalent. If $M$ has bounded geometry (as a coarse metric
manifold) then any net in $M$ has bounded geometry (as a discrete
metric space).

\begin{prop}
\label{Rips/manifoldThm}
Let $M$ be a uniformly contractible coarse metric manifold having
bounded geometry and dimension at least five.  Let $\Gamma$ be a net
in $M$.  The assembly map \textup{(}\ref{bbc}\textup{)} of
the bounded Borel isomorphism conjecture for $\Gamma$ identifies
with the assembly map for $M$:
\begin{equation}
\label{bbcM}
  A: H_n(M,\L(e)) \to L_n^{bdd, s}(M).
\end{equation}
Precisely, there are isomorphisms 
\begin{equation*}
  H_n(M,\L(e))\cong \lim_{d\to\infty} H_n(P_d(\Gamma),\L(e)) 
      \quad\text{and}\quad
  L_n^{bdd, s}(M)\cong \lim_{d\rightarrow\infty}L^{bdd,s}(P_d(\Gamma))
\end{equation*}
commuting with the assembly maps.
\end{prop}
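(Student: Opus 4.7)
The plan is to construct, for all sufficiently large $d$, mutually bounded-homotopy-inverse coarse equivalences between $M$ and $P_d(\Gamma)$, and then appeal to bounded-homotopy invariance of the two theories appearing in the assembly map.

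To go from $M$ to the Rips complex, fix $C>0$ so that the balls $\{B(\gamma,C)\}_{\gamma\in\Gamma}$ cover $M$, and use bounded geometry plus paracompactness to choose a locally finite partition of unity $\{\phi_\gamma\}_{\gamma\in\Gamma}$ on $M$ with $\Supp(\phi_\gamma)\subset B(\gamma,C)$. The formula
\begin{equation*}
  p(x) \;=\; \sum_{\gamma\in\Gamma}\phi_\gamma(x)\,\gamma
\end{equation*}
defines a continuous map $p\colon M\to P_{2C}(\Gamma)\subset P_d(\Gamma)$ for every $d\geq 2C$, since whenever $\phi_\gamma(x)$ and $\phi_{\gamma'}(x)$ are both nonzero one has $d(\gamma,\gamma')\leq 2C$, so these vertices span a simplex. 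In the other direction, I would build $f_d\colon P_d(\Gamma)\to M$ by skeletal induction, sending each vertex to itself in $M$. If $f_d$ has been defined on the $k$-skeleton so that every $k$-simplex maps into a set of diameter at most $R_k$, then the boundary of a $(k{+}1)$-simplex lies in a ball of radius $R_k$, and uniform contractibility of $M$ allows a continuous extension over the $(k{+}1)$-cell into a ball of radius $R_{k+1}:=2R(R_k)$; bounded geometry caps $\dim P_d(\Gamma)$ by a constant depending only on $d$, so the recursion terminates and the resulting $f_d$ enjoys a uniform bound on simplex diameters.

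Both compositions $p\circ f_d$ and $f_d\circ p$ are then uniformly close to the respective identity maps: $p\circ f_d$ is the identity up to bounded error on vertices and by convexity on simplices, while $f_d(p(x))$ lies in a bounded region containing $x$, because $p(x)$ is supported on vertices within distance $C$ of $x$ and $f_d$ has controlled simplex diameter. Re-running the skeletal induction, this time filling in $X\times I$ starting from $X\times\{0,1\}\cup X^{(0)}\times I$, yields a bounded homotopy $f_d\circ p\simeq\mathrm{id}_M$; the analogous homotopy $p\circ f_d\simeq\mathrm{id}_{P_{d'}(\Gamma)}$ is obtained by straight-line interpolation inside a sufficiently large $P_{d'}(\Gamma)$, with $d'\geq d$ large enough that any two points of $P_d(\Gamma)$ that are simplicially close enough lie in a common simplex of $P_{d'}(\Gamma)$. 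Passing to the limit $d\to\infty$, $p$ and the $f_d$ assemble into mutually inverse bounded homotopy equivalences between $M$ and $\lim_d P_d(\Gamma)$.

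Locally finite homology with coefficients in the spectrum $\L(e)$, and bounded locally finite simple $L$-theory, are bounded-homotopy invariants of coarse metric spaces --- this is built into their definitions in \cite{RY1,RY2} --- so the preceding equivalences yield natural isomorphisms
\begin{equation*}
  H_n(M,\L(e))\;\cong\;\lim_{d\to\infty}H_n(P_d(\Gamma),\L(e)),\qquad
  L_n^{bdd,s}(M)\;\cong\;\lim_{d\to\infty}L_n^{bdd,s}(P_d(\Gamma)),
\end{equation*}
intertwining the two assembly maps by naturality. The main obstacle is the skeletal induction: one must verify that the diameter constants $R_k$ can be chosen uniformly across all simplices, depending only on $d$ and on the modulus of uniform contractibility of $M$, and that the analogous uniform bounds hold for the homotopies --- the combination of uniform contractibility of $M$ with the bounded-geometry cap on $\dim P_d(\Gamma)$ is precisely what makes this go through.
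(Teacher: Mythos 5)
Your construction follows the paper's approach closely: the partition-of-unity map $M\to P_d(\Gamma)$, the skeletal induction giving $P_d(\Gamma)\to M$ via uniform contractibility, and the skeletal induction on the mapping cylinder $M\times[0,1]$ producing the bounded homotopy back to $\mathrm{id}_M$. This is essentially the content of the paper's Lemma~\ref{boundedhomotopyLem}.

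However, there is a gap that the paper's own (very short) proof of Proposition~\ref{Rips/manifoldThm} is precisely designed to address and which you pass over. When you write ``filling in $X\times I$ starting from $X\times\{0,1\}\cup X^{(0)}\times I$'' to build the homotopy $f_d\circ p\simeq\mathrm{id}_M$, you are doing a skeleton-by-skeleton extension on $M\times[0,1]$. This requires $M$ to carry a CW-structure, but $M$ is only assumed to be a topological manifold with a coarse metric, not a simplicial complex. Producing the CW-structure is exactly where the hypothesis $\dim M\geq 5$ is used: by Kirby--Siebenmann, a topological manifold of dimension at least five admits a CW-structure, and then $M$ is a coarse metric CW-space to which the lemma applies. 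Your proposal never invokes this and never uses the dimension restriction, so as written the skeletal induction on $M\times I$ has no starting point. (The rest of your argument --- uniformity of the contraction radii $R_k$ across simplices, bounded-geometry cap on the dimension of $P_d(\Gamma)$, linear-interpolation homotopy inside a larger $P_{d'}(\Gamma)$ --- is in line with the paper.)
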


\begin{remark}
  The bounded geometry condition is essential here; Dranishnikov,
  Ferry and Weinberger have constructed an example of a uniformly
  contractible manifold $M$ for which the first asserted isomorphism
  fails \cite{DFW}.
\end{remark}


Results analogous to the proposition are typically proved under the
stronger assumption that $M$ is a Riemannian manifold with bounded
geometry, or at least that $M$ is a simplicial complex equipped with a
proper path metric.  See, for example, \cite[Section 3]{HR}.  Our
proof will follow the standard arguments, based on the following
lemma.  For the statement define a {\it coarse metric CW-space\/} to
be a CW-complex equipped with a continuous (pseudo-)metric in which
balls are relatively compact, and in which the cells have uniformly
bounded diameter. The latter property can always be achieved by
refining the CW-structure.

\begin{lem}
\label{boundedhomotopyLem}
Let $X$ be a uniformly contractible coarse metric finite dimensional
CW-space. Suppose that  $X$ admits a bounded geometry net $\Gamma$.  For
every sufficiently large $d>0$ there exist continuous coarse equivalences
\begin{equation*}
  f_d:X\to P_d(\Gamma) \quad\text{and}\quad g_d: P_d(\Gamma)\to X 
\end{equation*}
with the following properties:
\begin{ilist}
  \item $g_d\circ f_d$ is boundedly homotopic to the identity map of
    $X$;
  \item $i_{dd'}\circ f_d\circ g_d$ is boundedly homotopic to the
    inclusion $i_{dd'}: P_d(\Gamma)\to P_{d'}(\Gamma)$, for $d'>d$ 
    sufficiently large. 
\end{ilist}
\end{lem}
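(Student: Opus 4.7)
The plan is to build $f_d$ by a partition-of-unity construction and $g_d$ by inductively extending over simplices using uniform contractibility, and then to verify the two bounded-homotopy properties by a skeleton-wise obstruction argument on $X$ and by straight-line homotopies in $P_{d'}(\Gamma)$ for (i) and (ii), respectively.

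First I would fix a radius $r$ larger than the coarse density constant $C$ of $\Gamma$ in $X$. Using the bounded-geometry property of $\Gamma$ (as a discrete subspace of $X$), together with a continuous bump function $\psi$ with $\psi \equiv 1$ on $[0,r/2]$ and $\psi \equiv 0$ on $[r,\infty)$, I would form $\tilde\phi_\gamma(x) = \psi(d(x,\gamma))$ and normalize to obtain a locally finite continuous partition of unity $\{\phi_\gamma\}_{\gamma\in\Gamma}$ with $\text{supp}(\phi_\gamma)\subset B(\gamma,r)$; the denominator in the normalization is bounded below because $\Gamma$ is coarsely dense. For any $x\in X$ the $\gamma$'s active at $x$ all lie within distance $2r$ of each other, so for $d\geq 2r$ they span a simplex of $P_d(\Gamma)$. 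Setting
\begin{equation*}
  f_d(x) = \sum_{\gamma\in\Gamma}\phi_\gamma(x)\gamma
\end{equation*}
defines a continuous map to $P_d(\Gamma)$, and the boundedness of the supports together with bounded geometry make $f_d$ a coarse equivalence (with the net inclusion $\Gamma\hookrightarrow X$ as a coarse inverse).

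Next I would construct $g_d$ by induction on the skeleta of $P_d(\Gamma)$, starting with $g_d|_\Gamma$ the inclusion $\Gamma\hookrightarrow X$. Assume $g_d$ has been defined on the $(k-1)$-skeleton in such a way that the image of every $(k-1)$-simplex sits in a ball of uniformly bounded radius around any of its vertices. For each $k$-simplex $[\gamma_0,\dots,\gamma_k]$ of $P_d(\Gamma)$, the inductive image of its boundary lies in a ball $B(\gamma_0,\rho_{k-1})$ for some $\rho_{k-1}$ depending only on $d$ and $k$. Uniform contractibility of $X$ produces an $R(\rho_{k-1})$ and a contraction within $B(\gamma_0,R(\rho_{k-1}))$, which I would use to extend $g_d$ over the simplex with image inside a ball of bounded radius $\rho_k$. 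Since $\dim P_d(\Gamma)$ is bounded in terms of $d$ (by bounded geometry of $\Gamma$) the induction terminates after finitely many steps with a continuous bounded coarse equivalence $g_d:P_d(\Gamma)\to X$.

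For property (i), both $g_d\circ f_d$ and $\text{id}_X$ are continuous self-maps of $X$ that agree up to a bounded error, since $f_d(x)$ is supported in a $2r$-ball of $x$ and $g_d$ moves those vertices by a bounded amount. I would then produce a bounded homotopy between them by another skeleton-wise obstruction argument: on the $0$-skeleton of $X$ the two maps differ by a bounded amount so can be joined by paths of bounded length (uniform contractibility); extend over each $k$-cell of $X$ by contracting the already-defined map on the boundary of the cylinder within a larger controlled ball, using uniform contractibility and finite dimensionality to keep the total stretch of the homotopy uniform. For property (ii), I would observe that for any $y\in P_d(\Gamma)$ both $y$ and $f_d(g_d(y))$ have vertex support contained in a common ball in $\Gamma$ of bounded radius $\rho$ depending only on $d$ (and on the parameter $r$). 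Choosing $d'\geq\rho$, the union of the two supports spans a simplex of $P_{d'}(\Gamma)$, and the straight-line homotopy $(1-t)\,i_{dd'}(y) + t\,i_{dd'}(f_dg_d(y))$ inside $P_{d'}(\Gamma)$ is a bounded homotopy from $i_{dd'}$ to $i_{dd'}\circ f_d\circ g_d$.

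The main obstacle I expect is the skeleton-by-skeleton construction in the third paragraph: one must juggle the uniform contractibility function $R(\cdot)$, the bounded-diameter condition on the cells of $X$, and finite dimensionality simultaneously so that the homotopy built cell by cell still has a uniform bound on the displacement. This is where the hypothesis that $X$ is a \emph{coarse metric CW-space} (with cells of uniformly bounded diameter) is essential, and where the precise quantitative form of uniform contractibility must be invoked.
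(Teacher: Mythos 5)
Your proposal follows essentially the same route as the paper: a partition of unity for $f_d$, inductive extension over skeleta via uniform contractibility for $g_d$, a straight-line homotopy in $P_{d'}(\Gamma)$ for property (ii), and an inductive skeleton-by-skeleton extension of the homotopy over the cylinder $[0,1]\times X$ (using uniform contractibility and finite dimensionality) for property (i), which is exactly the argument of Bartels--Rosenthal that the paper invokes. Your closing remark also correctly identifies the coarse metric CW hypothesis (uniformly bounded cell diameters plus finite dimension) as what makes the inductive homotopy extension terminate with a uniform displacement bound.
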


\begin{proof}
The proof is inspired by \cite[Section 4] {BR} and \cite[Section 3]{HR}.  
We define $f_d$.  Since $\Gamma$ is a net, there exists a $C>0$
such that the balls of radius $C$ centered at the points of $\Gamma$
cover $X$.  Take a continuous partition of unity
$(\phi_{\gamma})_{\gamma\in \Gamma}$ in $X$ subordinate to this
cover.   Define
\begin{equation*}
  f_d:X\to P_d(\Gamma), \quad  
       f_d(x)=\sum_{\gamma\in \Gamma} \phi_{\gamma}(x)\gamma,  
\end{equation*}
for all $x\in X$.  Whenever $\phi_\gamma(x)$ and $\phi_{\gamma'}(x)$
are simultaneously nonzero the distance between $\gamma$ and $\gamma'$
is at most $2C$.  Thus, $f_d$ is properly defined provided $d>2C$.
Observe that $f_d$ is continuous and uniformly expansive.

We define $g_d$ recursively. On the $0$-skeleton 
$\Gamma\subset P_d(\Gamma)$ we define $g_d$ in the obvious way.
Assuming we have defined $g_d$ on the $k$-skeleton, we extend it to
the $(k+1)$-skeleton using the uniform contractibility of $X$.
Observe that $g_d$ is continuous and uniformly expansive.  Indeed,
this follows from the finite dimensionality of $P_d(\Gamma)$.

Further, $f_d$ and $g_d$ are inverse coarse equivalences -- the
compositions $f_d\circ g_d$ and $g_d\circ f_d$ are close to the
identity maps on $X$ and $P_d(\Gamma)$, respectively.  This is most
easily verified by noting that the compositions are uniformly
expansive, and close to the identity on the copies of $\Gamma$ in
$P_d(\Gamma)$ and $X$, respectively.

Then it is easy to see that for $d'$ large enough, there exits a
linear homotopy from $f_d\circ g_d$ to the identity inside
$P_{d'}(\Gamma)$.

The case of $g_d\circ f_d$ is done by \cite[Lemma 4.4]{BR}. We
reproduce the proof here. Consider the map $\Phi: \{0,1\}\times X\to
X$ defined by $\phi(0,x)=g_d\circ f_d(x)$ and $\Phi(1,x)=x$. Note that
$W=[0,1]\times X$, with the product metric, is a coarse metric
CW-space, and $\{0,1\}\times X$ is a sub-complex of $W$.  Let $W_n$ be
the union of $\{0,1\}\times X$ with the $n$-skeleton of $W$. Using the
uniform contractibility of $W$, one can extend inductively $\Phi$ to a
bounded continuous map $\Phi_n$ defined on $W_n$. Now, since $W$ has
finite dimension, $W=W_n$ for some $n$ and we have constructed a
bounded homotopy equivalence from $f_d\circ g_d$ to the identity.
\end{proof}

\begin{proof}[Proof of Proposition~\ref{Rips/manifoldThm}]
  A topological manifold of dimension at least five admits the
  structure of a 
  CW-complex \cite{KS}.\footnote{This is the only point at which
    we require the dimension to be $\geq 5$ -- the question of whether a
    manifold admits the structure of a CW-complex remains open in low
    dimensions. One could give an alternative proof of
    Proposition~\ref{Rips/manifoldThm} using a Mayer-Vietoris
    argument, which would allow us to remove the dimension
    restriction.}  Thus a coarse metric manifold of dimension at least
  five is a coarse metric CW-space and the lemma applies.
\end{proof}

\begin{proof}[Proof of Theorem~\ref{thm:brig}]
 Let $M$ be as in the statement.  Let $N$ be another coarse metric
 manifold and suppose that $N$ is boundedly homotopy equivalent to
 $M$. According to the bounded surgery exact sequence \cite{FP}, the
 bounded Borel isomorphism conjecture for $M$ implies that $N$ is
 homeomorphic to $M$, assuming that $\dim M\geq 5$.
\end{proof}

\subsection{Application to stable rigidity}

The {\it bounded Farrell-Jones $L$-theory isomorphism conjecture\/}
asserts that a certain assembly map is an isomorphism. Precisely this
conjecture asserts that for a locally finite metric space $\Gamma$ the
assembly map 
\begin{equation*}
  A : \lim_{d\to\infty} H_n(P_d(\Gamma),\L(e))\to 
         \lim_{d\to\infty}L_n^{bdd, <-\infty>}(P_d(\Gamma))
\end{equation*}
is an isomorphism.  Here, for a metric space $X$ and natural number
$n$, we define $L^{bdd,<-\infty>}_n (X)$ to be the direct limit of the bounded
locally finite and free $L$-groups $L_n ^{bdd}(X\times \R^k)$ with the
maps given by crossing with $\R$.  Recall that $\L(e)$, the (simply)
connected surgery spectrum, satisfies 
$\pi _n(\L(e))=L_n^{<-\infty>}(\Z \{e\})$.

\begin{thm}
\label{BoundedFarrelJonesThm}
The bounded Farrell-Jones $L$-theory isomorphism conjecture is true
for metric spaces spaces with bounded geometry and finite
decomposition complexity.
\end{thm}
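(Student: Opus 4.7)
The plan is to mirror the proof of Theorem~\ref{BoundedBorelThm}: combine Theorem~\ref{AssemblyMapThm} with the vanishing part of Theorem~\ref{AsymptoticVanishingK-theoryThm}, only now using the vanishing of \emph{all} bounded lower $K$-theory groups $\widetilde K^{bdd}_{-i}$ for $i\geq 0$ rather than just the Whitehead group. The task reduces to proving the identification
\begin{equation*}
\lim_{d\to\infty} L^{bdd}_n(P_d(\Gamma)) \;\cong\; \lim_{d\to\infty} L^{bdd,<-\infty>}_n(P_d(\Gamma));
\end{equation*}
once this is in hand, Theorem~\ref{AssemblyMapThm} identifies the left hand side with $\lim_{d\to\infty} H_n(P_d(\Gamma),\L(e))$ and the conjecture follows.

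By the definition $L^{bdd,<-\infty>}_n(X)=\lim_{k\to\infty} L^{bdd}_n(X\times \R^k)$, the identification above is carried out one $\R$-factor at a time. The controlled Ranicki--Rothenberg/Bass--Heller--Swan type sequence of \cite{FP} fits the `crossing with $\R$' map $L^{bdd}_n(X\times\R^j)\to L^{bdd}_n(X\times\R^{j+1})$ into a long exact sequence whose remaining terms are Tate cohomology groups of $\Z/2$ with coefficients in the bounded reduced lower $K$-theory groups $\widetilde K^{bdd}_{-j}(X)$. Applying this with $X=P_d(\Gamma)$ and taking the limit $d\to\infty$, these $K$-theoretic obstructions vanish by Theorem~\ref{AsymptoticVanishingK-theoryThm}, so each `crossing with $\R$' map becomes an isomorphism in the $d$-limit; passing to the further limit $k\to\infty$ then gives the desired identification.

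To apply Theorem~\ref{AsymptoticVanishingK-theoryThm} uniformly to the relevant product spaces, I would observe that $\Gamma\times \Z^j$ has bounded geometry and FDC for every $j$ --- FDC is closed under products by the Fibering Theorem --- and that $P_d(\Gamma\times\Z^j)$ is coarsely equivalent to $P_d(\Gamma)\times \R^j$, so that the vanishing applies to $P_d(\Gamma)\times\R^j$ as well. This is the only place where the permanence results of Section~\ref{PermanenceSection} enter the argument.

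The main obstacle is not conceptual but organizational: one must check that the controlled Ranicki--Rothenberg sequences are sufficiently functorial to commute with both the Rips-scale direct limit $d\to\infty$ and the $\R$-crossing direct limit $k\to\infty$, and that exactness is preserved under these direct limits. This is essentially a spectrum-level bookkeeping exercise, analogous to what is required in the proof of Theorem~\ref{BoundedBorelThm} but iterated indefinitely; once this is set up carefully, no inputs beyond Theorems~\ref{AsymptoticVanishingK-theoryThm} and \ref{AssemblyMapThm} and the product permanence of FDC are needed.
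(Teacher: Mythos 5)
Your route is a valid alternative, but it is considerably heavier than the paper's and does not follow the same path. The paper's entire proof is a one-liner: apply Theorem~\ref{AssemblyMapThm} not just to $\Gamma$ but to $\Gamma\times\Z^k$ for every $k$ (still of bounded geometry and FDC, by product permanence); identify $\lim_d L^{bdd}(P_d(\Gamma\times\Z^k))$ with $\lim_d L^{bdd}(P_d(\Gamma)\times\R^k)$ via the controlled homotopy equivalence of Rips complexes; and use the suspension isomorphism for locally finite $\L(e)$-homology on the domain side. Since $\lim_d L^{bdd,<-\infty>}_*(P_d(\Gamma))$ is by definition the $k$-limit of those right-hand sides, the bounded Farrell--Jones assembly isomorphism drops out directly, with no appeal to Theorem~\ref{AsymptoticVanishingK-theoryThm} and no controlled Ranicki--Rothenberg or Bass--Heller--Swan sequences at all. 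Your plan instead proves the stronger intermediate assertion $\lim_d L^{bdd}_n(P_d(\Gamma))\cong\lim_d L^{bdd,<-\infty>}_n(P_d(\Gamma))$ -- that the $L$-theoretic decoration is immaterial in the $d$-limit -- and for that you genuinely do need the vanishing of the lower $\widetilde K^{bdd}_{-i}$ together with the controlled Rothenberg/Shaneson--Wall sequences of \cite{FP}, which is exactly the mechanism the paper uses for the single $s$-to-$h$ step in Theorem~\ref{BoundedBorelThm}, but now iterated across all decorations. That buys a worthwhile corollary in its own right, at the cost of precisely the functoriality and double-limit bookkeeping you flag at the end; the paper sidesteps all of it by simply re-running Theorem~\ref{AssemblyMapThm} on the FDC product spaces and letting the $k$-limit do the rest.
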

\begin{proof}
  Immediate from Theorem \ref{AssemblyMapThm} and from the observation
  that if $X$ has finite decomposition complexity, then so does
  $X\times \R^n$ for all $n$.
\end{proof}

The bounded Farrell-Jones $L$-theory isomorphism conjecture has
implications to question of stable rigidity.  Let $M$ be a closed,
aspherical manifold.  By the arguments presented in the previous
section, the bounded Farrell-Jones $L$-theory isomorphism conjectures
for the universal cover of $M$ and for the fundamental group of $M$
are equivalent.  According to the {\it descent principle\/} they imply
the {\it integral Novikov conjecture\/} -- a
detailed argument is contained in the proof of \cite[Theorem 5.5]{CP}.
For a nice exposition of the descent principle in the context of
$C^*$-algebra $K$-theory see \cite{R}.

Recall now from the introduction that a closed manifold $M$ is {\it
  stably rigid\/} if there exists a natural number $n$ with the
following property: for every closed manifold $N$ and every homotopoy
equivalence $M\to N$ the map $M\times \R^n\to N\times\R^n$ is
homotopic to a homeomorphism.  The {\it stable Borel conjecture\/}
asserts that closed aspherical manifolds are stably rigid.  The fact
that the integral Novikov conjecture implies the stable Borel
conjecture was stated without proof in \cite{FP}; for a detailed
treatment see \cite[Proposition 2.8]{J}.  From this discussion, and
our previous results, we conclude:

\begin{thm} 
 The stable Borel conjecture holds for closed aspherical manifolds
 whose fundamental groups have finite decomposition complexity. \qed
\end{thm}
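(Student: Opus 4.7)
The plan is to chain together the three ingredients assembled in the paragraph immediately preceding the theorem, with no genuinely new input. Let $M$ be a closed aspherical manifold whose fundamental group $\Gamma = \pi_1(M)$ has finite decomposition complexity. The universal cover $\tilde M$, equipped with the $\Gamma$-invariant pseudo-metric constructed via a compactly supported cut-off function as described in Section~\ref{TopologicalResultsSection}, is coarsely equivalent to $\Gamma$ (with a word metric), and therefore inherits both bounded geometry and finite decomposition complexity by coarse invariance.

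First I would apply Theorem~\ref{BoundedFarrelJonesThm} either to $\Gamma$ directly, or to $\tilde M$ via the coarse equivalence $\tilde M \simeq \Gamma$ and the identification of the two assembly maps provided by Proposition~\ref{Rips/manifoldThm} (noting that $\tilde M$ is uniformly contractible because $M$ is aspherical, and $\dim M$ will be taken large enough in the relevant stabilized setting once we cross with $\R^n$). This yields the bounded Farrell–Jones $L$-theory isomorphism in the form
\begin{equation*}
  A : \lim_{d\to\infty} H_n(P_d(\Gamma),\L(e)) \xrightarrow{\;\cong\;}
      \lim_{d\to\infty} L_n^{bdd,\langle-\infty\rangle}(P_d(\Gamma)).
\end{equation*}

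Next I would invoke the descent principle, exactly as indicated in the paragraph preceding the theorem, to pass from this bounded (i.e.\ coarse, equivariant-on-the-universal-cover) assembly isomorphism to the classical \emph{integral Novikov conjecture} for $\Gamma$, namely the injectivity (in fact isomorphism after the appropriate formulation) of the classical $L$-theory assembly map for $B\Gamma$. The argument is the one written out in detail in \cite[Thm.~5.5]{CP}: the bounded statement for $\tilde M$, combined with the fact that $\tilde M$ is a cocompact $\Gamma$-space with contractible Rips tower, descends to the classical assembly map for $B\Gamma = M$. No new ingredients beyond what is cited in the paper are required here.

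Finally I would apply the known implication ``integral Novikov $\Longrightarrow$ stable Borel'' for closed aspherical manifolds, proved in the form we need by Ferry–Pedersen and written out in detail in \cite[Prop.~2.8]{J}: there exists $n$ such that for any closed manifold $N$ and any homotopy equivalence $f\colon M\to N$, the product $f\times\mathrm{id}\colon M\times\R^n \to N\times\R^n$ is homotopic to a homeomorphism. Combining the three steps gives the theorem with no dimension restriction on $M$, since stabilization by $\R^n$ brings us into the dimension range in which the surgery machinery is available. The main obstacle, conceptually, is nothing internal to this proof but rather the careful bookkeeping in the descent step — one must check that the bounded assembly isomorphism furnished by Theorem~\ref{BoundedFarrelJonesThm} is the exact input required by the version of descent in \cite{CP}, in particular that the decorations (free, $\langle-\infty\rangle$) and the passage $L^{bdd}\leadsto L^{bdd,\langle-\infty\rangle}$ via crossing with $\R^k$ are compatible with the stabilization used in \cite[Prop.~2.8]{J}. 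Once these decorations are matched, the theorem is immediate.
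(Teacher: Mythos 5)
Your proposal reproduces the paper's argument exactly: apply Theorem~\ref{BoundedFarrelJonesThm} to $\Gamma$ (or equivalently to $\tilde M$), descend via \cite[Thm.~5.5]{CP} to the integral Novikov conjecture, and invoke the Ferry--Pedersen / Ji implication \cite[Prop.~2.8]{J} from integral Novikov to stable Borel. The paper gives no proof environment for this theorem precisely because the argument is the chain of implications in the preceding paragraph, which you have correctly identified and assembled.
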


\noindent
No restriction on the dimension is required -- low dimension can
compensated by increasing $n$.  Moreover, if $\dim(M)\geq 5$, then one
can take $n=3$.

\begin{remark}
  In analogy with the bounded Farrell-Jones $L$-theory isomorphism
  conjecture, we could state a bounded
  version of the Farrell-Jones isomorphism conjecture for 
  algebraic $K$-theory.  The Farrell-Jones conjecture for bounded
  locally finite algebraic $K$-theory implies vanishing of the bounded
  Whitehead group
and the bounded reduced algebraic $K$-group described previously.
\end{remark}





\section{Vanishing theorem}
\label{vanishing}

We devote this section to the proof of
Theorem~\ref{AsymptoticVanishingK-theoryThm}, our vanishing result for
the bounded Whitehead and bounded reduced lower algebraic $K$-theory
groups.  In view of the definitions, we obtain
Theorem~\ref{AsymptoticVanishingK-theoryThm} as an immediate
consequence of the following result:

\begin{thms}\label{VanishingK-theoryThm} 
  Let $\Gamma$ be a locally finite metric space with bounded geometry and  finite
  decomposition complexity.  The controlled locally
  finite Whitehead group 
  and the controlled reduced locally finite algebraic $K$-theory group
  vanish asymptotically.  Precisely,
  given $i\geq 0$, $\delta>1$ and $a>1$ there exists $b>1$ such that,
  for any $Z\subset \Gamma$ the natural homomorphisms:
  \begin{equation}
    \label{phi}
      Wh^{\delta}_{1-i}(P_a (Z)) \to 
         Wh^{\delta}_{1-i}(P_{b} (Z))    
  \end{equation}
  \begin{equation}
    \label{psi}
      \widetilde{K}_{-i}^{\delta}(P_{a}(Z))\to
          \widetilde{K}_{-i}^{\delta}(P_{b}(Z))    
  \end{equation}
  are zero.  Here $P_a(\Gamma)$ is equipped with the simplicial metric
  and $P_a(Z)\subset P_a(\Gamma)$ with the subspace
  metric \textup{(}and similarly for $P_b(Z)$\textup{)}. The constant
  $b$ depends only on $i$, $\delta$, $a$ and $\Gamma$, and not on $Z$.
\end{thms}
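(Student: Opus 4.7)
The plan is to prove Theorem~\ref{VanishingK-theoryThm} by transfinite induction on the decomposition complexity ordinal of $\Gamma$. By Theorem~\ref{thm:char_complexity} there exists a countable ordinal $\alpha_0$ with $\{\Gamma\}\in\C_{\alpha_0}$, and by Coarse Invariance every subspace $Z\subset\Gamma$ inherits a decomposition strategy of depth at most $\alpha_0$, pulled back from a fixed strategy for $\Gamma$ via the pruning/relabeling construction used in the proof of Coarse Invariance. The key technical inputs are the controlled Mayer--Vietoris sequences in $Wh^{\delta}$ and $\widetilde{K}^{\delta}_{-i}$ of \cite{RY1} (recalled in the second appendix), together with the elementary observation that the controlled $K$-theory of an $r$-disjoint union splits as a direct sum whenever $r$ exceeds the current control parameter.

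The precise inductive statement I would prove is: for every countable ordinal $\alpha$ and every triple $(i,\delta,a)$ there exists $b=b(i,\delta,a,\Gamma,\alpha)$ such that for every $Z\subset\Gamma$ whose associated family lies in $\C_{\alpha}$, both maps (\ref{phi}) and (\ref{psi}) are zero. The base case $\alpha=0$ is the family of uniformly bounded subspaces: $P_a(Z)$ then has uniformly bounded diameter, and an Eilenberg swindle argument inside a controlled direct sum produces the required $b$ making the scale-inclusion zero on $Wh^\delta$ and $\widetilde{K}^\delta_{-i}$.

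For the inductive step, given $(i,\delta,a)$, choose $r$ sufficiently large (the precise size determined by the control loss of Mayer--Vietoris and by $a$). Apply the decomposition at scale $r$ to obtain $Z = Z_0\cup Z_1$ with $Z_i=\bigsqcup_{r\text{-disj}} Z_{ij}$ and $Z_{ij}\in\Y$ for a single family $\Y\in\C_\beta$, $\beta<\alpha$. Provided $r>2a$, the subcomplexes $P_a(Z_{ij})$ are pairwise disjoint and simplicially separated by at least $r-2a$, so at a suitable enlarged control $\delta'\geq\delta$ the groups $Wh^{\delta'}_{1-i}(P_a(Z_i))$ and $\widetilde{K}^{\delta'}_{-i}(P_a(Z_i))$ split as direct sums over $j$. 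By the inductive hypothesis, applied uniformly to all the $Z_{ij}$ (which lie in the single family $\Y\in\C_\beta$), there is a scale $b_1$ killing each summand, hence killing the class for $P_a(Z_i)$ after passage to $P_{b_1}$. A controlled Mayer--Vietoris sequence for the cover $Z=Z_0\cup Z_1$ then reduces the vanishing for $Z$ to that for $Z_0\cap Z_1$, itself a subspace of $\Gamma$ with a strategy of depth $\leq\alpha$; reapplying the argument one more time (with a further scale enlargement) and taking $b$ large enough to absorb all accumulated constants completes the step.

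The main obstacle will be the careful bookkeeping of the three interacting parameters: the disjointness scale $r$, the control parameter $\delta$, and the Rips scale. Each use of controlled Mayer--Vietoris increases $\delta$ by a controlled amount, so one must \emph{preselect} the enlarged $\delta'$ before invoking the inductive hypothesis at $\delta'$, and then choose $r>\delta'$ to guarantee the splitting of the disjoint union. Uniformity of $b$ over all $Z\subset\Gamma$ is preserved because the decomposition strategy comes from $\Gamma$ itself and the bounded geometry of $\Gamma$ bounds the dimension of each Rips complex $P_a(\Gamma)$ (hence also of every $P_a(Z)$), so all constants arising from Mayer--Vietoris are uniform. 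A minor subtlety at limit ordinals $\alpha$ is that a single $r$-decomposition uses only one predecessor ordinal $\beta<\alpha$, which keeps the recursion on $b$ well-founded throughout the transfinite induction.
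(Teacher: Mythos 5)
Your overall strategy---transfinite induction on the decomposition ordinal, with a base case of bounded families and an inductive step via controlled Mayer--Vietoris and $r$-disjointness splitting---matches the paper's high-level plan, but there are two genuine gaps that the paper had to work hard to close and that your sketch passes over.

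First, your reduction to $Z_0\cap Z_1$ is circular. You write that $Z_0\cap Z_1$ is ``itself a subspace of $\Gamma$ with a strategy of depth $\leq\alpha$,'' and propose to reapply the argument. But a strategy of depth $\leq\alpha$ is exactly what $Z$ itself had, so this gives infinite regress, not a terminating induction: nothing forces the intersection to have strictly smaller complexity. The paper handles this differently. The intersection pieces $W_{ij}$ are contained in controlled neighborhoods $N_{t'}(A_i)$ of the members of the lower-complexity family $\GG$ furnished by the $r$-decomposition, and the inductive hypothesis is applied to $\GG$ (at a larger neighborhood scale $t'$), not to $Z_0\cap Z_1$ as a raw subspace. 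This is precisely why the paper's definition of ``vanishing family'' carries the neighborhood parameter $t$ --- without it the inductive bookkeeping cannot be set up, and your formulation of the inductive statement lacks this parameter entirely.

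Second, you do not address the degeneration of separation when the Rips scale is enlarged. After applying the inductive hypothesis to the pieces $Z_{ij}$ you must pass from $P_a(Z_{ij})$ to $P_{b_1}(Z_{ij})$, and at scale $b_1$ the Rips complex of $\Gamma$ joins points up to distance $b_1$ apart: the $r$-separation at the outset (with $r$ chosen relative to $a$ and $\delta'$, but \emph{before} you know $b_1$) is swamped, and the direct-sum splitting you rely on fails inside $P_{b_1}(\Gamma)$. Your remark that one should ``preselect $\delta'$ and then choose $r>\delta'$'' deals only with the control parameter, not with the Rips scale, and the Rips scale is where the trouble lives. The paper's solution is the \emph{relative Rips complex} $P_{ab}(\Gamma,W)$: it re-scales the intersection region $W$ to scale $b$ while keeping the ambient complex at scale $a$, so the pieces remain well separated while the inductive hypothesis is invoked at the larger scale. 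The proof also needs the companion lemmas on separation and neighborhoods in the relative complex (Lemmas~\ref{commutelemma} and~\ref{separatelemma}) and, in the $L$-theory analogue, the scaled Rips complex. Without some mechanism of this kind your argument does not close.

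A smaller omission: the paper's vanishing-family statement also carries a torus factor $T^p$, which is threaded through the Mayer--Vietoris bookkeeping; your proposed inductive statement drops it.
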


\begin{remarks}
  To emphasize the dependence among the various constants and metric
  families we shall encounter we shall write, for example, $f=f(g,h)$
  when $f$ depends on $g$ and $h$; if additionally $g=g(p,q)$ and
  $h=h(q,r)$ we write $f=f(g,h)=f(p,q,r)$.
\end{remarks}

In preparation for the proof of
Theorem~\ref{VanishingK-theoryThm} we formalize the notion of a
vanishing family: a collection $\FF$ of metric subspaces of $\Gamma$
is a {\it vanishing family\/} if for every $i\geq 0$, $\delta>1$,
$a>1$, $t>1$ and $p\geq 0$ there exists $b>1$ such that for every
$X\in\FF$ and every $Z\subset N_t(X)$ the homomorphisms
\begin{equation}
  \label{phi2}
        Wh^{\delta}_{1-i}(P_a (Z)\times T^p) \to 
        Wh^{\delta}_{1-i}(P_{b} (Z)\times T^p)
\end{equation}
\begin{equation}
  \label{psi2}
        \widetilde{K}_{-i}^{\delta}(P_{a}(Z)\times T^p)\to
        \widetilde{K}_{-i}^{\delta}(P_{b}(Z)\times T^p)
\end{equation}
are zero, where $N_t(X)$ is the $t$-neighborhood of $X$ in $\Gamma$,
i.e. $N_t(X) =\{y\in \Gamma: d(y, X)\leq t\}$ .  Here, $T^p$ is the
$p$-dimensional torus with the standard Riemannian metric of diameter
one.  Note that $b=b(i,p,t,a,\delta,\FF)$.  We denote the collection
of vanishing families by $\VV$.

Observe that in the definition of vanishing family we have not
specified the metric to be used on $P_a(Z)$ and $P_b(Z)$.  Indeed,
this was intentional as we shall need to employ {\it two\/} different
metrics in the proof of Theorem~\ref{VanishingK-theoryThm}.  The first
is the {\it simplicial metric\/} on $P_a(Z)$ and the second is the
{\it subspace metric\/} inherited from $P_a(\Gamma)$.  
%
%
Similarly we consider the simplicial and subspace metrics on
$P_b(\Gamma)$.

\begin{props}
\label{metricindependence}
The notion of vanishing family is independent of the choice of
metric on $P_a(Z)$ and $P_b(Z)$.  
\end{props}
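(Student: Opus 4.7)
The plan is to exploit the natural comparison between the two metric structures on $P_a(Z)$ -- the intrinsic simplicial metric $d_{\text{simp}}$ and the subspace metric $d_{\text{sub}}$ inherited from $P_a(\Gamma)$ -- and to show that vanishing of the maps~(\ref{phi2}) and~(\ref{psi2}) in one metric transfers to vanishing in the other.

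First I would observe that $d_{\text{sub}} \leq d_{\text{simp}}$ on $P_a(Z)$: since $P_a(Z) \subset P_a(\Gamma)$ as a subcomplex, any path in $P_a(Z)$ is also a path in $P_a(\Gamma)$ of the same length, so the subspace distance is at most the simplicial distance. Consequently the identity $(P_a(Z), d_{\text{simp}}) \to (P_a(Z), d_{\text{sub}})$ is $1$-Lipschitz, and a $\delta$-controlled module or morphism in the simplicial metric is automatically $\delta$-controlled in the subspace metric. This yields natural comparison homomorphisms
\[
Wh_{1-i}^\delta\bigl((P_a(Z), d_{\text{simp}}) \times T^p\bigr) \to Wh_{1-i}^\delta\bigl((P_a(Z), d_{\text{sub}}) \times T^p\bigr)
\]
and analogously for $\widetilde{K}_{-i}^\delta$, fitting into commutative squares with the maps induced by the inclusion $P_a(Z) \hookrightarrow P_b(Z)$.

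Next I would establish the reverse comparison using the bounded geometry of $\Gamma$. The key geometric lemma is that for every $a$ and $\delta$, any subspace geodesic of length at most $\delta$ in $P_a(\Gamma)$ visits a bounded number of vertices of $\Gamma$ (depending only on $a$, $\delta$, and the bounded geometry), each at $\Gamma$-distance at most $a\delta + O(1)$ from the endpoints. If the endpoints lie in $P_a(Z)$, then each is within simplicial distance $\sqrt{2}$ in $P_a(Z)$ of some vertex of $Z$, and these $Z$-vertices are at $\Gamma$-distance at most $a\delta + C$. Taking $b \geq a\delta + C$ makes these $Z$-vertices edge-connected in $P_b(Z)$, so the simplicial distance in $P_b(Z)$ between the original points is bounded by a constant $C' = C'(a, \Gamma)$. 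This yields natural maps
\[
Wh_{1-i}^\delta\bigl((P_a(Z), d_{\text{sub}}) \times T^p\bigr) \to Wh_{1-i}^{C'}\bigl((P_b(Z), d_{\text{simp}}) \times T^p\bigr)
\]
for appropriate $b = b(\delta, a, \Gamma)$, and analogously for $\widetilde{K}_{-i}^\delta$.

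Combining these two comparisons allows one to transfer vanishing of (\ref{phi2}) and (\ref{psi2}) between the two metrics. Given vanishing in one metric with parameters $(i,\delta,a,t,p)$, a class in the other metric is pushed forward to the first metric via step one, annihilated at some intermediate Rips parameter $b_1$ by the vanishing hypothesis, and then pulled back to the original metric via step two at a further enlarged parameter $b_2$. The main obstacle is the bookkeeping of control parameters: the reverse comparison enlarges the control constant from $\delta$ to $C'$, so matching the statements at a common $\delta$ requires iterating the comparisons and absorbing the enlargement through the freedom to enlarge $b$ sufficiently. Bounded geometry of $\Gamma$ is essential to ensure that all constants depend only on $(i,\delta,a,t,p,\FF)$ and not on the particular subspace $Z \subset N_t(X)$, which is precisely what the definition of vanishing family requires.
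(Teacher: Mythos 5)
Your proposal starts from the same place as the paper's proof: the subspace metric dominates the simplicial metric, so one direction is a $1$-Lipschitz identity map, giving a comparison of controlled groups at the same $\delta$. The gap is in your step two, the reverse comparison. You correctly observe that after enlarging the Rips parameter, two points at subspace distance $\leq\delta$ in $P_a(Z)$ have simplicial distance in $P_b(Z)$ bounded by some $C'=C'(a,\Gamma)$ -- but that only gives a map into $Wh^{C'}_{1-i}$, not into $Wh^\delta_{1-i}$. You then say the "bookkeeping of control parameters" can be handled by "iterating the comparisons and absorbing the enlargement through the freedom to enlarge $b$," but that does not work: the definition of vanishing family fixes the same $\delta$ on source and target of~(\ref{phi2}) and~(\ref{psi2}), and enlarging the Rips parameter $b$ cannot compress a control constant $C'$ back down to an arbitrary $\delta>1$, since distinct vertices of $Z$ remain at simplicial distance $\geq 1$ in every $P_b(Z)$.

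What the paper does instead is prove a genuinely sharper statement at this step: for $a'$ large enough (depending on $a$ and $\delta$), any two points of $P_a(Z)$ at subspace distance $\leq 100\delta$ lie in a single simplex of $P_{a'}(Z)$. Since the subspace and simplicial metrics on $P_{a'}(Z)$ coincide on pairs of points in a common simplex (simplices are convex in $P_{a'}(\Gamma)$), and the subspace metric from $P_{a'}(\Gamma)$ is $\leq$ the subspace metric from $P_a(\Gamma)$, the inclusion $P_a^{\ind}(Z)\hookrightarrow P_{a'}^{\simp}(Z)$ is $1$-Lipschitz at scale $100\delta$ -- the target distance is no larger than the source distance, not merely bounded. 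Such a map induces $Wh^\delta_{1-i}(\cdot)\to Wh^\delta_{1-i}(\cdot)$ without any change of $\delta$, and the diagram chase closes. To repair your argument you would need to replace your "bounded by $C'$" estimate with this $1$-Lipschitz-at-scale statement; the coarse counting of visited vertices is not enough.
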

\begin{proof}
The subspace metric is always smaller than the simplicial metric.
Consequently there is a hierarchy among the four ({\it a priori}
different) definitions of vanishing family.  The weakest version
of vanishing states:
\begin{center}
\begin{minipage}{0.80\linewidth}
For every $a$ (\ref{phi2}) and (\ref{psi2}) are zero for sufficiently
large $b$, when $P_a(Z)$ is equiped with the simplicial metric and
$P_b(Z)$ with the subspace metric;
\end{minipage}
\end{center}
whereas the strongest version states: 
\begin{center}
\begin{minipage}{0.80\linewidth}
For every $a$ (\ref{phi2}) and (\ref{psi2}) are zero for sufficiently
large $b$, when $P_a(Z)$ is equiped with the subspace metric and
$P_b(Z)$ with the simplicial metric.
\end{minipage}
\end{center}
It suffices to show that the weak version of vanishing implies the
strong version.  We shall focus on the Whitehead groups (the case of
the $K$-groups being similar).  Suppose that $Z$ is a vanishing family
in the weak sense.  We shall show that, for sufficiently large
$a'$ depending on $a$ and $\delta$, there exist maps
\begin{equation}
\label{comparingmetrics}
  Wh^{\delta}_{1-i}(P_a^{\ind}(Z)\times T^p)) \to 
        Wh^{\delta}_{1-i}(P_{a'}^{\simp}(Z)\times T^p));
\end{equation}
here, and below, the superscript makes clear which metric is to be
employed, either the subspace or the simplicial.  Assuming this for the
moment, the proof of the proposition is completed by considering the
diagram
\begin{equation*}
\xymatrix{%
   Wh^{\delta}_{1-i}(P_a^{\ind}(Z)\times T^p) \ar[r] \ar[d] & 
         Wh^{\delta}_{1-i}(P_b^{\simp}(Z)\times T^p) \\
   Wh^{\delta}_{1-i}(P_{a'}^{\simp}(Z)\times T^p) \ar[r] &
         Wh^{\delta}_{1-i}(P_{b'}^{\ind}(Z)\times T^p); \ar[u] }     
\end{equation*}
given $a$ we choose $a'$ to ensure existence of the left hand vertical
map as in (\ref{comparingmetrics}); according to the weak version of
vanishing we choose $b'$ so that the bottom horizontal map is zero;
finally, we choose $b$ to ensure existence of the right hand vertical
map as in (\ref{comparingmetrics}).  

It remains to verify the existence of the maps
(\ref{comparingmetrics}).  This follows from the following two
observations.  First, for $a'$ sufficiently large, the inclusion
\begin{equation*}
  P_a^{\ind}(Z) \to P_{a'}^{\simp}(Z)
\end{equation*}
is {\it $1$-Lipschitz at scale $100\delta$\/} -- meaning that
whenever $x$, $y\in P_a^{\ind}(Z)$ satisfy $d(x,y)\leq 100\delta$ then 
the distance between $x$ and $y$ in $P_{a'}^{\simp}(Z)$ is not greater
than their distance in $P_a^{\ind}(Z)$.
Indeed, choose  $a'\geq a$ to be large enough  such that any pair of points of
$P^{\ind}_a(Z)$ at distance less than $100\delta$ lie in a common
simplex in $P_{a'}(Z)$ -- this is possible because the map
$P_a(\Gamma)\to \Gamma$ associating to a point some vertex of the
smallest simplex containing it is uniformly expansive.
Now, the first map in the composition
\begin{equation*}
  P_a^{\ind}(Z) \to P_{a'}^{\ind}(Z) \to P_{a'}^{\simp}(Z)
\end{equation*}
is contractive.  The second map is isometric {\it for pairs of points
  in a simplex\/} -- the subspace and simplicial metrics on $P_a(Z)$
coincide for pairs of points belonging to a common simplex,
essentially because each simplex is a convex subspace of
$P_a(\Gamma)$. 

Second, the ${\delta}$-controlled Whitehead groups are
independent of the behavior of the metric at scales much larger than
$\delta$.  More precisely, an injection $X\to Y$ which is 
{\it $1$-Lipschitz at scale $100\delta$\/} induces a map
$Wh^{\delta}_{1-i}(X)\to Wh^{\delta}_{1-i}(Y)$.  This follows from the
definitions of these groups \cite{RY1}. 
\end{proof}

Finally, before turning to the proof of
Theorem~\ref{VanishingK-theoryThm} we pause to outline the strategy.
We wish to show that a subspace of $\Gamma$ (rather, a family of
subspaces) is a vanishing space from the knowledge that it may be
decomposed as a union of arbitrarily well-separated vanishing
families.  Denote the constituent families $\Cc$ and $\D$ -- here and
below we freely employ the notations of Appendix~\ref{Rips} for Rips
complexes. The following diagram  motivates our proof --
unfortunately, it does not exist in the controlled
setting and must be loosely interpreted:
\begin{equation*}
\xymatrix{ Wh(P_a(\Cc)) \oplus Wh(P_a(\D)) \ar[r]\ar[d] & 
              Wh(P_a(\Cc)\cup P_a(\D)) \ar[r]\ar[d] & 
                 \widetilde K_0(P_a(\Cc)\cap P_a(\D)) \ar^{i}[d] \\
           Wh(P_b(\Cc)) \oplus Wh(P_b(\D)) \ar[r]\ar^{j}[d] & 
              Wh(P_b(\Cc)\cup P_b(\D)) \ar[r]\ar[d] & 
                 \widetilde K_0(P_b(\Cc)\cap P_b(\D)) \ar[d] \\
           Wh(P_c(\Cc)) \oplus Wh(P_c(\D)) \ar[r] & 
              Wh(P_c(\Cc)\cup P_c(\D)) \ar[r] & 
                 \widetilde K_0(P_c(\Cc)\cap P_c(\D)). }
\end{equation*}
In this diagram $a\leq b \leq c$, the horizontal rows are pieces of
Mayer-Vietoris sequences, and the induction hypothesis applies to the
first and third columns.  Thus, given $a$, choose $b$ large enough so
that $i=0$; then choose $c$ large enough so that $j=0$; a simple
diagram chase reveals that the composite of the two maps in the middle
column is zero.

This heuristic does not reveal the need for the introduction of the
relative Rips complex, which we have remarked is an important aspect
of the proof.  Roughly stated, in the diagram above upon passing from
$a$ to $b$ we have no way to ensure that the separation of the
individual spaces comprising the families $\Cc$ and $\D$ does not
evaporate -- the purpose of the relative Rips complex is to
selectively rescale parts of the ambient space while maintaining the
separation between them.  In the proof below we shall point out where
this is needed.

\begin{proof}[Proof of Theorem~\ref{VanishingK-theoryThm}]
  Assuming that $\Gamma$ has
  finite decomposition complexity we shall prove that the collection
  of vanishing families contains the bounded families and, using a
  controlled Mayer-Vietoris argument based on part (5) of Theorem
  \ref{MayerVietorisK} (proved in \cite{RY1}), is closed under
  decomposability.  We thereby conclude that the family 
  $\{\, \Gamma \,\}$ is a vanishing family and the theorem follows.

A uniformly bounded family of subspaces of $\Gamma$ is a vanishing
  family, as we conclude from the following facts:
\begin{ilist}
\item If a subspace $Y\subset \Gamma$ has diameter at most $b$ for
  some $b\geq 0$, then $P_{b}(Z)$ is Lipschitz homotopy equivalent to
  a point (with Lipschitz constant one); indeed the same is true for
  any larger $b$.
\item If two metric spaces $P$ and $Q$ are Lipschitz homotopy
  equivalent (with Lipschitz constant one) then $Wh_{1-i}^{\delta}(P)$
  is isomorphic to $Wh_{1-i}^{\delta}(Q)$, and similarly
  $\widetilde{K}_{-i}^{\delta}(P)$ is isomorphic to
  $\widetilde{K}_{-i}^{\delta}(Q)$.
\item By the choice of the Riemannian metric on $T^p$ and the
  assumption $\delta>1$, $Wh^{\delta}(T^p)$ and
  $\tilde{K}_{-i}^{\delta}(T^p)$ vanish for each $p\geq 0$.
\end{ilist}

Now, let $\FF$ be a family of subspaces of $\Gamma$ and assume that
$\FF$ is decomposable over the collection of vanishing families.
We must show that $\FF$ is a vanishing family; precisely, there exists
$b=b(i,p,t,a,\delta,\FF)$ such that for every $X\in\FF$ and every
$Z\subset N_t(X)$ the maps (\ref{phi2}) and (\ref{psi2}) are zero.

Set $r=r(t,a,\delta,\lambda)$ sufficiently large, to be specified
later.  Obtain an $r$-decomposition of $\FF$ over a vanishing
family $\GG=\GG(r,\FF)$.  Let $X\in \FF$.
We obtain a decomposition:
\begin{equation*}
  X = A \cup B, \qquad A = \bigsqcup_{r} A_i, 
                 \quad B = \bigsqcup_{r} B_j,
\end{equation*}
for which all $A_i$ and $B_j\in\GG$.  Let $Z\subset N_t(X)$; setting 
$C_i=Z\cap N_{t+a}(A_i)$ and $D_j=Z\cap N_{t+a}(B_j)$ we obtain an
analogous decomposition:
\begin{equation*}
  Z = C \cup D, \qquad C = \bigsqcup_{r-2(t+a)} C_i, 
  \quad D = \bigsqcup_{r-2(t+a)} D_j.
\end{equation*}
Denote $\Cc = \{\, C_i \,\}$ and $\D = \{\, D_j \,\}$. By the separation
hypothesis we have $r-2(t+a)>a$ so that $P_a(\Cc)=P_a(C)$ and
$P_a(\D)=P_a(D)$.  Further, $P_a(Z)=P_a(C)\cup P_a(D) =
P_a(\Cc\cup\D)$.  We intend to compare the Mayer-Vietoris sequence of
this pair of subspaces of $P_a(\Gamma)$ to a Mayer-Vietoris sequence for
certain subspaces of an appropriate relative Rips complex. We enlarge the
intersection $\Cc\cap\D = \{\, C_i\cap D_j \,\}$ by setting
\begin{align*}
  W &= N_{a\beta\lambda\delta}(C)\cap N_{a\beta\lambda\delta}(D) \cap Z \\
    &= (N_{a\beta\lambda\delta}(C)\cap D) \cup (C\cap N_{a\beta\lambda\delta}(D)) \\
    &= \bigsqcup_{r-2(t+a\beta\lambda\delta)} W_{ij},
\end{align*}
where all the neighborhoods are in $\Gamma$ and 
\begin{equation*}
  W_{ij} = N_{a\beta\lambda\delta}(C_i) \cap N_{a\beta\lambda\delta}(D_j) \cap Z,
\end{equation*}
and where $\beta$ is the constant appearing in Lemma \ref{commutelemma}.
Observe that $C_i\cap D_j\subset W_{ij}$, so that denoting
$\W=\{\, W_{ij} \,\}$ we have $\Cc\cap\D\subset \W$.  Provided $a\leq
b$ we have a commuting diagram
\begin{equation}
\label{diagone}
  \xymatrix{Wh^\delta(P_a(\Cc\cup\D)) \ar[r] \ar[d] &
      \widetilde K_0^{\lambda\delta}(N_{\lambda\delta}(P_a(\Cc\cap\D))) \ar[d]\\
            Wh^\delta(P_{ab}(\Cc\cup\D,\W)) \ar[r] &
      \widetilde K_0^{\lambda\delta}(N_{\lambda\delta}(P_b(\W))). }
\end{equation}
The horizontal maps are boundary maps in controlled Mayer-Vietoris
sequences in Appendix~\ref{MVsequences}: in the top row the
neighborhood is taken in 
$P_a(\Cc\cup\D)$, and all spaces are given the subspace metric from
$P_a(\Gamma)$; in the bottom row the neighborhood is taken in
$P_{ab}(\Cc\cup\D,\W)$, and all spaces are given the subspace metric
from $P_{ab}(\Gamma,W)$.  The vertical maps are induced from the
proper contraction $P_a(\Gamma)\to P_{ab}(\Gamma,W)$.  In fact, the
right hand vertical map factors as the composite
\begin{equation}
\label{factorRHV}
  N_{\lambda\delta}(P_a(\Cc\cap\D)) \subset P_a(\W) \to 
          P_b(\W)\subset N_{\lambda\delta}(P_b(\W));
\end{equation}
in which the first two spaces are subspaces of 
$P_a(\Cc\cup\D)\subset P_a(\Gamma)$ and the last two are subspaces of
$P_{ab}(\Cc\cup\D,\W)\subset P_{ab}(\Gamma,W)$.  The first inclusion
in (\ref{factorRHV}) follows from
\begin{align*}
  N_{\lambda\delta}(P_a(\Cc\cap\D)) &= 
         \bigcup_{i,j} N_{\lambda\delta} (P_a(C_i\cap D_j)) \\
     &\subset \bigcup_{i,j} P_a(N_{a\beta\lambda\delta}(C_i) \cap 
                    N_{a\beta\lambda\delta}(D_j)) \\
     &\subset \bigcup_{i,j} P_a(W_{ij}) = P_a(\W),
\end{align*}
where we have applied Lemma~\ref{commutelemma} of the appendix for the first
inclusion -- keep in mind that the
neighborhoods on the first line are taken in $P_a(\Cc\cup\D)$.

Applying the induction hypothesis we claim that for sufficiently large
$b$ the right hand vertical map in (\ref{diagone}) is zero.  Indeed, the
components $W_{ij}\in \W$ are contained in the neighborhoods
$N_{t+{a\beta\lambda\delta}}(A_i)$ (and also of
$N_{t+{a\beta\lambda\delta}}(B_j)$) and we can apply the hypothesis with
appropriate choices of the parameters: $t'=t+a\beta\lambda\delta$,
$\delta'=\lambda\delta$, $a'=a$, etc.  In detail,
\begin{equation*}
  \xymatrix{%
       \widetilde K_0^{\lambda\delta}(P_a(\W)) \ar[r]^{\cong} &  
       \prod \widetilde K_0^{\lambda\delta}(P_a(W_{ij})) \ar[r]^{0} &
       \prod \widetilde K_0^{\lambda\delta}(P_b(W_{ij})) \ar[r] &
       \widetilde K_0^{\lambda\delta}(P_b(\W));  }
\end{equation*}
as the spaces $P_a(W_{ij})$ and $P_a(\W)$ are given the subspace metric
from $P_a(\Gamma)$ and the individual $W_{ij}$ are well-separated, the
first map is an isomorphism by Lemma~\ref{separatelemma} (which
guarantees that the various $P_a(W_{ij})$ are separated by at least
$\lambda\delta$); the spaces $P_b(W_{ij})$ are given the simplicial
metric and the middle map is $0$ for sufficiently large $b$ by
hypothesis; the space $P_b(\W)$ is given the subspace metric from
$P_{ab}(\Gamma,\W)$ and the last map is induced by proper contractions
$P_b(W_{ij})\subset P_b(\W)$ onto disjoint subspaces.

Having chosen $b=b(i,p,t',a',\delta',\GG)$ we extend the diagram
(\ref{diagone}) to incorporate the relax-control map for the bottom
sequence:
\begin{equation}\label{diagtwo}
  \xymatrix{ & Wh^\delta(P_a(\Cc\cup\D)) \ar[d] & \\
          &  Wh^\delta(P_{ab}(\Cc\cup\D,\W)) \ar[d]^{relax}\ar[r] & 
      \widetilde K_0^{\lambda\delta}(N_{\lambda\delta}(P_b(\W))) \\
\left.\txt{%
$Wh^{\lambda^2\delta}(P_{ab}(\Cc,\W) \cup N_{\lambda\delta}(P_b(\W)))$\\
     $\oplus$ \\
$Wh^{\lambda^2\delta}(P_{ab}(\D,\W) \cup N_{\lambda\delta}(P_b(\W)))$ }
\right\}
   \ar[r] & Wh^{\lambda^2\delta}(P_{ab}(\Cc\cup\D,\W)) & }
\end{equation}
We conclude from the above discussion and the controlled Mayer-Vietoris
sequence that the image of $Wh^\delta(P_a(\Cc\cup\D))$ under the
composite of the two vertical maps is contained in the image of the
bottom horizontal map.  It remains to apply the induction hypothesis to
$\Cc$ and $\D$.  The case of $\D$ being analogous, we concentrate on
$\Cc$ and shall show that for sufficiently large $c\geq b$ the composite
\begin{equation*}
   P_{ab}(\Cc,\W) \cup N_{\lambda\delta}(P_b(\W))
      \subset P_{ab}(\Cc\cup\D,\W) \to P_b(Z) \to P_c(Z),
\end{equation*}
in which the arrows are induced by proper contractions
$P_{ab}(\Gamma,W)\to P_b(\Gamma)\to P_c(\Gamma)$ is zero on the
$\lambda^2\delta$-controlled Whitehead group.  We have, as subspaces of
$P_{ab}(\Cc\cup\D,\W)\subset P_{ab}(\Gamma,W)$,
\begin{equation}
\label{uniononi}
  P_{ab}(\Cc,\W)\cup N_{\lambda\delta}(P_b(\W)) =
       \bigcup_i \left( P_a(C_i) \cup 
          \bigcup_j N_{\lambda\delta}(P_b(W_{ij})) \right),
\end{equation}
in which the spaces comprising the union over $i$ are well-separated
by Lemma~\ref{separatelemma} (which guarantees
$\lambda^2\delta$-separation).  Further, for fixed $i$ and $j$ we have
\begin{equation*}
  N_{\lambda\delta}(P_b(W_{ij})) \subset 
          P_{ab}(N_{a\beta\lambda\delta}(W_{ij}),W_{ij})) \to 
             P_b(N_{a\beta\lambda\delta}(W_{ij})) \subset
                P_b(N_{2a\beta\lambda\delta}(C_i)),
\end{equation*}
where we have applied Lemma~\ref{commutelemma} for the first
containment (we point out that this is one of the places where  the notion of relative Rips complex is important), and the arrow represents the assertion that the space on
its left maps to the space on its right under the proper contraction
$P_{ab}(\Gamma,W)\to P_b(\Gamma)$.  Accordingly, for
each fixed $i$ we have
\begin{equation*}
     P_a(C_i) \cup \bigcup_j P_b(N_{a\beta\lambda\delta}(W_{ij}))
           \to P_b(N_{2a\beta\lambda\delta}(C_i)),
\end{equation*}
where the arrow is interpreted as above.  Now, we apply our induction
hypothesis a second time, with appropriate choices of the parameters:
$t''=t+2a\beta \lambda\delta$, $\delta''=\lambda^2\delta$, $a''=b$, etc, noting
that $N_{2a\beta\lambda\delta}(C_i)\subset N_{t+2a\beta\lambda\delta}(A_i)$.  We
get $c=c(i,p,t'',a'',\delta'',\GG)$, and analyze
\begin{align*}
  Wh^{\lambda^2\delta}(P_{ab}(\Cc,\W) \cup N_{\lambda\delta}(P_b(\W)) 
     &\cong \prod Wh^{\lambda^2\delta}\left(P_a(C_i) \cup 
                \bigcup_j P_b(N_{a\beta\lambda\delta}(W_{ij}))\right) \\
   &\to \prod Wh^{\lambda^2\delta}(P_b(N_{2a\beta\lambda\delta}(C_i))) \\
   &\to \prod Wh^{\lambda^2\delta}(P_c(N_{2a\beta\lambda\delta}(C_i))) \\
   &\to Wh^{\lambda^2\delta}(P_c(Z)) \\
   & \to Wh^{\delta}(P_{\lambda^2c}(Z)) 
\end{align*}
the $\cong$ follows from the well-separatedness in (\ref{uniononi}); the
spaces $P_c(N_{2a\lambda\delta}(C_i))$ are given the simplicial metrics,
and the second arrow is $0$; the fourth arrow is induced from proper
contractions onto disjoint subspaces of $P_c(Z)$.  The last arrow
follows from the definition of the controlled Whitehead groups. Checking the
dependence of the constant $c$ we find $c=c(i,p,t,a,\lambda,\delta,\FF)$
as required. \end{proof}

\section{Assembly isomorphism}
\label{assembly}

We devote this section to the proof of Theorem~\ref{AssemblyMapThm},
which asserts that assembly is an isomorphism for spaces having finite
decomposition complexity.  In view of the definitions, we obtain
Theorem~\ref{AssemblyMapThm} as an immediate consequence of the
following result:

\begin{thms}
\label{QuantitativeLtheoryThm} 
  Let $\Gamma$ be a locally finite metric space with bounded geometry
  and finite decomposition complexity.  Assembly for $\Gamma$ is an
  asymptotic isomorphism.  Precisely, given $n\geq 0$, $\delta>1$ and
  $a>1$ there exists $b=b(a,\delta,n)\geq a$ such that, for any
  $Z\subset \Gamma$,
  \begin{ilist}
  \item the kernel of $H_n(P_a(Z))\to L_n^{\delta}(P_a(Z))$ is mapped 
              to zero in $H_n(P_b(Z))$;
  \item the image of $L_n^{\delta}(P_a(Z))\to L_n^{\delta}(P_b(Z))$ is
    contained in the image of $H_n(P_b(Z))\to L_n^{\delta}(P_b(Z)$. 
  \end{ilist}
\end{thms}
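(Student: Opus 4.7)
The plan is to introduce, by analogy with Section~\ref{vanishing}, the notion of an \emph{assembly family}: a collection $\FF$ of metric subspaces of $\Gamma$ is an assembly family if for every $n\geq 0$, $\delta>1$, $a>1$, $t>1$ and $p\geq 0$ there exists $b=b(n,p,t,a,\delta,\FF)$ such that, for every $X\in\FF$ and every $Z\subset N_t(X)$, the quantitative injectivity and surjectivity statements (i) and (ii) of Theorem~\ref{QuantitativeLtheoryThm} hold for the pair of assembly maps
\[
H_n(P_a(Z)\times T^p) \longrightarrow L_n^\delta(P_a(Z)\times T^p) \longrightarrow L_n^\delta(P_b(Z)\times T^p).
\]
By Theorem~\ref{thm:char_complexity}, the theorem follows once we verify that the collection of assembly families contains the bounded families and is stable under decomposability; the conclusion is then obtained by applying the result to $\{\,\Gamma\,\}$.

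For a uniformly bounded family the verification is immediate: for $b$ sufficiently large, each $P_b(Z)\times T^p$ is Lipschitz homotopy equivalent (with Lipschitz constant one) to $T^p$, and the assembly map for $T^p$ is an isomorphism when $\delta>1$ by the choice of Riemannian metric on the torus, exactly as in items (1)--(3) of the proof of Theorem~\ref{VanishingK-theoryThm}. Both statements (i) and (ii) then hold trivially at scale $b$.

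The heart of the proof is stability under decomposability, which parallels the Mayer-Vietoris argument of Theorem~\ref{VanishingK-theoryThm}. Given $\FF$ decomposable over a collection of assembly families, pick $r$ very large relative to $(a,\delta,t,\lambda)$ and decompose $X\in\FF$ as $X=A\cup B$ with $A=\sqcup_r A_i$, $B=\sqcup_r B_j$ and the families $\{A_i\},\{B_j\}$ in an assembly family $\GG$. For $Z\subset N_t(X)$ form the induced decomposition $Z=C\cup D$ via $C_i=Z\cap N_{t+a}(A_i)$, $D_j=Z\cap N_{t+a}(B_j)$, together with the thickened overlap $\W=\{W_{ij}\}$, $W_{ij}=N_{a\beta\lambda\delta}(C_i)\cap N_{a\beta\lambda\delta}(D_j)\cap Z$, all as in the vanishing proof. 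Inside the relative Rips complex $P_{ab}(\Gamma,W)$ one now assembles a ladder diagram whose rows are the controlled Mayer-Vietoris sequences of Appendix~\ref{MVsequences}: the top row is the Mayer-Vietoris sequence in locally finite homology with coefficients in $\L(e)$, the bottom row is the corresponding controlled Mayer-Vietoris sequence in locally finite $L$-theory proved in~\cite{RY2}, and the vertical maps are the assembly maps. Applying the induction hypothesis to $\Cc$, $\D$ and $\W$ with adjusted parameters $t'=t+a\beta\lambda\delta$, $\delta'=\lambda\delta$, $a'=a$ (and analogous choices for the second invocation), and exploiting the relax-control map between the $L$-theory sequences at successive scales, two five-lemma-style diagram chases---one for the kernel statement (i), one for the image statement (ii)---yield the required conclusion at a final scale $b=b(n,p,t,a,\delta,\FF)$.

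The main obstacle is the quantitative bookkeeping. Each inductive step inflates the control parameter by a factor of $\lambda$ and thickens metric neighborhoods by $a\beta\lambda\delta$, and the three separate invocations of the induction hypothesis (once on the overlap $\W$, then on each of $\Cc$ and $\D$) must all be absorbed into a single parameter $b$ depending only on $n,p,t,a,\delta,\FF$. Exactly as in Section~\ref{vanishing}, the relative Rips complex is indispensable here: without it the $r$-separation of the $A_i$'s and $B_j$'s would evaporate as soon as one passed from scale $a$ to a larger scale, and the Mayer-Vietoris decompositions that drive the chase would no longer be available.
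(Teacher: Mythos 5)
Your overall framework is correct --- defining an inductive class of ``good'' families of subspaces, reducing to bounded families via decomposability, and running a controlled Mayer--Vietoris/five-lemma argument in the relative Rips complex --- and this is indeed the approach the paper takes. However, there are two substantive gaps.

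First, your induction hypothesis is too weak. The controlled Mayer--Vietoris sequence in $L$-theory (parts (3)--(6) of Theorem~\ref{MayerVietorisL_theoryThm}) is \emph{conditional}: the boundary map $\partial$ from $L_n$ of the union to $L_{n-1}$ of the overlap exists only under the hypothesis that a certain map of controlled $\widetilde K_0$-groups vanishes. The paper therefore does not merely show that the collection of ``$L$-isomorphism families'' is closed under decomposability; it shows that the collection of families that are \emph{simultaneously vanishing families and $L$-isomorphism families} is closed under decomposability. The vanishing property is invoked explicitly at Steps~1, 3, and 6 of the published proof to define and control the boundary maps, and it cannot be recovered from the $L$-theory statement alone. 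Your proposal never states this joint hypothesis, so the Mayer--Vietoris ladder you wish to chase through does not exist at the scales you need.

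Second, you invoke only the relative Rips complex $P_{ab}(\Gamma,W)$, but the paper also requires the \emph{scaled} Rips complex $P_{abm}(\Gamma,W)$ (Definition~\ref{scaleripsdef}). In Steps~6 through 11 the scaling parameter $m$ is tuned large so that the subfamilies $P_d(\W)$ and, later, $P_{aem}(\Cc,\W')$ remain well-separated inside the ambient complex (Lemma~\ref{separatelemma} and the Lipschitz homotopy Lemma~\ref{RetractionLemma}). Without the scaled complex you have no mechanism to maintain separation of the thickened overlaps when you relax from scale $b$ to larger scales $d$, $e$, $f$, and the product decomposition of the controlled groups you rely on (`` $\cong \prod_{i,j}(\cdots)$ '') fails. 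Finally, a minor point: your definition of ``assembly family'' carries a torus parameter $T^p$, apparently copied from the definition of vanishing family. The paper's $L$-isomorphism family has no such parameter; the torus is a device specific to the lower $K$-theory vanishing argument and plays no role in the $L$-theory assembly proof, where the degree shift $n\to n-1$ in the Mayer--Vietoris sequence supplies the needed flexibility.
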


\noindent
We shall refer to condition (2) in the statement as 
{\it asymptotic surjectivity\/} and
to condition (1) as {\it asymptotic injectivity\/}.

Before turning to the proof we pause to outline the strategy.  The
proof consists essentially of a quantitative version of the five
lemma, which we shall prove using the controlled Mayer-Vietoris
sequence in $L$-theory, precisely parts (4) and (5) of Theorem
\ref{MayerVietorisL_theoryThm}.  Borrowing the notation from the
previous section, consider the following diagram, which again does not
make sense in the controlled setting and must be loosely interpreted:

\begin{equation}
\label{diagramisoheuristic}
\xymatrix{ 
    H_n(P_a(\Cc))\oplus H_n(P_a(\D))  \ar[r]\ar[d] & 
          L_n(P_a(\Cc))\oplus L_n(P_a(\D))\ar[d] \\
    H_n(P_a(\Cc\cup\D) )   \ar[r]\ar[d]  &  
        L_n(P_a(\Cc\cup\D)) \ar[d] \\ 
    H_{n-1}(P_a(\Cc\cap \D)) \ar[r]\ar[d] & 
        L_{n-1}(P_a(\Cc\cap \D))  \ar[d] \\
    H_{n-1}(P_a(\Cc))\oplus H_{n-1}(P_a(\D)) \ar[r] & 
       L_{n-1}(P_a(\Cc))\oplus L_{n-1}(P_a(\D)). }
\end{equation}
In the diagram, the vertical exact sequences are portions of
appropriate Mayer-Vietoris sequences; the horizontal maps are the
assembly maps.  The induction hypothesis applies to the first, third
and fourth rows; we are to prove that the second horizontal map is an
(asymptotic) isomorphism.  In the proof below, we shall concentrate on
(asymptotic) surjectivity -- a simple diagram chase reveals that this
follows (asymptotic) surjectivity of rows one and three and
(asymptotic) injectivity of row four.

In the proof below, to help the reader follow our trajectory we shall
adopt the following conventions: $x$, $y$ and $z$ will be used for
elements in the bounded $L$-theory for unions, intersections and
direct sums, respectively; $x'$, $y'$, $z'$ will be used for elements
in the corresponding homology groups.

As preparation for the proof we introduce the notion of an
$L$-isomorphism family: a collection $\FF$ of metric subspaces of
$\Gamma$ is an {\it $L$-isomorphism family\/} if for every $n\geq 0$,
$\delta>1$, $a>1$, and $t>1$ there exists $b=b(a,\delta,t,n)>1$ such
that for every $X\in\FF$ and every $Z\subset N_t(X)$ the assertions
(1) and (2) of the theorem are satisfied.  As was the case for
vanishing families the notion of an $L$-isomorphism family is not
sensitive to the choice of metric on $P_a(Z)$ and $P_b(Z)$.  Compare
Proposition~\ref{metricindependence} -- the proof in the present
situation is based on the same argument.

Finally, the proof employs both the relative Rips complex,
$P_{ab}(\Cc,\W)$ and the scaled Rips complex, $P_{abm}(\Cc,\W)$ -- see
Definition~\ref{relativeripsdef} and Definition~\ref{scaleripsdef},
respectively, and also Section~\ref{Ripsfamilies}.

\begin{proof}
  The proof will be much more condensed than the proof of
  Theorem~\ref{VanishingK-theoryThm} which we presented in some
  detail; while the present proof is not technically more difficult,
  it is somewhat longer.

We proceed as in the proof of Theorem~\ref{VanishingK-theoryThm}.
Assuming $\Gamma$ has finite decomposition complexity we shall show
that the collection of families that are both vanishing families and
$L$-isomorphism families contains the bounded families, and is closed
under decomposability.  We thereby conclude that the family $\{\,
\Gamma \,\}$ is an isomorphism family, and the theorem follows.

The case of bounded families is handled by the following facts:
\begin{ilist}
\item If a subspace $Y\subset \Gamma$ has diameter at most $b$ for
  some $b\geq 0$, then $P_{b}(Z)$ is Lipschitz homotopy equivalent to
  a point (with Lipschitz constant one); indeed the same is true for
  any larger $b$.
\item If two metric spaces $P$ and $Q$ are Lipschitz homotopy
  equivalent (with Lipschitz constant one) then $L_n^{\delta}(P)$
  is isomorphic to $L_n^{\delta}(Q)$.
\end{ilist}

Now, let $\FF$ be a family of subspaces of $\Gamma$, and assume $\FF$
is decomposable over the collection of families that are both
vanishing and $L$-isomorphism families.  It follows from the proof of
Theorem~\ref{VanishingK-theoryThm} that $\FF$ itself is a vanishing
family and we are to prove that $\FF$ is an $L$-isomorphism family.
We shall concentrate on proving asymptotic surjectivity; asymptotic
injectivity can be proved in essentially the same manner.

Set $r=r(t,a,\delta,\lambda)$ sufficiently large, to be specified
later -- precisely, when a union below is called {\it
  well-separated\/}, this will mean for a sufficiently good choice of
$r$, and the reader will verify that this choice depends only on the
parameters $t$, $a$, $\delta$ and $\lambda$.  Obtain an
$r$-decomposition of $\FF$ over an $L$-isomorphism (and vanishing)
family $\GG=\GG(r,\FF)$.

Let $X\in \FF$.  Let $Z$, $\Cc$, $\D$ and $\W$ be as in the proof of
Theorem~\ref{VanishingK-theoryThm}.  
Let $x\in L_n^{\delta}(P_a(\Cc)\cup P_a(\D))$.  We need to prove that
$x$ is in the image of the assembly map up to increasing $a$.

\begin{itemize}

\item[{\bf Step 1.}]
Using the well-separatedness of $\W$, and the vanishing assumption for
the family $\W$, we can find $b=b(a,\delta, t, n)$ such that the map  
\begin{equation}\label{Kvanishing}
   \widetilde K_0^{\lambda_n\delta}(P_a(\W))\to 
     \widetilde K_0^{\lambda_n\delta}(P_b(\W))
\end{equation} 
is zero. This allows us to consider the boundary map
\begin{equation*}
  \partial: \;L_n^{\delta}(P_a(\Cc\cup\D))\to 
      L_{n-1}^{\lambda_n\delta}(P_b(\W)),
\end{equation*}
where $\partial$ is the boundary map in Theorem B.2 of Appendix B and
$P_b(\W)$ is seen as a subspace of $P_{ab}(Z,\W)$. 
 
\item[{\bf Step 2.}]
Lemma \ref{separatelemma} implies that $P_b(\W)$ is well separated, as
a subspace of $P_{ab}(Z,\W)$.  Hence  
\begin{equation*}
     L_{n-1}^{\lambda_n\delta}(P_b(\W))\cong 
       \prod_{i,j}L_{n-1}^{\lambda_n\delta}(P_b(W_{ij})).
\end{equation*}
Hence, by the surjectivity assumption for $\W$, there exists
$c=c(a,\delta,n,t)\geq b$ and $y'\in H_{n-1}(P_c(\W))$ mapping to (the
image of) $x$  in $L^{\lambda_n\delta}_{n-1}(P_c(\W))$, which we will
simply write $A(y')=\partial (x)$.  

\item[{\bf Step 3.}] 
By Theorem \ref{MayerVietorisL_theoryThm} in Appendix B, part (5), and
(\ref{Kvanishing})  (using that $c\geq b$), we have
$i_*\circ \partial=0$ in  
\begin{equation*}
  L_n^{\delta}(P_a(\Cc\cup\D))\overset{\partial}{\longrightarrow}
       L^{\lambda_n\delta}_{n-1}(P_c(\W))\overset{i_*}{\longrightarrow}
          L^{\lambda_n\delta}_{n-1}(P_{ac}(\Cc,\W))\oplus
             L^{\lambda_n\delta}_{n-1}(P_{ac}(\D,\W)).
\end{equation*}
In particular,  $i_*\circ\partial (x)= i_*\circ A (y')=0$. Considering
the following  commutative diagram 
\begin{equation*}
  \xymatrix{H_{n-1}(P_c(\W)) \ar[r]^-{i_*} \ar[d]^A &
      H_{n-1}(P_{ac}(\Cc,\W))\oplus H_{n-1}(P_{ac}(\D,\W))  \ar[d]^A\\
           L^{\lambda_n\delta}_{n-1}(P_c(\W))   \ar[r]^-{i_*} &
      L^{\lambda_n\delta}_{n-1} (P_{ac}(\Cc,\W))\oplus  
             L^{\lambda_n\delta}_{n-1} (P_{ac}(\D,\W)), }
\end{equation*}
we deduce $A\circ i_*(y')=0$.

\item[{\bf Step 4.}]
By the injectivity assumption for $\W$, there exists $d=d(a,\delta, n,
t)\geq c$ such that the map 
\begin{equation*}
  H_{n-1}(P_{ac}(\Cc,\W))\oplus H_{n-1}(P_{ac}(\D,\W)) \to  
     H_{n-1}(P_{d}(\Cc,\W))\oplus H_{n-1}(P_{d}(\D,\W))
\end{equation*}
sends $i_*(y')$ to $0$.

\item[{\bf Step 5.}]
By exactness of the sequence
\begin{equation*}
  H_{n}(P_{d}(\Cc\cup \D,\W)) \overset{\partial}{\longrightarrow}  
      H_{n-1}(P_{d}(\W)) \overset{i_*}{\longrightarrow} 
        H_{n-1}(P_{d}(\Cc,\W))\oplus H_{n-1}(P_{d}(\D,\W)),
\end{equation*}
there exists $x'\in H_{n}(P_{d}(\Cc\cup \D,\W))$ such that
$y'=\partial(x').$ 

\item[{\bf Step 6.}]
If $m$ is large enough, the metric subfamily  $P_d(\W)$ of
$P_{adm}(\Cc\cup \D,\W)$ is well-separated by  
Lemma \ref{separatelemma}. Hence,
\begin{equation*}
  \widetilde K_{0}^{\lambda_n^2\delta}(N_{\lambda_n^2\delta}(P_d(\W)))\cong 
       \prod_{i,j}\widetilde K_{0}^{\lambda_n^2\delta}(N_{\lambda_n^2\delta}(P_d(W_{ij}'))).
\end{equation*}
On the other hand, by Lemma \ref{RetractionLemma}, when $m$ is large
enough, $N_{\lambda_n^2\delta}(P_d(\W))$ is $2$-Lipschitz  homotopy
equivalent to a subset of  $P_d(\W')$ (just take the homotopy
equivalence $F$ of Lemma \ref{RetractionLemma}, restricted to $V$,
which in our case is $N_{\lambda_n^2\delta}(P_d(\W))$) 
where $\W'=N_{a\beta\lambda_n^2\delta}(\W)$ ($\beta$ is as in Lemma
\ref{RetractionLemma}) and $ P_d(\W')$  is  viewed  as subspace of
$P_{adm}(\Cc\cup \D,\W')$.  
Hence there exists $e=e(a,\delta,n,t)$ such that\footnote{as up to
  increasing $e$, one can change $2\lambda_n\delta$ to
  $\lambda_n\delta$ in the right-hand term.} 

\begin{equation}
\label{vanishingK}
   \widetilde K_{0}^{\lambda_n\delta}(N_{\lambda_n^2\delta}(P_d(\W)))
   \longrightarrow \widetilde K_{0}^{2\lambda_n\delta}(P_d(\W'))
       \overset{0}{\longrightarrow} \widetilde K_{0}^{\lambda_n\delta}(P_e(\W')). 
\end{equation}
We can thus define the boundary map
\begin{equation*}
  L_n^{\lambda_n\delta}(P_{adm}(\Cc\cup \D,\W))
       \overset{\partial}{\longrightarrow}L_{n-1}^{\lambda_n^2\delta}(P_e(\W')).
\end{equation*}

\item[{\bf Step 7.}]
Remember that  $P_{d}(\Cc\cup \D,\W)$ and $P_{adm}(\Cc\cup \D,\W)$ are
the same topological space equipped with two different metrics.  
Considering the following commutative diagram, 
\begin{equation*}
  \xymatrix{H_{n}(P_{d}(\Cc\cup \D,\W) )\ar[r]^-{\partial} \ar[d]^A &
      H_{n-1}(P_{d}(\W))  \ar[d]^A\\
           L^{\lambda_n\delta}_{n}(P_{adm}(\Cc\cup \D,\W))   \ar[r]^-{\partial} &
      L^{\lambda_n^2\delta}_{n-1} (P_{e}(\W')), }
\end{equation*}
we obtain $\partial \circ A (x')=A\circ \partial (x')=A(y')=\partial (x).$
In other words, $\partial (x-A(x')))=0$ in $ L^{\lambda_n^2\delta}_{n-1} (P_{e}(\W')).$
Up to replacing $x$ by $x-A(x')$, we can therefore suppose that $\partial (x)=0$.

\item[{\bf Step 8.}]
Applying part (4) of Theorem \ref{MayerVietorisL_theoryThm} with
\begin{equation*}
        \xymatrix{ & L^{\lambda_n\delta}_{n}(P_{adm}(\Cc\cup \D,\W))\ar[r]^-\partial \ar[d] 
                 &  L^{\lambda_n^2\delta}_{n-1} (\V) \\
           L_n^{\lambda_n^3\delta}(P_{aem}(\Cc,\W')\cup \V)\oplus L_n^{\lambda_n^3\delta}(P_{aem}(\D,\W')\cup \V)\ar[r]^-{j_*} \ar[d] & 
                  L_n^{\lambda_n^3\delta}(P_{aem}(\Cc\cup\D,\W'))\ar[d]  & \\
              L_n^{2\lambda_n^3\delta}(P_{aem}(\Cc,\W'))\oplus L_n^{2\lambda_n^3\delta}(P_{aem}(\D,\W'))\ar[r]^-{j_*} & 
                  L_n^{2\lambda_n^3\delta}(P_{aem}(\Cc\cup\D,\W')), & }
 \end{equation*}
where $\V$ is the $\beta\lambda_n^2\delta$-neighborhood  of $P_d(\W')$
in $P_{adm}(\Cc\cup \D,\W)$. The lower part of the diagram follows
from the Lipschitz-homotopy lemma (see Lemma \ref{RetractionLemma}). 
Together with (\ref{vanishingK}), we deduce  the existence of $z$ such
that $x=j_*(z)$, where $j_*$ is the map defined above. 

\item[{\bf Step 9.}]   
We have $P_{aem}(\Cc,\W'))=\bigcup_iP_{aem}(C_i,\cup_jW_{ij}'))$,
where the union over $i$ is well-separated provided $m$ was chosen
large enough. Moreover, since $\W'\subset
N_{2a\beta\lambda_n^2\delta}(\Cc\cap \D)$, we have the following
contractive inclusion 
\begin{equation*}
  P_{aem}(C_i,\cup_jW_{ij}'))\subset P_e(N_{2a\beta\lambda_n^2}(C_i)).
\end{equation*}
We therefore get a map
\begin{equation*}
  L_n^{2\lambda_n^3\delta}(P_{aem}(\Cc,\W'))\to 
     \prod_i  L_n^{2\lambda_n^3\delta}(P_e(N_{2a\beta\lambda_n^2}(C_i))).
\end{equation*}
The similar statement is true for $\D$.

\item[{\bf Step 10.}]
By the surjectivity assumption applied to  the families $\Cc$ and
$\D$, there exists $f=f(a,\delta,n,t)$ such that the range of 
\begin{equation*}
   L_n^{2\lambda_n^3\delta}(P_e(N_{2a\beta\lambda_n^3}(C_i)))\to
     L_n^{2\lambda_n^3\delta}(P_f(N_{2a\beta\lambda_n^3}(C_i)))
\end{equation*}
is contained in the range of 
\begin{equation*}
   H_n(P_f(N_{2a\beta\lambda_n^3}(C_i))) \to
      L_n^{2\lambda_n^3\delta}(P_f(N_{2a\beta\lambda_n^3}(C_i))),
\end{equation*}
and similarly for $D_i$, for all $i$.  Hence there exists $z'$ in
\begin{equation*}
\prod_i \left(H_n(P_f(N_{2a\beta\lambda_n^2}(C_i)))\oplus
  H_n(P_f(N_{2a\beta\lambda_n^2}(D_i)))\right)\cong
H_n(P_f(N_{2a\beta\lambda_n^2}(\Cc)))\oplus
H_n(P_f(N_{2a\beta\lambda_n^2}(\D)))
\end{equation*}
such that $A(z')=z$ where $z$ is identified with its image through the
map  
\begin{equation*}
 \prod_i \left(L_n^{2\lambda_n^3\delta}(P_f(N_{2a\beta\lambda_n^2}(C_i)))\oplus  
     L_n^{2\lambda_n^3\delta}(P_f(N_{2a\beta\lambda_n^2}(D_i))\right)\to 
     L_n^{2\lambda_n^3\delta}(P_f(N_{2a\beta\lambda_n^2}(\Cc)))\oplus  
         L_n^{2\lambda_n^3\delta}(P_f(N_{2a\beta\lambda_n^3}(\D))).
\end{equation*}

\item[{\bf Step 11.}]
Finally we use the commutative diagram
\begin{equation*}
  \xymatrix{ H_n(P_f(N_{2a\beta\lambda_n^2}(\Cc)))\oplus H_n(P_f(N_{2a\beta\lambda_n^2}(\D))) \ar[r]^-{j_*}\ar[d]^A&
      H_{n}(P_{f}(Z) ) \ar[d]^A\\
          L_n^{2\lambda_n^3\delta}(P_f(N_{2a\beta\lambda_n^2}(\Cc)))\oplus  L_n^{2\lambda_n^3\delta}(P_f(N_{2a\beta\lambda_n^2}(\D)))  \ar[r]^-{j_*} &
     L_n^{2\lambda_n^3\delta}(P_f(Z)), }
\end{equation*}
to get $x=j_*(z)=j_*(A(z'))=A(j_*(z'))$, viewed in
$L_n^{2\lambda_n^3\delta}(P_f(Z))$.  The first two equalities following
from steps 8 and 10. 
We have therefore proved that $x$ is in the range of 
\begin{equation*}
  H_n(P_f(Z))\to L_n^{2\lambda_n^3\delta}(P_f(Z)),
\end{equation*}
which is enough to conclude, as up to increasing $f$, we can replace
$2\lambda_n^3\delta$ by $\delta$ in the right-hand term.  
\end{itemize}
\end{proof}

\section{Concluding remarks}
\label{conclusion}

\subsection{The Novikov conjecture for linear groups}

The methods presented here could be adapted to prove the coarse
Baum-Connes conjecture for spaces with finite decomposition
complexity, and therefore the Novikov conjecture for finitely
decomposable groups.  Of course, since finite decomposition complexity
implies Property $A$ the coarse Baum-Connes conjecture is already
verified for spaces with finite decomposition complexity, as a
consequence of the main result in \cite{Y2}.  The alternate proof that
we suggest here would, however, not rely on infinite dimensional
methods.  Rather, it would be based on the more elementary approach of
\cite{Y1}, where the coarse Baum-Connes conjecture is proved for
groups with finite asymptotic dimension.  We believe
that such an alternative proof would be of some interest.

\subsection{Weak finite decomposition complexity}

A main motivation for this project was to find a weak form of finite
asymptotic dimension satisfied by linear groups.  As described in
Remark~\ref{weakFDCrem}, we first defined weak finite decomposition
complexity.  We abandoned this property when we realized we were
unable to prove the bounded Borel and bounded Farrell-Jones isomorphism
conjectures.  

\begin{que}
  Are the bounded Borel and bounded Farrell-Jones isomorphism
  conjectures true for bounded geometry metric spaces having weak
  finite decomposition complexity?
\end{que}

We speculate that to answer this question one would probably need to
replace the controlled Mayer-Vietoris sequences used here by
controlled spectral sequences in $K$- and $L$-theory.

\appendix

\section{Variations on the Rips complex}
\label{Rips}

In this appendix, we introduce the {\it relative Rips
  complex\/} and the {\it scaled \textup{(}relative\textup{)} Rips
  complex\/} and prove several useful results about their geometry.
These complexes, and the assorted technical results presented here,
play a crucial role in the proofs of
Theorems~\ref{AsymptoticVanishingK-theoryThm} and
\ref{AssemblyMapThm}.  The appendix is designed to be read
independently and, in spite of their technical nature, we believe
that the results presented may be useful in other contexts.

The appendix is organized as follows. In the first subsection, we
shall introduce the {\it relative Rips complex\/} and the {\it scaled
  Rips complex\/}. In the second, we extend the definitions to the
setting of metric families, relevant for the proofs
Theorems~\ref{AsymptoticVanishingK-theoryThm} and
\ref{AssemblyMapThm}. The final subsection contains a collection of
lemmas, also necessary for the proofs of
Theorems~\ref{AsymptoticVanishingK-theoryThm} and
\ref{AssemblyMapThm}.  While we shall state and prove the lemmas in
the context of metric spaces they generalize immediately to the
context of metric families.

Throughout, $\Gamma$ is a locally finite metric space with the
property that $d(x,y)\geq 1$ for each pair of distinct points $x$ and
$y\in\Gamma$. The Rips complex was defined previously (see Definition \ref{RipsDef} and the surrounding discussion).

\subsection{The relative Rips complex and the scaled Rips complex}

In this subsection, we shall introduce the {\it relative Rips
  complex\/} and the {\it scaled Rips complex\/}.  These play
important roles in the proofs of
Theorems~\ref{AsymptoticVanishingK-theoryThm} and
\ref{AssemblyMapThm}, respectively.

\begin{defn}\label{relativeripsdef}
  Let $\Sigma$ be a subset of $\Gamma$.  For $1\leq a\leq b$ we define the
  relative Rips complex $P_{ab}(\Gamma,\Sigma)$ to be the simplicial
  polyhedron with vertex set $\Gamma$ and in which a finite subset 
  $\{\, \gamma_0,\dots,\gamma_n \,\}$ spans a simplex if one of the
  following conditions hold:
  \begin{ilist}
    \item $d(\gamma_i,\gamma_j)\leq a$ for all $i$ and $j$;
    \item $d(\gamma_i,\gamma_j)\leq b$ for all $i$, $j$, and
      $\gamma_i\in\Sigma$ for all $i$.
  \end{ilist}
The relative Rips complex is equipped with the simplicial metric.
\end{defn}

If $C$ is a subspace of $\Gamma$, then
$P_d(C)$ is, in a natural way, a {\it subset\/} of $P_d(\Gamma)$.
When $P_d(C)$ and $ P_d(\Gamma)$ are equipped with the simplicial
metric, the inclusion $P_d(C)\subset P_d(\Gamma)$ is contractive.
Observe that $P_d(C)$ carries, in addition to the simplicial metric, a
subspace metric inherited from $P_d(\Gamma)$.  If $C\subset \Gamma$
and $W\subset\Sigma$ we have inclusions of sets
\begin{equation*}
  P_a(C)\subset P_{ab}(\Gamma,\Sigma),\quad
          P_b(W) \subset P_{ab}(\Gamma,\Sigma).
\end{equation*}
If $P_b(W)$ is equipped with the intrinsic
metric the second inclusion is contractive; the analogous statement is
generally false if $P_b(W)$ is equipped with the subspace metric
inherited from $P_b(\Gamma)$.  Similar remarks apply for $P_a(C)$.

\begin{defn}
\label{scaleripsdef}
Let $W$ be a subset of the metric space $\Gamma$.  For $1\leq
a\leq b$ and a sequence of positive integers
$\overline{m}=m_1,\ldots,m_n,\ldots $, we define the metric space
$P_{ab\overline{m}}(\Gamma; W)$ to be the polyhedron $P_{b}(\Gamma)$
with the metric defined as follows:
\begin{ilist}
  \item each simplex $K$ spanned by a finite subset 
     $\{\gamma_0, \gamma_1, \cdots, \gamma_n\}$ of $\Gamma$ is given
     by the (pseudo) Riemannian metric defined inductively on $n$: 
\begin{alist} 
  \item if $K$ is a simplex in $P_{ab}(\Gamma;  W)$,  then the simplex 
     is endowed the standard simplicial Riemannian metric;  
  \item  if $K$  is not a simplex in $P_{ab}(\Gamma;  W)$ and we have
     inductively defined the (pseudo) Riemannian metric $g_{n-1}$ on its
     $(n-1)$-skeleton $K^{(n-1)}$, then we identify $K$ with the cone  
\begin{equation*}
    ([0,1]\times K^{(n-1)})/(0\times K^{(n-1)})
\end{equation*}
and define a (pseudo) Riemannian metric $g_n$ on $K$ by:
\begin{equation*}
  g_n = m_n^2 dt^2 + t^2 g_{n-1}
\end{equation*}
for $t\in [0,1]$.
\end{alist}
  \item the (pseudo) Riemannian metrics on simplices of 
      $P_{ab}(\Gamma; W)$ can be used to define the length of any
      piecewise smooth path in the polyhedron.  For any pair of points
      $x$ and $y$ in $P_{ab\overline{m}}(\Gamma; W)$, $d(x, y)$ is
      defined to be the infimum of the lengths of all piecewise smooth
      paths in $P_{ab\overline{m}}(\Gamma;W)$ connecting $x$ and $y$. 
\end{ilist} 
\end{defn}

\begin{remark}
We shall actually only use the case  $\overline{m}=(m,m,\ldots)$ in
the proofs of Theorems~\ref{AsymptoticVanishingK-theoryThm} and
\ref{AssemblyMapThm}, where we will denote
$P_{ab\overline{m}}(\Gamma;  W)$ by $P_{abm}(\Gamma; W)$. We however
chose to introduce the more general notion since it will streamline the
proofs of several results in this appendix.  
\end{remark}

\subsection{Extension of the definitions  for metric families}
\label{Ripsfamilies}

In this subsection, we  introduce some further notations in order to
deal with families of subsets of $\Gamma$ instead of just one subspace
at a time. In particular, we will introduce the Rips complex and the
relative Rips complex for metric families. We will not treat the case
of the scaled Rips complex since it is a straightforward adaptation of
the case of the relative Rips complex.

For a family $\Cc=\{\, C \,\}$
of subspaces of $\Gamma$ we define
\begin{equation*}
  P_d(\Cc) = \bigcup_{C\in\Cc} P_d(C) \subset P_d(\Gamma),
\end{equation*}
which we shall always equip with the subspace metric.  Typically, we
shall employ this notation when the family $\Cc$ is disjoint.  Note that
if the family $\Cc$ is $d$-disjoint and $\tilde C$ is the union of
the $C\in\Cc$ then
\begin{equation*}
  P_d(\Cc) = P_d(\tilde C).
\end{equation*}
If the union of families is defined naively, and the intersection of
families is {\it defined\/} to be the family of intersections 
$\Cc\cap \D = \{\, C\cap D : C\in\Cc, D\in\D \,\}$ we have
\begin{equation*}
  P_d(\Cc\cup \D) = P_d(\Cc)\cup P_d(\D), \quad 
           P_d(\Cc\cap \D) = P_d(\Cc)\cap P_d(\D).
\end{equation*}

Just as for the standard Rips complex, we can extend the definition of
the relative Rips complex to families.  For families $\Cc=\{\, C \,\}$
and $\W=\{\, W \,\}$ with each $C\subset \Gamma$ and each
$W\subset\Sigma$ we define
\begin{equation*}
  P_{ab}(\Cc,\W) = \bigcup_{C\in\Cc} P_a(C) \cup 
             \bigcup_{W\in\W} P_b(W),
\end{equation*}
as subspaces of $P_{ab}(\Gamma,\Sigma)$. If $\Sigma$ is not explicitly
specified, then 
$\Sigma$ is understood to be the union of all $W$ in $\W$.
In the special case $a=b$ we have $P_{aa}(\Gamma,\Sigma)=P_a(\Gamma)$
and, more generally $P_{aa}(\Cc,\W) = P_a(\Cc\cup\W)$.  As for the
standard Rips complex, we have the elementary equalities
\begin{equation*}
  P_{ab}(\Cc\cup\D,\W) = P_{ab}(\Cc,\W)\cup P_{ab}(\D,\W), \quad
     P_{ab}(\Cc\cap\D,\W) = P_{ab}(\Cc,\W)\cap P_{ab}(\D,\W)
\end{equation*}
as subspaces of $P_{ab}(\Gamma,\Sigma)$.

\subsection{A few technical results}

In this subsection, we prove a several useful results about the
geometry of the (relative) Rips and scaled Rips complex.  These
results are important tools in the proofs of
Theorems~\ref{AsymptoticVanishingK-theoryThm} and
\ref{AssemblyMapThm}.

Henceforth, we assume $\Gamma$ has bounded geometry.

\begin{lem}[Comparison lemma]
  Let $a\geq 1$, and let
  $P_a(\Gamma)$ be equipped as usual with the
  simplicial metric.  For $x$ and $y\in\Gamma$ we have
  \begin{equation*}
    d_{\Gamma}(x,y) \leq a\;\alpha\; d_{P_a(\Gamma)}(x,y),
  \end{equation*}
for some constant $\alpha$ depending only on the dimension of
$P_a(\Gamma)$. 
\end{lem}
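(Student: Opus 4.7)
The plan is to prove the lemma by exhibiting a vertex chain along a geodesic. Let $\gamma \colon [0,L] \to P_a(\Gamma)$ be a geodesic from $x$ to $y$ of length $L = d_{P_a(\Gamma)}(x,y)$, parametrized by arc length. I will construct a sequence of vertices $x = v_0, v_1, \dots, v_m = y$ of $P_a(\Gamma)$ satisfying:
\begin{ilist}
  \item consecutive vertices $v_i, v_{i+1}$ span a simplex of $P_a(\Gamma)$, so that $d_\Gamma(v_i, v_{i+1}) \leq a$;
  \item $m \leq \alpha' L$, where $\alpha'$ depends only on $N = \dim P_a(\Gamma)$.
\end{ilist}
The triangle inequality in $\Gamma$ then immediately gives $d_\Gamma(x,y) \leq m \cdot a \leq a\alpha' L$, as required.

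To construct the chain, I will exploit the local structure of the simplicial metric. Since each simplex of $P_a(\Gamma)$ is isometric to a standard Euclidean simplex of bounded dimension $N$, the \emph{height} $h_N$ (minimum distance from a vertex to the opposite face) is a positive constant depending only on $N$. Consequently, any point $q \in P_a(\Gamma)$ with $d_{P_a(\Gamma)}(q,v) < h_N$ must lie in the open star of $v$, since any path from $v$ of length less than $h_N$ cannot cross a face disjoint from $v$. Using this, I partition $[0,L]$ into subintervals of length at most $\epsilon_N := h_N/2$, obtaining sample points $p_0 = x, p_1, \dots, p_k = y$ with $k \leq L/\epsilon_N + 1$. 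At each $p_i$, I select a vertex $w_i$ of the (closed) minimal simplex containing $p_i$; the bound $d_{P_a(\Gamma)}(w_i, w_{i+1}) < 2\cdot\mathrm{diam}(\text{standard }N\text{-simplex}) + \epsilon_N$ is controlled by a dimension-dependent constant, and a bounded-length path in the $1$-skeleton of $P_a(\Gamma)$ (whose length is bounded by a function of $N$) connects $w_i$ to $w_{i+1}$. Concatenating these subpaths gives the required vertex chain with $m = O_N(L)$.

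The main obstacle is the final step of bounding the $1$-skeleton distance between $w_i$ and $w_{i+1}$ by a dimension-only constant. Naively, two vertices lying in nearby simplices can fail to span a single simplex, and the combinatorial structure can be complicated. However, since the geodesic segment $\gamma|_{[t_i,t_{i+1}]}$ has length less than $h_N$, it remains in the closed star of some vertex $v^*$ of the intermediate minimal simplices (by the height estimate applied to any vertex encountered along the segment). Both $w_i$ and $w_{i+1}$ therefore lie in simplices containing $v^*$, so the three-step chain $w_i \to v^* \to w_{i+1}$ lies in the $1$-skeleton, giving $d_\Gamma(w_i, w_{i+1}) \leq 2a$. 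This yields $m \leq 2(L/\epsilon_N + 1)$, and setting $\alpha = 2/\epsilon_N + O(1)$ completes the argument, with $\alpha$ depending only on $N = \dim P_a(\Gamma)$.
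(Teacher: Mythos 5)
Your approach — discretizing a geodesic at a dimension-dependent scale, selecting nearby vertices, and chaining them through common stars — is a genuinely different route from the paper's argument. The paper instead takes an arbitrary path of length $l$ and inductively pushes it from the $n$-skeleton to the $(n-1)$-skeleton, replacing the portion of the path crossing an open $n$-simplex by a boundary path whose length is longer only by a factor depending on $n$; iterating down to the $1$-skeleton and then projecting edges to $\Gamma$ gives the result. Both strategies are viable, and yours is arguably more geometric.

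However, there is a gap in your key step: the claim that a segment of length less than $h_N$ stays in the closed star of some single vertex $v^*$. The justification you give, ``by the height estimate applied to any vertex encountered along the segment,'' does not apply, because the geodesic segment may hover near barycenters and never come close to a vertex. Moreover the scale $\epsilon_N = h_N/2$ does not shrink with $N$: since $h_N = \sqrt{(N+1)/N} \to 1$, you have $\epsilon_N \to 1/2$, whereas the correct threshold must decrease with the dimension. A clean fix: at the starting point $p_i$ of a subsegment, with carrier (minimal) simplex $\sigma_i$, some vertex $v^*$ of $\sigma_i$ has barycentric coordinate $f_{v^*}(p_i) \geq 1/(N+1)$. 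The function $f_{v^*}$, extended by zero outside the star of $v^*$, is piecewise linear on $P_a(\Gamma)$ with gradient of Euclidean norm at most $1$ on every simplex, hence $1$-Lipschitz for the simplicial metric. Consequently, along a segment of length $< 1/(N+1)$ starting at $p_i$ one has $f_{v^*} > 0$, so the segment stays in the open star of $v^*$, and both $w_i$ and $w_{i+1}$ are vertices of simplices containing $v^*$. Taking $\epsilon_N < 1/(N+1)$ makes the rest of your chain argument go through, with $\alpha = O(N)$ — acceptable, since $\alpha$ is allowed to depend on $N = \dim P_a(\Gamma)$.
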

\begin{proof}
As this lemma is classical, we will only sketch its proof. 
Let $x,y\in X$ and suppose that there exists a path $\gamma$ of length
$l$ between $x$ and $y$ in $P_a(X)$ (since otherwise, the distance
between $x$ and $y$ in $P_a(X)$ is infinite). 
Let $n$ be the minimal integer such that $\gamma$ is contained in the
$n$-skeleton of $P_a(X)$. We will prove the inequality
$d_{\Gamma}(x,y)\leq a \alpha l$ for some $\alpha$ depending only on
$n$ by induction on $n$. First note that if $n=1$, then the inequality
$d_\Gamma(x,y) \leq a l$ is trivial. Now consider
a simplex $\Delta$ of dimension $n$ whose interior intersects
$\gamma$. Let $u<v$ be such that $\gamma(s)$ lies in $\Delta$ for all
$u\leq s\leq v$ and the path $\gamma$ intersects  $\partial \Delta$ at
$u$ and $v$ (when $s$ is $u$ and $v$), 
where $\gamma$ is parametrized by arclength $s$.  It is easy to see
that there exists a constant $C$ depending only on $n$ such that the
portion of $\gamma$ lying between $u$ and $v$ can be replaced by a
path in $\partial \Delta$ of length $\leq C(v-u).$ Doing this on each
such $n$-simplex, we obtain a path in the $(n-1)$-skeleton of length
$\leq Cl$ and we conclude by induction.  
\end{proof}

\begin{lem}\label{scalecompLemma}[Comparison lemma for the scaled complex]
Let $a\geq 1$, and let $C$ be a subspace of $\Gamma$.  There exists $\beta\geq 1$ depending only on the dimension of
$P_a(\Gamma)$
such that for all $b\geq a$, there exists $M>0$ for which
$$d_{\Gamma}(x,C) \leq a\;\beta\; d(x,P_b(C)),$$
for all $x\in \Gamma$, provided $m_k\geq M$ for all $k$,
where the distance for the right-hand term is taken in $P_{ab\overline m}(\Gamma,C)$.
 \end{lem}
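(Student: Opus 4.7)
The plan is to reduce the problem to the standard comparison lemma for $P_a(\Gamma)$ by modifying any near-geodesic from $x$ to $P_b(C)$ into a path that lies in the subcomplex $P_{ab}(\Gamma,C)=P_a(\Gamma)\cup P_b(C)$ equipped with its standard simplicial metric, and then analyzing where this modified path first meets $P_b(C)$.

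First, fix a near-geodesic $p\colon [0,T]\to P_{ab\overline{m}}(\Gamma,C)$ from $x$ to some $y\in P_b(C)$ with $T$ close to $L:=d(x,P_b(C))$. Since $y$ lies in a simplex of $P_b(C)$ of standard diameter one, $p$ can be extended by an arc of length at most one to a vertex $\gamma\in C$. Next, I would choose $M$ large in terms of $b$ and $n:=\dim P_b(\Gamma)$, so that $p$ is replaced, by radial retraction inside each bad $k$-simplex (a simplex of $P_b(\Gamma)$ not in $P_{ab}(\Gamma,C)$, carrying the cone metric $m_k^2\,dt^2+t^2 g_{k-1}$), by a modified path $\tilde p$ of length at most $C(n)(T+1)$ lying in $P_{ab}(\Gamma,C)$. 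The key estimate is that a segment of $p$ inside a bad $k$-simplex of scaled length less than $m_k/2$ must remain in $\{t\geq 1/2\}$ (since $\int m_k|\dot t|\leq \text{length}$), on which the radial retraction $(t,\theta)\mapsto(1,\theta)$ is $2$-Lipschitz; applying such retractions iteratively from top dimension down produces $\tilde p$.

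Once $\tilde p$ is produced, examine the first time it enters $P_b(C)\setminus P_a(\Gamma)$. By continuity this occurs at a point $z\in P_a(\Gamma)\cap P_b(C)=P_a(C)$, at standard distance at most one from some vertex $\gamma'\in C$. The portion of $\tilde p$ from $x$ to $\gamma'$ lies entirely in $P_a(\Gamma)$ with length at most $C(n)(T+1)+1$, so the first comparison lemma yields
$$d_\Gamma(x,C)\leq d_\Gamma(x,\gamma')\leq a\alpha\bigl(C(n)(T+1)+1\bigr).$$
Letting $T\to L$ and absorbing the additive constants using the bounded-geometry lower bound on $L$ valid when $x\notin C$ produces the claimed inequality with $\beta$ depending only on $n$ and the constant $\alpha$ from the first comparison lemma.

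The main obstacle is calibrating the retraction step so that a single $M$, depending only on $a$, $b$, and $n$, suffices for all $x$: the naive iterated retraction would require $m_k$ to grow with the path length $T$, since each dimensional retraction can double the length of the portion that must subsequently avoid the cone point. I would overcome this by a careful bookkeeping argument showing that the recursive cone structure forces any traversal of $p$ between distinct vertex-neighborhoods of bad simplices to cost at least unit scaled length, which bounds the cumulative length that must be retracted at each dimensional step by a quantity controlled by the number of simplex transitions rather than by $T$.
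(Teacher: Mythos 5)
Your key estimate --- that a segment of scaled length less than $m_k/2$ inside a bad $k$-simplex must stay in $\{t\geq 1/2\}$, where radial retraction is $2$-Lipschitz --- only handles the \emph{shallow} portions of the path, and the gap you flag at the end is genuine and not resolved by the sketched bookkeeping. Concretely: after retracting at level $n$, a sub-segment of $\tilde p$ lying in a single bad $(n-1)$-simplex $\Delta'$ is the $2$-Lipschitz image of a portion of $p$ inside some $n$-simplex $\Delta$ with face $\Delta'$; that original portion is only known to have scaled length less than $m_n/2$, so the retracted sub-segment has length bounded by $m_n$, not by anything involving $m_{n-1}$. Since the hypothesis lets $m_n$ be arbitrarily large relative to $m_{n-1}$ (both are merely $\geq M$), there is no way to guarantee this sub-segment falls below the threshold $m_{n-1}/2$ needed to repeat your estimate one level down. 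Your proposed fix --- that traversals between vertex-neighborhoods cost at least unit scaled length and hence the retractable length is controlled by the number of simplex transitions --- does not close this: the number of transitions is itself of order $T$, and what actually needs to be controlled is the length of a \emph{single} segment in a single lower-dimensional bad simplex, which your argument leaves unbounded.

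The paper circumvents this precisely by not relying solely on the Lipschitz retraction. Its inductive step on the skeleton dimension splits into two cases inside a bad $n$-simplex $\Delta=([0,1]\times\partial\Delta)/(0\times\partial\Delta)$: if the portion of the path hits $[0,1-\eta]\times\partial\Delta$, then that portion has scaled length at least $\eta m_n$, which for $m_n$ large dominates the diameter of $\partial\Delta$ (controlled by an upper bound $K$ on $m_1,\dots,m_{n-1}$), so the \emph{entire} portion can be replaced by a boundary path with multiplicative cost $\rho K/(\eta m_n)$, tunable to be as small as desired; if instead the portion stays in $[1-\eta,1]\times\partial\Delta$, the retraction is $(1-\eta)^{-1}$-Lipschitz. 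It then chooses $\eta$ and $M'$ in that order, and removes the auxiliary upper bound $K$ by observing that decreasing the $m_k$ only decreases path lengths, so the inequality propagates to all $\overline m$ with $m_k\geq M'$. This two-case split (cheap retraction for shallow incursions; expensive-hence-replaceable for deep ones) is the missing ingredient. Without it, one-pass radial retraction with a uniform $M$ cannot work. You would also want to phrase the result as a length inequality for arbitrary paths (as the paper does) rather than near-geodesics, which removes the need for the additive-constant absorption at the end, though that part is a matter of convenience rather than a real obstruction.
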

\begin{proof}
It is enough to show that if $\gamma$ is a path of length $l$ in  $P_{ab\overline{m}}(\Gamma,C)$, parametrized by its arc length with respect to the (pseudo) riemannian metric,
between $x\in \Gamma$ and $P_b(C)$, then 
\begin{equation}\label{length inequality}
d_{\Gamma}(x,C)\leq a\beta l.
\end{equation}
We proceed by induction on $n$, the minimal integer such that $\gamma$
is contained in the union of $P_{a}(\Gamma)$ and the $n$-skeleton of
$P_{ab\overline{m}}(\Gamma,C)$.   
Precisely, our induction hypothesis will be the following: for all
$\beta>\alpha$, where $\alpha$ appears in the comparison lemma for
$P_a(\Gamma)$, and every path of length $l$ contained in the union of
$P_{a}(\Gamma)$ and the $n$-skeleton, there exists $M$ such that
(\ref{length inequality}) holds for all $\overline{m}$ such that
$m_k\geq M$ for all $1\leq k \leq n$. 

Let us start with the case $n=1$. Note that up to replacing $\gamma$ by a sub-path, we can always suppose that it does not intersect $C$ at any $t<l$.  We can also suppose that if $\gamma$ meets the interior of an edge not belonging to $P_a(\Gamma)$, then this edge is completely contained in $\gamma$. 
Hence traveling along $\gamma$ means that, either we stay in $P_a(\Gamma)$, or we jump between two points in $\Gamma$, at distance $\leq b$, through  
an edge of length $m_1$. Hence choosing $M=b$, we conclude thanks to the comparison lemma in $P_a(\Gamma).$

Now let us suppose that $n\geq 2$.
Fix some $\beta_1>\beta_2>\alpha$ and choose an $M$ such that the induction hypothesis applies for $\beta=\beta_2$. We assume moreover that $M\leq m_k\leq K$ for all $1\leq k \leq n-1$, where $K$ is some integer.
Let us assume that $\gamma$ meets at least a simplex $\Delta$ of dimension $n$ which does not belong to $P_a(\Gamma)$. 
Let $u<v$ be such that $\gamma(t)\in \Delta$ for $u\leq t\leq v$, and 
$\gamma$ meets the boundary of $\Delta$ at $u$ and $v$. We start with two observations. Let $\Delta= ([0,1]\times \partial \Delta)/(0\times \partial \Delta)$ and let $\eta\in (0,1).$

First, note that if $\gamma$ meets $[0,1-\eta]\times \partial \Delta$,
then $v-u\geq \eta m_n$. But the diameter of $\partial \Delta$ is less
than $\rho K$, for some $\rho$ depending only on $n$.  Hence we can
replace the portion of $\gamma$ between $u$ and $v$ by a path
contained in $\partial \Delta$, of length 
$\leq \rho K\leq \rho K (v-u)/(\eta m_n)$.

Second, if $\gamma$ is contained in  $[1-\eta,1]\times \partial \Delta$, then observe that the retraction of $[1-\eta,1]\times \partial \Delta$  onto $\partial \Delta$ is a $(1-\eta)^{-1}$-Lipschitz map, and hence, projecting $\gamma$ to the boundary increases its length by at most $(1-\eta)^{-1}$. Hence there exists a path $\gamma'$ completely contained in the union of $P_{a}(\Gamma)$ and the $(n-1)$-skeleton whose length $l'$ satisfies 
$$l'\leq (\rho K/(\eta m_n)) l+ (1-\eta)^{-1}l.$$ Applying the induction hypothesis to $\gamma'$ yields
$$d_{\Gamma}(x,C)\leq a\beta_2 l'\leq a\beta_2((\rho K/(\eta m_n)+ (1-\eta)^{-1})l.$$
First fix $\eta$ such that 
$$\beta_2(1-\eta)^{-1}<\beta_1.$$  We then take $M'\geq M$ big enough so that  
$$\beta_2((\rho  K/(\eta m_n)+ (1-\eta)^{-1})\leq \beta_1 $$ for all $m_n\geq M'$.  
This gives the desired inequality 
$$d_{\Gamma}(x,C)\leq a\beta_1 l,$$
under the assumption that $M\leq m_k\leq K$ for $1\leq k\leq n-1$, and $m_n\geq M'$. But since increasing  $m_k$ can only increase $l$, this inequality remains true under the  condition that $m_k\geq M'$ for all $1\leq k\leq n$.  
\end{proof}

Next we make the following observation, from which we will immediately deduce the {\it neighborhood}  and the {\it separation} lemmas below.
 
\begin{lem}\label{NeighPrelimlemma}
Let $C$ be a subspace of $\Gamma$ and let $\varepsilon\geq 1$ and $a\geq 1$.  There exists $\beta\geq 1$  depending only on the dimension of
$P_a(\Gamma)$ such that the following statements are true. Viewing  $P_a(C)$ as a subspace of $P_a(\Gamma)$ we have
  \begin{equation*}
    N_{\varepsilon}(P_a(C))\cap \Gamma \subset N_{a\varepsilon\beta}(C),
  \end{equation*}
Similarly for the relative Rips complex, viewing
$P_b(C)$ as a subspace of  $P_{ab}(\Gamma,C)$ ($b\geq a$) we have
\begin{equation*}
  N_\varepsilon(P_b(C))\cap \Gamma \subset N_{a\varepsilon\beta}(C).
  \end{equation*}
 Finally, for the scaled complex, viewing
$P_b(C)$ as a subspace of  $P_{ab\overline{m}}(\Gamma,C)$ we have
\begin{equation*}
  N_\varepsilon(P_b(C))\cap \Gamma \subset N_{a\varepsilon\beta}(C).
  \end{equation*}
 provided that $\overline{m}$ is large enough in sense that  $m_k\geq
 M$ for all $k$, where $M$  depends only on $b$.  
\end{lem}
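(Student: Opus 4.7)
The strategy is to handle the three parts by the same reduction: once $x\in\Gamma$ is within distance $\varepsilon$ of $P_\ast(C)$ in the relevant polyhedral metric, use that every simplex of $P_\ast(C)$ has diameter at most one in its simplicial metric to replace the nearby point of $P_\ast(C)$ by a vertex $c\in C$ at an additional cost of $+1$; then translate polyhedral distance back to $\Gamma$-distance via a comparison lemma. For part (1) this is immediate: if $d_{P_a(\Gamma)}(x,P_a(C))\leq\varepsilon$, pick $y\in P_a(C)$ with $d_{P_a(\Gamma)}(x,y)\leq\varepsilon$ and a vertex $c\in C$ of the simplex of $P_a(C)$ containing $y$; then $d_{P_a(\Gamma)}(x,c)\leq\varepsilon+1\leq 2\varepsilon$ since $\varepsilon\geq 1$, and the Comparison Lemma gives $d_\Gamma(x,c)\leq 2a\alpha\varepsilon$, so $\beta=2\alpha$ suffices.

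For part (2) the essential topological observation is that inside $P_{ab}(\Gamma,C)$ one has $P_{ab}(\Gamma,C)=P_a(\Gamma)\cup P_b(C)$ with intersection exactly $P_a(C)$: a simplex in the intersection is a $b$-simplex with all vertices in $C$ whose pairwise distances are also $\leq a$, and hence is an $a$-simplex of $C$. Given $x\in\Gamma$ with $d_{P_{ab}(\Gamma,C)}(x,P_b(C))\leq\varepsilon$, pick an almost-realizing path $\gamma$ from $x$ to a point of $P_b(C)$ and let $t_0$ be the last time $\gamma$ lies in the closed set $P_a(\Gamma)$. Either $t_0$ is the endpoint, in which case the endpoint lies in $P_a(\Gamma)\cap P_b(C)=P_a(C)$, or immediately after $t_0$ the path enters the interior of a $b$-simplex of $C$, forcing $\gamma(t_0)$ to sit on a face of that simplex which lies in $P_a(\Gamma)$, hence again in $P_a(C)$. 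In either case the initial sub-path has length $\leq\varepsilon$, lies in $P_a(\Gamma)$, and connects $x$ to a point of $P_a(C)$, which reduces matters to part (1).

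Part (3) is essentially a direct consequence of the scaled Comparison Lemma~\ref{scalecompLemma}: that lemma produces $\beta$ (depending only on the dimension) and, for each $b\geq a$, a threshold $M=M(b)$ such that $m_k\geq M$ for all $k$ implies $d_\Gamma(x,C)\leq a\beta\, d_{P_{ab\overline m}(\Gamma,C)}(x,P_b(C))$, from which $N_\varepsilon(P_b(C))\cap\Gamma\subset N_{a\beta\varepsilon}(C)$ is immediate. Taking the maximum of the three constants $\beta$ so obtained yields a single constant depending only on the dimension of $P_a(\Gamma)$. The main point of care is the path-cutting step in part (2): the set-theoretic identity $P_a(\Gamma)\cap P_b(C)=P_a(C)$ is formal, but one must verify that a path leaving $P_a(\Gamma)$ does so through this intersection, which follows from closedness of $P_a(\Gamma)$ inside $P_{ab}(\Gamma,C)$ together with the fact that $P_b(C)\setminus P_a(C)$ is a union of open faces of $b$-simplices of $C$.
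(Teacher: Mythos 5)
Your approach matches the paper's proof essentially step by step: for the first assertion, approximate a nearby point of $P_a(C)$ by a vertex of $C$ (costing a bounded amount since simplices have bounded diameter) and apply the comparison lemma; for the second, exploit the fact that $P_{ab}(\Gamma,C)$ decomposes as $P_a(\Gamma)\cup P_b(C)$ with $P_a(\Gamma)\cap P_b(C)=P_a(C)$, so any path from a vertex $x\notin C$ to $P_b(C)$ must first hit $P_a(C)$ while still inside $P_a(\Gamma)$, reducing to assertion one; for the third, quote the scaled comparison lemma directly. This is precisely the paper's (very terse) argument written out in full, and your elaboration of the intersection identity $P_a(\Gamma)\cap P_b(C)=P_a(C)$ is a genuinely helpful addition.

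One small slip in part (2): you define $t_0$ as the \emph{last} time $\gamma$ lies in $P_a(\Gamma)$ and then claim $\gamma|_{[0,t_0]}$ lies in $P_a(\Gamma)$, but that need not hold if $\gamma$ exits and re-enters $P_a(\Gamma)$ before $t_0$. The clean fix is to take $t_1=\inf\{t:\gamma(t)\in P_b(C)\}$: since $P_b(C)$ is closed, $\gamma(t_1)\in P_b(C)$; since $P_{ab}(\Gamma,C)\setminus P_b(C)\subset P_a(\Gamma)$ and $P_a(\Gamma)$ is closed, $\gamma(t)\in P_a(\Gamma)$ for all $t\leq t_1$, whence $\gamma(t_1)\in P_a(\Gamma)\cap P_b(C)=P_a(C)$. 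Then $\gamma|_{[0,t_1]}$ is a path in $P_a(\Gamma)$ from $x$ to $P_a(C)$, and its length (which agrees in the two simplicial metrics because lengths are computed simplex-by-simplex) is at most that of $\gamma$, giving the reduction to part (1) exactly as you intend.
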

 
\begin{proof}
The diameter of a simplex in $P_a(\Gamma)$ is bounded by a universal
constant.  The first assertion therefore follows from the comparison
lemmas upon approximating a given $z\in P_a(C)$ by a vertex in $C$.

Observe that  a path in $P_{ab}(\Gamma,C)$ leaving $P_b(C)$
must pass through $P_a(C)$. Hence, moving away from 
$P_b(C)$ within $P_{ab}(\Gamma,C)$ is the same as moving away from
$P_a(C)$ in $P_a(\Gamma)$. The second assertion therefore follows from
the first one.  

Finally, the third assertion follows immediately from the previous lemma.
\end{proof}

The following lemma is an easy consequence of the previous and is left
to the reader.

\begin{lem}[Neighborhood lemma]\label{commutelemma}
Let $C\subset\Gamma$, $\varepsilon\geq 1$ and $a\geq 1$.  Viewing 
  $P_a(C)\subset P_a(\Gamma)$ we have
  \begin{equation*}
    N_{\varepsilon}(P_a(C)) \subset P_a(N_{a\varepsilon\beta}(C)),
  \end{equation*}
for some constant $\beta$ depending only on the dimension of
$P_a(\Gamma)$.  Similarly for the relative Rips complex, viewing
$P_b(C)\subset P_{ab}(\Gamma,C)$ ($b\geq a$) we have
\begin{equation*}
  N_\varepsilon(P_b(C))\subset P_{ab}(N_{a\varepsilon\beta}(C),C).
  \end{equation*}
\end{lem}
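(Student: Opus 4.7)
The plan is to reduce the desired inclusions to the vertex-level statement of Lemma~\ref{NeighPrelimlemma} by exploiting the fact that, in the simplicial metric, every simplex has diameter bounded by a universal constant depending only on its dimension. Concretely, since $\Gamma$ has bounded geometry, $\dim P_a(\Gamma)$ is finite, and each simplex of $P_a(\Gamma)$ is isometric to a standard Euclidean simplex of that dimension; hence there is a constant $D$, depending only on $\dim P_a(\Gamma)$, that bounds the simplicial diameter of every simplex. This $D$ is what lets us turn control on vertices into control on whole simplices.

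For the absolute case, take $x \in N_{\varepsilon}(P_a(C))$ and let $\Delta$ be the open simplex of $P_a(\Gamma)$ containing $x$, with vertex set $\{\gamma_0,\dots,\gamma_n\}\subset\Gamma$. Each $\gamma_i$ satisfies $d_{P_a(\Gamma)}(\gamma_i,x)\leq D$, so $\gamma_i \in N_{\varepsilon+D}(P_a(C))\cap\Gamma$. By Lemma~\ref{NeighPrelimlemma} (first assertion), $\gamma_i \in N_{a(\varepsilon+D)\beta_0}(C)$, where $\beta_0$ is the constant furnished there. Since $\varepsilon\geq 1$, this lies in $N_{a\varepsilon\beta}(C)$ with $\beta := \beta_0(1+D)$, a constant depending only on $\dim P_a(\Gamma)$. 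The vertices of $\Delta$ thus all belong to $N_{a\varepsilon\beta}(C)$ and have pairwise $\Gamma$-distance $\leq a$, so $\Delta$ is a simplex of $P_a(N_{a\varepsilon\beta}(C))$; consequently $x\in P_a(N_{a\varepsilon\beta}(C))$, as required.

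The relative case follows the same template. Given $x \in N_{\varepsilon}(P_b(C))\subset P_{ab}(\Gamma,C)$, let $\Delta$ be the simplex of $P_{ab}(\Gamma,C)$ containing $x$. If the vertices of $\Delta$ lie in $C$ and have pairwise distance $\leq b$, then $\Delta\subset P_b(C)\subset P_{ab}(N_{a\varepsilon\beta}(C),C)$ trivially. Otherwise the vertices have pairwise $\Gamma$-distance $\leq a$ and $\Delta$ is already a simplex of $P_a(\Gamma)$, so the same diameter bound $D$ applies; the second assertion of Lemma~\ref{NeighPrelimlemma} places each vertex in $N_{a\varepsilon\beta}(C)$, and $\Delta$ accordingly lies in $P_a(N_{a\varepsilon\beta}(C))\subset P_{ab}(N_{a\varepsilon\beta}(C),C)$.

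I do not anticipate any substantive obstacle: the proof is a bookkeeping reduction to the previous lemma, and the only point requiring care is the observation that, in the relative case, $D$ can be chosen independently of $b$ because the ``type~(1)'' simplices of $P_{ab}(\Gamma,C)$ are exactly the simplices of $P_a(\Gamma)$, while the ``type~(2)'' simplices already land in $P_b(C)$ so no diameter estimate is needed for them.
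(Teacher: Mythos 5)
Your proof is correct and carries out exactly the reduction the authors had in mind when they wrote ``easy consequence of the previous [lemma] and is left to the reader'': pass from a point $x$ in the $\varepsilon$-neighborhood to the vertices of its carrier simplex (at bounded simplicial distance from $x$), apply Lemma~\ref{NeighPrelimlemma} to those vertices, absorb the additive simplex-diameter constant multiplicatively using $\varepsilon\geq 1$, and note that the carrier is then a simplex of $P_a(N_{a\varepsilon\beta}(C))$ (resp.\ of $P_b(C)$ in the type-(2) case). The only nitpick is that the simplicial diameter of a standard simplex is in fact the universal constant $\sqrt 2$ rather than merely ``bounded in terms of dimension,'' so $D$ need not depend on $\dim P_a(\Gamma)$ at all; this does not affect the argument.
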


\begin{lem}[Separation lemma]\label{separatelemma} 
  Let $\varepsilon \geq 1$ and $a\geq 1$.  If the family $\Cc$ of subsets of 
  $\Gamma$ is $\varepsilon$-separated, then the family $P_a(\Cc)$
  (resp. $P_b(\Cc)$) is $\varepsilon(a\beta)^{-1}$-separated in
  $P_a(\Gamma)$ (resp. in $P_{ab}(\Gamma, \Cc) $ for $b\geq a$, and in
  $P_{ab\overline m}(\Gamma, \Cc)$ for $b\geq a$ if $\overline m$ is
  large enough), where $\beta$ only depends on the dimension of  $P_a(\Gamma)$.
\end{lem}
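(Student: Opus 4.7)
The plan is to deduce separation in the various Rips-type complexes from the separation of $\Cc$ in $\Gamma$ via the Comparison Lemma (and its scaled analogue, Lemma \ref{scalecompLemma}), exploiting the fact that any point of $P_a(C)$ lies within a dimension-dependent simplicial distance of a vertex of $C$.

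First I would treat the case of $P_a(\Cc)\subset P_a(\Gamma)$. Suppose $x\in P_a(C)$ and $y\in P_a(C')$ for distinct $C,C'\in\Cc$. Choose vertices $v$ and $v'$ of the (smallest) simplices of $P_a(\Gamma)$ containing $x$ and $y$ respectively. Since those simplices sit inside $P_a(C)$ and $P_a(C')$ by construction, their vertex sets lie in $C$ and $C'$, so $v\in C$ and $v'\in C'$; moreover $d_{P_a(\Gamma)}(x,v)$ and $d_{P_a(\Gamma)}(y,v')$ are bounded by a constant $D$ depending only on $\dim P_a(\Gamma)$ (the diameter of a standard simplex in the simplicial metric). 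The Comparison Lemma applied to the pair of vertices $v,v'$, combined with the triangle inequality, yields
\[
\varepsilon \;\leq\; d_\Gamma(v,v') \;\leq\; a\alpha\,d_{P_a(\Gamma)}(v,v') \;\leq\; a\alpha\bigl(d_{P_a(\Gamma)}(x,y)+2D\bigr),
\]
whence $d_{P_a(\Gamma)}(x,y)\geq \varepsilon/(a\alpha)-2D$. Because $\varepsilon\geq 1$ and $a\geq 1$, this lower bound can be recast as $\varepsilon/(a\beta)$ by choosing $\beta$ a large enough multiple of $\alpha$ (dependent only on the dimension) so that the additive $-2D$ is absorbed into the multiplicative constant.

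The relative case follows by the same argument once one observes, exactly as in Lemma \ref{NeighPrelimlemma}, that any path in $P_{ab}(\Gamma,\Cc)$ leaving $P_b(C)$ to reach $P_b(C')$ must transit through $P_a(\Gamma)$, so distances between $P_b(C)$ and $P_b(C')$ in the relative complex are bounded below by the corresponding $P_a(\Gamma)$-distances; the vertex-proximity estimate above then applies unchanged. For the scaled Rips complex $P_{ab\overline m}(\Gamma,\Cc)$, the identical reasoning goes through with the Comparison Lemma replaced by Lemma \ref{scalecompLemma}, provided each $m_k\geq M$ for a threshold $M$ dictated by $b$ through that lemma.

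The main obstacle is purely bookkeeping — tracking the dimension-dependent simplex diameter $D$ and verifying that it can be absorbed into $\beta$ uniformly in $\varepsilon\geq 1$ and $a\geq 1$, and, in the scaled case, confirming that the hypothesis ``$\overline m$ large enough'' in Lemma \ref{scalecompLemma} can be read off from $b$ alone rather than depending on the particular pair $(x,y)$ realising the separation distance.
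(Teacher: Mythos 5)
Your approach is in substance the same as the paper's: the official proof cites the Neighborhood Lemma~\ref{commutelemma}, whose own proof (via Lemma~\ref{NeighPrelimlemma}) is exactly the ``approximate a point by a vertex of its carrier simplex and invoke the Comparison Lemma'' argument you spell out. So the route is right. However, the step you dismiss as ``purely bookkeeping'' is a genuine gap. From $d_{P_a(\Gamma)}(x,y)\geq \varepsilon/(a\alpha)-2D$ you cannot extract $d_{P_a(\Gamma)}(x,y)\geq \varepsilon/(a\beta)$ with $\beta$ depending only on the dimension: fix $\varepsilon=1$ and let $a\to\infty$; then $\varepsilon/(a\alpha)-2D\to -2D<0$, so your lower bound is vacuous, while $\varepsilon/(a\beta)>0$. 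The required inequality $\varepsilon/(a\alpha)-2D\geq\varepsilon/(a\beta)$ rearranges to $\varepsilon/a\geq 2D\alpha\beta/(\beta-\alpha)$, which fails whenever $a$ is large relative to $\varepsilon$ --- no dimension-only choice of $\beta$ saves it.

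The missing ingredient is an unconditional lower bound in the regime $\varepsilon\lesssim a$. Since $\varepsilon\geq 1>0$, the sets in $\Cc$ are pairwise disjoint, so the $P_a(C)$ are \emph{disjoint full subcomplexes} of the finite-dimensional complex $P_a(\Gamma)$; in the simplicial metric any two such subcomplexes are separated by a constant $c_n>0$ depending only on $n=\dim P_a(\Gamma)$ (e.g.\ because the function $z\mapsto \sum_{v\in C}\phi_v(z)$, a sum of barycentric coordinates, is $1$ on $P_a(C)$, $0$ on $P_a(C')$, and Lipschitz with a dimension-only constant). Combining this with your estimate --- use yours when $\varepsilon\geq K a$ for a suitable dimension-dependent $K$, and the crude bound $d\geq c_n\geq\varepsilon/(a\beta)$ when $\varepsilon<Ka$ and $\beta$ is taken large enough --- closes the argument, and the identical caveat applies to your relative and scaled cases. (To be fair, the paper's one-line proof, invoking Lemma~\ref{commutelemma} whose radius is also restricted to be $\geq 1$, elides the same point; but your write-up asserts the absorption is automatic, and it is not.)
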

\begin{proof}
The first two cases are direct consequences of the neighborhood lemma
above. For the scaled complex, it follows from Lemma \ref{scalecompLemma}. 
\end{proof}

Note that the neighborhood lemma does not apply to the scaled  Rips
complex. Instead, we have the following slightly weaker statement. 

\begin{lem}[Lipschitz homotopy lemma]\label{RetractionLemma}  
Let  $C$ and $W$ be subspaces of the metric space $\Gamma$. Let
$\varepsilon \geq 1$ and $b\geq a\geq 1$. 
Let $V$ be the $\varepsilon$-neighborhood of $P_b(W)$ in
$P_{ab\overline{m}}(\Gamma,W)$, let $W'$ be the
$a\beta\varepsilon$-neighborhood of $W$ in $\Gamma$, where $\beta$ is
the constant appearing in Lemma \ref{NeighPrelimlemma}. Then, for all
$c\geq b$, there exist $M>0$ and  a proper continuous map  
\begin{equation*}
  F: (P_{ac\overline{m}}(C,W')\cup V)\times [0,1]\to 
          P_{ac\overline m}(C, W')\cup V
\end{equation*}  
such that
\begin{ilist}
  \item $F(\cdot, t)$ is $2$-Lipschitz  for all $t\in [0,1],$ provided
      that  $m_k\geq M$ for all $k$,  
  \item for each $t\in [0,1],$ $F(\cdot, t)$ restricts to the identity
      map on $P_{ac\overline{m}}(C, W')$, 
  \item $F(\cdot, 0)$ is the identity map on
      $P_{ac\overline{m}}(C,W')\cup V$, and the image of $F(\cdot,1)$
      lies in $P_{ac\overline{m}}(C,W')$.   
\end{ilist}
Moreover,  the constant $M$ depends only on  $\varepsilon$ and the
dimension of $P_c(\Gamma)$. 
\end{lem}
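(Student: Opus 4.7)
The plan is to construct $F$ explicitly, simplex-by-simplex, by exploiting the cone structure of the scaled Rips complex. The starting observation is that in a mixed $n$-simplex $K$ (one not in $P_{ab}(\Gamma, W)$), Definition~\ref{scaleripsdef} identifies $K$ with the cone $([0,1]\times K^{(n-1)})/(\{0\}\times K^{(n-1)})$ with metric $g_n = m_n^2\,dt^2 + t^2\,g_{n-1}$. An interior point $(t,y)$ is distance exactly $m_n(1-t)$ from the $(n-1)$-skeleton along the radial direction, so any $(t,y)\in K\cap V$ must satisfy $t \geq 1 - \varepsilon/m_n$. Iterating this estimate dimension by dimension, a point of $V$ in a mixed simplex is in every skeleton coordinate confined close to the ``$W$-subcomplex'' of $K$, and Lemma~\ref{NeighPrelimlemma} further tells us that every vertex of $\Gamma$ lying in $V$ belongs to $W'$.

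My construction of $F$ is by downward induction on simplex dimension. On $P_{ac\overline m}(C,W')$ (equivalently, on every simplex of $P_c(\Gamma)$ whose vertices all lie in $C\cup W'$), $F$ is the identity in $s$. On a mixed $n$-simplex $K$ whose vertex set contains some point outside $C\cup W'$, the radial retraction $h^n_s(t,y) = ((1-s)t+s,\,y)$ is the straight-line deformation of $K$ onto its $(n-1)$-skeleton. I would modulate $h^n_s$ by a cutoff supported on a slightly enlarged copy of $V\cap K$ and equal to $1$ on $V\cap K$, so that it agrees with the identity near the apex and near any face of $K$ on which $F$ has already been declared to be the identity. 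Composing with the retraction on $K^{(n-1)}$ defined by the inductive step gives $F$ on $K$. After at most $\dim P_c(\Gamma)$ iterations, $V$ has been pushed onto the union of faces spanned by vertices in $W'$, and each such face is a genuine simplex of $P_{ac\overline m}(C,W')$.

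The core estimate is the Lipschitz bound on each $h^n_s$. A direct differential computation using $g_n = m_n^2\,dt^2 + t^2\,g_{n-1}$ shows that $h^n_s$ scales the $dt$-direction by $(1-s)\leq 1$ and the $g_{n-1}$-direction by a factor $((1-s)t+s)/t$. On the region $t \geq 1-\varepsilon/m_n$, this latter factor is at most $1+\varepsilon/(m_n-\varepsilon)$. Since the number of iterations is bounded by $\dim P_c(\Gamma)$, which depends only on $c$ and the bounded geometry of $\Gamma$, the composed retraction has Lipschitz constant at most $\bigl(1+\varepsilon/(M-\varepsilon)\bigr)^{\dim P_c(\Gamma)}$, which drops below $2$ once $M$ is chosen sufficiently large in terms of $\varepsilon$ and $\dim P_c(\Gamma)$.

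The main obstacle is the global compatibility of the piecewise construction: the inductively defined retractions must glue continuously across shared faces of simplices, must take values inside $P_{ac\overline m}(C,W')\cup V$, and must match the identity on $P_{ac\overline m}(C,W')$. The choice $W' = N_{a\beta\varepsilon}(W)$, which is larger than the naive $\varepsilon$-neighborhood, is precisely what makes this possible: Lemma~\ref{NeighPrelimlemma} guarantees that any vertex of $K$ with a point of $V$ in its star already belongs to $W'$, so the face of $K$ onto which one retracts is actually a simplex of $P_{ac\overline m}(C,W')$ and the cutoff handing off to the identity can be chosen consistently on overlapping simplices. Properness and continuity of $F$ then follow from the local finiteness of $\Gamma$ and compactness of simplices.
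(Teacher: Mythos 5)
Your overall strategy is the same as the paper's: exploit the cone parametrization $K\cong([0,1]\times K^{(n-1)})/(\{0\}\times K^{(n-1)})$ with metric $g_n=m_n^2\,dt^2+t^2g_{n-1}$, observe via the radial coordinate that points of $V$ inside a mixed $n$-simplex sit in the slab $t\geq 1-\varepsilon/m_n$, push them to $\partial K$ by the radial retraction $h_s(t,y)=((1-s)t+s,y)$, estimate the Lipschitz constant of a single step by $1+\varepsilon/(m_n-\varepsilon)$, and then induct on the skeleton dimension, compounding the per-step bound over $\dim P_c(\Gamma)$ steps. Your core computation is identical to the paper's (the paper states the constant as $1/(1-\varepsilon/m_n)$, which is the same number), and your use of Lemma~\ref{NeighPrelimlemma} to verify $V\cap\Gamma\subset W'$ is also the paper's first step.

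The one place you diverge is the cutoff, and that is where a genuine gap appears. The paper never extends the homotopy past $V\cap K$: it defines a filtration $Y_n = X_n\cap(P_{ac\overline m}(C,W')\cup V)$, restricts the radial retraction to $Y_n\cap\Delta$ (which sits entirely in the slab $t\geq 1-\varepsilon/m_n$), and notes this restriction already fixes $\partial\Delta$ pointwise, so it glues across faces for free --- no cutoff is needed to "hand off to the identity," because $h_s(1,y)=(1,y)$ automatically, and the apex $t=0$ simply isn't in the domain. By contrast, if you introduce a cutoff $\chi$ interpolating from $0$ to $1$ on an enlarged slab $1-K\varepsilon/m_n\leq t\leq 1-\varepsilon/m_n$, the modified map $(t,y)\mapsto\bigl((1-\chi(t)s)t+\chi(t)s,\,y\bigr)$ has $t$-derivative $1-s\chi+s\chi'(t)(1-t)$, and the product $\chi'(t)(1-t)$ is of order $1$ on the cutoff region no matter how $K$ or $M$ are tuned. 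So each step of your modified homotopy can be $\approx 2$-Lipschitz in the radial direction rather than $(1+o(1))$-Lipschitz, and the composition of $\dim P_c(\Gamma)$ such steps will not stay below $2$ --- the estimate you wrote down, $\bigl(1+\varepsilon/(M-\varepsilon)\bigr)^{\dim P_c(\Gamma)}$, applies to $h_s$ itself, not to its cutoff modification, which you did not analyze. Dropping the cutoff and restricting to the set $Y_n\cap\Delta\subset\{t\geq 1-\varepsilon/m_n\}$, as the paper does, makes this issue disappear; with that one change your argument matches the paper's.

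A minor secondary imprecision: Lemma~\ref{NeighPrelimlemma} gives you $V\cap\Gamma\subset W'$, i.e.\ vertices lying in $V$ belong to $W'$; it does not give you that a vertex whose \emph{star} merely meets $V$ lies in $W'$, since the scaled simplices containing that vertex can have diameter of order $m_n$. This does not affect the argument once the cutoff is removed, because then you only ever retract points that actually lie in $V$.
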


\begin{proof}
By Lemma \ref{NeighPrelimlemma}, we have $V\cap \Gamma\subset W'$. Now
for each $n$, let $X_n$ be the union of $P_{ac\overline{m}}(C, W')$
with the $k$-simplices of $P_{ab\overline{m}}(\Gamma,W)$ whose
interiors meet $V$, for all $k\leq n$. Let $Y_n=X_n\cap
(P_{ac\overline{m}}(C, W')\cup V)$. 
We will prove the following stronger statement by induction on $n$. 

For all $c\geq b$, and all $\eta>0$, if $M$ is large enough, there
exists  a  proper continuous map $F: Y_n\times [0,1]\to Y_n$ where for
each $t\in [0,1],$ $F(\cdot, t)$ is $(1+\eta)$-Lipschitz if $m_k\geq
M$ for all $1\leq k\leq n$, and restricts to the identity map on
$P_{ac\overline m}(C, W')$. Moreover $F(\cdot, 0)$ is the identity map
on $Y_n$, and the image of $F(\cdot,1)$ lies in $P_{ac\overline
  m}(C,W')$. 
This is obvious for $n=0$. We therefore assume $n\geq 1$.  

We suppose that this statement holds for $Y_{n-1}$.  Note that it is
enough to show that for every $\eta>0$, there exists some proper
continuous map $H:Y_n\times [0,1]\to Y_{n}$ such that for every $t\in
[0,1],$ $H(\cdot, t)$ is $(1+\eta)$-Lipschitz as soon as $m_n$ is
large enough, $H$ restricts to the identity map on
$P_{ac\overline{m}}(C, W')$, $H(\cdot, 0)$ is the identity map on
$Y_{n}$, and the image of $H(\cdot,1)$ lies in $Y_{n-1}$.   

Let $\Delta= ([0,1]\times \partial \Delta)/(0\times \partial \Delta)$
be a simplex in $X_n$, which is not in  $X_{n-1}$. Observe that
$Y_n\cap \Delta \subset [1-\varepsilon/m_n]\times \partial \Delta$.
Consider
the standard homotopy between $[1-\varepsilon/m_n]\times \partial \Delta$ and $\partial \Delta$.
We define $H$ to be the restriction of this homotopy to $(Y_n\cap
\Delta)\times [0,1]$. Clearly the Lipschitz constant is at most
$1/(1-\varepsilon/m_n)$. Hence it suffices to take $m_n$ large enough
to make this number less than $1+\eta$. 
\end{proof}

\section{Mayer-Vietoris sequences in bounded $K$ and $L$-theory}
\label{MVsequences}

In this section, we recall from \cite{RY1,RY2} the controlled
Mayer-Vietoris sequences in $K$ and $L$-theory.  These are important
tools in our proof of the bounded Borel conjecture for spaces with
finite decomposition complexity.

\begin{thms}\label{MayerVietorisK}
Let $X$ be a metric space, written as the union of closed subspaces
$X=A\cup B$.  There exists a universal constant $\lambda>1$
(independent of $X$, $A$ and $B$) such that  for each $\delta> 0$, 
  \begin{ilist}
    \item in $Wh^\delta(A\cap B) \overset{i_*}{\rightarrow} Wh^\delta(A)\oplus Wh^\delta(B)
                 \overset{j_*}{\rightarrow}  Wh^\delta(X)$, we have $j_* i_*=0$;
                 
    \item if $N_{\lambda\delta}(A\cap B)\subset W$, then the
        relax-control image of the kernel of $j_*$ in 
        $Wh^{\lambda^2\delta}(A\cup W)\oplus Wh^{\lambda^2\delta}(B\cup W)$ is contained in
      the image of $i_*$ below
      \begin{equation*}
      \xymatrix{  & Wh^\delta(A) \oplus Wh^\delta(B) \ar[r]^-{j_*} \ar[d]
                      & Wh^\delta(X)\\
             Wh^{\lambda^2\delta}(W) \ar[r]^-{i_*} & Wh^{\lambda^2\delta}(A\cup W) \oplus
                     Wh^{\lambda^2\delta}(B\cup W) & },
      \end{equation*}
      where $N_{\lambda \delta}(A\cap B) = 
             \{\, x\in X \colon d(x,A\cap B)\leq \lambda \delta \,\}$;
             
    \item if $N_{\lambda \delta}(A\cap B)\subset W$, then in 
    $$Wh^\delta(A)\oplus Wh^\delta(B) \overset{j_*}{\rightarrow}
                Wh^\delta(X) \overset{\partial}{\rightarrow} 
                    \tilde{K}_{0}^{\lambda\delta}(W),$$
                 we have $\partial j_*=0$;
                
    \item if $N_{\lambda\delta}(A\cap B)\subset W$, then
     the relax-control image of the kernel of $\partial$ in
      $Wh^{\lambda^2\delta}(X)$ is contained in the image of $j_*$
      below
      \begin{equation*}
        \xymatrix{ & Wh^\delta(X) \ar[r]^-\partial \ar[d] 
                 & \tilde{K}_0^{\lambda\delta}(W) \\
            Wh^{\lambda^2\delta}(A\cup W)\oplus
            Wh^{\lambda^2\delta}(B\cup W) \ar[r]^-{j_*} & 
                  Wh^{\lambda^2\delta}(X) }
      \end{equation*}
      
    \item if $N_{\lambda\delta}(A\cap B)\subset W$, then
    in 
    $$Wh^\delta(X) \overset{\partial}{\rightarrow} 
               \tilde{K}_0^{\lambda\delta}(W) \overset{i_*}{\rightarrow}
      \tilde{K}_0^{\lambda\delta}(A\cup W) \oplus
      \tilde{K}_0^{\lambda\delta}(B\cup W),$$  we have $i_* \partial=0$;
      
    \item if $N_{\lambda\delta}(A\cap B)\subset W$, then
    the relax-control image of the kernel of $i_*$ in
      $\tilde{K}_0^{\lambda^2\delta}(W)$ is contained in the image of
      $\partial$
      \begin{equation*}
        \xymatrix{ & \tilde{K}_0^\delta(A \cap B) \ar[r]^-{i_*} \ar[d] &
                 \tilde{K}_0^\delta(A) \oplus \tilde{K}_0^\delta(B) \\
             Wh^{\lambda\delta}(X) \ar[r]^{\partial} & 
             \tilde{K}_0^{\lambda^2\delta}(W) & } 
      \end{equation*}
  \end{ilist}
\end{thms}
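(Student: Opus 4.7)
The plan is to deduce all six assertions from a single controlled ``chopping'' construction for geometric modules over $X$. Recall that a class in $Wh^\delta(X)$ is represented by a self-equivalence $\phi\colon M\to M$ of a based free geometric module $M$ supported on $X$, with both $\phi$ and $\phi^{-1}$ of propagation at most $\delta$, and that a class in $\tilde K_0^\delta(X)$ is represented by an idempotent of propagation at most $\delta$. The structural point will be that any such representative can be split along the decomposition $X=A\cup B$, with explicit control on how the spreading of supports relates to $\delta$.

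First I would construct the connecting map $\partial\colon Wh^\delta(X)\to \tilde K_0^{\lambda\delta}(W)$. Given $\phi$ as above, I would fix a partition $X=X_A\sqcup X_B$ with $X_A\supset A\setminus N_\delta(A\cap B)$ and $X_B\supset B\setminus N_\delta(A\cap B)$, and let $p\colon M\to M$ be the projection onto the summand supported on $X_A$. The operator $p\phi p\phi^{-1}$ is, up to a controlled perturbation, an idempotent that differs from $p$ only where the matrix entries of $\phi$ and $\phi^{-1}$ cross the boundary between $X_A$ and $X_B$; this forces its support to lie in $N_{c\delta}(A\cap B)$ for some universal $c$. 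Choosing $\lambda$ to absorb all the constants that appear here, the resulting idempotent represents a well-defined class in $\tilde K_0^{\lambda\delta}(W)$ whenever $N_{\lambda\delta}(A\cap B)\subset W$, and a standard check shows it depends only on the Whitehead class of $\phi$.

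The verifications that successive composites vanish, namely (i), (iii), and (v), would follow in routine fashion. If $\phi$ comes from $A\cap B$, its extensions to $A$ and to $B$ cancel upon gluing to $X$. If $\phi=\phi_A\oplus \phi_B$ is already split, then $p\phi p\phi^{-1}=p$ is a genuine projection and represents a trivial reduced $K_0$ class. If $\phi$ is arbitrary, then pushing $\partial[\phi]$ back into $\tilde K_0^{\lambda\delta}(A\cup W)$ via $i_*$ yields an idempotent stably equivalent to $p$, hence trivial there. The relax-control assertions (ii), (iv), and (vi) are the real content: for each, one must take an element of the kernel of one map and, by enlarging the allowed propagation from $\delta$ to $\lambda^2\delta$, exhibit a preimage under the previous map. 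The room afforded by the enlargement from $A\cap B$ to $W$ is precisely what lets one perform the inverse chopping, splicing compatible pieces on $A\cup W$ and $B\cup W$ into a single class on $X$ or vice versa.

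The main obstacle will be the genuine geometric gluing underlying (iv) and (vi), in contrast to the formal manipulations behind the composite-zero statements. For (vi), given an idempotent on $W$ whose class vanishes after inclusion into both $A\cup W$ and $B\cup W$, one must produce a Whitehead torsion on $X$ whose boundary recovers it, and this requires splicing the two trivialising homotopies along their overlap near $A\cap B$. The exponent two in $\lambda^2$ reflects two successive uses of propagation-increasing moves: one to widen each trivialisation into $W$, and a second to splice them. The delicate bookkeeping is to verify that every constant appearing in these constructions is absorbed into one and the same universal $\lambda$, independent of $X$, $A$, $B$, and $\delta$; this universality is what makes the sequence useable in the iterated decomposition arguments of the sort carried out in Theorem~\ref{VanishingK-theoryThm}.
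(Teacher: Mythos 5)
The paper does not prove Theorem~\ref{MayerVietorisK}: Appendix~\ref{MVsequences} opens by stating that the controlled Mayer--Vietoris sequences are \emph{recalled from} Ranicki and Yamasaki \cite{RY1,RY2}, and Theorem~\ref{MayerVietorisK} is then stated without proof. There is therefore no proof in the paper to compare against; the question is whether your sketch constitutes an independent proof, and it does not.

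The composite-zero parts (1), (3), (5) are, as you say, essentially formal once the boundary map is in hand, but the formula you propose for the boundary idempotent is not correct: $p$ and $\phi p\phi^{-1}$ do not commute in general (this non-commutativity is precisely the obstruction to a naive gluing), so $p\phi p\phi^{-1}$ is not idempotent even up to controlled error. The standard construction uses $\phi p\phi^{-1}$ itself, or a $2\times 2$ stabilized version, and measures its deviation from $p$; getting the resulting class to live on $W$ rather than all of $X$, with propagation only $\lambda\delta$, already requires care. More seriously, the relax-control assertions (2), (4), (6) -- which you correctly identify as the real content -- are only described, not proved. You write that the enlargement from $\delta$ to $\lambda^2\delta$ ``is precisely what lets one perform the inverse chopping,'' but the inverse chopping itself (lifting compatible trivializations on $A\cup W$ and $B\cup W$, splicing them along the overlap, and verifying that every intermediate operation has propagation bounded by a universal multiple of $\delta$ independent of $X$, $A$, $B$) is the substance of the theorem. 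These are multi-page arguments in \cite{RY1}, and the universality of $\lambda$ cannot be asserted without carrying them out. Your proposal is a sound conceptual orientation toward the proof, but the correct disposition here is the paper's own: cite the Ranicki--Yamasaki results rather than reprove them.
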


The precise $L$-theory version we require is the following result  where, for each metric space $Y$, each integer $n\geq 0$ and  $\delta>0$,  
$L_n ^{\delta} (Y)$ is the $\delta$-controlled  locally finite and  free
 L-theory of $Y$ \cite{RY2}.  This result is a consequence of Theorem~7.3 and Proposition 4.6 in \cite{RY2}),  Proposition 3.2 and Proposition 3.4 in \cite{RY1}.

\begin{thms}\label{MayerVietorisL_theoryThm} Let $P$ be a locally compact polyhedron and $P'$ a subpolyhedron of $P$. Assume that $P$ and $P'$ are respectively given with metrics $d$ and $d'$ satisfying $d(x,y)\leq d'(x,y)$ for all $x$ and $y$ in $P'$. 
Let $X$ be a  metric subspace of $P'$. Assume that $X$ is  written as the union of closed subspaces
$X=A\cup B$.  For every integer $n\geq 2$ there exists $\lambda_n>1$, which
depends only on $n$, such that  for each $\delta> 0$,
  \begin{ilist}
    \item in $L_n^\delta(A\cap  B) \overset{i_*}{\rightarrow} L_n^\delta(A)\oplus L_n^\delta(B)
                 \overset{j_*}{\rightarrow}  L_n^\delta(X)$, we have $j_* i_*=0$, where 
                 the metrics on $A\cap B$, $A$, $B$ and $X$ are inherited from the metric of $P'$;
                 
    \item if $N_{\lambda_n\delta}(A\cap B)\subseteq  W \subseteq P$ and the natural homomorphism from 
    $\tilde{K}_0^{\lambda_n\delta}( N_{\lambda_n\delta}(A\cap B))$ to $\tilde{K}^{\lambda_n\delta}_0(W)$ is zero,  then the
        relax-control image of the kernel of $j_*$ in 
        $$L_n^{\lambda_n^2\delta}(A\cup W)\oplus L_n^{\lambda_n^2\delta}(B\cup W)$$ is contained in
      the image of $i_*$ below
      \begin{equation*}
      \xymatrix{  & L_n^\delta(A) \oplus L_n^\delta(B) \ar[r]^-{j_*} \ar[d]
                      & L_n^\delta(X)\\
             L_n^{\lambda_n\delta}(W) \ar[r]^-{i_*} & L_n^{\lambda_n^2\delta}(A\cup W) \oplus
                     L_n^{\lambda_n^2\delta}(B\cup W) & },
      \end{equation*}
      where $N_{\lambda_n\delta}(A\cap B) = 
             \{\, x\in X \colon d(x,A\cap B)\leq \lambda_n\delta \,\}$ is given the  metric of $P'$,   the metrics on  $A$, $B$ and $X$ are inherited from the  metric of $P'$, and the metrics on  $W$, $A\cup W$ and  $B\cup W$  are inherited from the  metric of $P$;
             
    \item if $N_{\lambda_n\delta}(A\cap B)\subseteq W \subseteq P$ and the natural homomorphism from 
    $\tilde{K}^{\lambda_n\delta}_0( N_{\lambda_n\delta}(A\cap B))$ to $\tilde{K}_0^{\lambda_n\delta}(W)$ is zero, then in 
    
    $$L_n^\delta(A)\oplus L_n^\delta(B) \overset{j_*}{\rightarrow}
                L_n^\delta(X) \overset{\partial}{\rightarrow} L_{n-1}^{\lambda_n\delta}(W),$$
                 we have
                $\partial j_*=0$,
               where $N_{\lambda_n\delta}(A\cap B) = 
             \{\, x\in X \colon d(x,A\cap B)\leq \lambda_n\delta \,\}$ is given the  metric of $P'$,   the metrics on  $A$, $B$ and $X$ are inherited from the  metric of $P'$, and the metric on  $W$ is inherited from the  metric of $P$;
                
    \item if $N_{\lambda_n\delta}(A\cap B)\subseteq W \subseteq P$ and the natural homomorphism from 
    $\tilde{K}_0^{\lambda_n\delta}( N_{\lambda_n\delta}(A\cap B))$ to $\tilde{K}^{\lambda_n\delta}_0(W)$ is zero, then
     the relax-control image of the kernel of $\partial$ in
      $L_n^{\lambda_n^2\delta}(X)$ is contained in the image of $j_*$
      below
      \begin{equation*}
        \xymatrix{ & L_n^\delta(X) \ar[r]^-\partial \ar[d] 
                 & L_{n-1}^{\lambda_n\delta}(W) \\
            L_n^{\lambda_n^2\delta}(A\cup W)\oplus
            L_n^{\lambda_n^2\delta}(B\cup W)\ar[r]^-{j_*} & 
                  L_n^{\lambda_n^2\delta}(X\cup W) }
      \end{equation*}
   where   $N_{\lambda_n\delta}(A\cap B) = 
             \{\, x\in X \colon d(x,A\cap B)\leq \lambda_n\delta \,\}$ is given the  metric of $P'$,   the metric on $X$ is inherited from the metric of $P'$, and the metrics on  $W$, $A\cup W$, $B\cup W$ and $X\cup W$ are inherited from the  metric of $P$; 
      
    \item if $N_{\lambda_n\delta}(A\cap B)\subseteq W \subseteq P$ and the natural homomorphism from 
    $\tilde{K}^{\lambda_n\delta}_0( N_{\lambda_n\delta}(A\cap B))$ to $\tilde{K}^{\lambda_n\delta}_0(W)$ is zero, then
    in 
    
    $$L_n^\delta(X) \overset{\partial}{\rightarrow} L_{n-1}^{\lambda_n\delta}(W) \overset{i_*}{\rightarrow}
      L_{n-1}^{\lambda_n\delta}(A\cup W) \oplus
      L_{n-1}^{\lambda_n\delta}(B\cup W),$$  we have $i_* \partial=0$,
      where   $N_{\lambda_n\delta}(A\cap B) = 
             \{\, x\in X \colon d(x,A\cap B)\leq \lambda_n\delta \,\}$ is given the  metric of $P'$,   the metric on $X$ is inherited from the metric of $P'$, and the metrics on  $W$, $A\cup W$ and $B\cup W$  are inherited from the  metric of $P$; 
      
    \item if $N_{\lambda_n\delta}(A\cap B)\subseteq W \subseteq P$ and the natural homomorphism from 
    $\tilde{K}^{\lambda_n\delta}_0( N_{\lambda_n\delta}(A\cap B))$ to $\tilde{K}^{\lambda_n\delta}_0(W)$ is zero, then
    the relax-control image of the kernel of $i_*$ in
      $L_{n-1}^{\lambda_n^2\delta}(W)$ is contained in the image of
      $\partial$
      \begin{equation*}
        \xymatrix{ & L_{n-1}^\delta(A \cap B) \ar[r]^-{i_*} \ar[d] &
                 L_{n-1}^\delta(A) \oplus L_{n-1}^\delta(B) \\
             L_n^{\lambda_n\delta}(X)\ar[r]^{\partial} & 
             L_{n-1}^{\lambda_n^2\delta}(W) & } 
      \end{equation*}
      where   $N_{\lambda_n\delta}(A\cap B) = 
             \{\, x\in X \colon d(x,A\cap B)\leq \lambda_n\delta \,\}$ is given the  metric of $P'$,   the metrics on $X$, $A\cap B$, $A$ and $B$  are inherited from the  metric of $P'$, and the metric on  $W$ is inherited from the  metric of $P$. 
  \end{ilist}
\end{thms}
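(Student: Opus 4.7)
The plan is to derive each of the six statements as a consequence of the corresponding controlled Mayer--Vietoris exactness results from \cite{RY2} (Theorem~7.3 and Proposition~4.6) combined with the Ranicki--Rothenberg comparison sequences from \cite{RY1} (Propositions~3.2 and 3.4). The Mayer--Vietoris sequences of \cite{RY2} are naturally stated with a projective-type decoration, in which the boundary $\partial\colon L_n(X)\to L_{n-1}(W)$ takes values in a group of Poincar\'e chain complexes of geometric modules on $W$ without a prescribed basis. The hypothesis that the natural map $\widetilde K_0^{\lambda_n\delta}(N_{\lambda_n\delta}(A\cap B))\to \widetilde K_0^{\lambda_n\delta}(W)$ is zero is the precise obstruction-vanishing needed to pass from the projective to the free (i.e.~locally finite and free) decoration used in the statement.

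First, I would apply the Mayer--Vietoris sequence of \cite{RY2} to the decomposition $X=A\cup B$ inside the enlarged ambient space $P$. This produces, for each $n$, the six exactness statements analogous to (1)--(6) at the level of controlled projective $L$-theory, with relax-control constant $\lambda_n$ depending only on $n$. The hypothesis $d(x,y)\leq d'(x,y)$ on $P'$ is what allows the inclusion-induced maps from the controlled $L$-groups of $A\cap B$, $A$, $B$, $X$ (metric inherited from $d'$) into those of $W$, $A\cup W$, $B\cup W$ (metric inherited from $d$) to be defined and properly controlled.

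Next, I would interpret Propositions~3.2 and 3.4 of \cite{RY1} as giving a controlled Rothenberg-type long exact sequence relating the free and projective versions of the relevant $L$-groups, whose connecting terms are the controlled groups $\widetilde K_0^{\lambda_n\delta}$. The assumption of vanishing of the map $\widetilde K_0^{\lambda_n\delta}(N_{\lambda_n\delta}(A\cap B))\to \widetilde K_0^{\lambda_n\delta}(W)$ guarantees, via a diagram chase in this Rothenberg sequence, that the projective boundary element lifts to the free $L$-theory of $W$, and symmetrically that a free class mapping to zero in the projective Mayer--Vietoris sequence can be represented as the image of a free class on $W$. Applying the diagram chase separately at each of the three vertices of the Mayer--Vietoris triangle converts parts (1),(3),(5) of the projective sequence into parts (1),(3),(5) of the statement, and converts the relax-control exactness statements of the projective sequence into parts (2),(4),(6), which explains the appearance of $\lambda_n^2\delta$ in those items.

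The main obstacle is bookkeeping: one must track the control parameters through both the Mayer--Vietoris sequence and the Rothenberg sequence simultaneously, verifying that the single constant $\lambda_n$ (and its square) governs all six statements uniformly. In particular, one must confirm that the $\widetilde K_0$-vanishing hypothesis, which is stated with control $\lambda_n\delta$ on the neighborhood $N_{\lambda_n\delta}(A\cap B)$, suffices to handle every appearance of the boundary map $\partial$ in the six parts, including in parts (5)--(6) where $\partial$ produces an element that is then fed into the $i_\ast$ induced by inclusion into $A\cup W$ and $B\cup W$. This is a routine, if tedious, diagram chase once the projective sequence and Rothenberg sequence are in place.
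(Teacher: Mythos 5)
The paper does not actually prove this theorem; it simply asserts that the result ``is a consequence of Theorem~7.3 and Proposition 4.6 in \cite{RY2}, Proposition 3.2 and Proposition 3.4 in \cite{RY1}'' and leaves the verification to the reader, so there is no internal argument against which to check your reasoning. Your outline cites the same four sources and correctly identifies the heuristic role of the $\widetilde K_0$-vanishing hypothesis: it is what allows the controlled Mayer--Vietoris boundary, which a priori produces only a chain-homotopy-projective Poincar\'e complex supported near $A\cap B$, to be replaced by one landing in the locally finite and free $L$-group of $W$. You also read the metric hypothesis $d\leq d'$ correctly as what makes the relevant inclusion-induced maps well-defined and controlled.

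That said, your specific identification of Propositions~3.2 and 3.4 of \cite{RY1} with a ``controlled Rothenberg-type sequence relating free and projective $L$-groups'' is speculative and likely imprecise. \cite{RY1} is a controlled \emph{$K$}-theory paper; those propositions are far more plausibly splitting and stability statements for controlled $\widetilde K_0$ (e.g.\ that a $\delta$-controlled projective module over $X=A\cup B$ splits, up to $\lambda\delta$, into pieces over $A$, $B$ and a neighborhood of $A\cap B$, and that relax-control maps behave as expected), which then feed into the boundary construction and the $\widetilde K_0$-obstruction argument in \cite{RY2} rather than constituting a Rothenberg sequence in their own right. The overall shape of your argument --- controlled Mayer--Vietoris plus a $\widetilde K_0$-controlled passage from projective to free, with $\lambda_n$ and $\lambda_n^2$ tracking the relax-control --- is reasonable, but to actually carry it out you would need to open the cited papers and verify the precise content of each statement rather than reconstruct it from the decoration-change analogy.
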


\bigskip
\footnotesize

\end{document}